\newtheorem{theorem}{Theorem}[section]
\newtheorem{lemma}[theorem]{Lemma}
\newtheorem{proof of lemma}[theorem]{Proof of Lemma}
\newtheorem{proposition}[theorem]{Proposition}
\newtheorem{corollary}[theorem]{Corollary}
\theoremstyle{definition}
\newtheorem{definition}[theorem]{Definition}
\newtheorem{remark}[theorem]{Remark}
\numberwithin{equation}{section}
\newcommand{\abs}[1]{\lvert#1\rvert}
\newcommand{\T}{\mathbb{T}}
\newcommand{\R}{\mathbb{R}}
\newcommand{\Z}{\mathbb{Z}}
\newcommand{\cH}{\mathcal{H}}
\newcommand{\Sp}{\mathfrak{S}}
\newcounter{cases}
\newcounter{subcases}
\begin{document}
	\allowdisplaybreaks
	\title[Orthonormal Strichartz estimates in partial regularity frame]{Refined Strichartz estimates and their orthornomal counterparts for Schr\"odinger equations on torus}
	
	\author{Divyang G. Bhimani, Subhash.  R.  Choudhary and S. S. Mondal}
	
	\address{Department of Mathematics, Indian Institute of Science Education and Research-Pune, Homi Bhabha Road, Pune 411008, India}
	
	\email{divyang.bhimani@iiserpune.ac.in}
	\email{subhashranjan.choudhary@students.iiserpune.ac.in}
	\address{\endgraf  Stat-Math Unit, Indian Statistical Institute  Kolkata, BT Road,  Baranagar, Kolkata  700108, India }
	\email{mondalshyam055@gmail.com}
	
	\subjclass[2010]{Primary 35Q55, 35B45; Secondary 42B37}
	
	\date{}
	
	\keywords{Schr\"odinger Equation, orthonormal Strichartz estimate, partial regularity,  local well-posedness.}
	\begin{abstract} The aim of the paper is twofold. We establish refined Strichartz estimates for the Schr\"odinger equation on tori within the framework of partial regularity. As a result, we reveal that the solution of the free Schr\"odinger equation has better regularity in mixed Lebesgue spaces. This complements the well-established theory over the past few decades, where initial data comes from the Sobolev space with respect to all spatial variables. As an application, we obtain local well-posedness for non-gauge-invariant nonlinearities with partially regular initial data. On the other hand, we extend refined Strichartz estimates for infinite systems of orthonormal functions, which generalizes the classical orthonormal Strichartz estimates on the torus by Nakamura \cite{nakamura2020orthonormal}. As an application, we establish well-posedness for the Hartree equation for infinitely many fermions in some Schatten spaces. In the process, we develop several harmonic analysis tools for mixed Lebesgue spaces, e.g. Fourier multiplier transference principle, vector-valued Bernstein inequality, and vector-valued Littlewood--Paley theory for densities of operators, which may be of independent interest and complement the results of \cite{ward2010mixedLebesgue,sabin2016littlewood}.
	\end{abstract}
	\maketitle
	\tableofcontents
	\section{Introduction}
	In this paper, we develop  Strichartz estimates, along with their orthonormal counterparts, within the framework of partial regularity for the nonlinear   Schr\"odinger equation (NLS) on the torus $\mathbb T^d$. This reveals that the initial data do not need to have complete regularity like $H^s(\mathbb T^d),$ but only a partial regularity space $X_k^s$ and $H^s \subsetneq X_k^s$ (to be described below, see \eqref{evolution space embedding}). To explain this and to motivate our main results, we begin with the following nonlinear Schr\"odinger equation (NLS) on the torus:
	\begin{equation}\label{NLS}
		\begin{cases}
			i\partial_tu -\Delta u = w\ast F_{p}(u)\\
			u(0, x)=f(x),
		\end{cases} (t,x) \in \mathbb R \times \mathbb T^d.
	\end{equation}
	Here, the term  $F_p$  satisfies the growth condition with exponent $p > 1$:
	\begin{equation}\label{growth condition}
		\left|F_p(u)\right| \lesssim|u|^p \quad \text { and } \quad|u|\left|F_p^{\prime}(u)\right| \sim\left|F_p(u)\right| ,
	\end{equation}
	and the function $w$ is taken from the mixed Sobolev-type spaces $ W^{s,r}(\T^d).$   
	Consider NLS on the Euclidean space in the following form:
	\begin{equation}\label{Eucleidean}
		\begin{cases}
			i\partial_tu -\Delta u = F_{p}(u)\\
			u(0, x)=f(x),
		\end{cases} (t,x) \in \mathbb R \times \mathbb R^d.
	\end{equation}
	The typical examples of  \eqref{growth condition} are $F_p(u)= \pm |u|^{p-1}u$ and $F_p(u)=\pm |u|^p;$ with this nonlinearity, \eqref{Eucleidean} is invariant under scaling: if $u(t,x)$ is a solution of \eqref{Eucleidean}, then so is	
	\begin{equation*}	
		u_{\delta}(t, x)= \delta^{\frac{2}{p-1}} u(\delta^{2} t, \delta x), \quad \delta>0,
	\end{equation*}	 
	with  rescaled initial data $f_{\delta}(x)=u_{\delta}(0, x)$. The following scaling relation
	\begin{equation}\label{scaling}
		\| f_{\delta}\|_{\dot{H}^s(\mathbb{R}^{d})} =\delta^{\frac{2}{p-1}+s-\frac{d}{2}} \|f\|_{ \dot{H}^s(\mathbb{R}^{d})}
	\end{equation}
	determines  the critical exponent  $p$ (or scale-invariant Sobolev space $\dot{H}^{s_c}$), which can be  expressed as
	$$
	p=p_c(d, s)=1+\frac{4}{d-2s}, \quad \text{alternatively}\,\, s=s_c= \frac{d}{2}-\frac{2}{p-1}.
	$$
	The critical exponent serves to separate various regimes of solution behavior: for $p > p_c(d, s)$, the problem is said to be supercritical; and for $p < p_c(d, s)$, it is subcritical. In fact,  it is expected that the problem is well-posed in the subcritical regime and ill-posed in the supercritical regime. Indeed, enormous progress has been made over the last three decades to study the Cauchy problem \eqref{Eucleidean} with initial data in the full regularity space $H^s(\mathbb R^d)$. See for e.g.  \cite{birnir1996ill, cazenave2003, dai2013continuous, kato1995nonlinear, cazenave1990cauchy, killip2013nonlinear} and references therein.

	On the other hand, if we decompose the spatial variable $z \in \mathbb{R}^d$ as $(x, y) \in \mathbb{R}^{d-k} \times \mathbb{R}^k$, and  suppose that the initial data are from the mixed Sobolev space $L^2(\mathbb{R}^{d-k}; {\dot{H}^s(\mathbb{R}^k))}$ instead of  the full regularity  space {$\dot{H}^s(\mathbb{R}^d)$}, then  for a rescaled initial datum $f_\delta,$ we have
	$$\| f_{\delta}\|_{L_x^2 \dot{H}_y^s} =  \delta^{\frac{2}{p-1} + s - \frac{d}{2}} \|f\|_{L_x^2 \dot{H}_y^s}.$$
	This shows that the scaling exponent $ \frac{2}{p-1} + s - \frac{d}{2}$ is identical to that in the case of the full regularity setting \eqref{scaling}, and also independent of the choice of $k$. Therefore, the same admissible range of $p$ holds even when regularity is imposed only partially i.e., the initial data need not possess full regularity; having partial regularity alone is sufficient. In view of this perspective,   Koh, Lee, and Seo in \cite{koh2023local} developed a new well-posedness theory under weaker regularity assumptions i.e.,   when the initial data do not possess full regularity in the classical Sobolev space $H^s(\mathbb{R}^d)$, but instead lies in a partially regular space of the form $L^2(\mathbb{R}^{d-k}; H^{s} (\mathbb{R}^k))$ for some $1 \leq k \leq d$. To this end, the authors \cite{koh2023local} established a refined Strichartz estimate with a different regularity for each spatial variable.

	Taking this  consideration into account, we focus on studying Strichartz estimates and their orthonormal counterparts within the framework of partial regularity on the torus $\T^d$. To the best of our knowledge, the partial regularity framework on torus $\T^d$ has not yet been explored in the existing literature. 

	\subsection{Strichartz estimates}
	For the linear Schr\"odinger equation of \eqref{Eucleidean} on $\mathbb{R}^d$,  the following   mixed space-time estimates,
	well known as classical Strichartz estimates  (see \cite{KeelTao, ORS})
	\begin{eqnarray}\label{ost}
		\|e^{-it\Delta} f\|_{L^q_t (\mathbb R, L^r_x (\mathbb R^d)))} \lesssim \|f\|_{L_{x}^2(\mathbb R^d)}
	\end{eqnarray}
	holds if and only if $(q,r)$ satisfies \textbf{Schr\"odinger admissible} condition $$\frac{2}{q}+ \frac{d}{r}= \frac{d}{2},  \quad 2 \leq r, q \leq \infty, \quad (q, r, d) \neq (2, \infty,2).$$
	The above remarkable estimate was first obtained by Strichartz \cite{ORS} when $r=q$ in connection with Fourier restriction theory.  However, in the non-diagonal setting, the proof of the estimate  \eqref{ost} is a combination of an abstract functional analysis argument known as the $TT^*$ duality argument and the following dispersive estimate
	\begin{equation}\label{cec}
		\|e^{-it\Delta} f\|_{L^{\infty}_x(\mathbb R^d)} \lesssim |t|^{-\frac{d}{2}} \|f\|_{L^1_x (\mathbb R^d)},
	\end{equation}
	except for the endpoint case $(q,r)=(1,\frac{d}{d-2})$ for $d\geq3$, which was proved by Keel-Tao \cite{KeelTao}. The situation changes significantly on the torus $\T^d$, where the dispersive estimate \eqref{cec} no longer holds, which makes it challenging to establish Strichartz estimates of the form \eqref{ost} for the linear Schrödinger equation \eqref{NLS}.   One crucial feature
	of the equation on $\T^d$
	is that the dispersion of the solution is weaker than the
	solution of the equation on $\R^d$
	since $\T^d$
	is compact and hence, new difficulty occurs
	to established the well-posedness theory.

	To establish the well-posedness theory for the NLS on torus, Bourgain in his seminal paper \cite{Bourgainrestriction} initiated to study the estimates of the form 
	\begin{eqnarray}\label{ft}
		\|e^{-it\Delta} f\|_{L_{t,x}^{q}(I \times \mathbb T^d)} \lesssim_{|I|}  N^{ d/2- (d+2)/q}\|f\|_{L_{x}^2(\mathbb T^d)}, \quad f\in L_{x}^2(\mathbb T^d),
	\end{eqnarray}
	for $q=\frac{d+2}{2}$ with  $\text{supp} \hat{f} \subset [-N, N]^d,$ where $I$ is a compact interval, and proved it for $d=1$ and $2$, using number theoretical argument.  Subsequently, Bourgain and Demeter \cite{bourgain2015} extend these results for dimensions $d\geq 3$ by utilizing the decoupling theory.
	We also note that  Burq, G\'erard and Tzvetkov  \cite{burq2004strichartzcompact}  established Strichartz estimate with loss of $1/q$ derivatives for the Sch\"odinger equation on compact manifold
	\begin{eqnarray}\label{BurqEst}
		\|e^{-it\Delta} f\|_{L^q(I, L^r(\mathbb T^d))} \leq C \|f\|_{H^{1/q}(\mathbb T^d)},
	\end{eqnarray}
	where $(q,r)$ is a Schr\"odinger admissible condition.
	While for the fractional Laplacian on torus, Demirbas, Erdo\u{g}an and  Tzirakis \cite{Demirbas2016ALM} established 
	\[\|e^{it (-\Delta)^{\theta}} f\|_{L^4 (\mathbb T \times \mathbb T)} \leq C \|f\|_{H^{\rho}}, \quad \rho > \frac{2-\theta}{8},  \ \theta \in (1,2).\]
	
	Now introduce the  frequency  cut-off  operator $P_{\leq N},$    as  $P_{\leq N} \phi=(\textbf{1}_{[-N,N]^d} \widehat{\phi})^{\lor},$ where $\phi$ is Schwartz function on $\mathbb{T}^d$.  
	Recently,  estimate \eqref{ft} has been extended in the  mixed norm setting by Dinh  \cite{Dinh} for the fractional  Laplacian on torus $\mathbb{T}^d$  and for all $N>1$, they proved the following Strichartz estimates:   
	\begin{equation}\label{SE Torus}
		\|e^{it(-\Delta)^{\frac{\theta}{2}}}P_{\leq N}f\|_{L^{q}_{t}(I, L_{x}^{r} (\mathbb{T}^d) )} \lesssim_{|I|} N^{\sigma}\|f\|_{L_{x}^{2}(\mathbb{T}^d )},
	\end{equation}
	equivalently, \begin{equation*}
		\|e^{it(-\Delta)^{\frac{\theta}{2}}}f\|_{L^{q}_{t}(I, L_{x}^{r} (\mathbb{T}^d) )} \lesssim_{|I|} \|f\|_{H^{\sigma}(\mathbb{T}^d )},
	\end{equation*} 
	for  $2 \leq q \leq \infty, 2 \leq r< \infty$ with $(q,r,d) \neq (2,\infty,2)$ satisfies  
	\begin{align}\label{admssible less equal}
		\frac{2}{q}+\frac{d}{r} \leq \frac{d}{2} \quad{and}\quad 
		\sigma = \begin{cases}
			\frac{1}{r} &  \text{if} ~~ \theta \in (1,\infty), \\
			\frac{2-\theta}{r} & \text{if} ~~  \theta \in (0,1).
		\end{cases}
	\end{align}
	We refer the readers to  \cite{nandakumaran2005, koh2015strichartz, bahouri2021strichartz, schippa2025refinements, tataru2000strichartz, blair2012strichartz} and the references therein for Strichartz estimates in different frameworks.

	\subsection{Orthonormal Strichartz estimates}
	In recent years, many researchers have devoted considerable attention to studying Strichartz estimates within the context of many-body quantum systems. In this context, Frank-Lewin-Lieb-Seiringer \cite{FrankJEMS2014,frank2017restriction} and Frank-Sabin \cite{frank2017restriction}, first establish Strichartz estimates for orthonormal families of initial data for the free Schr\"odinger equation, extending the classical Strichartz estimate  \eqref{ost} for a single function,  of the form
	\begin{equation}\label{OST R^n}
		\left\|\sum_{j\in J} \lambda_j|e^{-it\Delta}f_j|^2\right\|_{L_t^q(\mathbb{R},L_x^r(\mathbb{R}^d))} \leq C \|\lambda \|_{\ell^\frac{2r}{r+1}},
	\end{equation}
	where $\{f_j\}$ is an  orthonormal  system (ONS for short) in $L^2(\mathbb{R}^d)$, $\lambda=\{\lambda_j\}\in \ell^\frac{2r}{r+1}(\mathbb{C})$ and $r, q\in [1, \infty]$ satisfies $$ \frac{2}{q}+ \frac{d}{r}= d, \quad 1\leq r<\frac{d+1}{d-1}.$$
	Subsequently, the Strichartz estimate for single function on torus \eqref{SE Torus} (with $\theta=2$)   has been generalized to  orthonormal framework  by  Nakamura \cite{nakamura2020orthonormal}, yielding the following form
	\begin{eqnarray}\label{onstr}
		\left\| \sum_{j} \lambda_j  |e^{-it\Delta} P_{\leq N} f_j|^2\right\|_{L^q_t(I, L^r_x(\mathbb{T}^d)} \lesssim_{|I|}  N^{\frac{1}{q}} \|\lambda\|_{\ell^{ \frac{2r}{r+1}}}, 
	\end{eqnarray}
	for any $N\geq 1$, $\lambda=\{\lambda_j\}\in \ell^\frac{2r}{r+1}(\mathbb{C})$,   $\{f_j\}$ is an  orthonormal  system in $L^2(\mathbb{T}^d)$,   and $r, q\in [1, \infty]$ satisfies 
	\begin{equation}\label{SAP}
		\frac{2}{q}+ \frac{d}{r}= d, \quad 1\leq r<\frac{d+1}{d-1}.
	\end{equation}
	For general $\theta\in (0,\infty)$, the orthonormal analogue of the estimate \eqref{SE Torus}   has been investigated  recently by the first two  authors in \cite{bhimani2025orthonormal}. Furthermore, in a recent work, Wang–Zhang–Zhang \cite{wang2025strichartz} established analogous estimates on compact Riemannian manifolds without boundary.
	
	The motivation to extend fundamental inequalities from single to  system of orthonormal functions came from the theory of many body quantum mechanics. The first initiative work of such generalization goes back to the famous work established by Lieb and Thirring \cite{lieb1975bound}, which extended the Gagliardo–Nirenberg–Sobolev inequality, one of the key tools used to understand the stability of matter. There are several reasons to analyze estimates of the form \eqref{OST R^n} and \eqref{onstr}, in particular, if we take $f_j = 0$ for all $j$ except a single one in these orthonormal estimates, we obtain the usual one, and hence  these are the refinement of the usual classical Strichartz estimates \eqref{ost} on  $\mathbb{R}^d$  and \eqref{BurqEst} on $\mathbb{T}^d$, respectively. Secondly, these estimates can be apply to understand the dynamics of infinitely many fermions in a quantum system, in particular, in the
	analysis of the Hartree equations describing infinitely many particle system of fermions (see \eqref{Hartree} below), see \cite{chen2017global,chen2018scattering, sabin2014hartree,FrankJEMS2014, FrankJEMS2014}.  Recently, the orthonormal estimate of the form \eqref{OST R^n} has been further extended to various contexts in different frameworks.   We refer to 
	\cite{nealbez2021, neal2019, Hoshiya2024, HoshiyaJMP2025, feng2024orthonormal,mondal2025, wang2025strichartz} and references therein for recent works in the direction of orthonormal inequality and its applications. We  extend our analysis of the Schr\"odinger propagator $e^{it\Delta}$ on the torus from the single function setting to the orthonormal one, under the framework of partial regularity.
	
	\section{Statement of the main results}
	Our first  main   theorem  is a refined version of the Strichartz estimates \eqref{BurqEst} and  \eqref{SE Torus}.   
	\begin{theorem}[Strichartz estimates] \label{SE Single function}
		Let $d \geq 1,  1 \leq k \leq d, 2 \leq \widetilde{r} \leq r<\infty, $ and  $2<q \leq \infty$. If $ q, r, \tilde{r}$ satisfy 
		\begin{align}\label{eq0}
			\frac{2}{q} \geq (d-k)\left(\frac{1}{2}-\frac{1}{r}\right)+k\left(\frac{1}{2}-\frac{1}{\widetilde{r}}\right), 
		\end{align}
		then for $f\in {H}^{\frac{1}{q}}(\mathbb{T}^d)$, we have
		\begin{align}\label{For single}
			\left\|e^{-i t\Delta} f\right\|_{L_t^q\left(\mathbb{T} ; L_x^r\left(\mathbb{T}^{d-k} ; L_y^{\tilde{r}}\left(\mathbb{T}^k\right)\right)\right)} \lesssim\|f\|_{{H}^{\frac{1}{q}}(\mathbb{T}^d)}.
		\end{align}
	\end{theorem}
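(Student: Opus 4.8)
The plan is to localize to a single Littlewood--Paley block, prove the block estimate by interpolating a handful of mixed-norm endpoint estimates built from the known torus Strichartz bounds and the tensor structure $e^{-it\Delta}=e^{-it\Delta_x}\otimes e^{-it\Delta_y}$, and then resum. Write $z=(x,y)\in\mathbb T^{d-k}\times\mathbb T^{k}$, let $P_N$ be the frequency projection onto $|n|\sim N$, and set $a=\tfrac12-\tfrac1r$, $b=\tfrac12-\tfrac1{\widetilde r}$, so $0\le b\le a<\tfrac12$. Using the vector-valued Littlewood--Paley inequality for the mixed space $L^r_x(\mathbb T^{d-k};L^{\widetilde r}_y(\mathbb T^k))$ on $\mathbb T^d$ (developed earlier in the paper) together with Minkowski's inequality---legitimate since $2\le\widetilde r\le r$, $q>2$---and the Littlewood--Paley description of $H^{1/q}(\mathbb T^d)$, the theorem reduces to the single-block bound
\begin{equation}\label{plan-block}
\|e^{-it\Delta}P_N f\|_{L^q_t(\mathbb T;\,L^r_x(\mathbb T^{d-k};\,L^{\widetilde r}_y(\mathbb T^k)))}\ \lesssim\ N^{1/q}\,\|P_N f\|_{L^2(\mathbb T^d)},\qquad N\ge 1,
\end{equation}
summed in $\ell^2_N$.

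The next step is to exploit the factorization $e^{-it\Delta}=e^{-it\Delta_x}\otimes e^{-it\Delta_y}$. Taking a partial Fourier transform in the \emph{inner} of the two spatial groups and applying Plancherel there converts a mixed $L^2$-norm in that group into an $\ell^2$-square function of the lower-dimensional propagator: with $\widehat{\,\cdot\,}^{\,y}$, $\widehat{\,\cdot\,}^{\,x}$ the partial transforms,
\begin{align*}
\|e^{-it\Delta}P_N f\|_{L^q_tL^r_xL^2_y}
 &=\Big\|\big(\textstyle\sum_{m}|e^{-it\Delta_x}\,\widehat{P_N f}^{\,y}(\cdot,m)|^2\big)^{1/2}\Big\|_{L^q_tL^r_x},\\
\|e^{-it\Delta}P_N f\|_{L^q_t L^{\widetilde r}_y L^2_x}
 &=\Big\|\big(\textstyle\sum_{n}|e^{-it\Delta_y}\,\widehat{P_N f}^{\,x}(n,\cdot)|^2\big)^{1/2}\Big\|_{L^q_t L^{\widetilde r}_y}
\end{align*}
(the second controls $\|e^{-it\Delta}P_Nf\|_{L^q_tL^2_xL^{\widetilde r}_y}$ by Minkowski, $\widetilde r\ge2$). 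To the right-hand sides I apply the $\ell^2$-valued form of the Burq--G\'erard--Tzvetkov estimate \eqref{BurqEst} on $\mathbb T^{d-k}$, resp.\ on $\mathbb T^{k}$, for a Schr\"odinger-admissible pair---raising $q$ first by H\"older on the finite-measure time torus when needed---and Plancherel once more to recover $\|P_Nf\|_{L^2}$ on the right. This produces the endpoint estimates
\begin{align*}
\text{(E1)}\quad &\|e^{-it\Delta}P_N f\|_{L^q_tL^r_xL^2_y}\lesssim N^{(d-k)a/2}\|P_N f\|_{L^2} && \text{if } \tfrac2q\ge(d-k)a,\\
\text{(E2)}\quad &\|e^{-it\Delta}P_N f\|_{L^q_tL^2_xL^{\widetilde r}_y}\lesssim N^{kb/2}\|P_N f\|_{L^2} && \text{if } \tfrac2q\ge kb,\\
\text{(E3)}\quad &\|e^{-it\Delta}P_N f\|_{L^q_tL^r(\mathbb T^d)}\lesssim N^{da/2}\|P_N f\|_{L^2} && \text{if } \tfrac2q\ge da,
\end{align*}
(E3) being \eqref{BurqEst} itself on $\mathbb T^d$; for $\tfrac2q$ above the stated threshold the sharp power is smaller still, and for configurations where \eqref{BurqEst} is not directly available (a factor torus of dimension $\le1$, or $q$ below its admissibility range) one substitutes the almost-sharp estimates \eqref{SE Torus}, \eqref{ft} of Dinh and Bourgain, which in those ranges again yield a power $\le N^{1/q}$. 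Finally $\|e^{-it\Delta}P_Nf\|_{L^q_tL^2_xL^2_y}=|\mathbb T|^{1/q}\|P_Nf\|_{L^2}$ by unitarity of $e^{-it\Delta}$ on $L^2$.

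In the coordinates $(a,b)$, hypothesis \eqref{eq0} is precisely ``$(a,b)$ lies in the convex region $\mathcal R_q=\{0\le b\le a<\tfrac12:\ (d-k)a+kb\le\tfrac2q\}$''. On the boundary segment $(d-k)a+kb=\tfrac2q$ the endpoint estimates (E1)--(E3) (or their Dinh/Bourgain substitutes) hold with power exactly $N^{1/q}$; since $L^q_tL^r_xL^{\widetilde r}_y$ forms a complex-interpolation scale in $(r,\widetilde r)$ (Benedek--Panzone/Stein), interpolating them along that boundary, and then interpolating with $\|e^{-it\Delta}P_Nf\|_{L^q_tL^2_x L^2_y}$ (the point $(0,0)$, power $N^0$), fills all of $\mathcal R_q$ with power $\le N^{1/q}$. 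That is \eqref{plan-block}, and summing over $N$ finishes the proof.

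The heart of the matter is the factorization step: recasting a space--time bound with a frequency cutoff on the \emph{joint} variable as a (vector-valued) Strichartz estimate for the \emph{lower-dimensional} propagator $e^{-it\Delta_x}$ (resp.\ $e^{-it\Delta_y}$), and checking that the admissibility constraint making \eqref{BurqEst} applicable on $\mathbb T^{d-k}$ (resp.\ $\mathbb T^{k}$) is exactly $\tfrac2q\ge(d-k)a$ (resp.\ $\tfrac2q\ge kb$)---this is where the partial weights $d-k$ and $k$ in \eqref{eq0} originate. The remaining difficulty is the bookkeeping at $r$ or $\widetilde r\to\infty$ and on low-dimensional factor tori with small $q$, where \eqref{BurqEst} must be replaced by \eqref{SE Torus} and \eqref{ft} and one has to track the (then strictly better) powers through the interpolation.
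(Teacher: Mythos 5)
Your reduction to the frequency-localized block estimate and the partial-Fourier vectorization trick are sound ideas, and the endpoint estimates (E1)--(E3) are correct in their stated ranges. However, the interpolation step has a genuine gap.

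Your endpoint families live only on the two coordinate axes and the diagonal of the $(a,b)$-plane (with $a=\tfrac12-\tfrac1r$, $b=\tfrac12-\tfrac1{\tilde r}$), so complex interpolation together with the trivial point $(0,0)$ can only fill their convex hull, and this convex hull is in general a \emph{strict} subset of $\mathcal R_q=\{0\le b\le a<\tfrac12:(d-k)a+kb\le\tfrac2q\}$. The problem occurs precisely when $q\le 4/(d-k)$ or $q\le 4/k$, i.e.\ when one of the factor tori has dimension $1$ and $q$ is close to $2$: then the line $(d-k)a+kb=\tfrac2q$ exits the square $[0,\tfrac12)^2$ at a point with $b>0$ that is not reached by any of your endpoints. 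Concretely, take $d=2$, $k=1$, $q=\tfrac52$, and $(a,b)=(0.45,0.35)$, i.e.\ $(r,\tilde r)=(20,20/3)$. Then $a+b=0.8=\tfrac2q$, so the point belongs to $\mathcal R_q$, but the convex hull of $\{(a_1,0):a_1<\tfrac12\}$, $\{(0,b_2):b_2<\tfrac12\}$, $\{(a_3,a_3):2a_3\le 0.8\}$ and $(0,0)$ is the quadrilateral with (closure) corners $(0,0)$, $(\tfrac12,0)$, $(0.4,0.4)$, $(0,\tfrac12)$, and $(0.45,0.35)$ lies strictly outside: the edge from $(\tfrac12,0)$ to $(0.4,0.4)$ passes through $(0.45,0.2)$, well below $b=0.35$. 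The fallback you invoke does not help here: Dinh's estimate \eqref{SE Torus} holds under $\tfrac2q+\tfrac dr\le\tfrac d2$, which on the diagonal reads $\tfrac2q\le da$ and is the \emph{opposite} side of the (E3) endpoint, and Bourgain's \eqref{ft} is diagonal as well, so neither produces the missing off-diagonal endpoints.

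The paper's proof avoids this obstruction by interpolating not the space--time Strichartz estimates in $(q,r,\tilde r)$ but the \emph{fixed-time} dispersive bound
$\|e^{-it\Delta}P_{\le N}f\|_{L^r_xL^{\tilde r}_y}\lesssim |t|^{-\beta(r,\tilde r)}\|f\|_{L^{r'}_xL^{\tilde r'}_y}$
on $|t|\le(2N)^{-1}$ (Lemma~\ref{eq6}), proved at the three corners $(r,\tilde r)=(2,2),(\infty,\infty),(\infty,2)$ by factoring the kernel \eqref{eq9} into one-dimensional pieces. Riesz--Thorin then covers the \emph{entire} closed triangle $2\le\tilde r\le r\le\infty$, with no restriction tied to $q$. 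The $q$-dependence enters only afterwards, through a scalar $TT^*$/Hardy--Littlewood--Sobolev argument in the time variable alone (Lemma~\ref{Loc}) followed by the covering argument. If you want to salvage your approach you would need either to establish the mixed-norm fixed-time decay directly, as the paper does, or to supply genuinely non-tensor endpoints; interpolating the scalar torus Strichartz estimates from lower dimensions does not fill $\mathcal R_q$.
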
  
	It should be noted that in general we have a sharp embedding $L^r_x(\mathbb T^{d-k}, L^{\tilde{r}}(\mathbb T^k)) \subsetneq  L^{\tilde{r}}(\mathbb T^d).$ Thus Theorem \ref{SE Single function} reveals that solution to the free Schr\"odinger equation  enjoy the better regularity compare to the classical Strichartz estimates \eqref{SE Torus}.

	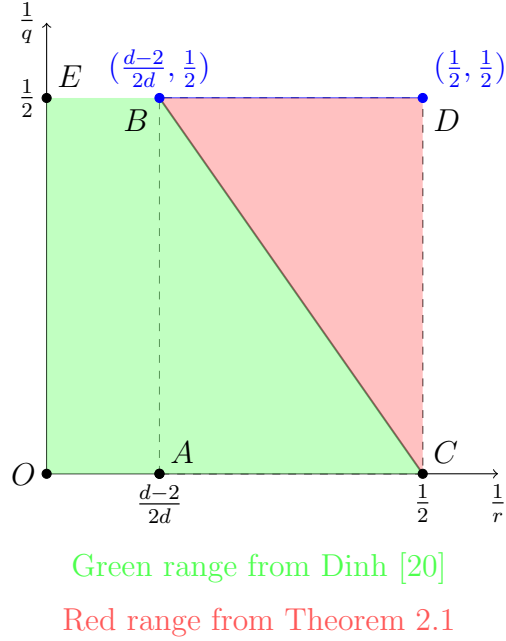
\begin{figure}[h]
		\centering
		\begin{tikzpicture}[scale=1]
			
			\draw[->] (0,0) -- (0,6) node[left] {$\tfrac{1}{q}$};
			\draw[->] (0,0) -- (6,0) node[below] {$\tfrac{1}{r}$};
			
			\draw[dashed] (1.5,0) rectangle (5,5);
			\draw[thick] (1.5,5)--(5,0);
			\draw[blue] (1.5,5)-- (5,5);

			
			\fill[green!40, opacity=0.6]
			(0,0) -- (0,5) --(1.5,5) -- (5,0) -- cycle;
			
			\fill[red!40, opacity=0.6]
			(5,5) -- (1.5,5) -- (5,0) -- cycle;
			
			\filldraw[blue] (5,5) circle (0.06) node[above right] {$(\tfrac{1}{2},\tfrac{1}{2})$};
			\filldraw[blue] (1.5,5) circle (0.06) node[above ] {$(\frac{d-2}{2d},\tfrac{1}{2})$};
			\filldraw[black] (0,0) circle (0.06) node[left] {$O$};
			\filldraw[black] (0,5) circle (0.06) node[left] {$\tfrac{1}{2}$};
			\filldraw[black] (1.5,0) circle (0.06) node[below] {$\tfrac{d-2}{2d}$};
			\filldraw[black] (5,0) circle (0.06) node[below] {$\tfrac{1}{2}$};
			\filldraw[] (5,5)  node[below right] {$D$};
			\filldraw[] (1.5,5)  node[below left] {$B$};
			\filldraw[] (1.5,0) circle (0.06) node[above right] {$A$};
			\filldraw[] (5,0)  node[above right] {$C$};
			\filldraw[] (0,5)  node[above right] {$E$};
			
		\end{tikzpicture}

		\begin{center}

			\begin{tikzpicture}[scale=1]	\node[left] at (4.81,4.1) { \color{green!70} Green range from      Dinh \cite{Dinh}}; \end{tikzpicture}\\
			
			\begin{tikzpicture}[scale=1]	\node[left] at (4.8,4.1) {{\color{red!65}  Red range from  Theorem \ref{SE Single function}}};  \end{tikzpicture}
		\end{center}
		\caption{Complete region for Strichartz estimate on $\mathbb{T}^d, d \geq 3.$}
		\label{fig:strichartz-region r= tilde r}
	\end{figure}
	For any two points $X,Y\in [0,\frac{1}{2}]^2,$ we use notation 
	$$[X,Y]=\{(1-\tau)X+\tau Y: \tau \in [0,1]\}$$
	to represent the line segment connecting $X$ and $Y.$ We denote $XYZ$ for the convex hull of points $X,Y,Z \in [0,\frac{1}{2}]^2;$ and $XYZ \setminus X$ for the convex hull of points $X,Y,Z \in [0,\frac{1}{2}]^2$ excluding the point $X.$ Similarly $XYZ \setminus [X,Y]$ denote for the convex hull of points $X,Y,Z \in [0,\frac{1}{2}]^2$ excluding the line $[X,Y].$  
	\begin{corollary}
		Let $d \geq 1, (\frac{1}{r},\frac{1}{q}) \in OEDC \setminus [B,D]$ in Figure \ref{fig:strichartz-region r= tilde r}.
		Then for $f\in {H}^{\frac{1}{q}}(\mathbb{T}^d)$, we have
		\begin{align}\label{For single r= tilda r}
			\left\|e^{-i t\Delta} f\right\|_{L_t^q\left(\mathbb{T} ; L_x^r\left(\mathbb{T}^{d} \right)\right)} \lesssim\|f\|_{{H}^{\frac{1}{q}}}.
		\end{align}
	\end{corollary}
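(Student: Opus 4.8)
The plan is to cover the region $OEDC\setminus[B,D]$ by splitting it along two lines in the $(\tfrac1r,\tfrac1q)$–plane: the Strichartz line $\ell:\ \tfrac2q+\tfrac dr=\tfrac d2$ (the segment $[B,C]$ in Figure~\ref{fig:strichartz-region r= tilde r}) and the diagonal $\tfrac1q=\tfrac1r$ (the segment $[O,D]$). On the part lying on or above $\ell$ I would invoke Theorem~\ref{SE Single function}; on the part below $\ell$ but above the diagonal I would use Dinh's estimate \eqref{SE Torus} together with a Sobolev embedding to correct the regularity exponent; and the part below both $\ell$ and the diagonal I would obtain by interpolation. Note that $(\tfrac1r,\tfrac1q)\in[0,\tfrac12]^2$ automatically forces $q,r\ge2$, so the only exponent restrictions in the cited estimates that are not free of charge are $r<\infty$ and, at the very boundary $\tfrac1q=0$, $q<\infty$, and these affect only limiting points on $\partial(OEDC)$.

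For the region on or above $\ell$ (the triangle $BCD$ minus $[B,D]$, so $\tfrac2q+\tfrac dr\ge\tfrac d2$ with $q>2$) I would apply Theorem~\ref{SE Single function} with $\widetilde r=r$ and $k=d$: since $L_x^r(\mathbb{T}^{d-k};L_y^r(\mathbb{T}^k))=L^r(\mathbb{T}^d)$ by Fubini, the hypothesis \eqref{eq0} collapses to $\tfrac2q\ge d(\tfrac12-\tfrac1r)$, i.e.\ $\tfrac2q+\tfrac dr\ge\tfrac d2$, which is exactly what we have assumed, so \eqref{For single r= tilda r} follows (and the vertex $C$ is in any case immediate from the unitarity of $e^{-it\Delta}$ on $L^2$). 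For the region below $\ell$ with $\tfrac1q\ge\tfrac1r$, estimate \eqref{SE Torus} with $\theta=2$ (so that $\sigma=\tfrac1r$) gives $\|e^{-it\Delta}f\|_{L_t^q(\mathbb{T};L_x^r(\mathbb{T}^d))}\lesssim\|f\|_{H^{1/r}(\mathbb{T}^d)}$; since $\tfrac1q\ge\tfrac1r$ we have the embedding $H^{1/q}(\mathbb{T}^d)\hookrightarrow H^{1/r}(\mathbb{T}^d)$, hence $\|f\|_{H^{1/r}}\le\|f\|_{H^{1/q}}$ and \eqref{For single r= tilda r} again follows.

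The remaining region is $\tfrac1q\le\tfrac1r\le\tfrac12$ (equivalently $2<q<\infty$ and $r\in[2,q]$), and this is where I expect the real work: here the loss $\tfrac1r$ in \eqref{SE Torus} is strictly larger than the target exponent $\tfrac1q$, so the plain embedding of the previous paragraph is no longer available. The plan is to first extract the diagonal estimate $\|e^{-it\Delta}f\|_{L_t^q(\mathbb{T};L_x^q(\mathbb{T}^d))}\lesssim\|f\|_{H^{1/q}}$, which follows from the previous two cases because the point $(\tfrac1q,\tfrac1q)$ lies on or above $\ell$ when $q\le\tfrac{2(d+2)}d$ and strictly below $\ell$ (with $\tfrac1q\ge\tfrac1r$ trivially) when $q\ge\tfrac{2(d+2)}d$. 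Combining this with the trivial bound $\|e^{-it\Delta}f\|_{L_t^q(\mathbb{T};L_x^2(\mathbb{T}^d))}=|\mathbb{T}|^{1/q}\|f\|_{L^2}\lesssim\|f\|_{H^{1/q}}$, I would rewrite both inequalities as the boundedness of the single operator $T:=e^{-it\Delta}(I-\Delta)^{-1/(2q)}$ from $L^2(\mathbb{T}^d)$ into $L_t^qL_x^q$ and into $L_t^qL_x^2$, and then interpolate in the spatial exponent only — interpolation of mixed–norm Lebesgue spaces over the finite measure space $\mathbb{T}\times\mathbb{T}^d$ with fixed domain $L^2$ — to get $T\colon L^2\to L_t^qL_x^r$ for every $r\in[2,q]$, which is exactly \eqref{For single r= tilda r} on this region. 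The subtle point is precisely this regularity bookkeeping: since both endpoint estimates are phrased with the same index $H^{1/q}$ (for the $L^2_x$ bound this is a harmless weakening of the $L^2$ bound), no Sobolev exponent varies along the interpolation, and once this is arranged the three pieces together exhaust $OEDC\setminus[B,D]$.
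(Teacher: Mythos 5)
Your proposal is correct. The paper's own argument is essentially a combination of two citations: Theorem~\ref{SE Single function} with $\widetilde r=r$ on the closed half-plane $\frac2q+\frac dr\ge\frac d2$ (the triangle $BDC\setminus[B,D]$), and the Dinh/Burq--G\'erard--Tzvetkov estimates on the complementary region $OEBC$. Your first case coincides with the paper's first citation, and your second and third cases together make explicit \emph{why} the second citation suffices, which the paper does not spell out. This detail is genuinely needed: the estimate \eqref{SE Torus} with $\theta=2$, as quoted in the paper, carries a loss of $1/r$ derivatives, and this yields the target $H^{1/q}$ bound via the Sobolev embedding $H^{1/q}\hookrightarrow H^{1/r}$ only when $q\le r$; on the portion of $OEBC$ lying below the diagonal $\frac1q=\frac1r$ (near the vertex $C$) that embedding runs the wrong way, so the extra step you supply closes a real gap in the written argument rather than merely elaborating it. Your interpolation is valid, though slightly more than is needed: once the diagonal bound $\|e^{-it\Delta}f\|_{L^q_tL^q_x}\lesssim\|f\|_{H^{1/q}}$ is secured (as you derive from the first two cases), the finite-measure inclusion $L^q_x(\mathbb{T}^d)\hookrightarrow L^r_x(\mathbb{T}^d)$ for $r\le q$ already gives $\|e^{-it\Delta}f\|_{L^q_tL^r_x}\lesssim\|e^{-it\Delta}f\|_{L^q_tL^q_x}$ pointwise in $t$, so the interpolation against the $L^q_tL^2_x$ endpoint can be dispensed with.
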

	Note that,  for $2 \leq r = \tilde r < \infty,$ and $2 <q \leq \infty,$   by Theorem \ref{SE Single function}, estimate \eqref{For single r= tilda r} holds whenever
	\[
	\frac{2}{q} + \frac{d}{r} \ge \frac{d}{2}, \quad \text{i.e., }  ~\left(\frac{1}{r},\frac{1}{q}\right) \in BDC \setminus [B,D].
	\]
	On the other hand, Dinh \cite{Dinh} has already shown that the estimate that the estimate \eqref{For single r= tilda r} (see \eqref{SE Torus} for $\theta = 2$ and \eqref{BurqEst}) holds whenever
	\[
	\frac{2}{q} + \frac{d}{r} \le \frac{d}{2}, 
	\qquad (q,r,d) \neq (2,\infty,2).
	\]
	Combining these two facts, we conclude that the estimate \eqref{For single r= tilda r} holds for
	$(\frac{1}{r},\frac{1}{q}) \in OEDC \setminus [B,D]$
	in Figure \ref{fig:strichartz-region r= tilde r}.  Thus  Theorem \ref{SE Single function} extend the range of $(q,r)$ of  Burq, G\'erard and Tzvetkov   \eqref{BurqEst} and   Dinh \eqref{SE Torus}. We refer to Subsection \ref{cop} for further comments.


	

	\begin{remark}
		The restriction  on $2 \leq \widetilde{r} \leq r<\infty$ in Theorem \ref{SE Single function} is due to the restrictions of Littlewood-Paley theorem on mixed Lebesgue spaces $L_x^r L_y^{\widetilde{r}}\left(\mathbb{T}^{d-k} \times \mathbb{T}^k\right)$, see Proposition \ref{mixed LP}.
	\end{remark}
	
	Our next main result  is the following orthonormal extension of the refined Strichartz estimates \eqref{For single} in the case of \textbf{sharp admissible condition}, i.e., for $1 \leq q,r, \tilde{r} \leq \infty,$ when  $(q,r,\Tilde{r})$ satisfies  the condition 
	\begin{eqnarray}\label{adc}
		\frac{1}{q}+\frac{1}{2}\left(\frac{d-k}{r}+\frac{k}{\Tilde{r}} \right)=\frac{d}{2}.
	\end{eqnarray}
	Note that, in the case $r=\tilde{r},$ the condition \eqref{adc} reduces to \eqref{SAP}. 
	We define $\gamma$ in terms of $r$ and $\tilde{r}$ such that  \begin{equation}\label{gamma def}
		\frac{1}{\gamma}=\frac{1}{r}\left(1-\frac{k}{d}\right)+\frac{1}{\Tilde{r}}\frac{k}{d}.
	\end{equation}
	Then by the sharp admissibility condition \eqref{adc}, one can  obtain 
	\begin{equation}\label{adc gamma}
		\frac{2}{q}+\frac{d}{\gamma}=d.
	\end{equation}

	\begin{theorem}[Orthonormal Strichartz estimates]\label{ose}
		Let $d \geq 1,1 \leq \Tilde{r} \leq r \leq \infty$ and $1 \leq \gamma < \frac{d+1}{d-1},$ where $\gamma$ defined in \eqref{gamma def}, and  $$1 \leq \alpha' \leq \frac{2 \gamma}{\gamma+1}.$$ Then, for any triple $(q,r,\Tilde{r})$ satisfying \eqref{adc},  $N > 1,$  $\lambda \in \ell^{\alpha'}, $ and  $(f_j)$ are a family of orthonormal functions in $L^{2}(\mathbb{T}^d)$,   the following orthonormal estimate 
		\begin{equation}\label{IN12}
			\left\| \sum_{j} \lambda_j  |e^{it \Delta} P_{\leq N} f_j|^2\right\|_{L^q_tL^r_xL^{\Tilde{r}}_{y} (I\times \mathbb{T}^{d-k} \times \mathbb{T}^{k})} \lesssim_{|I|}N^{\frac{1}{q}} \| \lambda \|_{\ell^{\alpha'}}
		\end{equation}
		holds true.
	\end{theorem}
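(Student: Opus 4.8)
The plan is to deduce \eqref{IN12} from a single Schatten-norm bound via the duality principle for orthonormal estimates of Frank--Lewin--Lieb--Seiringer and Frank--Sabin (in the mixed-norm form needed here), and then to prove that Schatten bound by a complex-interpolation argument in the spirit of Frank--Sabin, feeding in the refined single-function estimate of Theorem~\ref{SE Single function} at one endpoint and a Hilbert--Schmidt computation based on the Weyl-sum structure of the frequency-truncated Schr\"odinger kernel at the other. Concretely, writing $Tf=e^{it\Delta}P_{\le N}f$ as a map from $L^2(\mathbb{T}^d)$ to $L^q_t(I;L^r_x(\mathbb{T}^{d-k};L^{\tilde r}_y(\mathbb{T}^k)))$, testing $\sum_j\lambda_j|Tf_j|^2$ against a nonnegative $V$ in the dual mixed space, and setting $W=V^{1/2}$, one obtains $\int V\sum_j\lambda_j|Tf_j|^2=\operatorname{Tr}\big(\Gamma\,(WT)^*(WT)\big)$ with $\Gamma=\sum_j\lambda_j|f_j\rangle\langle f_j|$ and $\|\Gamma\|_{\mathfrak{S}^{\alpha'}}=\|\lambda\|_{\ell^{\alpha'}}$ (since $(f_j)$ is orthonormal), so by the Schatten--H\"older inequality \eqref{IN12} follows once one shows
\[
\big\|\,W\,e^{it\Delta}P_{\le N}e^{-is\Delta}\,\overline W\,\big\|_{\mathfrak{S}^{\alpha}}\;\lesssim_{|I|}\;N^{1/q}\,\|W\|_{L^{2q'}_t(I;\,L^{2r'}_x(\mathbb{T}^{d-k};\,L^{2\tilde r'}_y(\mathbb{T}^{k})))}^{2},\qquad \tfrac1\alpha+\tfrac1{\alpha'}=1,
\]
for all admissible $W$.

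Since $\mathfrak{S}^{\alpha_0}\hookrightarrow\mathfrak{S}^{\alpha}$ with nonincreasing norm when $\alpha\ge\alpha_0$, and the hypothesis $\alpha'\le\tfrac{2\gamma}{\gamma+1}$ is precisely $\alpha\ge\alpha_0:=\tfrac{2\gamma}{\gamma-1}$, it suffices to prove the displayed bound at the single endpoint $\alpha=\alpha_0$; the opposite extreme $\alpha'=1$ is already Theorem~\ref{SE Single function} applied at the exponents $(2q,2r,2\tilde r)$, which by \eqref{adc} lie on the sharp admissible line, a useful consistency check. To prove the endpoint bound I would first use $2\pi$-periodicity of $e^{it\Delta}$ on $\mathbb{T}^d$ in $t$ to cover $I$ by $O(1+|I|)$ copies of $\mathbb{T}$ and reduce to $I=\mathbb{T}$, absorbing the dependence on $|I|$ into the constant. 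Then $W e^{it\Delta}P_{\le N}e^{-is\Delta}\overline W$ has integral kernel $W(t,x,y)\,\mathcal K_N(t-s,x-x',y-y')\,\overline{W(s,x',y')}$, where $\mathcal K_N$ is the cube-truncated Schr\"odinger kernel on $\mathbb{T}\times\mathbb{T}^d$; since $P_{\le N}$ is the product multiplier $\mathbf 1_{[-N,N]^d}$ and $-\Delta=-\Delta_x-\Delta_y$, this kernel factorises as $\mathcal K_N(\tau,z,w)=\mathcal K^{(d-k)}_N(\tau,z)\,\mathcal K^{(k)}_N(\tau,w)$.

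The heart of the matter is then a mixed-norm Frank--Sabin complex interpolation. Realise $\alpha_0$ as an intermediate Schatten exponent between $\infty$ and $2$, introduce an analytic family $\{A_z\}$ that complexifies the order of the time-dispersion in $\mathcal K_N$ (cleanest after transferring to $\mathbb{R}^d$ via a mixed-norm Fourier-multiplier transference principle of de Leeuw type, which the paper develops, so that the analytic continuation is the classical Euclidean one), and interpolate between two vertical lines: on the first, $\|A_z\|_{\mathfrak{S}^\infty}$ is controlled, which by the $TT^*$ identity is equivalent to the refined single-function estimate of Theorem~\ref{SE Single function} on a sharp-admissible line; on the second, $\|A_z\|_{\mathfrak{S}^2}^2=\iint|W|^2\,|\mathcal K^z_N|^2\,|W|^2$ is controlled, and is estimated through the mixed-norm Young and Hausdorff--Young inequalities together with the factorisation above and the $\tau$-averaged bounds for $\|\mathcal K^{(m)}_N(\tau,\cdot)\|_{L^p(\mathbb{T}^m)}$ supplied by the circle method and the vector-valued Bernstein inequality, which is where the torus-specific loss $N^{1/q}$ enters. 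Stein's complex-interpolation theorem for Schatten classes, combined with the vector-valued Littlewood--Paley theory for densities of operators to reassemble the spatial frequency pieces, then delivers the bound at $\alpha=\alpha_0$; the exponent bookkeeping is forced by \eqref{adc}, \eqref{gamma def}, and \eqref{adc gamma}.

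The step I expect to be the main obstacle is the Hilbert--Schmidt endpoint in the mixed norm: one must combine the $\tau$-averaged $L^p(\mathbb{T}^m)$ estimates for the two factors $\mathcal K^{(d-k)}_N$ and $\mathcal K^{(k)}_N$ through the mixed-norm convolution inequality so that the accumulated powers of $N$ collapse to exactly $N^{1/q}$ and no more, which hinges on the sharpness of the Weyl-sum bounds for the torus Schr\"odinger kernel and on the precise shape of \eqref{adc} and \eqref{adc gamma}. A secondary difficulty is that the operator-norm endpoint needs the single-function estimate exactly on the sharp admissibility line, which lies on the boundary of the range of Theorem~\ref{SE Single function}; this is harmless because that theorem is stated with a non-strict inequality, but if one instead transfers an $\varepsilon$-loss Euclidean estimate, the loss has to be recovered by summing a Littlewood--Paley series, which is precisely where the mixed-norm Littlewood--Paley estimates developed in the paper become indispensable.
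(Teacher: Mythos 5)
Your high-level architecture — dualize to a Schatten bound via Frank--Sabin's lemma, reduce by Schatten embedding to the single exponent $\alpha_0=\tfrac{2\gamma}{\gamma-1}$, and prove that endpoint by a Stein analytic family complexifying the time-dispersion — matches the paper. But you diverge from the paper's route at precisely the two places you yourself flag as obstacles, and the paper resolves them with simple reductions rather than the heavier machinery you propose.

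The first structural difference concerns the time variable. You want to work on the whole time torus and extract the loss $N^{1/q}$ from $\tau$-averaged $L^p(\mathbb T^m)$ bounds on the Weyl-sum kernel via the circle method. The paper does something much lighter: it first proves Proposition~\ref{oseP}, an $N$-\emph{independent} Schatten bound on a time interval $I_N$ of length $1/N$, precisely the window on which the pointwise dispersive bound $|K_N(t,z)|\lesssim|t|^{-d/2}$ of Proposition~\ref{l8} holds. It then covers $I$ by $O(N)$ translates of $I_N$, using the translation-invariance identity \eqref{ts1} (which preserves orthonormality), and the $q$-summation produces the factor $N^{1/q}$. This avoids any uniform-in-$t$ or time-averaged kernel estimate; the circle method never appears. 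Your route is not obviously closable and would require substantial work on Weyl-sum moment estimates that the paper does not need.

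The second structural difference is how the mixed $L^r_xL^{\tilde r}_y$ norm is handled inside the interpolation. You plan to keep the mixed norm at the Hilbert--Schmidt endpoint, factor the kernel as $\mathcal K^{(d-k)}_N\cdot\mathcal K^{(k)}_N$, and combine averaged $L^p$ bounds on each factor through mixed-norm Young/Hausdorff--Young. The paper sidesteps this entirely. Since $\tilde r\le r$ forces $\gamma\ge\tilde r$, compactness of $\mathbb T^k$ gives $L^{\gamma}_y\hookrightarrow L^{\tilde r}_y$, so it suffices to prove the estimate with the \emph{diagonal} exponent $\gamma$ in place of $\tilde r$ on the $y$-variable; this is the very first reduction in the proof of Proposition~\ref{oseP}. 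After dualizing, the $x$-exponent $2r'$ is absorbed at the HS endpoint by another compactness embedding ($L^2_x\hookrightarrow L^{2r'}_x$, since $2\le 2r'$), and only the $y$- and Schatten exponents are genuinely interpolated. At the HS endpoint the paper then just uses the sup-norm dispersive bound $|K_N|\lesssim|t|^{-d/2}$ together with the one-dimensional Hardy--Littlewood--Sobolev inequality (Lemma~\ref{PL3}) in the $t$-variable; no factoring of the kernel, no mixed Hausdorff--Young, and no vector-valued Littlewood--Paley are used at this stage. In short, your two ``main obstacles'' are exactly the places where the paper's two compactness reductions make the hard work disappear, and without those reductions your interpolation scheme has a genuine gap at the Hilbert--Schmidt endpoint.

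One smaller point: your operator-norm endpoint is not quite Theorem~\ref{SE Single function} at $(2q,2r,2\tilde r)$. In the paper's analytic family $T^{z_1}_{N,\epsilon}$, the $\mathfrak S^\infty$ endpoint sits at $\operatorname{Re}(z_1)=-1$, where an extra factor $|t|^{-1}$ has been inserted; the bound there is a direct $L^2$-boundedness estimate (citing \cite[Prop.~3.3]{nakamura2020orthonormal}), not a single-function Strichartz estimate. The observation that $\alpha'=1$ follows from Theorem~\ref{SE Single function} by the triangle inequality is true but is logically separate from the endpoint used in the Stein interpolation.
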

	
Note that  that Theorem \ref{ose} gives orthonormal Strichartz estimates  in mixed  Lebesgue spaces of the form $ 
L_x^r L_y^{\tilde r}(\mathbb T^{d-k} \times \mathbb T^k)$,
rather than in a single Lebesgue space over the full torus. In contrast, Nakamura’s result \cite{nakamura2020orthonormal} is formulated in \(L_{x,y}^{\tilde r}(\mathbb T^d)\), where the same integrability exponent is imposed uniformly on all spatial variables. When \(r\neq \tilde r\), the mixed space \(L_x^rL_y^{\tilde r}\) is strictly contained in \(L_{x,y}^{\tilde r}\), and therefore provides more detailed information on the spatial integrability of the solution. In particular, this formulation allows the exponent \(r\) to take values larger than the classical threshold \(\frac{d+1}{d-1}\), while remaining consistent with the dispersive scaling, a feature that does not arise in estimates based on a single spatial exponent.

\begin{remark}
     Estimate \eqref{IN12} recover known  Nakamura's estimate \eqref{onstr}. In fact,  for $r=\tilde{r}$, by \eqref{gamma def},  we have  $\gamma=r$; and in this case the   admissible condition  \eqref{adc gamma} reduces to  $    \frac{2}{q}+\frac{d}{r}=d.$  
\end{remark}
	\begin{remark}
		Using the triangle inequality together with the refined Strichartz estimate for a single function \eqref{For single} yields \eqref{IN12} in the case  $\alpha'=1$ 
		, and at this point no orthogonality of the initial data is needed. It is therefore natural to ask how much additional improvement, if any can be obtained from exploiting orthogonality by allowing 
		$\alpha'\geq  1$ to be as large as possible. In the case $r=\tilde r$, the value $\alpha'=\frac{2r}{r+1}$ is sharp in the sense that the estimate \eqref{IN12} fails for $\alpha'>\frac{2r}{r+1}$; see \cite{nakamura2020orthonormal}. In contrast, within the partial regularity framework considered here, we do not know whether the corresponding upper bound $\alpha'=\frac{2\gamma}{\gamma+1}$ is optimal or not.
	\end{remark}
	    To prove Theorem \ref{ose}, we will not prove directly the inequality \eqref{IN12}, rather  we prove an inequality which is dual to \eqref{IN12}, see Lemma \ref{PL1}.  First, we reformulate \eqref{IN12} in terms of the Fourier extension operator restricted to the $d-$dimensional cube $
	S_{d,N}  = \Z^{d} \cap [-N, N]^d$; (see Remark \ref{usrev}) and prove the dual of the reformulate  equation. For the dual inequality, a key step is the kernel estimate, which relies on the observation that the higher-dimensional kernel factors into a product of one-dimensional kernels; see Proposition \ref{l8}. Then we follow the spirit of the Hardy-Littlewood-Sobolev inequality  in one dimensional via Frank-Sabin’s $TT^*$  argument in Schatten space and Stein's analytic interpolation method in Schatten spaces. 
	\subsection{Applications to well-posedness}\label{symbols}
	To state our well-posedness results, we briefly set notations. We decompose the spatial variable \( z \in \mathbb{T}^d \) as
	\( z=(x,y)\in \mathbb{T}^{d-k}\times \mathbb{T}^k \), where \( 1\le k<d \).
	Correspondingly, for \( \xi\in\mathbb{Z}^d \), we write
	\( \xi=(\xi_1,\xi_2)\in\mathbb{Z}^{d-k}\times\mathbb{Z}^k \), and
	\( |\xi|^2=|(\xi_1,\xi_2)|^2=|\xi_1|^2+|\xi_2|^2 .\)
	For $s \in \mathbb{R},$ we define the Fourier multiplier operators on $\mathbb{T}^{d-k}\times \mathbb{T}^k$ as
	\[
	\langle \nabla \rangle^{s} f= (1 + |\nabla|^{2})^{\frac{s}{2}}f=\left((1 + |(\xi_{1},\xi_{2})|^{2})^{\frac{s}{2}} \widehat{f}(\xi_{1},\xi_{2}) \right)^{\vee}
	\]
	and 
	$$\langle \nabla_{y} \rangle^{s} f= (1 + |\nabla_{y}|^{2})^{\frac{s}{2}}f= \mathcal{F}_{y}^{-1}\left((1 + |\xi_{2}|^{2})^{\frac{s}{2}} \mathcal{F}_{y}(f)(\cdot,\xi_{2}) \right),$$
	where $\mathcal{F}_{y}^{-1}$ is partial Fourier transform (see Section \ref{sec2}).
	The  $L^r$\textbf{-Sobolev spaces} on $\mathbb T^d$ is given by the following norm
	$$\|f\|_{ W^{s,r}}= \|\langle \nabla \rangle^{s} f\|_{L^r(\mathbb T^d)}. $$
	For $r=2,$ we simply  write $W^{s,2}=H^s.$ For $2\leq q \leq \infty$ and $\frac1q \leq s \leq 1+\frac{1}{q},$ we define \textbf{partial regularity Sobolev space} as follows
	\begin{equation}\label{evolution space embedding}
		X_k^s = \big\{ f : \mathbb{T}^d \to \mathbb{C} : \langle \nabla_y \rangle^{s-\frac1q} f \in H^{1/q} \big\},
	\end{equation}
	via the following norm 
	$$\|f\|_{X^s} = \|\langle \nabla_y\rangle^{s-\frac1q} f\|_{H^{1/q}(\mathbb T^{d-k} \times \mathbb T^k)}.$$
	Note that\footnote{For instance,  consider the function $h(x,y) = f(x) g(y)$ with the property that $f \in H^{1/q}_x(\mathbb{T}^{d-k}) \setminus H^s_x(\mathbb{T}^{d-k})$ and $g \in H^s_y(\mathbb{T}^{k}).$ Then one can see that $h\in X^s$ but $h\notin {H}^s(\mathbb{T}^{d}).$} 
	\[H^{1/q} = X^{1/q} \quad \text{and} \quad {H}^s \subsetneq	X_k^s \  \  \text{for} \  \ s>1/q, 1\leq k <d. \  \]
	In view of Theorem \ref{SE Single function} (in the case of sharp admissible condition) and Lemma \ref{nonlinearity estimate x y}, for  $1 \le k < d$, the  set of all \textbf{refined Schr\"odinger triplets} is defined by
	\begin{equation}\label{Adk}
		\mathbb{A}(d, k)
		= \left\{
		(q, r, \tilde{r}) :
		2 < q \le \infty,\;
		2 \le \tilde{r} \le r < \infty,\;
		\frac{2}{q} + \frac{d - k}{r} + \frac{k}{\tilde{r}} = \frac{d}{2}
		\right\}.
	\end{equation}
	
	Many authors have studied the  NLS  with the nonlinearity $F_p(u)=|u|^p;$ which is non-gauge invariance type nonlinearity; see \cite{MINonGauge, HiroNGI, AdvMathNoGI}. The nonlinear effect of $|u|^p$ is quite different from that of $|u|^{p-1}u.$  Notice that in   \eqref{NLS}, the nonlinearity could of  non-gauge invariance type, e.g. $w\ast |u|^2$; and its nonlinear effect could be different from that of Hartree type $(w\ast |u|^2)u$ nonlinearity, which is gauge invariance (see \eqref{HN} and \eqref{Hartree} below). This  convolution type nonlinearity $w\ast F_p(u)$ also has been extensively studied  in the  context of heat equation to model nonlocal interactions and derive sharp functional inequalities, see \cite{cazenave2008, mitidieri2005, loayza2012global}. In this work, we assume that the potential $w $ belongs to the Sobolev space, $ W^{\frac{2}{q},1}(\T^d)$, and the result below addresses the well-posedness of the nonlinear problem \eqref{NLS}.
	
	\begin{theorem}[local well-posedness]\label{wellposedness}
		Let $d \ge 3,$ $0 \leq s < 1$ and $w \in W^{\frac{2}{q},1}.$ Suppose that 
		\[
		1 < p < 1 + \frac{4}{d-2s}
		\quad \text{and} \quad 
		\langle \nabla_{y} \rangle^{s} f \in H^{\frac{1}{q}}(\mathbb{T}^{\,d-2} \times \mathbb{T}^{2}).
		\]
		Then there exists a time
		\[
		T = T\!\left( \| w \|_{W^{\frac{2}{q},1}},\, \|\langle \nabla_{y} \rangle^{s} f\|_{H^{\frac{1}{q}}} \right) > 0
		\]
		for which \eqref{NLS} admits a unique solution $u$ satisfying
		\[
		\langle \nabla_{y} \rangle^{s} u 
		\in C([0,T]; H^{\tfrac{1}{q}})
		\cap L_{t}^{q}([0,T]; L_{x}^{r} L_{y}^{\tilde{r}})
		\]
		for every $(q,r,\tilde{r}) \in \mathbb{A}(d,2)$.
		
	\end{theorem}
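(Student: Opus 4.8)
The plan is to prove Theorem \ref{wellposedness} by a standard contraction mapping argument in a suitable function space adapted to the partial regularity framework, using the refined Strichartz estimates of Theorem \ref{SE Single function}. First I would fix the triple $(q,r,\tilde r)\in \mathbb{A}(d,2)$ dictated by the scaling — concretely, choosing the diagonal exponent on the endpoint part of $\mathbb{A}(d,2)$ so that $\frac{2}{q}+\frac{d-2}{r}+\frac{2}{\tilde r}=\frac{d}{2}$ holds with equality, which is exactly the sharp admissible condition needed to invoke \eqref{For single}. I would then define the solution space as
\[
Y_T = \Big\{ u : \langle \nabla_y\rangle^{s} u \in C([0,T];H^{1/q}) \cap L^q_t([0,T];L^r_xL^{\tilde r}_y) \Big\},
\]
equipped with the norm $\|u\|_{Y_T} = \|\langle \nabla_y\rangle^{s} u\|_{L^\infty_t H^{1/q}} + \|\langle \nabla_y\rangle^{s} u\|_{L^q_t L^r_x L^{\tilde r}_y}$, and set up the Duhamel map $\Phi(u)(t) = e^{it\Delta}f - i\int_0^t e^{i(t-t')\Delta}\big(w*F_p(u)\big)(t')\,dt'$.

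The heart of the argument is to show $\Phi$ is a contraction on a ball in $Y_T$ for small $T$. The linear term $e^{it\Delta}f$ is controlled directly by Theorem \ref{SE Single function}: since $\langle\nabla_y\rangle^{s}$ commutes with $e^{it\Delta}$ and with frequency projections, $\|\langle\nabla_y\rangle^{s} e^{it\Delta}f\|_{L^q_t L^r_x L^{\tilde r}_y} = \|e^{it\Delta}\langle\nabla_y\rangle^{s}f\|_{L^q_t L^r_x L^{\tilde r}_y}\lesssim \|\langle\nabla_y\rangle^{s}f\|_{H^{1/q}}$, with the analogous bound in $C_tH^{1/q}$ coming from unitarity of the propagator on $H^{1/q}$. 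For the Duhamel term I would apply the dual/inhomogeneous form of the Strichartz estimate (the Christ–Kiselev lemma is available since $q>2$, so the retarded estimate follows from the untruncated one), reducing matters to bounding $\|\langle\nabla_y\rangle^{s}\big(w*F_p(u)\big)\|_{L^{q'}_t L^{r'}_x L^{\tilde r'}_y}$ in terms of $\|u\|_{Y_T}$. Here I would use Young's inequality in the $x$-variable to move the convolution with $w\in W^{\frac{2}{q},1}$ onto $F_p(u)$ — the choice $w\in W^{2/q,1}$ is precisely what lets $\langle\nabla_y\rangle^{s}$ land on $F_p(u)$ at the cost of an $L^1$ norm of $w$ together with $\|w\|_{W^{2/q,1}}$ absorbing the time-integrability mismatch via Hölder in $t$ on the interval $[0,T]$ (this is where a positive power of $T$ is gained). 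The remaining nonlinear estimate $\|\langle\nabla_y\rangle^{s}F_p(u)\|_{L^{q'}_t L^{\rho}_x L^{\tilde\rho}_y}\lesssim \|u\|_{Y_T}^p$ for appropriate $\rho,\tilde\rho$ is exactly the content of Lemma \ref{nonlinearity estimate x y} (cited in the excerpt as the nonlinearity estimate), which handles the fractional $y$-derivative of $F_p(u)$ using the growth condition \eqref{growth condition} and a fractional chain/Leibniz rule in the mixed-norm setting.

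With both pieces in hand, one obtains $\|\Phi(u)\|_{Y_T}\lesssim \|\langle\nabla_y\rangle^{s}f\|_{H^{1/q}} + T^{\theta}\,C(\|w\|_{W^{2/q,1}})\,\|u\|_{Y_T}^p$ for some $\theta>0$, and a parallel difference estimate $\|\Phi(u)-\Phi(v)\|_{Y_T}\lesssim T^\theta C(\|w\|)\big(\|u\|_{Y_T}^{p-1}+\|v\|_{Y_T}^{p-1}\big)\|u-v\|_{Y_T}$, the latter using the second part of \eqref{growth condition} to control $F_p(u)-F_p(v)$. Choosing the ball radius $\sim \|\langle\nabla_y\rangle^{s}f\|_{H^{1/q}}$ and then $T$ small enough makes $\Phi$ a contraction, and the fixed point is the desired unique solution; persistence of regularity and continuity in time follow from the estimates themselves. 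The main obstacle I anticipate is the nonlinear mixed-norm estimate for $\langle\nabla_y\rangle^{s}F_p(u)$ when $p$ is not an odd integer and $F_p$ is merely Hölder-type: one needs a fractional Leibniz/chain rule valid in $L^r_x L^{\tilde r}_y$ and a careful Hölder bookkeeping in the $x$ and $y$ exponents so that the output exponents $(\rho,\tilde\rho)$ are dual-admissible and the subcriticality $p<1+\frac{4}{d-2s}$ is exactly what gives room for the exponents to close with a positive power of $T$ — but since Lemma \ref{nonlinearity estimate x y} is already established in the paper, this reduces to correctly invoking it and matching exponents. A secondary technical point is justifying the retarded Strichartz estimate in the mixed-norm vector-valued setting via Christ–Kiselev, which is routine given $q>2$.
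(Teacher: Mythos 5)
Your proposal is correct and follows essentially the same route as the paper's proof: a contraction mapping argument in a ball of $C_t H^{1/q}\cap L^q_t L^r_x L^{\tilde r}_y$ (weighted by $\langle\nabla_y\rangle^s$), with the linear part controlled by Theorem~\ref{SE Single function}, the Duhamel term controlled by the retarded Strichartz estimate derived via $TT^*$/Christ--Kiselev, and the nonlinear closure supplied by Lemma~\ref{nonlinearity estimate x y} and the difference estimate \eqref{difference of Gu GV}. One small bookkeeping clarification relative to your narrative: in the paper the $W^{2/q,1}$ norm of $w$ absorbs the $\langle\nabla\rangle^{2/q}$ derivative loss coming from the inhomogeneous Strichartz estimate \eqref{single inhomogeneous estimate}, while the positive power $T^{\beta_0}$ arises separately from H\"older in time exploiting the subcriticality $p<1+\tfrac{4}{d-2s}$; these are two distinct gains rather than one, but both are present in your sketch.
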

	 To prove Theorem \ref{wellposedness}, we establish crucial nonlinear estimate (Lemma \ref{nonlinearity estimate x y}) in  the space  $ {L_t^{q'} L_x^{r'} W_y^{s, \tilde{r}'}}$; which enables us to apply fixed-point argument.    This estimate relies fundamentally on the fractional chain rule and the Sobolev embedding on the 2D torus.
	\begin{corollary}\label{in term of Xsk}
		Let $d \ge 3,$ $(q,r,\tilde{r}) \in \mathbb{A}(d,2),$ $\frac{1}{q} \leq s < 1+\frac{1}{q}$ and $w \in W^{\frac{2}{q},1}.$ Suppose that 
		\[
		1 < p < 1 + \frac{4}{d-2s}
		\quad \text{and} \quad 
		f \in X_2^s.
		\]
		Then there exists a time
		\[
		T = T\!\left( \| w \|_{W^{\frac{2}{q},1}},\, \|f\|_{X_2^s} \right) > 0
		\]
		for which \eqref{NLS} admits a unique solution $u$ satisfying
		\[
		u 
		\in C([0,T]; X_2^s)
		\cap L_{t}^{q}([0,T]; L_{x}^{r} W_{y}^{s-\frac{1}{q},\tilde{r}}).
		\]
	\end{corollary}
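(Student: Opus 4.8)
The plan is to reduce Corollary \ref{in term of Xsk} to Theorem \ref{wellposedness} by a straightforward change of variables, noting that the two statements encode the same information written in different notational conventions. Recall from \eqref{evolution space embedding} that $f \in X_2^s$ means precisely that $\langle \nabla_y\rangle^{s-\frac1q} f \in H^{1/q}(\mathbb T^{d-2}\times\mathbb T^2)$, with $\|f\|_{X^s} = \|\langle\nabla_y\rangle^{s-\frac1q} f\|_{H^{1/q}}$. So if I set $g := \langle \nabla_y\rangle^{s-\frac1q} f$, the hypothesis $f \in X_2^s$ is equivalent to $g \in H^{1/q}$, and moreover $\langle\nabla_y\rangle^{(s-\frac1q)} g = \langle \nabla_y\rangle^{2(s-\frac1q)}f$; more to the point, Theorem \ref{wellposedness} is stated with regularity parameter in $[0,1)$, so I should instead directly invoke Theorem \ref{wellposedness} with its $s$ replaced by the shifted exponent, or simply observe that the content of Theorem \ref{wellposedness} already \emph{is} a statement about data of the form $\langle\nabla_y\rangle^\sigma f \in H^{1/q}$.

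First, I would reconcile the ranges: in Corollary \ref{in term of Xsk} the exponent satisfies $\frac1q \le s < 1+\frac1q$, so $\sigma := s - \frac1q$ satisfies $0 \le \sigma < 1$, which is exactly the range $0 \le s < 1$ appearing in Theorem \ref{wellposedness} (with Theorem \ref{wellposedness}'s ``$s$'' being my $\sigma$). Likewise the subcriticality condition $p < 1 + \frac{4}{d-2s}$ of the corollary, once rewritten, should line up with $p < 1 + \frac{4}{d - 2\sigma}$ up to the bookkeeping of whether the threshold is expressed via $s$ or via $\sigma$ — I would simply note that the corollary's statement is the one we want and that Theorem \ref{wellposedness} supplies it after the substitution $s \leftrightarrow s - \frac1q$ in the regularity index (the scaling-critical exponent is governed by the $y$-regularity, consistent with the discussion preceding \eqref{Adk}). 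Then I would apply Theorem \ref{wellposedness} to the datum $f$ (whose $\langle\nabla_y\rangle^{s-\frac1q}$-image lies in $H^{1/q}$ by the assumption $f\in X_2^s$): it yields a time $T = T(\|w\|_{W^{2/q,1}}, \|\langle\nabla_y\rangle^{s-\frac1q} f\|_{H^{1/q}}) = T(\|w\|_{W^{2/q,1}}, \|f\|_{X_2^s})$ and a unique solution $u$ with $\langle\nabla_y\rangle^{s-\frac1q} u \in C([0,T];H^{1/q}) \cap L^q_t([0,T]; L^r_x L^{\tilde r}_y)$ for every $(q,r,\tilde r)\in\mathbb A(d,2)$.

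To finish, I would translate this regularity statement for $\langle\nabla_y\rangle^{s-\frac1q} u$ back into a statement for $u$ in the spaces named in the corollary. By the definition of $X_2^s$, the condition $\langle\nabla_y\rangle^{s-\frac1q} u(t) \in H^{1/q}$ with norm continuous in $t$ is exactly $u \in C([0,T]; X_2^s)$. For the space-time piece: $\langle\nabla_y\rangle^{s-\frac1q} u \in L^q_t L^r_x L^{\tilde r}_y$ means, by definition of the $L^r_x W^{s-\frac1q,\tilde r}_y$ norm as $\|\langle\nabla_y\rangle^{s-\frac1q}(\cdot)\|_{L^r_x L^{\tilde r}_y}$, precisely that $u \in L^q_t([0,T]; L^r_x W^{s-\frac1q,\tilde r}_y)$. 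Uniqueness transfers since $\langle\nabla_y\rangle^{s-\frac1q}$ is an invertible Fourier multiplier, so the solution map is a bijection between the relevant solution classes. The main (and only real) obstacle is a careful matching of conventions — ensuring that the shift $s \mapsto s - \frac1q$ is applied consistently in the regularity index, the subcriticality threshold, and the function-space definitions, and confirming that Theorem \ref{wellposedness}'s hypothesis ``$w\in W^{2/q,1}$'' and the admissible triple condition $\mathbb A(d,2)$ are identical in both statements (they are). There is no new analysis: this is a dictionary lemma, and I would keep the proof to a few lines invoking \eqref{evolution space embedding}, the definition of $W^{s,r}_y$, and Theorem \ref{wellposedness}.
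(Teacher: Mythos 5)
Your reduction to Theorem \ref{wellposedness} via the change of variable $\sigma := s - \tfrac1q$ is exactly the paper's intended route, and the translation of the data hypothesis, the function spaces, and the time of existence is handled correctly. But there is a genuine gap precisely where you hand-wave: the subcriticality threshold does \emph{not} ``line up'' after the substitution. Theorem \ref{wellposedness} applied with regularity parameter $\sigma = s - \tfrac1q$ requires
\[
p < 1 + \frac{4}{d - 2\sigma} = 1 + \frac{4}{d - 2s + \tfrac{2}{q}},
\]
whereas Corollary \ref{in term of Xsk} as stated asserts the strictly larger range
\[
p < 1 + \frac{4}{d - 2s}.
\]
Since $\tfrac{2}{q} > 0$, the denominator $d - 2s + \tfrac{2}{q}$ exceeds $d - 2s$, so the corollary's admissible range of $p$ strictly contains the range Theorem \ref{wellposedness} gives you under the dictionary. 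You flag the question of ``whether the threshold is expressed via $s$ or via $\sigma$'' and then declare it resolved; it is not. This is not mere bookkeeping: the condition $p < 1 + \tfrac{4}{d-2\sigma}$ comes from the 2D Sobolev embedding $W^{\sigma,\tilde r}(\mathbb T^2) \hookrightarrow L^{\frac{(p-1)(p+1)}{2\varepsilon_0}}$ used in Lemma \ref{nonlinearity estimate x y} (see \eqref{Embedding}), where $\sigma$ is the amount of $y$-regularity \emph{above} $H^{1/q}$, and nothing in that lemma allows you to trade the $H^{1/q}$ component for additional room in $p$.

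So either the paper's corollary carries a typo — it should read $p < 1 + \tfrac{4}{d - 2(s - 1/q)}$ — in which case your argument closes immediately once you write the threshold in terms of $\sigma$ instead of asserting they coincide; or the corollary is claiming an actual strengthening, and then a pure dictionary reduction cannot deliver it and one would have to re-examine the nonlinear estimate to see whether the extra $\tfrac1q$ of regularity coming from the $H^{1/q}$ factor can be exploited. Your writeup does neither: it should at minimum track the threshold explicitly under the substitution $s \mapsto s - \tfrac1q$ rather than assert without calculation that the two expressions agree.
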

	Under the same assumption on $d,$ $(q,r,\tilde{r}),$ $s$ and $p$ in above Corollary \ref{in term of Xsk}, let $w \in W ^{s+\frac{1}{q},1} $ and $f \in H^s.$  Then there exists a time
	\[
	T = T\!\left( \| w \|_{W ^{\frac{2}{q},1}},\, \|f\|_{X_2^s} \right) > 0
	\]
	for which \eqref{NLS} admits a unique solution satisfying
	\[
	u 
	\in C([0,T]; H^s)
	\cap L_{t}^{q}([0,T]; L_{x}^{r} W_{y}^{s-\frac{1}{q},\tilde{r}}).
	\]
	Note that this gives local well-posedness in the Sobolev space $H^s.$
	\begin{remark}
		The restriction   $1 < p < 1 + \frac{4}{d-2s}$ and the specific choice $k=2$ in the decomposition $\mathbb{T}^{d-2} \times \mathbb{T}^2$ in the assumptions of Theorem \ref{wellposedness} arise from the Sobolev embeddings used in the argument. In particular, they are required for applying the embeddings between Sobolev spaces and into $L^r$ spaces; see \eqref{Embedding} in the proof of Theorem \ref{wellposedness}.  
	\end{remark}
	
	\subsection{Hartree equations for infinitely many particles}
	Consider $M$ couple of Hartree equations that describe the dynamics of $M$ fermions interacting via a potential $\omega: \mathbb{T}^{d-k} \times \mathbb{T}^k \to \mathbb R$ as follows:
\begin{align}\label{HN}
    \left\{\begin{array}{rlrl}
        i \partial_t u_1 & =\left(-\Delta+\omega * \rho\right) u_1, & & \left.u_1\right|_{t=0}=f_1 \\
        & \vdots \\
        i \partial_t u_M & =\left(-\Delta+\omega * \rho\right) u_M, & & \left.u_M\right|_{t=0}=f_M,
    \end{array}\right.
\end{align}
where $(x, t) \in \mathbb{T}^{d-k} \times \mathbb{T}^k \times \mathbb{R},$ and to account for the Pauli principle, the family $\left\{f_j\right\}_{j=1}^M$ assumed to be an orthonormal system in $L^2\left(\mathbb{T}^{d-k} \times \mathbb{T}^k \right)$. Here, $\rho$ is the total density functions of particles defined by 
$$\rho(x, t)=\sum_{j=1}^M\left|u_j(x, t)\right|^2.$$ 

Let us introduce the density matrix corresponding to \eqref{HN} as
\begin{eqnarray*}
    \gamma_M (t) = \sum_{j=1}^M | u_j(t)\rangle \langle u_j(t) |,
\end{eqnarray*}
where $|u \rangle \langle v|$ denotes the operator $f\mapsto \langle v, f\rangle u$ from $L^2\left(\mathbb{T}^{d-k} \times \mathbb{T}^k \right)$ into itself. Then system \eqref{HN} can be rewritten as a operator-valued equation
\begin{equation}\label{oHN}
    \begin{cases}
        i \partial_t\gamma_{M} = [-\Delta+ \omega \ast \rho_ {\gamma_M}, \gamma_M],\\
        \gamma_M(t=0) = \sum_{j=1}^M | f_{j}\rangle \langle f_{j} |,
    \end{cases}
\end{equation}
where $[X, Y]=XY-YX,$ is the operator commutator and the density function $\rho_ {\gamma_M}$ is given by 
\[\rho_{\gamma_M} (t,z)= \gamma_{M} (t, z,z), \] 
where $\gamma_M (t, \cdot, \cdot)$ is the integral kernel of operator $\gamma_M(t)$.
	Our goal is to investigate the behavior of the solution of system \eqref{HN} as $M \rightarrow \infty$. In this case, we naturally arrive at the following operator-valued equivalent formulation 
\begin{align}\label{Hartree} 
    \left\{\begin{array}{l}i \partial_t \gamma=\left[-\Delta+\omega * \rho_\gamma, \gamma\right], \\ \left.\gamma\right|_{t=0}=\gamma_0.\end{array}\right.
\end{align}
It is known that equation \eqref{Hartree} describes the mean-field dynamics of an interacting gas containing infinitely many fermions in the torus $\mathbb T^d.$ See \cite{chen2017global,chen2018scattering, sabin2014hartree,FrankJEMS2014, FrankJEMS2014}.
Here, the unknown $\gamma=\gamma(t)$ is a bounded self-adjoint operator on $L^2(\mathbb T^d)$ representing the one-particle density matrix and $\rho_\gamma:\mathbb R \times \T^{d-k}\times \T^k \to \mathbb R$ is the scalar \textbf{density function} associated to $\gamma(t),$ formally defined by 
$$\rho_\gamma(t, z)=\gamma (t, z,z),$$
where $\gamma (t, x, x')$ denote (with the abuse of notation) the integral kernel of operator $\gamma(t),$ i.e.,
\[\gamma (t) \phi (x) = \int_{\mathbb T^d} \gamma (t, x, x') \phi (x') dx'.\]  
The potential belongs to mixed-type Besov spaces (see Subsection \ref{Bs})	
$$\omega \in B^{s}_{(r',\tilde{r}'), \infty}(\T^{d-k} \times \T^k) $$ 
In particular, for $s\in\mathbb R$, the localization argument combined with Littlewood--Paley theory yields
\[
|x|^{-a}\in B^{s}_{(r',\tilde r'),\infty}
\bigl(\mathbb T^{d-k}\times\mathbb T^k\bigr)
\quad \text{whenever} \quad
a+s\leq \frac{d-k}{r'}+\frac{k}{\tilde r'}.
\]

For $s\in\mathbb R$ and $\alpha' \geq 1,$ we introduce the \textbf{Sobolev-Schatten space} $\Sp^{\alpha',s}$, equipped with the norm
\begin{equation}\label{sobolev schatten}
    \|\gamma\|_{\Sp^{\alpha',s}}
=
\|\langle\nabla\rangle^{s}\,\gamma\,\langle\nabla\rangle^{s}\|_{\Sp^{\alpha'}}.
\end{equation}
Here, $\langle\nabla\rangle^{s}$ denotes the Fourier multiplier
on $\mathbb T^{d-k}\times\mathbb T^{k}$ (see Subsection~\ref{symbols}),
and $\Sp^{\alpha'}$ is the Schatten class of order $\alpha'$
on $L^{2}(\mathbb T^{d-k}\times\mathbb T^{k})$ (see Subsection~\ref{schp}).
We are now ready state our well-posedness result for \eqref{Hartree} with data in Sobolev-Schatten space.

\begin{theorem}\label{TIN7}
    Let the triple \(\left( q,r,\tilde{r} \right) \) be as in \eqref{adc}, \(s > \frac{2}{q}\), and $\omega \in B^{s}_{(r',\tilde{r}'), \infty}(\T^{d-k} \times \T^k)$. Assume that \(\gamma_{0} \in \Sp^{\frac{2 q}{q+1}, s} (L^2_z (\T^{d-k} \times \T^k))\). Then 
    
    \begin{enumerate}
        \item \label{TIN71} (local well-posedness) there exist 
        \(
        {\small T = T\left(\| \gamma_{0} \|_{\Sp^{\frac{2 q}{q+1}, s}}, \| \omega \|_{B^{s}_{(r',\tilde{r}'), \infty}} \right) > 0}
        \)
        and a unique solution 
        \[
        \gamma \in C([0, T], \Sp^{\frac{2 q}{q+1}, s} (L^2 (\T^{d-k} \times \T^k)))
        \]
        satisfying \eqref{Hartree} on \([0, T] \times \T^{d-k} \times \T^k\) whose density 
        \(
        \rho_\gamma \in L_{t}^{q} L_{x}^{r}L_y^{\tilde{r}} ([0, T] \times \T^{d-k} \times \T^k).\)
        \item \label{TIN72} (small data global well-posedness) For any $T >0,$ there exists a small\\ ${\small R_{T}=R_{T}\left(\| \omega \|_{B^{s}_{(r', \tilde{r}', \infty}},s\right)}$ $>0$ such that if $\| \gamma_{0} \|_{\Sp^{\frac{2 q}{q+1}, s}} \leq R_{T},$ then there exists a unique solution $$\gamma \in C([0, T], \Sp^{\frac{2 q}{q+1}, s} (L^2 (\T^{d-k} \times \T^k))),$$ satisfying \eqref{Hartree} on $[0, T] \times \T^{d-k} \times \T^k$ whose density $\rho_{\gamma} \in L_t^q L_x^r L_y^{\tilde r}\!\left([0,T]\times\mathbb T^{d-k}\times\mathbb T^k\right).$
    \end{enumerate} 
\end{theorem}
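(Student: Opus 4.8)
The plan is to treat \eqref{Hartree} via Duhamel's formula and a contraction mapping argument in the Sobolev--Schatten space, exactly as in the Strichartz-based local well-posedness philosophy, but now performed at the level of operators. Writing $U(t)=e^{it\Delta}$, the Duhamel formulation of \eqref{Hartree} reads
\begin{equation*}
\gamma(t) = U(t)\gamma_0 U(-t) - i\int_0^t U(t-\tau)\,[\omega*\rho_\gamma(\tau),\,\gamma(\tau)]\,U(\tau-t)\,d\tau,
\end{equation*}
and the map $\Phi$ defined by the right-hand side will be shown to be a contraction on a ball of the space
\begin{equation*}
E_T = C([0,T];\Sp^{\frac{2q}{q+1},s}) \ \cap\ \big\{\gamma:\ \rho_\gamma\in L_t^q L_x^r L_y^{\tilde r}([0,T]\times\mathbb T^{d-k}\times\mathbb T^k)\big\},
\end{equation*}
with the density norm being the decisive one. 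First I would record the two linear ingredients. The homogeneous term is controlled by Theorem~\ref{ose}: conjugating $\gamma_0$ by $\langle\nabla\rangle^{-s}$ and inserting frequency projections $P_{\leq N}$, the dual (Schatten-valued) formulation of \eqref{IN12} — i.e.\ Lemma~\ref{PL1} — gives $\|\rho_{U(t)\gamma_0 U(-t)}\|_{L_t^qL_x^rL_y^{\tilde r}}\lesssim \|\gamma_0\|_{\Sp^{\frac{2q}{q+1},s}}$ after summing a Littlewood--Paley decomposition in the $s>\frac2q$ regularity (here the gain $N^{1/q}$ in \eqref{IN12} is absorbed by the $s-\tfrac2q>0$ surplus of derivatives, which is why the hypothesis is a strict inequality). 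For the inhomogeneous term one needs the corresponding retarded/Christ--Kiselev-type estimate together with the dual homogeneous bound, producing
\begin{equation*}
\Big\|\rho_{\int_0^t U(t-\tau)G(\tau)U(\tau-t)d\tau}\Big\|_{L_t^qL_x^rL_y^{\tilde r}} \lesssim \|\langle\nabla\rangle^s\,G\,\langle\nabla\rangle^s\|_{L_t^{q'}\Sp^{\frac{2q}{q+1}}} \quad\text{(schematically)},
\end{equation*}
and likewise the $C_t\Sp^{\frac{2q}{q+1},s}$ bound on $\Phi(\gamma)$ follows from the unitarity of $U(t)$ on each Schatten class together with Minkowski/Hölder in time.

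The genuinely nonlinear step is estimating the commutator source term $G=[\omega*\rho_\gamma,\gamma]$. I would split $[\omega*\rho_\gamma,\gamma]=(\omega*\rho_\gamma)\gamma-\gamma(\omega*\rho_\gamma)$ and treat both halves symmetrically; by the Kato--Seiler--Simon / Hölder inequality in Schatten classes, $\|\langle\nabla\rangle^s (V\gamma)\langle\nabla\rangle^s\|_{\Sp^{\alpha'}}$ is bounded, via the fractional Leibniz rule for the multiplier $\langle\nabla\rangle^s$ distributed across the product and a Littlewood--Paley paraproduct decomposition, by a sum of terms of the shape $\|V\|_{\text{(Besov)}}\,\|\langle\nabla\rangle^s\gamma\langle\nabla\rangle^s\|_{\Sp^{\alpha'}}$ with $V=\omega*\rho_\gamma$. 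Here the mixed Besov hypothesis $\omega\in B^{s}_{(r',\tilde r'),\infty}(\mathbb T^{d-k}\times\mathbb T^k)$ enters precisely so that Young's inequality in mixed Lebesgue spaces turns $\|\omega*\rho_\gamma\|_{\text{Besov}(s)}$ into $\|\omega\|_{B^s_{(r',\tilde r'),\infty}}\|\rho_\gamma\|_{L_x^rL_y^{\tilde r}}$ — this is the step that dictates the exact indices $(r',\tilde r')$ and the Besov smoothness $s$. Integrating in time and using Hölder with the small interval $[0,T]$ to extract a factor $T^{\theta}$ (with $\theta>0$ coming from the gap between $q'$ and the time-exponent produced by the density norm of $\gamma$), one obtains
\begin{equation*}
\|\Phi(\gamma)\|_{E_T} \lesssim \|\gamma_0\|_{\Sp^{\frac{2q}{q+1},s}} + T^{\theta}\,\|\omega\|_{B^s_{(r',\tilde r'),\infty}}\,\|\gamma\|_{E_T}^2,
\end{equation*}
and an analogous difference estimate for $\Phi(\gamma)-\Phi(\tilde\gamma)$; standard fixed-point reasoning then gives part~\eqref{TIN71} with $T$ depending only on $\|\gamma_0\|_{\Sp^{\frac{2q}{q+1},s}}$ and $\|\omega\|_{B^s_{(r',\tilde r'),\infty}}$, and uniqueness in the ball.

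For part~\eqref{TIN72}, the only change is that on a prescribed interval $[0,T]$ the quadratic term carries a fixed constant $C(T)\|\omega\|_{B^s_{(r',\tilde r'),\infty}}$; choosing the initial-data radius $R_T$ so small that $C(T)\|\omega\| R_T<\tfrac12$ makes $\Phi$ map the ball of radius $2R_T$ into itself and contract there, giving the global-in-$[0,T]$ solution with the stated density regularity. The main obstacle I anticipate is not the abstract fixed point but the operator-level nonlinear estimate: carrying the two copies of $\langle\nabla\rangle^s$ through the commutator $[\omega*\rho_\gamma,\gamma]$ while keeping everything in the \emph{same} Schatten exponent $\frac{2q}{q+1}$ requires a careful paraproduct/fractional-Leibniz argument adapted to Schatten classes and to the mixed-norm Besov scale — in particular one must check that the "low-high" pieces, where the derivatives land on $\omega*\rho_\gamma$, are still controlled by $\|\omega\|_{B^s_{(r',\tilde r'),\infty}}$ without costing extra regularity on $\gamma$, and that the duality pairing needed to invoke Lemma~\ref{PL1} is legitimate in this mixed-norm setting. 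This is where the harmonic-analysis toolkit for mixed Lebesgue spaces developed earlier in the paper (vector-valued Littlewood--Paley theory for densities of operators and the mixed-norm Bernstein inequality) will be used in an essential way.
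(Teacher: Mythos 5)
Your overall architecture --- Duhamel, Strichartz-type density estimates, contraction mapping --- is the same as the paper's, and you correctly identify the key ingredients: the orthonormal Strichartz estimate dualized via Lemma~\ref{PL1}, the vector-valued Littlewood--Paley/Bernstein machinery to absorb the $N^{1/q}$ loss, and a mixed-norm Young inequality to convert the Besov norm of $\omega*\rho$ into $\|\omega\|_{B^s_{(r',\tilde r'),\infty}}\|\rho\|_{L_x^r L_y^{\tilde r}}$. However, your choice of ambient space is a genuine gap: you set $E_T = C([0,T];\Sp^{\frac{2q}{q+1},s}) \cap \{\gamma: \rho_\gamma\in L_t^q L_x^r L_y^{\tilde r}\}$, but for $\gamma$ in a Schatten class with exponent strictly bigger than $1$ the diagonal trace $\rho_\gamma(z)=\gamma(z,z)$ is not a priori a well-defined function, so the second component of your intersection is not a closed (or even unambiguously defined) subspace, and completeness of $E_T$ --- which you need for Banach's fixed point theorem --- is not clear. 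The paper circumvents this by working with the \emph{pair} space $X_T \subset C([0,T];\Sp^{\alpha',s}) \times L_t^q L_x^r L_y^{\tilde r}$, treating $\rho$ as an independent unknown coupled to $\gamma$ through the map $\Phi(\gamma,\rho) = (\Phi_1(\gamma,\rho), \rho[\Phi_1(\gamma,\rho)])$; this is the standard Lewin--Sabin device and is needed to make the contraction argument rigorous. Your proposal should be reformulated in this decoupled way.

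For the nonlinear estimate your route is also different from the paper's. You propose a fractional Leibniz/paraproduct decomposition combined with Kato--Seiler--Simon to distribute $\langle\nabla\rangle^s$ across $V\gamma$. The paper instead uses the cleaner bound $\|\langle\nabla\rangle^s V\gamma\langle\nabla\rangle^s\|_{\Sp^{\alpha'}}\leq \|\langle\nabla\rangle^s V\langle\nabla\rangle^{-s}\|_{\Sp^\infty}\|\langle\nabla\rangle^s\gamma\langle\nabla\rangle^s\|_{\Sp^{\alpha'}}$ and then controls the operator norm of the conjugation $\langle\nabla\rangle^s V\langle\nabla\rangle^{-s}$ by the Triebel product estimate $\|fg\|_{H^r}\lesssim\|f\|_{B^{|r|+\delta}_{\infty,\infty}}\|g\|_{H^r}$, which yields $\|\langle\nabla\rangle^s V\langle\nabla\rangle^{-s}\|_{\Sp^\infty}\lesssim\|V\|_{B^{s+\delta}_{\infty,\infty}}$. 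Both routes should work in principle, but the paper's is more economical because it reduces the Schatten-class manipulation to a single operator-norm bound; if you pursue the paraproduct route you will still need to verify Schatten-class Hölder across the paraproduct pieces and you should be explicit about the $\delta$-loss (the paper's hypothesis $s>\frac{2}{q}$ and the freedom to shrink $\varepsilon,\delta$ are used precisely to absorb such losses). Also, the paper obtains the inhomogeneous density estimate directly from the dual orthonormal estimate \eqref{dual version} and trace duality (Proposition~\ref{operator inhomogeneous estimate}), not via Christ--Kiselev as you suggest --- at the level of operator densities the Christ--Kiselev lemma is not the standard tool here.
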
 
    
It is worth noting that several authors have studied the Hartree equation for infinite number of fermions within various frameworks. In particular, Frank--Sabin \cite{frank2017restriction} and 
Lewin and Sabin \cite{lewin2015hartree, lewin2014hartree} initiated the study of \eqref{Hartree}, establishing local and global existence results in suitable Schatten spaces under strong assumptions on the interaction potential $\omega$. For compact domains, the works \cite{nakamura2020orthonormal} and \cite{wang2025strichartz} focused on interaction potentials of the type $\omega(x) = |x|^{-a}$. In comparison, Bez, Lee, and Nakamura \cite{nealbez2021} considered the case when the potential $\omega \in B^{s}_{q, \infty}(\mathbb R^n)$ for some $s,q$. We also refer to \cite{chen2017global, chen2018scattering, HoshiyaJMP2025, bhimani2025orthonormal} and references therein for related results and recent contributions to this topic.

Note that the density function in Theorem \ref{TIN7} enjoy regularity in mixed Lebesgue spaces, which is in quite contrast with the several aforementioned known results in the literature. We refer to following Subsection \ref{cop} for further comments.

	\subsection{Proof techniques and novelties}\label{cop}
	\begin{itemize}
		\item[--]We establish Theorem \ref{SE Single function} in Section \ref{sec3}. Since on the torus no dispersive estimate holds globally in time or uniformly across all frequencies, a frequency–localized approach is unavoidable. For this reason, our first step is to reduce the expression in \eqref{For single} to the frequency–localized Schrödinger propagator $e^{it\Delta}P_{\le N}$, using the Littlewood–Paley theorem on the mixed Lebesgue space $L_x^r L_y^{\tilde r}(\mathbb{T}^{d-k}\times\mathbb{T}^k)$. This reduction allows us to isolate dyadic frequency contributions and treat each scale separately. After spatial localization, we further localize in time by restricting to intervals $I_N$ of length $1/N$, on which the localized propagator satisfies estimates strong enough for our analysis. Combining these bounds with fixed-time  and frequency localized estimates (Lemmas \ref{eq6} and  \ref{Loc}), we conclude the proof of Theorem \ref{SE Single function}.\\
        
		\item[--] Littlewood–Paley theory on mixed Lebesgue spaces $L_x^r L_y^{\widetilde r}\left(\mathbb{T}^{d-k}\times\mathbb{T}^k\right)$ for $1<r,\widetilde r<\infty$ does not appear to be available in the existing literature. In the present work, we develop this theory using the classical transference method together with the Khintchine inequality for Rademacher signs. See Section \ref{sec22}.\\
		
		\item[--] The proof of Theorem \ref{ose} relies on the standard kernel estimate \eqref{kernel} on the torus. A crucial ingredient in our argument is the embedding  between $L^p$ spaces on the torus; as a result, this method does not readily extend to the setting of partial frames on $\R^d.$\\ 
		
		\item[--] Although the well-posedness of the Hartree equation \eqref{Hartree} for infinitely many fermions stated in Theorem~\ref{TIN7} can be established by a standard argument based on orthonormal Strichartz estimates, our mixed-norm framework requires additional preparatory results. Specifically, it is necessary to first establish a vector-valued version of the Littlewood--Paley theorem for operator densities (Theorem~\ref{vector LP}) and a vector-valued Bernstein inequality (Corollary~\ref{vvr}). These results are of independent interest and provide the foundation for the nonlinear analysis in the mixed-norm setting.\\	
        
		\item[--]  The mixed Lebesgue spaces \( L_x^r L_y^{\tilde r}(\mathbb{T}^{d-k}\times\mathbb{T}^k) \) extend the classical Lebesgue spaces and naturally arise in the study of functions depending on independent variables with distinct structural features. This additional flexibility makes mixed Lebesgue spaces particularly well suited for the analysis of partial differential equations, regularity theory, partial Fourier transforms, and related applications. In the mixed-norm setting \( L_x^r L_y^{\tilde r}(\mathbb{T}^{d-k}\times\mathbb{T}^k) \) with \( 1<r,\tilde r<\infty \), our Littlewood--Paley theory (see Proposition~\ref{mixed LP}) provides the first systematic treatment on the torus and extends the corresponding results of \cite{ward2010mixedLebesgue}.\\

		\item[--] To obtain the vector-valued version of Bernstein’s inequality stated in Corollary~\ref{vvr}, we establish $\ell^2$-valued extension inequalities (Theorem~\ref{mixed vector valued}) in mixed-norm spaces and combine them with the H\"ormander–Mikhlin Fourier multiplier theorem (Theorem~\ref{Hormander-mikhlm}) and the transference principle.\\
				
		\item[--]  We also extended our Littlewood–Paley theory (see Proposition \eqref{mixed LP}) to densities of operators in Theorem \ref{vector LP}, which is completely new in the direction of mixed Lebesgue spaces for densities of operators on torus $\T^d$.  We follow an approach similar to that of Sabin \cite{sabin2016littlewood} to prove vector valued Littlewood–Paley theory. However, at a certain stage (see \eqref{11}), when applying the H\"ormander–Mikhlin multiplier theorem in the mixed-norm setting, the argument used in \cite{sabin2016littlewood} no longer applies directly. In particular, the Khintchine inequality for Rademacher functions, which is employed in Sabin’s work, cannot be used in this context directly. To address this difficulty, we instead apply Jensen’s inequality, followed by Minkowski’s integral inequality.
		However, one can prove  the Littlewood–Paley theory  for  densities
		of operators on $\R^d$ in a similar manner but without using the transference principle. Note that, we can derive our  usual Littlewood-Paley theory for single function (see Proposition \ref{mixed LP})  from our vector-valued Littlewood-Paley theory (see Theorem \ref{vector LP}), for details,  we refer to  Subsection \ref{vector valued LP decomposition}.
		
		The motivation to extend Littlewood-Paley decomposition for mixed spaces to operator densities comes from many-body quantum mechanics. Indeed, a system of $d$ fermions in $\mathbb{R}^d$ can be realized via an orthogonal projection $\gamma$ on $L^2\left(\mathbb{R}^d\right)$ of rank $d$. For detailed study on vector valued Littlewood-Paley and its application on $\R^d$, we refer to \cite{sabin2016littlewood} and references therein.

	\end{itemize}

	Apart from the introduction, the paper is organized as follows: In Section \ref{sec2}, we review harmonic analysis on the torus, the relevant function spaces, and preliminary tools that will be employed throughout the paper. We also introduce the partial-regularity framework on torus.  In Section \ref{sec22}, we define and study some important properties for  Littlewood–Paley operators on mixed Lebesgue spaces and  vector-valued Littlewood-Paley theorem for densities of operators. Section \ref{sec3} is devoted to proving refined Strichartz estimates for single function using the Littlewood–Paley theory torus.  We investigate the local well-posedness for the nonlinear  Schr\"odinger equation (\ref{NLS}) on torus with partially regular initial data using the refined Strichartz estimes in Section \ref{sec4}.  In Section \ref{sec5},  we extend these refined Strichartz estimates from single to a system of orthonormal functions.  Finally, we discuss the well-posedness of the Hartree equation \eqref{Hartree} for infinitely many fermions as an application of our obtained OSE in Section \ref{sec7}.
	
	\section{Preliminaries}\label{sec2}
	In this section  we recall basic facts, function spaces, and preliminary tools that will be used throughout the paper.
	We state the definitions for the full operator and for the operators acting in the
	$y$-variable, noting that the analogous constructions in the $x$-variable follow
	in the same manner.
	We write \( A \lesssim B \) to denote that there exists a constant \( C > 0 \) such that \( A \leq C B \). Similarly, we use \( A \lesssim_u B \) to indicate that there exists a constant \( C(u) > 0 \), depending on \( u \), such that \( A \leq C(u) B \).
	We write \( A \sim B \) if both \( A \lesssim B \) and \( B \lesssim A \) hold.
	The notation \( A \approx B \), though less rigorous, will sometimes be used to indicate that \( A \) and \( B \) are essentially of the same size, up to small and insignificant error terms.
	
	As discussed in the introduction, we write \( z = (x,y) \in \mathbb{T}^{d-k} \times \mathbb{T}^{k} \) for \( 1 \le k \le d \), with the convention that the \( y \)-variable is absent when \( k = d \). Accordingly, \( x \in \mathbb{T}^{d-k} \), \( y \in \mathbb{T}^{k}, \) and the product measure decomposes as \( dz = dx\,dy. \) Similarly, if $\xi \in \mathbb{Z}^{d-k} \times \mathbb{Z}^{k}$, we write 
	$\xi = (\xi_{1},\xi_{2})$, where $\xi_{1} \in \mathbb{Z}^{d-k}$ and 
	$\xi_{2} \in \mathbb{Z}^{k}$, and we denote $z \cdot\xi = x \cdot \xi_{1}+ y \cdot \xi_{2}.$
	
	We make use of mixed Lebesgue spaces of the form \( L^p_x(\mathbb{T}^{d-k}, L^q_y(\mathbb{T}^k)) \), which we also denote by \( L^p_xL^q_y(\mathbb{T}^{d-k} \times \mathbb{T}^k) \) or simply \( L^p_xL^q_y \). These spaces are equipped with the norm:
	$$ \|f\|_{L^p_xL^q_y} = \left( \int_{\mathbb{T}^{d-k}} \left( \int_{\mathbb{T}^k}  |f(x, y)|^q\, dy \right)^{\frac{p}{q}} dx \right)^{\frac{1}{p}}.$$

	We define the \textbf{Fourier transform} of a function $f$ as follows
	$$(\mathcal{F}f)(\xi)= \widehat{f}(\xi)=  \int_{\mathbb{T}^{d-k}}\int_{\mathbb{T}^k}f(z) e^{-2 \pi i z \cdot \xi}\,dz, \quad \xi \in \mathbb{Z}^{d-k}\times \mathbb{Z}^k,$$
	and we denote by $f^{\vee}$ the inverse Fourier transform of function $f: \Z^{d-k} \times \Z^k \to \mathbb{C}.$
	On the other hand, the \textbf{partial Fourier transform} with respect to the $y$-variable is defined as 
	$$(\mathcal{F}_yf)(x, \xi_{2}) = \int_{\mathbb{T}^k}f(x, y) e^{-2 \pi i y \cdot \xi_{2}}\,dy, \quad \xi_{2} \in  \mathbb{Z}^k, $$ 
	and we denote by $\mathcal{F}_{y}^{-1}(f(x, \cdot))$ the inverse partial Fourier transform with respect to the $y$-variable of function $f(x, \cdot):\Z^k \to \mathbb{C}.$

	Let $\mathfrak{M} : \mathbb{Z}^d \to \mathbb{C}$ (on the torus) or $\mathfrak{M} : \mathbb{R}^d \to \mathbb{C}$ (on Euclidean space) be a given function. The associated {\bf Fourier multiplier operator} $T_{\mathfrak{M}}$ acts on a function $f$ by multiplying its Fourier transform by $\mathfrak{M}$.
	
	On $\mathbb{R}^d$, we define
	\[
	\widehat{T_{\mathfrak{M}} f}(\xi_1) := \mathfrak{M}(\xi_1)\,\widehat{f}(\xi_1), 
	\qquad \xi_1 \in \mathbb{R}^d,
	\]
	whenever the right-hand side is well defined. On $\mathbb{T}^d$, we define
	\[
	\widehat{T_{\mathfrak{M}} f}(\xi_2) := \mathfrak{M}(\xi_2)\,\widehat{f}(\xi_2), 
	\qquad \xi_2 \in \mathbb{Z}^d.
	\]
	
	We define two Fourier multiplier operators on $\mathbb{T}^{d-k} \times \mathbb{T}^k$ as follows:
	$$
	|\nabla|^{s} f
	=\left(|(\xi_{1},\xi_{2})|^{s} \, \widehat{f}(\xi_{1},\xi_{2})\right)^{\vee} \qquad \text{and} \qquad |\nabla_{y}|^{s} f
	=\mathcal{F}_{y}^{-1}\left(|\xi_{2}|^{s} \, \mathcal{F}_{y}(f)(\cdot,\xi_{2})\right).
	$$
	
	\begin{lemma}[Sobolev embedding (Corollary 1.2 in \cite{tadahiro2013sobolevtorus})]\label{sobolev embbeding T}
		Let $d \geq 1,$ $s>0$ and $1 < r < \tilde{r} < \infty.$ Then the Sobolev embedding
		\[ 
		W^{s,r}(\T^d) \hookrightarrow L^{\tilde{r}}(\T^d)
		\]
		holds under the condition
		\[
		\frac{s}{d} \geq \frac{1}{r} - \frac{1}{\tilde{r}}.
		\]

	\end{lemma}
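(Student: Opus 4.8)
The statement is the classical Sobolev embedding on the torus, and although the paper quotes it from \cite{tadahiro2013sobolevtorus}, here is how I would prove it. The plan is to reduce to the endpoint exponent and then to a Hardy--Littlewood--Sobolev bound for the periodic Bessel potential. First I would observe that it suffices to treat the endpoint value $s_0 := d\left(\frac{1}{r}-\frac{1}{\tilde r}\right)$, which satisfies $0<s_0<d$: indeed $r<\tilde r$ gives $s_0>0$, and $\frac{1}{r}-\frac{1}{\tilde r}<\frac{1}{r}<1$ gives $s_0<d$. Since the hypothesis $\frac{s}{d}\ge\frac{1}{r}-\frac{1}{\tilde r}$ means $s\ge s_0$, the Fourier multiplier $\langle\nabla\rangle^{s_0-s}$, whose symbol $\langle\xi\rangle^{s_0-s}$ has nonpositive exponent, is bounded on $L^r(\T^d)$ for $1<r<\infty$ (by a H\"ormander--Mikhlin type multiplier theorem on the torus, e.g.\ Theorem~\ref{Hormander-mikhlm}, or by transference from $\R^d$); hence $W^{s,r}(\T^d)\hookrightarrow W^{s_0,r}(\T^d)$ and it is enough to prove $W^{s_0,r}(\T^d)\hookrightarrow L^{\tilde r}(\T^d)$.

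Next, writing $g:=\langle\nabla\rangle^{s_0}f\in L^r(\T^d)$, we have $f=\langle\nabla\rangle^{-s_0}g=B_{s_0}\ast g$, where $B_{s_0}$ is the periodic Bessel potential, i.e.\ the distribution on $\T^d$ with Fourier coefficients $\langle\xi\rangle^{-s_0}$, $\xi\in\Z^d$. The key step is the kernel estimate
\[
\abs{B_{s_0}(z)}\lesssim \abs{z}^{s_0-d},\qquad z\in[-\tfrac{1}{2},\tfrac{1}{2}]^{d}\setminus\{0\},
\]
together with the fact that $B_{s_0}$ is smooth, hence bounded, away from $z=0$. I would obtain this by writing $B_{s_0}$, up to a bounded multiplier, as the periodization $\sum_{n\in\Z^d}\mathcal{G}_{s_0}(\cdot+n)$ of the Euclidean Bessel kernel $\mathcal{G}_{s_0}$ on $\R^d$, and using the classical bounds $\mathcal{G}_{s_0}(x)\lesssim\abs{x}^{s_0-d}$ for $\abs{x}\le1$ together with exponential decay for $\abs{x}\ge1$; the contribution of the terms $n\ne0$ is then bounded and smooth on the fundamental domain. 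Alternatively one may sum the Fourier series of $B_{s_0}$ directly by comparison with the Riesz kernel of order $s_0$.

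Finally, split $B_{s_0}=K_0+K_1$, where $K_0$ is supported in $[-\tfrac{1}{2},\tfrac{1}{2}]^{d}$ with $\abs{K_0(z)}\lesssim\abs{z}^{s_0-d}$, and $K_1\in L^\infty(\T^d)$. For the $K_1$ piece, Young's inequality gives $\norm{K_1\ast g}_{L^{\tilde r}}\lesssim\norm{K_1}_{L^\infty}\norm{g}_{L^1}\lesssim\norm{g}_{L^r}$ since $\T^d$ has finite measure. For the $K_0$ piece, the relation $\frac{1}{\tilde r}=\frac{1}{r}-\frac{s_0}{d}$ with $1<r<\tilde r<\infty$ and $0<s_0<d$ places us exactly in the range of the Hardy--Littlewood--Sobolev inequality on the torus (which follows from the Euclidean one by regarding functions on $\T^d$ as compactly supported functions on $\R^d$, or from the standard dyadic decomposition of the fractional kernel), yielding
\[
\norm{K_0\ast g}_{L^{\tilde r}(\T^d)}\lesssim\norm{\,\int_{\T^d}\abs{z-w}^{s_0-d}\,\abs{g(w)}\,dw\,}_{L^{\tilde r}(\T^d)}\lesssim\norm{g}_{L^r(\T^d)}.
\]
Combining the two pieces, $\norm{f}_{L^{\tilde r}}=\norm{B_{s_0}\ast g}_{L^{\tilde r}}\lesssim\norm{g}_{L^r}=\norm{f}_{W^{s_0,r}}$, which, together with the reduction of the first paragraph, completes the proof.

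The main obstacle is the kernel estimate for the periodic Bessel potential $B_{s_0}$ and having the Hardy--Littlewood--Sobolev inequality available on $\T^d$; both are classical but require some care with the periodization and localization. A cleaner but essentially equivalent route would bypass the kernel altogether: extend $f$ by a fixed smooth cutoff to a compactly supported function on $\R^d$, use the local comparability of periodic and Euclidean Bessel-potential norms together with the Euclidean Sobolev embedding $W^{s_0,r}(\R^d)\hookrightarrow L^{\tilde r}(\R^d)$, and restrict back to $\T^d$.
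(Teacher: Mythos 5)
The paper does not prove this lemma; it only cites Corollary~1.2 of the referenced work, so there is no in-text argument to compare against. Your reconstruction is a correct and standard proof. The reduction to the endpoint $s_0 = d\bigl(\tfrac{1}{r}-\tfrac{1}{\tilde r}\bigr)$ via boundedness on $L^r(\mathbb T^d)$ of the nonpositive-order multiplier $\langle\nabla\rangle^{s_0-s}$ is sound (its symbol satisfies the Mikhlin conditions since $s_0-s\le 0$), and the kernel bound for the periodic Bessel potential obtained by periodizing the Euclidean Bessel kernel (local blow-up $|x|^{s_0-d}$, exponential tails) is exactly right in the range $0<s_0<d$, which you correctly verify from $1<r<\tilde r<\infty$. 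The split $B_{s_0}=K_0+K_1$, with Young's inequality for the bounded tail (using finiteness of $|\mathbb T^d|$) and Hardy--Littlewood--Sobolev for the singular piece, is the usual route, and the exponent relation $\tfrac{1}{\tilde r}=\tfrac{1}{r}-\tfrac{s_0}{d}$ is precisely the HLS scaling. The one point you gloss over is transferring HLS to the periodic convolution: since the nearest-lift representative of $z-w$ may differ from the Euclidean difference, one should dominate the torus convolution by a Euclidean convolution against $g$ extended periodically over a bounded number of fundamental cells and set to zero outside, apply the Euclidean HLS there, and use $\|\tilde g\|_{L^r(\mathbb R^d)}\lesssim\|g\|_{L^r(\mathbb T^d)}$; you explicitly flag this as routine, and it is. Your alternative route (cutoff extension plus the Euclidean Sobolev embedding) is also valid and somewhat cleaner.
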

	\begin{remark}
		We will frequently rely on the following standard properties of Sobolev norms on $\mathbb{T}^d$.  
		These estimates follow from the transference principle for $L^p$-multipliers together with  
		\cite[Theorem~6.3.2]{bergh1976interpolationspace}.  
		For $s \geq 0$ and $1 \le r \le \infty$, we have
		\begin{equation}\label{inhomogeneous dominate}
			\|\,|\nabla|^{s} f\|_{L^{r}}
			\lesssim
			\|\langle \nabla \rangle^{s} f\|_{L^{r}},
		\end{equation}
		and
		\begin{equation}\label{homogeneous dominate}
			\|\langle \nabla \rangle^{s} f\|_{L^{r}}
			\lesssim
			\|f\|_{L^{r}}
			+
			\|\,|\nabla|^{s} f\|_{L^{r}} .
		\end{equation}
	\end{remark}
	We next define \textbf{mixed Sobolev-type space} $L_{x}^{r}W_{y}^{s,\tilde{r}}:=L_{x}^{r}W_{y}^{s,\tilde{r}}(\T^{d-k} \times \T^k)$ by
	$$\|f\|_{ L_{x}^{r}W_{y}^{s,\tilde{r}}}:= \|\langle \nabla_{y} \rangle^{s} f\|_{L_{x}^rL_{y}^{\tilde{r}}}. $$
	In addition, we also denote $H^{s}:=W^{s,2}$ and $L_{x}^{r}H_{y}^{s}:=L_{x}^{r}W_{y}^{s,2}$ throughout this paper.  
	
	Furthermore, for any interval $I \subset \mathbb{T},$ we will make use of the mixed space-time space $L^q_t(I, L^p_x(\mathbb{T}^{d-k}, H^s_y(\mathbb{T}^{k})))$ which will be briefly denotes by $ L^q_t(I)L^p_xH_y^s$ or simply $L^q_tL^p_xH_y^s.$ These spaces are eqquiped with the norm 
	$$\|F\|_{L^q_tL^p_xH_y^s}:= \left( \int_I \|F(t)\|^q_{L^p_xH^s_y}\, dt \right)^{\frac{1}{q}}.$$
	For suitable functions $f$ and $g$ defined on $\T^{d-k} \times \T^k$ and $x \in \T^{d-k},$ we denote by $f\ast_y g$ the convolution of $f$ and $g$ with respect to $y$-variable defined as 
	$$f\ast_yg (x, y):= \int_{\T^k} f(x, y-y') g(x, y')\, dy'. $$
	In order to restrict  the  frequency variable on $d-$dimensional cube, we introduce
	\begin{eqnarray}\label{rdp}
		S_{d,N}  = \Z^{d} \cap [-N, N]^d.  
	\end{eqnarray}
	Let $\phi: \mathbb{R}^{d-k}\times \R^{k}  \to [0,1]$ be a smooth even function satisfying 
	\begin{equation}\label{phi cutoff}
		\phi(z)= \begin{cases}
			1 &  \text{if} \quad \abs{z} \leq 1, \\
			0  &  \text{if} \quad \abs{z} \geq 2.
		\end{cases}
	\end{equation}
	\begin{remark}
		We shall use the symbol $\eta$ to denote either the indicator function 
		$\mathbf{1}_{S_{d,1}}$ or the smooth cutoff $\phi$; the specific choice 
		will be stated whenever it is relevant, where $S_{d,1}$ is defined in \eqref{rdp}.
	\end{remark}
	
	Let $N \in 2^{\mathbb{N}}, \mathbb{N}=\{0\}\cup \mathbb{Z}_+,$ be a dyadic integer. We define \textbf{Littlewood-Paley projectors} on $\T^d$ by 
	\begin{equation}\label{P less N}
		\widehat{P_{\leq N}f}(\xi)= \eta\left(\frac{\xi}{N}\right) \widehat{f}(\xi), \quad \xi \in \Z^d,
	\end{equation}
	and 
	\begin{align}\label{lpope}
		P_{N}f=P_{\leq N}f-P_{\leq \frac{N}{2}}f,
	\end{align}
	where we set  $P_{\leq \frac{1}{2}}=0.$ Let us define a function $\psi(z)=\eta(z)-\eta(2z)$ and
	\begin{equation}\label{psi N}
		\psi_N(z)=
		\begin{cases}
			\eta(z) & ~\text{for}~ N=1 ,\\
			\psi(N^{-1}z) & ~\text{for}~ N>1.
		\end{cases}
	\end{equation}
	Then using \eqref{lpope}, we get $$\widehat{P_{ N}f}(\xi)=\psi_{N} \widehat{f}(\xi).
	$$
	\subsection{Mixed Besov-type spaces} \label{Bs}
	
	Let $s \in \mathbb{R}$ and $r,\tilde r \in [1,\infty]$. We define the \textbf{mixed Besov-type space}
	\(
	B^s_{(r,\tilde r),\infty}
	= B^s_{(r,\tilde r),\infty}\big(\mathbb{T}^{d-k}\times\mathbb{T}^k\big)
	\)
	by
	\[
	B^s_{(r,\tilde r),\infty}
	= \left\{ f : \mathbb{T}^{d-k}\times\mathbb{T}^k \to \mathbb{C} \;:\;
	\|f\|_{B^s_{(r,\tilde r),\infty}} < \infty \right\}.
	\]
	
	where
	\[
	\|f\|_{B^s_{(r,\tilde{r}), \infty}} = \sup_{N \in 2^\mathbb{N} } N^{s} \|P_N f\|_{L_x^r L_y^{\tilde{r}}}.
	\]
	Note that for $r=\tilde{r}$, the space $B^s_{(r,\tilde{r}), \infty}$ coincides with the usual Besov space $B^s_{r, \infty}= \left\{ f : \|f\|_{B^s_{r, \infty}} < \infty \right\},$
	where
	\[
	\|f\|_{B^s_{r, \infty}} = \sup_{N \in 2^\mathbb{N} } N^{s} \|P_N f\|_{L^r}.
	\] 
See \cite{HansII} for Besov spaces on $\mathbb R^{d}$ and $\mathbb T^{d}$, 
and \cite{WangMei2025Navier} for mixed Besov-type spaces on $\mathbb R^{d}$.

	\subsection{Schatten spaces $\Sp^{\alpha}$} \label{schp}
	We denote the  Hilbert space $\mathcal{H}$ as $$\mathcal{H}= L_{t}^2 L_{x}^2L^{2}_{y}(I \times \T^{d-k} \times \T^k),$$ where $I \subset \mathbb R$  is  a time  interval. Let $\mathcal{B}(\cH)$ denotes the space of all bounded operators on $\cH.$ The \textbf{Schatten space} $\Sp^{\alpha }(\cH), \alpha \in [1, \infty),$ consists of all compact operators $A \in \mathcal{B}(\cH)$ such that $\operatorname{Tr}|A|^{\alpha} < \infty,$ where $|A| = \sqrt{A^* A}$ and $ A^* \text{ is adjoint of }A$.   The  Schatten space norm is  given by 
	$$\|A\|_{\Sp^{\alpha}}=\|A\|_{\Sp^{\alpha}(\cH)} = (\operatorname{Tr}|A|^{\alpha})^{\frac{1}{\alpha}}.$$ 
	For  $\alpha = \infty,$ the Schatten space norm coincides with the operator norm, i.e., 
	$$\|A\|_{\Sp^{\infty}}=\|A\|_{\Sp^{\infty}(\cH)} = \|A\|_{\cH\to \cH}.$$
	An operator belongs to the class \(\mathfrak{S}^{1}(\mathcal{H})\) is known as {\bf Trace class operator}. Also, an operator belongs to   \(\mathfrak{S}^{2}(\mathcal{H})\) is known as  {\bf Hilbert-Schmidt operator}.
	\subsubsection{\textbf{Duality principle in $\Sp^{\alpha}$}}
	\begin{lemma}[Lemma 3 in \cite{frank2017restriction}]\label{PL1}
		Let $q, r, \tilde{r} \geq 1$ and $\alpha \geq 1.$ Let $T$ be a bounded operator from a separable Hilbert space $\cH$ to
		$L_{t}^{2q}L_{x}^{2r}L_{y}^{2\tilde r}(I \times \T^{d-k} \times \T^{k}).$
		Then the following are equivalent.
		\begin{enumerate}
			\item[(i)] There is a constant $C > 0$ such that
			\begin{equation*}
				\left\|WTT^*W\right\|_{\Sp^\alpha (\cH)} \leq C
				\left\|W\right\|^{2}_{L_t^{2q'}L_x^{2r'}L_{y}^{2\tilde r'} (I \times \T^{d-k} \times \T^{k})},
			\end{equation*}
			for all $ W \in L_t^{2q'}L_x^{2r'}L_{y}^{2\tilde r'}(I \times \T^{d-k} \times \T^{k}),$
			where the function $W$ acts as a multiplication operator.
			\item[(ii)] There is a constant $C' > 0$ such that for any ONS $(f_j)_{j \in \mathbb{Z}} \subset L_{x}^{2}L_{y}^{2}(\T^{d-k} \times \T^{k})$ and any sequence $(\lambda_j)_{j \in \mathbb{Z}} \subset \mathbb{C}$,
			\begin{equation*}
				\left\| \sum_{j \in \mathbb{Z}} \lambda_j |Tf_j|^2 \right\|_{L_t^{q}L_x^{r}L_{y}^{\tilde r}(I \times \T^{d-k} \times \T^{k})}
				\leq C' \left( \sum_{j \in \mathbb{Z}} |\lambda_j|^{\alpha'} \right)^{1/\alpha'}.
			\end{equation*}
		\end{enumerate}
	\end{lemma}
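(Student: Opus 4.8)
The plan is to recognize this as a Schatten-space analogue of the classical $TT^\ast$/duality principle and to mirror the argument of Frank--Sabin, keeping track of the mixed-norm structure $L_t^{q}L_x^{r}L_y^{\tilde r}$. The two quantities are dual to each other via the Schatten pairing $\Sp^\alpha$--$\Sp^{\alpha'}$ together with the Lebesgue duality $L_t^{q}L_x^{r}L_y^{\tilde r}$--$L_t^{q'}L_x^{r'}L_y^{\tilde r'}$. First I would set up the equivalence abstractly: the bilinear form $\langle WTT^\ast W, \gamma\rangle_{\Sp^2}$ makes sense for $\gamma\in\Sp^{\alpha'}$, and the statement in (i) says precisely that $T T^\ast$, sandwiched by multiplication operators $W$, is bounded from $\Sp^{\alpha'}$ to $\Sp^1$ (equivalently, that $W\mapsto WTT^\ast W$ is $\Sp^\alpha$-valued with the stated norm bound), where $W$ ranges over $L_t^{2q'}L_x^{2r'}L_y^{2\tilde r'}$.

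Next I would carry out the implication (i) $\Rightarrow$ (ii). Given an ONS $(f_j)$ and coefficients $(\lambda_j)$, form the operator $A=\sum_j \lambda_j |f_j\rangle\langle f_j|$ on $\cH$ (this is bounded with $\|A\|_{\Sp^{\alpha'}}=\|\lambda\|_{\ell^{\alpha'}}$ by orthonormality). Then $TAT^\ast=\sum_j\lambda_j|Tf_j\rangle\langle Tf_j|$ is the operator whose kernel is $\sum_j\lambda_j (Tf_j)(t,x,y)\overline{(Tf_j)(t',x',y')}$, and its "diagonal" $\sum_j\lambda_j|Tf_j|^2$ is the density one wishes to bound. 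To extract the $L_t^qL_x^rL_y^{\tilde r}$ norm of this density, test against a nonnegative $V\in L_t^{q'}L_x^{r'}L_y^{\tilde r'}$ with unit norm, write $V=W^2$ with $W=V^{1/2}\in L_t^{2q'}L_x^{2r'}L_y^{2\tilde r'}$ (so $\|W\|^2=\|V\|$), and observe
\[
\int V\,\Big(\sum_j\lambda_j|Tf_j|^2\Big)
=\operatorname{Tr}\big(W (TAT^\ast) W\big)
=\operatorname{Tr}\big((WTT^\ast W)\, \widetilde A\big)
\]
for a suitable rearrangement; more directly one uses $\operatorname{Tr}(W T A T^\ast W)=\operatorname{Tr}(A^{1/2}T^\ast W^2 T A^{1/2})$ and the Hölder inequality in Schatten classes $|\operatorname{Tr}(BC)|\le\|B\|_{\Sp^\alpha}\|C\|_{\Sp^{\alpha'}}$ to bound this by $\|WTT^\ast W\|_{\Sp^\alpha}\,\|A\|_{\Sp^{\alpha'}}\le C\|W\|^2\|\lambda\|_{\ell^{\alpha'}}=C\|V\|_{L_t^{q'}L_x^{r'}L_y^{\tilde r'}}\|\lambda\|_{\ell^{\alpha'}}$. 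Taking the supremum over $V$ and using that $(\cdot)$ is a nonnegative density (so the $L_t^qL_x^rL_y^{\tilde r}$ norm is attained against nonnegative test functions) gives (ii) with $C'=C$.

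For the converse (ii) $\Rightarrow$ (i), I would run the argument in reverse. By the spectral theorem, any $\gamma\in\Sp^{\alpha'}(\cH)$ self-adjoint can be written $\gamma=\sum_j\lambda_j|f_j\rangle\langle f_j|$ with $(f_j)$ orthonormal and $\|\lambda\|_{\ell^{\alpha'}}=\|\gamma\|_{\Sp^{\alpha'}}$; by duality in $\Sp^\alpha$ it suffices to bound $|\operatorname{Tr}(WTT^\ast W\,\gamma)|$ for such $\gamma$, and the same computation as above identifies this with $\big|\int W^2\sum_j\lambda_j|Tf_j|^2\big|$, which by Hölder in the mixed Lebesgue spaces and then (ii) is at most $\|W^2\|_{L_t^{q'}L_x^{r'}L_y^{\tilde r'}}\,C'\|\lambda\|_{\ell^{\alpha'}}=C'\|W\|^2\|\gamma\|_{\Sp^{\alpha'}}$; the general (non-self-adjoint) $\gamma$ follows by splitting into real and imaginary parts, or by a polarization of the Hermitian form. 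I expect the main obstacle to be purely technical rather than conceptual: justifying the trace manipulations and the interchange of sum/integral when $(f_j)$ is an infinite ONS and $T$ is merely bounded (not Hilbert--Schmidt), which requires an approximation argument (truncate to finitely many $j$, or first prove the statement for finite-rank $\gamma$ and pass to the limit using density of finite-rank operators in $\Sp^{\alpha'}$ for $\alpha'<\infty$ and a separate weak-$\ast$ argument when $\alpha'=\infty$), together with checking that replacing $V$ by $W^2$ is harmless, i.e. that the supremum defining the $L_t^qL_x^rL_y^{\tilde r}$ norm of a nonnegative function may be restricted to squares of $L_t^{2q'}L_x^{2r'}L_y^{2\tilde r'}$ functions — which is immediate since every nonnegative $V$ in the dual space is such a square.
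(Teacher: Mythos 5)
The paper itself does not prove Lemma~\ref{PL1}; it simply cites Lemma~3 of Frank--Sabin and asserts in a remark that the argument adapts to the mixed-norm setting. Your proposal fills in what the paper leaves implicit, and it does so by the correct mechanism: passing from the density $\sum_j\lambda_j|Tf_j|^2$ to the operator $A=\sum_j\lambda_j|f_j\rangle\langle f_j|$ with $\|A\|_{\Sp^{\alpha'}}=\|\lambda\|_{\ell^{\alpha'}}$, testing against $V=W^2$, computing $\int W^2\sum_j\lambda_j|Tf_j|^2=\operatorname{Tr}\bigl(T^*W^2T\,A\bigr)$, and applying Schatten--H\"older; and conversely using the spectral decomposition of $\gamma\in\Sp^{\alpha'}$. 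The only adaptation needed relative to the Euclidean version is that $V=W^2$ now places $W$ in $L_t^{2q'}L_x^{2r'}L_y^{2\tilde r'}$, and the pointwise H\"older step carries over unchanged with mixed indices. Two small things are worth making explicit rather than leaving parenthetical. First, the density is not nonnegative for complex $(\lambda_j)$, so the reduction to nonnegative test functions must go through $\bigl|\sum_j\lambda_j|Tf_j|^2\bigr|\le\sum_j|\lambda_j|\,|Tf_j|^2$, which reduces to nonnegative $\lambda_j$ and then $V=W^2$ is legitimate. Second, the quantity written $\|WTT^*W\|_{\Sp^\alpha(\cH)}$ in the statement should be read as $\|T^*W^2T\|_{\Sp^\alpha(\cH)}$ --- these coincide because $SS^*$ and $S^*S$ share the same nonzero singular values --- and it is the latter operator, acting on $\cH$, that pairs against $\gamma$ to produce the density integral in the converse direction. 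You use this implicitly via $\operatorname{Tr}(WTAT^*W)=\operatorname{Tr}(A^{1/2}T^*W^2TA^{1/2})$, so the gap is cosmetic, not substantive.
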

	
	\subsubsection{\textbf{Complex interpolation in $\Sp^{\alpha}$}}
	Let $a_0, a_1 \in \mathbb R$ such that $a_0<a_1$ and strip $S= \{ z' \in \mathbb C: a_0 \leq \operatorname{Re}(z') \leq a_1\}.$  Denote Hilbert space $\mathcal{H}= L_{t}^2 L_{x}^2L^{2}_{y}(I \times \T^{d-k} \times \T^k),$ where $I \subset \mathbb R$  is  a time  interval.
	We say a  family of operators  $\{T_{z'}\} \subset \mathcal{B}(\mathcal{H})$  defined in a strip  $S$ is analytic in the sense of Stein if it has the following properties:
	\begin{itemize}
		\item[-] for each $z' \in S,$ $T_{z'}$ is a linear transformation  of simple functions on $I \times \T^{d-k} \times \T^{k}$;
		\item[-] for all simple functions $F, G$ on $I \times \T^{d-k} \times \T^{k},$ the map $z' \mapsto \langle G, T_{z'} F \rangle$ is analytic in the interior of the strip  $ {S}$ and continuous in $S$;
		\item[-] $\sup_{a_0 \leq \lambda \leq a_1} |\langle G, T_{\lambda +is} F \rangle| \leq C(s)$ for some $C(s)$ with at most a (double) exponential growth  in $s$.
	\end{itemize}
\begin{lemma}[Theorem 2.9 in \cite{simon2005trace}]\label{s-interpolation}
Let $\{T_{z'}\} \subset \mathcal{B}(\mathcal{H})$ be an analytic family of operators in the sense of Stein, defined on the strip $S,$ where $H = L^2\bigl(I \times \T^{d-k} \times \T^{k}\bigr).$
If there exist $M_0, M_1, b_0, b_1>0,$
$1 \leq q_0, r_0, \tilde r_0, q_1, r_1, \tilde r_1 \leq \infty$
and $1\leq a_0, a_1 \leq \infty$ such that for all simple functions
$W_1, W_2$ on $I \times \T^{d-k} \times \T^{k}$
and $s \in \mathbb R,$ we have
\[
\|W_1 T_{a_0 +is} W_2 \|_{\Sp^{a_0}(\mathcal{H})}
\leq
M_0 e^{b_0 |s|}
\|W_1\|_{L_t^{q_0}L_x^{r_0}L_y^{\tilde r_0}}
\|W_2\|_{L_t^{q_0}L_x^{r_0}L_y^{\tilde r_0}}
\]
and
\[
\|W_1 T_{a_1 +is} W_2 \|_{\Sp^{a_1}(\mathcal{H})}
\leq
M_1 e^{b_1|s|}
\|W_1\|_{L_t^{q_1}L_x^{r_1}L_y^{\tilde r_1}}
\|W_2\|_{L_t^{q_1}L_x^{r_1}L_y^{\tilde r_1}} .
\]
Then for all $\tau \in [0,1]$ we have
\[
\|W_1 T_{a_{\tau} +is} W_2 \|_{\Sp^{a_\tau}(\mathcal{H})}
\leq
M_0^{1-\tau}M_1^{\tau}
\|W_1\|_{L_t^{q_\tau}L_x^{r_\tau}L_y^{\tilde r_\tau}}
\|W_2\|_{L_t^{q_\tau}L_x^{r_\tau}L_y^{\tilde r_\tau}},
\]
where $a_\tau, r_\tau, q_\tau,$ and $\tilde r_\tau$ are defined by
\[
a_\tau = (1-\tau)a_0 + \tau a_1, \quad
\frac{1}{r_\tau}= \frac{1-\tau}{r_0}+ \frac{\tau}{r_1}, \quad
\frac{1}{q_\tau} = \frac{1-\tau}{q_0}+ \frac{\tau}{q_1}, \quad
\frac{1}{\tilde r_\tau} = \frac{1-\tau}{\tilde r_0}+ \frac{\tau}{\tilde r_1}.
\]
\end{lemma}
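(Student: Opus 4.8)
The statement is Stein's analytic interpolation theorem transplanted to the Schatten scale, so the plan is to follow the classical route. Since the hypotheses and the conclusion are phrased for simple $W_1,W_2$, I would fix such simple functions on $I\times\T^{d-k}\times\T^{k}$ and a point $s\in\mathbb R$, and invoke the Schatten duality
\[
\|A\|_{\Sp^{a_\tau}}=\sup\bigl\{\,|\operatorname{Tr}(AB)|\ :\ B\in\Sp^{a_\tau'},\ \|B\|_{\Sp^{a_\tau'}}\le1\,\bigr\},\qquad \tfrac1{a_\tau}+\tfrac1{a_\tau'}=1,
\]
to reduce matters to bounding $|\operatorname{Tr}(W_1\,T_{a_\tau+is}\,W_2\,B)|$ uniformly over $B$ of finite rank with $\|B\|_{\Sp^{a_\tau'}}=1$. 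The target is then a single scalar, which is exactly the setting of the Hadamard three-lines theorem.

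Next I would manufacture an analytic family of scalars adapted to this quantity. Writing $B=V|B|$ in polar form and decomposing each $W_j$ slice-by-slice in the iterated space $L_t^{q_\tau}L_x^{r_\tau}L_y^{\tilde r_\tau}$ (the standard device for interpolating mixed-norm Lebesgue spaces: peel off the $L^{\tilde r_\tau}_y$-norm, then the $L^{r_\tau}_x$-norm, then the $L^{q_\tau}_t$-norm, and recombine with complex powers), I set
\[
F(z')=\operatorname{Tr}\!\bigl(W_1^{(z')}\,T_{z'}\,W_2^{(z')}\,V|B|^{\nu(z')}\bigr),
\]
where $W_j^{(z')}$ is built from complex powers of $|W_j|$ and of its slice norms times the unimodular phase of $W_j$, and $\nu$ is the affine function of $z'$ with $\nu(a_\tau)=1$. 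Using that $1/q_\tau,1/r_\tau,1/\tilde r_\tau$ vary linearly between the endpoint data and that $a_\tau$ sits at parameter $\tau$ between $a_0$ and $a_1$, the exponents in $W_j^{(z')}$ and in $\nu$ can be chosen so that: (i) at $z'=a_\tau+is$ one recovers $W_j^{(a_\tau+is)}=W_j$ and $|B|^{\nu(a_\tau+is)}=|B|$, hence $F(a_\tau+is)=\operatorname{Tr}(W_1T_{a_\tau+is}W_2B)$; and (ii) on each vertical edge $\operatorname{Re}z'=a_\ell$, $\ell\in\{0,1\}$, one has, pointwise in $s'\in\mathbb R$,
\[
\|W_j^{(a_\ell+is')}\|_{L_t^{q_\ell}L_x^{r_\ell}L_y^{\tilde r_\ell}}\le\|W_j\|_{L_t^{q_\tau}L_x^{r_\tau}L_y^{\tilde r_\tau}}
\qquad\text{and}\qquad
\bigl\|\,|B|^{\nu(a_\ell+is')}\bigr\|_{\Sp^{a_\ell'}}\le\|B\|_{\Sp^{a_\tau'}}=1 .
\]
Because $W_1,W_2$ are simple and $B$ has finite rank, $F$ is a finite linear combination of terms $c(z')\langle G,T_{z'}F\rangle$ with $c$ entire and $F,G$ simple, so the Stein-analyticity of $\{T_{z'}\}$ renders $F$ analytic in the open strip, continuous on its closure, and of at most double-exponential growth on vertical lines.

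Finally I would close on the edges and interpolate. On $\operatorname{Re}z'=a_\ell$, the Schatten Hölder inequality $|\operatorname{Tr}(XY)|\le\|X\|_{\Sp^{a_\ell}}\|Y\|_{\Sp^{a_\ell'}}$, the endpoint hypothesis of the lemma, and (ii) give
\[
|F(a_\ell+is')|\le\|W_1^{(a_\ell+is')}T_{a_\ell+is'}W_2^{(a_\ell+is')}\|_{\Sp^{a_\ell}}\,\bigl\|\,|B|^{\nu(a_\ell+is')}\bigr\|_{\Sp^{a_\ell'}}\le M_\ell\,e^{b_\ell|s'|}\,\|W_1\|_{L_t^{q_\tau}L_x^{r_\tau}L_y^{\tilde r_\tau}}\|W_2\|_{L_t^{q_\tau}L_x^{r_\tau}L_y^{\tilde r_\tau}} .
\]
To absorb the factor $e^{b_\ell|s'|}$ (and the permitted growth of $F$ more generally), I replace $F$ by $F_\varepsilon(z')=e^{\varepsilon(z'-a_\tau)^2}F(z')$, which decays on vertical lines, apply the three-lines theorem to $F_\varepsilon$ on $S$, and let $\varepsilon\downarrow0$; this yields $|F(a_\tau+is)|\le M_0^{1-\tau}M_1^{\tau}\,\|W_1\|_{L_t^{q_\tau}L_x^{r_\tau}L_y^{\tilde r_\tau}}\|W_2\|_{L_t^{q_\tau}L_x^{r_\tau}L_y^{\tilde r_\tau}}$, and taking the supremum over admissible $B$ gives the claim. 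The main obstacle is the bookkeeping in the middle step: one must verify that the single parameter $\tau$ can simultaneously match the linear endpoint relations for $1/q_\tau,1/r_\tau,1/\tilde r_\tau$ used in the slice-wise construction of the $W_j^{(z')}$ and the relation linking the conjugate Schatten exponents $a_0',a_\tau',a_1'$ used in the power $\nu(z')$ --- if necessary one first proves the sharper bound with $a_\tau$ replaced by $\bigl(\tfrac{1-\tau}{a_0}+\tfrac{\tau}{a_1}\bigr)^{-1}$ and then invokes the inclusion $\Sp^{p}\hookrightarrow\Sp^{q}$ for $p\le q$ --- and that the resulting $F$ is genuinely analytic up to the closed strip; once this is arranged the rest is the classical Hadamard argument.
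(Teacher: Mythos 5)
You are being asked to supply a proof the paper itself omits: the authors only assert in the remark following the lemma that it ``follows, with appropriate modifications, from the method of proof of Theorem 2.9 in \cite{simon2005trace}.'' Your plan is exactly that method — Schatten duality, an analytic scalar family assembled from complex powers of $W_1,W_2$ (layer by layer in $L^q_tL^r_xL^{\tilde r}_y$) and of $|B|$, edge bounds via Schatten--H\"older, and the Hadamard three-lines theorem — so you are taking essentially the route the paper intends, adapted to the mixed-norm and sandwiched setting.

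Two points are worth tightening. First, the Gaussian multiplier $e^{\varepsilon(z'-a_\tau)^2}$ is not adequate under the stated hypotheses: the paper's Stein analyticity permits $\langle G,T_{z'}F\rangle$ to grow doubly exponentially on vertical lines, which a Gaussian does not dominate, and even the single-exponential edge factors $e^{b_\ell|s|}$ yield $\sup_{s}e^{-\varepsilon s^2+b_\ell|s|}=e^{b_\ell^2/4\varepsilon}\to\infty$ as $\varepsilon\downarrow0$, so the naive limit does not return $M_0^{1-\tau}M_1^\tau$. One must instead divide by an entire weight of the type $e^{\delta\cosh(\beta(z'-c))}$ with $\beta(a_1-a_0)<\pi$, or appeal to Hirschman's refinement of the three-lines lemma; the price is a constant depending on $b_0,b_1$, which is harmless where the paper uses the lemma since only $\lesssim$ is needed. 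Second, your closing observation identifies a genuine discrepancy in the printed statement: the duality construction forces $\nu(a_\ell)a_\ell'=a_\tau'$ for $\ell=0,1$, hence $1/a_\tau=(1-\tau)/a_0+\tau/a_1$, which is the harmonic mean, not $a_\tau=(1-\tau)a_0+\tau a_1$. The harmonic-mean version is the one Simon's Theorem 2.9 actually asserts, and it is the one the paper silently uses in Proposition \ref{oseP}, where $a_1=\infty$ would render the arithmetic mean identically infinite and the statement vacuous. As you note, the printed (weaker) statement follows from the harmonic-mean version via the contractive embedding $\Sp^p\hookrightarrow\Sp^q$ for $p\le q$, but the lemma as written should be read as claiming the sharper harmonic-mean exponent.
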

\begin{remark}
   Lemma \ref{PL1} can be proved by adapting the argument of Lemma 3 in \cite{frank2017restriction}, while Lemma \ref{s-interpolation} follows, with appropriate modifications, from the method of proof of Theorem 2.9 in \cite{simon2005trace}.
\end{remark}
	\begin{lemma}[Hardy--Littlewood--Sobolev inequality \cite{FrankJEMS2014}]\label{PL3}
		Let $q,r > 1$ and $\lambda \in [0,1).$ Let $\frac{1}{q}+\frac{1}{r}+\lambda=2$ and $\frac{1}{q}+\frac{1}{r} \geq 1.$ Let $f \in L^{q}(\mathbb{R})$ and $ g \in L^{r}(\mathbb{R}).$ Then  
		$$
		\left|\int_{\mathbb{R}}\int_{\mathbb{R}} \frac{f(x) g(y)}{|x-y|^{\lambda}} \,dx \, dy\right|
		\leq C \|f\|_{L^{q}(\mathbb{R})}\|g\|_{L^{r}(\mathbb{R})}.
		$$
	\end{lemma}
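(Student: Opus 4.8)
The plan is to pass to the dual (operator) formulation. By the triangle inequality we may assume $f,g\ge 0$, and by H\"older's inequality in the $x$-variable the asserted bilinear bound is equivalent to the boundedness of the Riesz-type convolution operator
\[
T_\lambda g(x) := \int_{\mathbb R}\frac{g(y)}{|x-y|^{\lambda}}\,dy
\]
from $L^{r}(\mathbb R)$ into $L^{q'}(\mathbb R)$, where $\tfrac1{q'}=1-\tfrac1q=\tfrac1r+\lambda-1$; this last identity is exactly the hypothesis $\tfrac1q+\tfrac1r+\lambda=2$. I would first record that $\lambda<1$ together with this identity forces $\tfrac1q+\tfrac1r=2-\lambda>1$, so the stated hypothesis $\tfrac1q+\tfrac1r\ge1$ is in fact automatic; in particular $r<q'$, i.e.\ $\theta:=r/q'\in(0,1)$, and $q,r\in(1,\infty)$.

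The main step is a pointwise interpolation inequality relating $T_\lambda g$ to the Hardy--Littlewood maximal function $Mg$. Fixing $x$ with $0<Mg(x)<\infty$ and a parameter $R>0$, I would split $T_\lambda g(x)$ into the integral over $\{|x-y|\le R\}$ and the integral over $\{|x-y|>R\}$. For the near part, decomposing into dyadic shells $2^{j-1}<|x-y|\le 2^{j}$ with $2^{j}\le R$ and using that a ball of radius $2^{j}$ has measure $\sim 2^{j}$ gives a bound $\lesssim \sum_{2^{j}\le R}2^{j(1-\lambda)}Mg(x)\sim R^{1-\lambda}Mg(x)$, where convergence of the geometric sum uses $\lambda<1$. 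For the far part, H\"older's inequality gives a bound $\lesssim \|g\|_{L^r}\big(\int_{|t|>R}|t|^{-\lambda r'}\,dt\big)^{1/r'}\sim \|g\|_{L^r}R^{1/r'-\lambda}$, and the integral converges \emph{precisely} because $\lambda>1/r'$, which is the same as $\tfrac1{q'}=1-\tfrac1q>0$, i.e.\ $q>1$. Choosing $R=(\|g\|_{L^r}/Mg(x))^{r}$ to balance the two contributions, and verifying the exponent arithmetic (here one uses $\tfrac1q+\tfrac1r+\lambda=2$ once more), yields
\[
|T_\lambda g(x)|\ \lesssim\ \big(Mg(x)\big)^{\theta}\,\|g\|_{L^r}^{\,1-\theta},\qquad \theta=r/q'\in(0,1).
\]

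To conclude, I would raise this to the power $q'$ and integrate, noting $\theta q'=r$ and $q'(1-\theta)=q'-r$:
\[
\|T_\lambda g\|_{L^{q'}}^{q'}\ \lesssim\ \|g\|_{L^r}^{\,q'-r}\int_{\mathbb R}\big(Mg\big)^{r}\ =\ \|g\|_{L^r}^{\,q'-r}\,\|Mg\|_{L^r}^{r}\ \lesssim\ \|g\|_{L^r}^{\,q'},
\]
using the Hardy--Littlewood maximal inequality $\|Mg\|_{L^r}\lesssim\|g\|_{L^r}$, valid since $r>1$. Thus $\|T_\lambda g\|_{L^{q'}}\lesssim\|g\|_{L^r}$, and a final application of H\"older's inequality in $x$ recovers the claimed bilinear estimate. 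I expect the only genuinely delicate points to be bookkeeping: checking that the near and far kernel pieces are integrable exactly in the stated range (which is precisely where $\lambda<1$ and $q,r>1$ enter), carrying out the exponent computation in the optimization, and discarding the null set where $Mg\in\{0,\infty\}$ together with the trivial case $g\equiv0$. An alternative would be to establish the weak-type $(r,q')$ bound for $T_\lambda$ by the classical layer-cake argument and invoke the Marcinkiewicz interpolation theorem, but the maximal-function route above gives the strong bound directly.
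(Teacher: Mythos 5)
The paper does not prove Lemma \ref{PL3}; it cites it as the classical one-dimensional Hardy--Littlewood--Sobolev inequality and refers to \cite{FrankJEMS2014}. Your proposal supplies a self-contained proof via the standard Hedberg argument: split the kernel at a scale $R$, control the near part by a dyadic sum against $Mg(x)$ (needing $\lambda<1$), control the far part by H\"older against $\|g\|_{L^r}$ (needing $\lambda r'>1$, equivalently $q>1$), optimize in $R$ to get the pointwise bound $|T_\lambda g(x)|\lesssim (Mg(x))^{\theta}\|g\|_{L^r}^{1-\theta}$ with $\theta=r/q'$, and finish with the maximal inequality for $r>1$. I checked the exponent bookkeeping: from $\tfrac1q+\tfrac1r+\lambda=2$ one gets $\tfrac1{q'}=\tfrac1r+\lambda-1$, hence $1-r(1-\lambda)=r/q'=\theta\in(0,1)$ (the strict inequality $\tfrac1q+\tfrac1r>1$ indeed follows from $\lambda<1$, so the stated hypothesis $\tfrac1q+\tfrac1r\ge1$ is automatic as you note), $\theta q'=r$, and $(1-\theta)q'=q'-r$; the balance $R=(\|g\|_{L^r}/Mg(x))^r$ follows from $1-1/r'=1/r$. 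Your handling of the degenerate set $\{Mg\in\{0,\infty\}\}$ and the reduction to $f,g\ge0$ are also fine. Since the paper gives no argument to compare against, the only remark is that your route (Hedberg's pointwise interpolation against the maximal function) is a well-known alternative to the layer-cake/Marcinkiewicz proof you mention in closing, and both are legitimate; your choice has the advantage of yielding the strong-type bound directly without real interpolation.
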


	\begin{lemma}[Hilbert-Schmidt operators \cite{simon2005trace}] \label{PL6}
		Let $H = L^2(M, d\mu)$, for some separable measure space $M$ (i.e., one with $H$ separable). If $A \in \Sp^2(H)$, then there exists a unique function $K \in L^2(M \times M, d\mu \otimes d\mu)$ with
		\begin{equation}\label{P4}
			(A\varphi)(x) = \int_{M} K(x,y) \varphi(y) d\mu(y).
		\end{equation}
		Conversely, any $K \in L^2(M \times M)$ defines an operator $A$ by \eqref{P4} which is in $\Sp^2(H)$ and
		\[
		\|A\|_{\Sp^{2}(H)} = \|K\|_{L^{2}(M \times M, d\mu \otimes d\mu)}.
		\]
	\end{lemma}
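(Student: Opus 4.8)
The plan is to reduce everything to the matrix of the operator in a fixed orthonormal basis of $H=L^2(M,d\mu)$, exploiting the basis-independent Hilbert--Schmidt identity $\|A\|_{\Sp^2(H)}^2=\sum_{n,m}|\langle Ae_m,e_n\rangle|^2$. Since $H$ is separable, I would first fix an orthonormal basis $\{e_n\}_{n\ge1}$ of $H$ and set $E_{nm}(x,y):=e_n(x)\overline{e_m(y)}$.

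The first --- and main --- step is to verify that $\{E_{nm}\}_{n,m\ge1}$ is an orthonormal basis of $L^2(M\times M,\,d\mu\otimes d\mu)$. Orthonormality is immediate from Fubini's theorem. For completeness I would invoke the Hilbert-space tensor-product identification $L^2(M\times M,\,d\mu\otimes d\mu)\cong L^2(M)\,\widehat{\otimes}\,L^2(M)$ --- valid because $(M,d\mu)$ is $\sigma$-finite with $L^2(M)$ separable --- under which $E_{nm}$ corresponds to $e_n\otimes\overline{e_m}$ and the family $\{e_n\otimes\overline{e_m}\}$ is a standard orthonormal basis of the tensor product; alternatively, one argues directly that if $f\perp E_{nm}$ for all $n,m$, then $f(x,\cdot)\in L^2(M)$ for a.e.\ $x$, the coefficient functions $x\mapsto\int_M f(x,y)\overline{e_m(y)}\,d\mu(y)$ lie in $L^2(M)$ and are orthogonal to every $e_n$, hence vanish, so $f=0$ almost everywhere.

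For the converse direction, given $K\in L^2(M\times M)$, I would expand $K=\sum_{n,m}c_{nm}E_{nm}$ with $\sum_{n,m}|c_{nm}|^2=\|K\|_{L^2(M\times M)}^2<\infty$. Since $K(x,\cdot)\in L^2(M)$ for a.e.\ $x$, Cauchy--Schwarz shows that \eqref{P4} defines $A\varphi\in L^2(M)$ for every $\varphi\in H$, and a direct computation gives $\langle Ae_m,e_n\rangle=\iint K(x,y)e_m(y)\overline{e_n(x)}\,d\mu(x)\,d\mu(y)=c_{nm}$. Summing in $n,m$ yields $\sum_{n,m}|\langle Ae_m,e_n\rangle|^2=\sum_{n,m}|c_{nm}|^2<\infty$, so $A\in\Sp^2(H)$ with $\|A\|_{\Sp^2(H)}=\|K\|_{L^2(M\times M)}$.

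For the forward direction and uniqueness, given $A\in\Sp^2(H)$ I would put $a_{nm}=\langle Ae_m,e_n\rangle$, observe $\sum_{n,m}|a_{nm}|^2=\|A\|_{\Sp^2(H)}^2<\infty$, and let $K:=\sum_{n,m}a_{nm}E_{nm}$, which converges in $L^2(M\times M)$ by Step~1; applying the converse step to this $K$ yields an integral operator agreeing with $A$ on every $e_m$, hence equal to $A$. If $K_1,K_2$ both represent $A$, then $K_1-K_2$ represents the zero operator, so all its matrix elements vanish and Step~1 forces $K_1=K_2$ in $L^2(M\times M)$. The only genuinely delicate point is the completeness claim in Step~1, which rests on the $\sigma$-finiteness and separability hypotheses on $M$ together with Fubini's theorem; everything else is routine bookkeeping with the basis-independent identity $\|A\|_{\Sp^2(H)}^2=\sum_{n,m}|\langle Ae_m,e_n\rangle|^2$.
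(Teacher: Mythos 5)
The paper does not prove this lemma; it is cited directly from Simon's \emph{Trace Ideals and Their Applications} \cite{simon2005trace}, so there is no in-paper argument to compare against. Your proof is correct and is in fact the standard textbook argument one finds in Simon's book: fix an orthonormal basis $\{e_n\}$ of $H$, observe that $\{E_{nm}(x,y)=e_n(x)\overline{e_m(y)}\}$ is an orthonormal basis of $L^2(M\times M,d\mu\otimes d\mu)$, match the expansion coefficients of $K$ in this basis with the matrix elements $\langle Ae_m,e_n\rangle$, and invoke the basis-independent identity $\|A\|_{\Sp^2}^2=\sum_{n,m}|\langle Ae_m,e_n\rangle|^2$. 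The only point worth flagging is a small bookkeeping issue with conjugates in the completeness argument: the inner product $\langle f,E_{nm}\rangle$ involves $\overline{e_n(x)}\,e_m(y)$, so the coefficient function you should consider is $h_m(x)=\int_M f(x,y)\,e_m(y)\,d\mu(y)$ rather than with $\overline{e_m}$ inside, and then one concludes $f(x,\cdot)=0$ a.e.\ because $\{\overline{e_m}\}$ is also an orthonormal basis; this does not affect the validity of the argument. You also implicitly use $\sigma$-finiteness for Fubini and the product measure, but this is already implicit in the statement since $d\mu\otimes d\mu$ must be defined for the lemma to make sense.
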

	\begin{proposition}[Dispersive estimate \cite{vega1992restriction}]\label{l8}
		We have
		\begin{equation}\label{kernel}
			\left| \sum_{\xi=-N}^{N} e^{2 \pi i (x \cdot \xi +t |\xi|^{2})} \right| \leq C |t|^{-\frac{1}{2}},
		\end{equation}
		for any $(t,x) \in  [-N^{-1},N^{-1}] \times \mathbb{T},$ where $N>1$ and $N \in \mathbb{Z}.$ 
	\end{proposition}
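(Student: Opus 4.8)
The starting point is to read the left–hand side of \eqref{kernel} as a quadratic exponential (Weyl) sum: with $f(\xi)=t\xi^{2}+x\cdot\xi$ we have
\[
\sum_{\xi=-N}^{N}e^{2\pi i(x\cdot\xi+t|\xi|^{2})}=\sum_{\xi=-N}^{N}e^{2\pi i f(\xi)},
\qquad f''(\xi)=2t .
\]
Since the claimed bound is vacuous when $t=0$, and since replacing $\xi\mapsto-\xi$ and conjugating shows that the modulus of the sum is unchanged under $t\mapsto-t$, we may assume $0<t\le N^{-1}$, so that in particular $2t\le 1$. The plan is then simply to invoke van der Corput's second–derivative test for exponential sums, which applies because $f$ is smooth on $[-N,N]$ with $f''\equiv 2t>0$ there.

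Van der Corput's estimate on the interval $[-N,N]$, of length $2N$, with $|f''|=2t$, gives
\[
\Bigl|\sum_{\xi=-N}^{N}e^{2\pi i f(\xi)}\Bigr|
\;\lesssim\; 2N\,(2t)^{1/2}+(2t)^{-1/2}
\;\lesssim\; N t^{1/2}+t^{-1/2},
\]
uniformly in $x\in\mathbb{T}$, which enters the phase only linearly and is therefore invisible to $f''$. The role of the frequency restriction $t\le N^{-1}$ is exactly to control the first term: it yields $N\le t^{-1}$, hence $N t^{1/2}\le t^{-1/2}$, so both terms are $\lesssim t^{-1/2}=|t|^{-1/2}$, which is \eqref{kernel}.

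If one prefers to avoid quoting van der Corput's lemma, the self-contained route is its usual proof specialized to this phase, i.e. Poisson summation: after smoothing the sharp cut-off at $\xi=\pm N$ one is reduced to oscillatory integrals $\int e^{2\pi i(tu^{2}+(x-m)u)}\,du$, each of which, upon completing the square and substituting $v=\sqrt{t}\,(u+(x-m)/(2t))$, is bounded by $C t^{-1/2}$ via the uniform boundedness $\bigl|\int_{a}^{b}e^{\pm 2\pi i v^{2}}\,dv\bigr|\le C$ of incomplete Fresnel integrals; crucially $f'(\xi)=2t\xi+x$ sweeps an interval of length $4tN\le 4$ over $\xi\in[-N,N]$, so only $O(1)$ frequencies $m$ carry a stationary point and the rest form a convergent tail after two integrations by parts. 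The one genuinely delicate point — and the reason a crude argument such as plain Weyl differencing loses a logarithmic factor — is the sharp cut-off at $\xi=\pm N$: one must extract real cancellation near the endpoints (equivalently, control the boundary and smoothing corrections) rather than use the trivial bound $N$, which in the critical range $|t|\sim N^{-1}$ already exceeds the target $|t|^{-1/2}\sim N^{1/2}$. For this reason I would present the van der Corput route as the proof and merely indicate the Poisson-summation alternative.
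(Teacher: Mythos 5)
The paper cites this proposition from Vega's 1992 paper without supplying a proof of its own, so there is no in-paper argument to compare against; the question is simply whether your argument closes the statement, and it does. Your van der Corput reduction is correct and clean: the phase $f(\xi)=t\xi^{2}+x\cdot\xi$ has constant $f''\equiv 2t$, the second-derivative test on an interval of length $2N$ with $\lambda=2t$ gives $\lesssim N t^{1/2}+t^{-1/2}$, and the hypothesis $|t|\le N^{-1}$ is exactly what forces the first term to be $\le t^{-1/2}$; the reduction to $0<t\le N^{-1}$ via $t\mapsto -t$ and conjugation is also fine, and $x$ enters the phase linearly so plays no role in $f''$. Your observation that plain Weyl differencing loses a logarithm in the critical range $|t|\sim N^{-1}$ is accurate — squaring gives $|S|^{2}\lesssim\sum_{|h|\le 2N}\min\bigl(N,(t|h|)^{-1}\bigr)\lesssim t^{-1}\log N$ — and is a good reason to present van der Corput (equivalently its Poisson-summation/stationary-phase proof) rather than the $|S|^{2}$ trick. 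One small point worth flagging when citing the lemma: some sources state the second-derivative bound as $\lesssim (b-a)\lambda^{1/2}+\lambda^{-1/2}+1$; since here $\lambda=2t\le 2/N<2$ forces $\lambda^{-1/2}\gtrsim 1$, the extra $+1$ is harmless and is absorbed, so the conclusion is unaffected.
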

	
	\section{Harmonic analysis tools}\label{sec22}
	In this section we prove Littlewood-Paley theorem on mixed Lebesgue space as well as for densities of operators. These results play a crucial role in the proof of our refined Strichartz estimates and in the well-posedness analysis of the Hartree equation. Moreover, the reader may observe that these results are of independent interest. We start this section with the Littlewood-Paley theorem on mixed Lebesgue space. 
	\subsection{Littlewood-Paley theorem on mixed Lebesgue space} The Littlewood-Paley theorem  on mixed Lebesgue spaces $L_x^r L_y^{\widetilde{r}}\left(\mathbb{T}^{d-k} \times \mathbb{T}^k\right)$; for $1<r, \widetilde{r}<\infty$,
	is given by 	\begin{align}\label{norm}
		\|f\|_{L_x^r L_y^{\tilde{r}}} \sim\left\| \left(\sum_{N \in 2^\mathbb{N}} \left|P_N f\right|^2\right)^{1 / 2}\right\|_{L_x^r L_y^{\tilde{r}}} .
	\end{align}
	The method to prove the preceding norm inequality heavily depends   on  the uses of randomization by Rademacher signs together with the classical transference principle. In the transference principle, one approximates periodic functions by compactly supported Euclidean functions through  Gaussian cutoffs, and then invokes a limiting relation that links the operator on the torus $\T^d$ to the corresponding operator on $\R^d.$  Before that, we first recall the H\"ormander–Mikhlin multiplier theorem in the setting of mixed-norm spaces, as it provides a crucial tool for proving the Littlewood–Paley theorem.
	\begin{lemma}[H\"ormander–Mikhlin multiplier theorem (Theorem~3.2.1 in \cite{ward2010mixedLebesgue})]\label{Hormander-mikhlm}
		Let \(\alpha = (\alpha_{1}, \ldots, \alpha_{d})\) be a multi-index, meaning that each 
		\(\alpha_{j}\) is a non-negative integer and its order is defined by
		\[
		|\alpha| = \alpha_{1} + \cdots + \alpha_{d}.
		\]
		Assume that
		\[
		|\alpha| \leq \left\lfloor \frac{d}{2} \right\rfloor + 1,
		\]
		where \(\left\lfloor \frac{d}{2} \right\rfloor\) denotes the integer part of \(d/2\).
		Suppose that the operator $T_\mathfrak{M}$ on $L_{x}^{r}L_{y}^{\Tilde{r}}(\mathbb{R}^{d-k} \times \mathbb{R}^{k})$ is given by
		\[
		\widehat{T_{\mathfrak{M}} f}(\xi_{1},\xi_{2}) = \mathfrak{M}(\xi_1, \xi_2)\, \widehat{f}(\xi_1, \xi_2)
		\]
		for all Schwartz function $f$ and $(\xi_1, \xi_2) \in \R^{d-k} \times\R^k$, and that $\mathfrak{M}$ satisfies the condition
		\begin{equation}\label{Hormander condition}
			|\partial^{\alpha} \mathfrak{M}(\xi_1, \xi_2)| \leq \frac{C}{|(\xi_1, \xi_2)|^{|\alpha|}}.
		\end{equation}
		Then 
		$
		T_{\mathfrak{M}} : L_{x}^{r}L_{y}^{\Tilde{r}}(\mathbb{R}^{d-k} \times \mathbb{R}^{k}) \to L_{x}^{r}L_{y}^{\Tilde{r}}(\mathbb{R}^{d-k} \times \mathbb{R}^{k})
		$ is bounded operator for $ 1 < r, \tilde{r} < \infty.$
	\end{lemma}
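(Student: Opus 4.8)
The plan is to show that the H\"ormander--Mikhlin condition \eqref{Hormander condition} forces $T_{\mathfrak{M}}$ to be a Calder\'on--Zygmund operator on $\mathbb{R}^{d}$, and then to transfer the resulting boundedness to the mixed-norm scale $L^{r}_{x}L^{\tilde r}_{y}$ by weighted estimates combined with extrapolation.

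First I would recover the scalar theory. Writing $\mathfrak{M}=\sum_{j}\mathfrak{M}_{j}$ for a dyadic frequency decomposition, with $\mathfrak{M}_{j}$ supported where $|\xi|\sim 2^{j}$ and $\|\partial^{\alpha}\mathfrak{M}_{j}\|_{L^{\infty}}\lesssim 2^{-j|\alpha|}$ for $|\alpha|\le\lfloor d/2\rfloor+1$, one rescales each $\mathfrak{M}_{j}$ to the unit annulus and uses Plancherel together with the Cauchy--Schwarz inequality (this is exactly where the critical order $\lfloor d/2\rfloor+1>d/2$ is used) to conclude that the kernel $K=\mathfrak{M}^{\vee}$ is a standard Calder\'on--Zygmund kernel. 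Since $\mathfrak{M}\in L^{\infty}$, Plancherel also gives $\|T_{\mathfrak{M}}\|_{L^{2}(\mathbb{R}^{d})\to L^{2}(\mathbb{R}^{d})}\lesssim 1$. This is precisely the proof of the sharp H\"ormander--Mikhlin multiplier theorem on $\mathbb{R}^{d}$, so $T_{\mathfrak{M}}$ is a Calder\'on--Zygmund operator.

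Then I would promote this to the mixed-norm statement. By the Coifman--Fefferman theorem, a Calder\'on--Zygmund operator with a standard kernel is bounded on $L^{p}(\mathbb{R}^{d},w)$ for every $1<p<\infty$ and every $w\in A_{p}(\mathbb{R}^{d})$, with operator norm controlled by $[w]_{A_{p}}$. Applying Rubio de Francia's extrapolation one coordinate block at a time---first running the Rubio de Francia iteration in the $y$-variable to build an $A_{\tilde r}(\mathbb{R}^{k})$ weight, then repeating it in the $x$-variable---upgrades these weighted bounds to $\|T_{\mathfrak{M}}f\|_{L^{r}_{x}L^{\tilde r}_{y}(\mathbb{R}^{d-k}\times\mathbb{R}^{k})}\lesssim\|f\|_{L^{r}_{x}L^{\tilde r}_{y}}$ for all $1<r,\tilde r<\infty$, which is the assertion. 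A slicker but non-sharp alternative is to regard $T_{\mathfrak{M}}$ as a Fourier multiplier in the $\xi_{1}$-variable with operator-valued symbol $M(\xi_{1})\in\mathcal{L}(L^{\tilde r}(\mathbb{R}^{k}))$, namely the scalar $\xi_{2}$-multiplier $\mathfrak{M}(\xi_{1},\cdot)$: since $|\xi|\ge|\xi_{1}|$ and $|\xi|\ge|\xi_{2}|$, each $|\xi_{1}|^{|\beta|}\partial_{\xi_{1}}^{\beta}M(\xi_{1})$ is a scalar Fourier multiplier on $L^{\tilde r}(\mathbb{R}^{k})$ with Mikhlin constant uniform in $\xi_{1}$, hence the family is $R$-bounded, and Weis's operator-valued Mikhlin theorem on the UMD space $L^{\tilde r}(\mathbb{R}^{k})$ applies.

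The main obstacle is this transfer step and its interplay with the sharp hypothesis $|\alpha|\le\lfloor d/2\rfloor+1$. The operator-valued route, although cleaner, costs roughly $\lfloor(d-k)/2\rfloor+\lfloor k/2\rfloor+2$ derivatives of $\mathfrak{M}$ in total---one more than is available---so to keep the stated condition sharp one must go through the weighted/extrapolation argument, whose only nontrivial input is the multi-parameter form of Rubio de Francia extrapolation, or equivalently run H\"ormander's square-function argument directly on the mixed-norm scale, which in turn relies on the $\ell^{2}$-valued Littlewood--Paley inequality on $L^{r}_{x}L^{\tilde r}_{y}(\mathbb{R}^{d-k}\times\mathbb{R}^{k})$---itself a consequence of Hilbert-space-valued Calder\'on--Zygmund theory. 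Once this machinery is in place, the remaining steps are routine.
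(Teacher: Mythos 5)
The paper does not prove this lemma: it imports it verbatim as Theorem~3.2.1 of the cited reference \cite{ward2010mixedLebesgue}, and no proof appears anywhere in the manuscript. So there is no internal argument to compare your proposal against; I can only assess your sketch on its own terms.

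Your route is essentially sound. The first block (dyadic decomposition of $\mathfrak{M}$, Plancherel plus Cauchy--Schwarz to get the Hörmander kernel condition, hence $T_{\mathfrak M}$ is a convolution Calderón--Zygmund operator on $\mathbb{R}^{d}$) is the textbook proof of the scalar Hörmander--Mikhlin theorem and is correct as stated. The passage to mixed norms also works, but your phrase ``running the Rubio de Francia iteration in the $y$-variable to build an $A_{\tilde r}(\mathbb{R}^k)$ weight, then repeating it in the $x$-variable'' is a heuristic that deserves to be unpacked, since taken literally it is not how the iteration goes. The clean way to make it precise is: (i) Coifman--Fefferman gives $\|T_{\mathfrak M}f\|_{L^{\tilde r}(\mathbb{R}^{d},w)}\lesssim_{[w]_{A_{\tilde r}}}\|f\|_{L^{\tilde r}(\mathbb{R}^{d},w)}$ for every $w\in A_{\tilde r}(\mathbb{R}^{d})$; (ii) observe that a weight $w_{1}(x)$ depending only on the $x$-block, with $w_{1}\in A_{\tilde r}(\mathbb{R}^{d-k})$, lies in $A_{\tilde r}(\mathbb{R}^{d})$ (every cube in $\mathbb{R}^{d}$ is a product of equal-sidelength cubes in the two factors, so the $A_{\tilde r}$ ratios factor), hence the weighted bound specializes to
\[
\bigl\|\,\|T_{\mathfrak M}f(x,\cdot)\|_{L^{\tilde r}_{y}}\,\bigr\|_{L^{\tilde r}_{x}(w_{1})}
\lesssim
\bigl\|\,\|f(x,\cdot)\|_{L^{\tilde r}_{y}}\,\bigr\|_{L^{\tilde r}_{x}(w_{1})}
\quad\text{for all } w_{1}\in A_{\tilde r}(\mathbb{R}^{d-k});
\]
and (iii) apply Rubio de Francia extrapolation (in its pairs-of-functions form, with the inner $L^{\tilde r}_{y}$ norms of $T_{\mathfrak M}f$ and $f$ playing the roles of $F$ and $G$) in the $x$-variable alone to pass from $L^{\tilde r}_{x}(w_{1})$ to $L^{r}_{x}$ for all $1<r<\infty$, then set $w_{1}\equiv 1$. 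This gives the claimed $L^{r}_{x}L^{\tilde r}_{y}$ bound with the sharp order $\lfloor d/2\rfloor+1$ of derivatives, as you anticipated. Your observation that the operator-valued Weis route costs extra derivatives and would miss the sharp hypothesis is also correct, and is a good reason not to take it here. In short: the proposal is a valid (and conceptually attractive) way to prove the lemma, but the paper itself simply cites the result and offers no proof to compare.
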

	\begin{lemma}[Equation 3.6.10 in \cite{grafakos2008classical}]\label{periodic identity}
		For all continuous $1$-periodic functions $g$ on $\mathbb{R}^d$, we have
		\[
		\lim_{\varepsilon \to 0} \varepsilon^{\frac{d}{2}} \int_{\mathbb{R}^d} g(x) e^{-\varepsilon \pi |x|^2}\, dx  = \int_{\mathbb{T}^d} g(x)\, dx.
		\]
	\end{lemma}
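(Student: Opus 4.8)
The plan is to fold the integral over $\mathbb{R}^d$ into a single fundamental domain of the lattice $\mathbb{Z}^d$ and to recognize the resulting weight as a Gaussian whose periodization is explicitly computable. Let $Q=[0,1)^d$, so that $\mathbb{R}^d=\bigsqcup_{n\in\mathbb{Z}^d}(Q+n)$. Using the $1$-periodicity $g(x+n)=g(x)$ and Tonelli's theorem (all summands are nonnegative after taking absolute values, and the original integral converges absolutely), I would first rewrite
\[
\varepsilon^{d/2}\int_{\mathbb{R}^d} g(x)\,e^{-\varepsilon\pi|x|^2}\,dx
=\int_Q g(x)\,\Phi_\varepsilon(x)\,dx,
\qquad
\Phi_\varepsilon(x):=\varepsilon^{d/2}\sum_{n\in\mathbb{Z}^d}e^{-\varepsilon\pi|x+n|^2}.
\]
It then suffices to show that $\Phi_\varepsilon\to 1$ as $\varepsilon\to 0^+$, together with a uniform bound $\sup_{0<\varepsilon\le 1}\|\Phi_\varepsilon\|_{L^\infty(Q)}<\infty$: granting this, if the convergence is uniform on $Q$ one concludes directly from
\[
\Bigl|\int_Q g\,\Phi_\varepsilon-\int_Q g\Bigr|\le \|g\|_{L^\infty}\,\|\Phi_\varepsilon-1\|_{L^\infty(Q)}\to 0,
\]
and in any case from the dominated convergence theorem with dominating function $\|g\|_{L^\infty}\bigl(\sup_{\varepsilon\le1}\|\Phi_\varepsilon\|_{L^\infty}\bigr)\mathbf{1}_Q\in L^1$, since $\int_Q g=\int_{\mathbb{T}^d}g$.

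To analyze $\Phi_\varepsilon$, I would apply the Poisson summation formula to the Schwartz function $e^{-\varepsilon\pi|\cdot|^2}$, whose Fourier transform, in the normalization of the excerpt, is $\varepsilon^{-d/2}e^{-\pi|\cdot|^2/\varepsilon}$. This yields the identity
\[
\Phi_\varepsilon(x)=\sum_{m\in\mathbb{Z}^d}e^{-\pi|m|^2/\varepsilon}\,e^{2\pi i m\cdot x}.
\]
For $0<\varepsilon\le 1$ and $m\ne 0$ one has $|m|^2\ge 1$ and $1/\varepsilon\ge 1$, so $e^{-\pi|m|^2/\varepsilon}\le e^{-\pi/\varepsilon}e^{-\pi(|m|^2-1)}$; hence $\|\Phi_\varepsilon-1\|_{L^\infty}\lesssim e^{-\pi/\varepsilon}\to 0$ and $\|\Phi_\varepsilon\|_{L^\infty}\le\sum_{m\in\mathbb{Z}^d}e^{-\pi|m|^2}<\infty$, which supplies both ingredients of the first step. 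An elementary alternative that avoids Poisson summation is to set $\delta=\varepsilon^{1/2}$ and observe that $\Phi_\varepsilon(x)=\delta^d\sum_{n}e^{-\pi|\delta(x+n)|^2}$ is precisely the Riemann sum of the integrable, piecewise-monotone function $y\mapsto e^{-\pi|y|^2}$ for the cubic mesh of side $\delta$ with tags $\delta(x+n)$; the rapid Gaussian decay makes the tails uniformly negligible and continuity controls the bulk, so $\Phi_\varepsilon(x)\to\int_{\mathbb{R}^d}e^{-\pi|y|^2}\,dy=1$, with the same comparison giving the uniform bound.

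The only genuinely delicate point is the passage $\Phi_\varepsilon\to 1$ with control uniform in $x$ and in $\varepsilon\le 1$; everything else is bookkeeping with Tonelli and dominated convergence. Via Poisson summation this delicacy is absorbed entirely into the standard justification of the summation formula for a Gaussian, after which the convergence is in fact uniform and exponentially fast; via the Riemann-sum route it amounts to uniform convergence of Gaussian Riemann sums over shrinking cubic grids, independently of the tags. I would present the Poisson-summation argument as the main line and record the Riemann-sum computation as a self-contained alternative.
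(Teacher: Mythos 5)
Your proof is correct, and there is no conflict with the paper's argument because the paper does not prove this lemma at all: it states it with a direct citation to Equation~3.6.10 in Grafakos, where the identity is established as a step in the transference-of-multipliers theory. Your route — fold $\mathbb{R}^d$ onto a fundamental cube $Q$, collect the Gaussian weights into the theta function $\Phi_\varepsilon(x)=\varepsilon^{d/2}\sum_{n}e^{-\varepsilon\pi|x+n|^2}$, and then invoke Poisson summation for the Schwartz function $\varepsilon^{d/2}e^{-\varepsilon\pi|\cdot|^2}$ to read off $\Phi_\varepsilon(x)=\sum_m e^{-\pi|m|^2/\varepsilon}e^{2\pi i m\cdot x}$ — is a clean, essentially standard way to see this. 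The Fourier transform normalization is right, the bound $\|\Phi_\varepsilon-1\|_{L^\infty}\lesssim e^{-\pi/\varepsilon}$ for $0<\varepsilon\le1$ follows from the elementary inequality you state, and the uniform bound $\|\Phi_\varepsilon\|_{L^\infty}\le\sum_m e^{-\pi|m|^2}$ legitimizes the interchange needed in the folding step (indeed, boundedness of $g$ is automatic from continuity plus periodicity, so the Tonelli and dominated-convergence appeals are justified). The Riemann-sum alternative you sketch after substituting $\delta=\sqrt{\varepsilon}$ is also a valid elementary substitute that avoids Poisson summation. Either version would serve as a complete proof of the cited identity.
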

	\begin{definition}
		A bounded function $b$ on $\R^d$ is called \textbf{regulated} at a point $\xi_{0} \in \R^{d}$ if $$\lim_{\varepsilon \to 0} \frac{1}{\varepsilon^{d}} \int_{|\xi| \leq \varepsilon} (b(\xi_{0}-\xi)-b(\xi_{0})) \, d\xi=0.$$ 
	\end{definition}
	To establish Proposition~\ref{transference}, we adapt the techniques used in the proof of the transference principle for the diagonal case $r = \tilde{r}$ as presented in \cite{grafakos2008classical}, making the necessary adjustments to accommodate our specific setting.  
	\begin{proposition}[transference principle]\label{transference} Let $1<r,\tilde{r}< \infty.$
		Let $T^{\mathbb{R}}$ be a bounded operator on $L_{x}^{r}L_{y}^{\Tilde{r}}(\mathbb{R}^{d-k} \times \mathbb{R}^{k})$ whose multiplier is $b(\xi,\eta)$ and let $T^{\mathbb{T}}$ be an operator on $L_{x}^{r}L_{y}^{\Tilde{r}}(\mathbb{T}^{d-k} \times \mathbb{T}^{k})$ whose multiplier is the sequence $\{b(m,n)\}_{(m,n) \in \Z^{d}}.$ Assume that $b(\xi,\eta)$ is regulated at every point $(m,n) \in \Z^d.$ Then $T^{\mathbb{T}}$ is bounded operator on $L_{x}^{r}L_{y}^{\Tilde{r}}(\mathbb{T}^{d-k} \times \mathbb{T}^{k})$ and $$\|T^{\mathbb{T}}\|_{L_{x}^{r}L_{y}^{\Tilde{r}} \to L_{x}^{r}L_{y}^{\Tilde{r}}} \leq \|T^{\mathbb{R}}\|_{L_{x}^{r}L_{y}^{\Tilde{r}} \to L_{x}^{r}L_{y}^{\Tilde{r}}}.$$ 
	\end{proposition}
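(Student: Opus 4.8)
The plan is to run the classical transference argument of \cite{grafakos2008classical} for the diagonal case $r=\tilde r$, the genuinely new feature being an \emph{iterated} application of Lemma~\ref{periodic identity}, once in each group of variables. Since $\{b(m,n)\}$ is a bounded sequence, $T^{\T}$ maps trigonometric polynomials on $\T^{d-k}\times\T^k$ to trigonometric polynomials, and these are dense in $L^r_xL^{\tilde r}_y(\T^{d-k}\times\T^k)$ for $1<r,\tilde r<\infty$; hence it suffices to prove $\|T^{\T}P\|_{L^r_xL^{\tilde r}_y(\T^{d-k}\times\T^k)}\le\|T^{\R}\|_{L^r_xL^{\tilde r}_y\to L^r_xL^{\tilde r}_y}\|P\|_{L^r_xL^{\tilde r}_y(\T^{d-k}\times\T^k)}$ for every trigonometric polynomial $P(x,y)=\sum_{(m,n)\in F}c_{m,n}e^{2\pi i(x\cdot m+y\cdot n)}$, $F$ finite. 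For $\varepsilon>0$ set $g_\varepsilon(x,y):=P(x,y)e^{-\pi\varepsilon|x|^2}e^{-\pi\varepsilon|y|^2}$, a Schwartz function on $\R^{d-k}\times\R^k$; the hypothesis on $T^{\R}$ then gives $\|T^{\R}g_\varepsilon\|_{L^r_xL^{\tilde r}_y(\R^{d-k}\times\R^k)}\le\|T^{\R}\|\,\|g_\varepsilon\|_{L^r_xL^{\tilde r}_y(\R^{d-k}\times\R^k)}$, while $T^{\R}g_\varepsilon=(b\,\widehat{g_\varepsilon})^{\vee}$ with $\widehat{g_\varepsilon}(\xi_1,\xi_2)=\varepsilon^{-d/2}\sum_{(m,n)\in F}c_{m,n}e^{-\pi(|\xi_1-m|^2+|\xi_2-n|^2)/\varepsilon}$. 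The whole proof reduces to showing that, after multiplication by $\varepsilon^{\beta}$ with $\beta:=\frac{d-k}{2}+\frac{kr}{2\tilde r}$, both sides of this Euclidean inequality converge as $\varepsilon\to0^{+}$ to the \emph{same} positive constant $C=C(d,k,r,\tilde r)$, independent of $P$, times the corresponding torus norm raised to the power $r$; dividing by $C$ then yields the assertion.

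For the right-hand side, which is where the mixed norm plays a role, I would write $\int_{\R^k}|g_\varepsilon(x,y)|^{\tilde r}\,dy=e^{-\pi\varepsilon\tilde r|x|^2}\int_{\R^k}|P(x,y)|^{\tilde r}e^{-\pi\varepsilon\tilde r|y|^2}\,dy$ and apply Lemma~\ref{periodic identity} in the $y$-variables (dimension $k$, Gaussian parameter $\varepsilon\tilde r$) to the continuous $1$-periodic function $y\mapsto|P(x,y)|^{\tilde r}$, uniformly in the parameter $x$ (legitimate since $(x,y)\mapsto|P(x,y)|^{\tilde r}$ is smooth and periodic on the compact torus), obtaining $\int_{\R^k}|g_\varepsilon(x,y)|^{\tilde r}\,dy\sim(\varepsilon\tilde r)^{-k/2}e^{-\pi\varepsilon\tilde r|x|^2}\|P(x,\cdot)\|^{\tilde r}_{L^{\tilde r}_y(\T^k)}$. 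Raising to the power $r/\tilde r$, integrating in $x$, and applying Lemma~\ref{periodic identity} a second time, now in the $x$-variables (dimension $d-k$, parameter $\varepsilon r$), to the continuous $1$-periodic function $x\mapsto\|P(x,\cdot)\|^{r}_{L^{\tilde r}_y(\T^k)}$ gives $\varepsilon^{\beta}\|g_\varepsilon\|^{r}_{L^r_xL^{\tilde r}_y(\R^{d-k}\times\R^k)}\to C\,\|P\|^{r}_{L^r_xL^{\tilde r}_y(\T^{d-k}\times\T^k)}$ with $C=\tilde r^{-kr/(2\tilde r)}r^{-(d-k)/2}$.

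For the left-hand side I would first show $T^{\R}g_\varepsilon(z)\to(T^{\T}P)(z)$ locally uniformly in $z=(x,y)$. Substituting $\xi=(m,n)+\sqrt{\varepsilon}\,\zeta$, the $(m,n)$-th summand of $T^{\R}g_\varepsilon(z)$ equals $c_{m,n}e^{2\pi iz\cdot(m,n)}\int b((m,n)+\sqrt{\varepsilon}\,\zeta)e^{-\pi|\zeta|^2}e^{2\pi i\sqrt{\varepsilon}\,z\cdot\zeta}\,d\zeta$; writing $e^{2\pi i\sqrt{\varepsilon}z\cdot\zeta}=1+(e^{2\pi i\sqrt{\varepsilon}z\cdot\zeta}-1)$ and noting that the second summand contributes $o(1)$ uniformly on bounded $z$-sets (the Gaussian concentrates at the origin), matters reduce to $\int b((m,n)+\sqrt{\varepsilon}\,\zeta)e^{-\pi|\zeta|^2}\,d\zeta\to b(m,n)$, which is exactly the regulated hypothesis of $b$ at $(m,n)$: the Gaussian $\varepsilon^{-d/2}e^{-\pi|\cdot|^2/\varepsilon}$ is a radially decreasing approximate identity, hence a superposition of normalized indicators of balls, so the limit follows from the regulated condition together with dominated convergence. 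I would then upgrade this to $\varepsilon^{\beta}\|T^{\R}g_\varepsilon\|^{r}_{L^r_xL^{\tilde r}_y(\R^{d-k}\times\R^k)}\to C\,\|T^{\T}P\|^{r}_{L^r_xL^{\tilde r}_y(\T^{d-k}\times\T^k)}$ as in \cite{grafakos2008classical}: decompose $\R^d$ into the unit cubes $\ell+[0,1)^d$; on cubes with $|\ell|\lesssim\varepsilon^{-1/2}$ the Gaussian envelope of $g_\varepsilon$ is essentially constant, so that, using translation invariance of $T^{\R}$ and periodicity of $P$, $T^{\R}g_\varepsilon$ restricted there is, up to a controlled error, $(T^{\T}P)$ times that constant, while the cubes with $|\ell|\gtrsim\varepsilon^{-1/2}$ contribute negligibly after multiplication by $\varepsilon^{\beta}$; the surviving cube sums are Riemann sums that converge, by the computation of the previous paragraph applied to the trigonometric polynomial $T^{\T}P$ in place of $P$, to $C\,\|T^{\T}P\|^{r}_{L^r_xL^{\tilde r}_y(\T^{d-k}\times\T^k)}$. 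Combining the two limits with the Euclidean bound and cancelling $C$ gives the inequality for trigonometric polynomials, and density finishes the proof.

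The step I expect to be the main obstacle is this last upgrade, from locally uniform pointwise convergence of $T^{\R}g_\varepsilon$ to the normalized mixed-norm convergence. For a general bounded regulated multiplier the kernel $\check b$ is only a tempered distribution, so the behaviour of $T^{\R}g_\varepsilon$ on the far region $|z|\gtrsim\varepsilon^{-1/2}$ cannot be extracted from kernel decay and must be handled by the robust cube-by-cube argument of \cite{grafakos2008classical}; what needs to be checked carefully is that that argument remains compatible with the iterated (rather than single) Gaussian integration dictated by the mixed norm---this is precisely where the exponent $\beta=\frac{d-k}{2}+\frac{kr}{2\tilde r}$ and the restriction $1<r,\tilde r<\infty$ enter. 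A secondary and routine point is the uniformity in $x$ of the inner application of Lemma~\ref{periodic identity}, which is immediate from the smoothness and periodicity of $P$.
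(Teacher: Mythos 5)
Your proposal departs from the paper in a crucial way: the paper runs the argument by \emph{duality}, whereas you try to pass to the limit in the mixed norm of $T^{\R}g_\varepsilon$ directly. Concretely, the paper writes the pairing $\int_{\T^d}(T^{\T}P)\overline{Q}$ as a limit of Euclidean pairings
$\lim_{\varepsilon_1\to0}\lim_{\varepsilon_2\to0}\varepsilon_1^{(d-k)/2}\varepsilon_2^{k/2}\int_{\R^d}T^{\R}\bigl(P\,\mathbf L^x_{\varepsilon_1/r}\mathbf L^y_{\varepsilon_2/\tilde r}\bigr)\,\overline{Q\,\mathbf L^x_{\varepsilon_1/r'}\mathbf L^y_{\varepsilon_2/\tilde r'}}$,
splitting the Gaussian weights between $P$ and $Q$ with exponents $1/r,1/r'$ (and $1/\tilde r,1/\tilde r'$), then applies H\"older, the Euclidean bound for $T^{\R}$, and the iterated periodic-identity limit (Lemma~\ref{periodic identity}) to each factor. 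The whole point of this bilinear formulation is that one never needs to know where $T^{\R}g_\varepsilon$ lives: only the pairing against the weighted $Q$ is controlled.

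The step you yourself flag as the main obstacle is, I believe, a genuine gap rather than a technicality. Your plan requires
$\varepsilon^{\beta}\|T^{\R}g_\varepsilon\|^r_{L^r_xL^{\tilde r}_y(\R^d)}\to C\,\|T^{\T}P\|^r_{L^r_xL^{\tilde r}_y(\T^d)}$,
but the locally uniform convergence $T^{\R}g_\varepsilon\to T^{\T}P$ contributes only $O(1)$ to the Euclidean norm and hence vanishes under the $\varepsilon^\beta$ normalization; the surviving mass comes from the region $|z|\lesssim\varepsilon^{-1/2}$, which is not a fixed compact set. The cube-by-cube reduction you invoke is not the argument in \cite{grafakos2008classical} (which is the duality argument), and it does not go through for an arbitrary bounded regulated multiplier: to assert that on the cube $\ell+[0,1)^d$ one has $T^{\R}g_\varepsilon\approx (T^{\T}P)(\cdot-\ell)\,G_\varepsilon(\ell)$ ``up to a controlled error,'' one would need quantitative decay of the kernel $\check b$ to localize the nonlocal operator $T^{\R}$, and no such information is available (indeed $T^{\R}P$ is not even defined, $P\notin L^p(\R^d)$). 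In fact each piece $h_m^\varepsilon=\check b(\cdot+m)*G_\varepsilon$ satisfies only $|h_m^\varepsilon|\le\|b\|_\infty$ with no pointwise decay, so the claimed envelope structure $T^{\R}g_\varepsilon\approx(T^{\T}P)\cdot e^{-\pi\varepsilon|z|^2}$ is not justified. The duality argument is precisely how one avoids ever having to prove such a norm asymptotic. (Your computation of the right-hand side limit $\varepsilon^\beta\|g_\varepsilon\|^r\to C\|P\|^r$ with $\beta=\tfrac{d-k}{2}+\tfrac{kr}{2\tilde r}$ is fine, and the pointwise convergence via the regulated condition is also correct; the problem is only in upgrading the left-hand side.)
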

	\begin{remark}[proof strategy] The following steps are in order    
		\begin{itemize}
			\item[-] We will prove the inequality for trigonometric polynomials because they are dense.
			
			\item[-] We use the duality argument to get the operator norm.
			
			\item[-] Extend periodic functions to Euclidean space using Gaussians mollifiers so that the boundedness of Euclidean multiplier $T^{\mathbb{R}}$ can be applied. We need regulated condition on the multiplier, so that we can apply \cite[Lemma 3.6.8]{grafakos2008classical}.
			
			\item[-] Use a limiting identity that relates the torus operator 
			$T^{\mathbb{T}}$ to the Euclidean operator $T^{\mathbb{R}}$.
			
			\item[-] And then apply H\"older's inequality and the boundedness of $T^{\mathbb{R}}$ to control the integral.
			
			\item[-] Use Lemma \ref{periodic identity} to pass to the limit and remove the Gaussian weights.
		\end{itemize}
	\end{remark}

	\begin{proof}[{\bf Proof of Proposition \ref{transference}} ]
		Let $P \in L_{x}^{r}L_{y}^{\tilde{r}}(\T^{d-k} \times \T^{k})$  and $Q \in L_{x}^{r'}L_{y}^{\tilde{r}'}(\T^{d-k} \times \T^{k})$ be two trigonometric polynomials on $\T^{d-k} \times \T^{k}.$ Then, by duality and using (\cite[Lemma 3.6.8]{grafakos2008classical}) (under the regulated assumption), we have
		\begin{align*}
			\left| 
			\int_{\T^{d-k} \times \T^{k}} 
			(T^{\T}P)(x,y)\, \overline{Q(x,y)} \, dy\, dx 
			\right|
			&=
			\left|
			\lim_{\varepsilon_{1} \to 0} 
			\lim_{\varepsilon_{2} \to 0} 
			\varepsilon_{1}^{\frac{d-k}{2}}
			\varepsilon_{2}^{\frac{k}{2}}
			\int_{\R^{d-k} \times \R^{k}}
			T^{\R}\!\big( 
			P\, \mathbf{L}_{\varepsilon_{1}/r}^{x} 
			\mathbf{L}_{\varepsilon_{2}/\tilde{r}}^{y}
			\big)(x,y)\,
			\right. \nonumber\\
			&\qquad\left.
			\times\;
			\overline{
				Q(x,y)\,
				\mathbf{L}_{\varepsilon_{1}/r'}^{x}(x)\,
				\mathbf{L}_{\varepsilon_{2}/\tilde{r}'}^{y}(y)
			}
			\, dy\, dx
			\right|,
		\end{align*}
		where for all $y \in \R^{k},$ $\mathbf{L}_{\varepsilon}^{x}(x,y)=e^{-\pi \varepsilon |x|^2}$ and  for all $x \in \R^{d-k},$ $\mathbf{L}_{\varepsilon}^{y}(x,y)=e^{-\pi \varepsilon |y|^2}.$ Then by  H\"older's inequality, we get  
		\begin{align*}
			&  \left| \int_{\T^{d-k} \times \T^k} (T^{\T}P)(x,y) \, \overline{Q(x,y)} \, dy \, dx \right| \\& \leq \limsup_{\varepsilon_1 \to 0} \limsup_{\varepsilon_2 \to 0} {\varepsilon}_{1}^{\frac{d-k}{2}} {\varepsilon}_{2}^{\frac{k}{2}}
			\| T^{\R}(P \mathbf{L}_{\varepsilon_1 / r}^{x} \mathbf{L}_{\varepsilon_2 / \tilde{r}}^{y}) \|_{L_x^{r} L_y^{\tilde{r}}} 
			\| Q \mathbf{L}_{\varepsilon_1 / r'}^{x} \mathbf{L}_{\varepsilon_2 / \tilde{r}'}^{y} \|_{L_x^{r'} L_y^{\tilde{r}'}} \\
			& \leq \| T^{\R} \|_{L_x^{r} L_y^{\tilde{r}} \to L_x^{r} L_y^{\tilde{r}}} 
			\limsup_{\varepsilon_1 \to 0} \limsup_{\varepsilon_2 \to 0} {\varepsilon}_{1}^{\frac{d-k}{2}} {\varepsilon}_{2}^{\frac{k}{2}}
			\| P \mathbf{L}_{\varepsilon_1 / r}^{x} \mathbf{L}_{\varepsilon_2 / \tilde{r}}^{y} \|_{L_x^{r} L_y^{\tilde{r}}} 
			\| Q \mathbf{L}_{\varepsilon_1 / r'}^{x} \mathbf{L}_{\varepsilon_2 / \tilde{r}'}^{y} \|_{L_x^{r'} L_y^{\tilde{r}'}} \\
			&= \| T^{\R} \|_{L_x^{r} L_y^{\tilde{r}} \to L_x^{r} L_y^{\tilde{r}}} \| P \|_{L_x^{r} L_y^{\tilde{r}}(\T^{d-k} \times \T^{k})} \| Q \|_{L_x^{r'} L_y^{\tilde{r}'}(\T^{d-k} \times \T^{k})},
		\end{align*}
		where in the last equality, we  used Lemma \ref{periodic identity}. Now taking   supremum over all trigonometric polynomials $Q \in L_{x}^{r'}L_{y}^{\tilde{r}'}(\T^{d-k} \times \T^{k})$ with norm at most $1$, we obtain that $T^{\T}$ is a bounded operator on $L_{x}^{r}L_{y}^{\Tilde{r}}(\mathbb{T}^{d-k} \times \mathbb{T}^{k})$ with $$\|T^{\mathbb{T}}\|_{L_{x}^{r}L_{y}^{\Tilde{r}} \to L_{x}^{r}L_{y}^{\Tilde{r}}} \leq \|T^{\mathbb{R}}\|_{L_{x}^{r}L_{y}^{\Tilde{r}} \to L_{x}^{r}L_{y}^{\Tilde{r}}}.$$
		
	\end{proof}
	
	\begin{lemma}[Jensen’s inequality]\label{Jensen inequality}
		Let $(\Omega, \mathcal{A}, \mu)$ be a probability space. Let
		$f : \Omega \to \mathbb{R}$ be a $\mu$-measurable function, and let
		$\mathbf{G} : \mathbb{R} \to \mathbb{R}$ be a convex function. Then
		\[
		\mathbf{G}\!\left( \int_{\Omega} f \, d\mu \right)
		\le
		\int_{\Omega} \mathbf{G} \circ f \, d\mu .
		\]
		
	\end{lemma}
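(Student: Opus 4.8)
The plan is to run the classical supporting-line argument. Write $m = \int_{\Omega} f \, d\mu$, which is a well-defined real number since $f$ is assumed $\mu$-integrable; note also that $\mathbf{G}$, being convex on all of $\mathbb{R}$, is continuous, hence $\mathbf{G}\circ f$ is $\mu$-measurable, so the right-hand side of the asserted inequality is meaningful (a priori with value in $(-\infty,+\infty]$).

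First I would establish the existence of a supporting line to the graph of $\mathbf{G}$ at the point $m$: there is some $a \in \mathbb{R}$ with
\[
\mathbf{G}(t) \ge \mathbf{G}(m) + a\,(t-m) \qquad \text{for all } t \in \mathbb{R}.
\]
This follows from convexity via the monotonicity of difference quotients: for $s < m < t$ one has $\frac{\mathbf{G}(m)-\mathbf{G}(s)}{m-s} \le \frac{\mathbf{G}(t)-\mathbf{G}(m)}{t-m}$, so the one-sided derivatives $\mathbf{G}'_-(m)$ and $\mathbf{G}'_+(m)$ exist and satisfy $\mathbf{G}'_-(m) \le \mathbf{G}'_+(m)$; any $a$ in this interval works.

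Next I would substitute $t = f(\omega)$ into the supporting-line inequality to get the pointwise bound $\mathbf{G}(f(\omega)) \ge \mathbf{G}(m) + a\,(f(\omega)-m)$ for every $\omega \in \Omega$. Since the right-hand side is $\mu$-integrable, the negative part of $\mathbf{G}\circ f$ is integrable and $\int_{\Omega}\mathbf{G}\circ f\,d\mu$ is well-defined in $(-\infty,+\infty]$; if it equals $+\infty$ the conclusion is trivial, so assume it is finite. Integrating both sides over $\Omega$ and using that $\mu$ is a probability measure, so $\int_{\Omega} 1 \, d\mu = 1$ and $\int_{\Omega}(f-m)\,d\mu = m - m = 0$, yields
\[
\int_{\Omega} \mathbf{G}\circ f \, d\mu \;\ge\; \mathbf{G}(m) + a\cdot 0 \;=\; \mathbf{G}(m) \;=\; \mathbf{G}\!\left( \int_{\Omega} f \, d\mu \right),
\]
which is exactly the claim. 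I do not expect any genuine obstacle here; the only point that deserves a line of care is the existence of the supporting line, which rests solely on the monotonicity of the difference quotients of a convex function on $\mathbb{R}$.
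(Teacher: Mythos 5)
Your proof is correct and is the standard supporting-line argument for Jensen's inequality; the paper itself states this lemma without proof, treating it as a classical fact. Nothing to compare against, but your handling of the measurability and integrability points (continuity of a finite convex function on $\mathbb{R}$, integrability of the negative part of $\mathbf{G}\circ f$ via the linear lower bound) is careful and complete.
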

	\begin{lemma}[Khintchine inequality for Rademacher signs (see Appendix C of \cite{grafakos2008classical})]\label{Khintchine inequality}
		Let $\varepsilon=(\varepsilon_{j})_{j \in \mathbb{Z}}$ be a sequence of independent Rademacher random variables satisfying
		\[
		\mathbb{P}(\varepsilon_{j}=1)=\mathbb{P}(\varepsilon_{j}=-1)=\tfrac12 .
		\]
		Let $(a_{j_1,\cdots,j_n})_{(j_1,\cdots,j_n) \in \mathbb{Z}^n}$ be any square-summable sequence of complex numbers indexed by $n$ variables. Define
		\[
		F_{n}(t_1,\cdots,t_n)
		=
		\sum_{j_{1} \in \mathbb{Z}} \cdots \sum_{j_{n} \in \mathbb{Z}}
		a_{j_1,\cdots,j_n}\,
		\varepsilon_{j_1}(t_1)\varepsilon_{j_2}(t_2)\cdots \varepsilon_{j_n}(t_n),
		\]
		which is a function on $[0,1]^n$.
		
		Then, for every $q \in [1,\infty)$, there exist constants $A_q,B_q>0$, depending only on $q$, such that
		\[
		A_{q}\left( \sum_{j_{1} \in \mathbb{Z}} \cdots \sum_{j_{n} \in \mathbb{Z}}
		|a_{j_1,\cdots,j_n}|^2 \right)^{1/2}
		\;\le\;
		\left\| F_{n} \right\|_{L_{t_1,\cdots,t_n}^{q}([0,1]^n)}
		\;\le\;
		B_{q}\left( \sum_{j_{1} \in \mathbb{Z}} \cdots \sum_{j_{n} \in \mathbb{Z}}
		|a_{j_1,\cdots,j_n}|^2 \right)^{1/2}.
		\]
	\end{lemma}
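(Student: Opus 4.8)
The plan is to reduce the $n$-variable estimate to the one-dimensional Khintchine inequality --- the case $n=1$, which is classical and is exactly what is established in Appendix~C of \cite{grafakos2008classical} --- by an induction on $n$ that peels off one Rademacher variable at a time using Fubini's theorem. Two structural observations drive the argument. First, it is enough to treat finite sums and then pass to the $L^2$-limit (by Fatou's lemma together with the boundedness of the map $a\mapsto F_n$), since the product system $\{\varepsilon_{j_1}(t_1)\cdots\varepsilon_{j_n}(t_n)\}_{(j_1,\dots,j_n)\in\Z^n}$ is orthonormal in $L^2([0,1]^n)$; this orthonormality also yields the exact identity
\[
\|F_n\|_{L^2([0,1]^n)}=\Big(\sum_{j_1,\dots,j_n}|a_{j_1,\dots,j_n}|^2\Big)^{1/2}.
\]
Second, on the probability space $[0,1]^n$ the $L^q$-norms are monotone in $q$, so $\|F_n\|_{L^q}\le\|F_n\|_{L^2}$ for $q\le 2$ and $\|F_n\|_{L^q}\ge\|F_n\|_{L^2}$ for $q\ge 2$; combined with the displayed identity this already gives, with constant $1$, the upper bound when $q\le 2$ and the lower bound when $q\ge 2$. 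Hence only two genuine estimates remain: the upper bound for $q\ge 2$ and the lower bound for $1\le q<2$.

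For the upper bound with $q\ge 2$ I would argue by induction on $n$. Freezing $t_2,\dots,t_n$, write $F_n(t_1,\dots,t_n)=\sum_{j_1\in\Z}b_{j_1}(t_2,\dots,t_n)\,\varepsilon_{j_1}(t_1)$ with $b_{j_1}(t_2,\dots,t_n)=\sum_{j_2,\dots,j_n}a_{j_1,\dots,j_n}\prod_{i=2}^{n}\varepsilon_{j_i}(t_i)$; the one-dimensional inequality in the variable $t_1$ gives $\|F_n\|_{L^q_{t_1}}\le B_q\big(\sum_{j_1}|b_{j_1}|^2\big)^{1/2}$ for each fixed $(t_2,\dots,t_n)$. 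Taking the $L^q$-norm in the remaining $n-1$ variables and applying Minkowski's inequality in $L^{q/2}$ (legitimate since $q/2\ge 1$) yields
\[
\|F_n\|_{L^q([0,1]^n)}\le B_q\Big\|\sum_{j_1}|b_{j_1}|^2\Big\|_{L^{q/2}_{t_2,\dots,t_n}}^{1/2}\le B_q\Big(\sum_{j_1}\|b_{j_1}\|_{L^q_{t_2,\dots,t_n}}^2\Big)^{1/2},
\]
and the induction hypothesis applied to each $\|b_{j_1}\|_{L^q}$ (an $(n-1)$-variable Rademacher sum) closes the estimate. For the lower bound with $1\le q<2$ I would use the standard interpolation trick: fix an auxiliary exponent $q_0=4$ and $\theta\in(0,1)$ with $\tfrac12=\tfrac{1-\theta}{q}+\tfrac{\theta}{q_0}$; log-convexity of $L^p$-norms gives $\|F_n\|_{L^2}\le\|F_n\|_{L^q}^{1-\theta}\|F_n\|_{L^{q_0}}^{\theta}$, and bounding $\|F_n\|_{L^{q_0}}$ by the upper bound just proved, together with the $L^2$-identity, rearranges into $\|F_n\|_{L^q}\ge A_q\|F_n\|_{L^2}$ for a suitable $A_q>0$.

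The step I expect to require the most care --- really the only subtlety --- is that the product system $\{\varepsilon_{j_1}(t_1)\cdots\varepsilon_{j_n}(t_n)\}$ is \emph{not} a sequence of independent Rademacher variables, so the neat one-dimensional argument (via the subgaussian bound $\mathbb{E}\,e^{\lambda G}\le e^{\lambda^{2}\|a\|_{2}^{2}/2}$) does not transfer verbatim; it is precisely the Fubini iteration, with the Minkowski step, that circumvents this, at the price of splitting into the two regimes $q\ge 2$ and $q<2$. One caveat worth flagging: the iteration produces constants depending on $n$ as well as on $q$, which is harmless for every use of the lemma in this paper since there $n$ is a fixed small integer; genuinely $n$-independent constants could be obtained instead from hypercontractivity (Bonami--Beckner) of the Rademacher noise operator, or by estimating the even moments $\mathbb{E}[F_n^{2m}]$ combinatorially for $q=2m$ and interpolating.
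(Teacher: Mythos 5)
The paper does not prove this lemma; it is quoted directly from Appendix C of Grafakos's \emph{Classical Fourier Analysis}, where the several-variable version (Appendix C.4) is deduced from the one-variable Khintchine inequality by exactly the Fubini/iteration argument you describe. Your write-up is correct: the $L^2$-orthonormality of the product system gives the exact $L^2$ identity and, via monotonicity of $L^q$ norms on a probability space, the upper bound for $q\le 2$ and the lower bound for $q\ge 2$ with constant $1$; the upper bound for $q\ge 2$ follows from freezing $t_1$, applying 1D Khintchine, using Minkowski in $L^{q/2}$, and inducting on $n$; and the lower bound for $1\le q<2$ follows by interpolating $\|F_n\|_{L^2}\le\|F_n\|_{L^q}^{1-\theta}\|F_n\|_{L^{q_0}}^{\theta}$ against the already-established upper bound at $q_0>2$. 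Your caveat about the constants being $n$-dependent is accurate but harmless here, since the paper invokes the lemma only with $n=1$ and $n=2$ (in Proposition \ref{mixed LP} and Theorem \ref{vector LP}); the cleaner formulation with $n$-free constants (hypercontractivity) would be needed only if $n$ were allowed to grow.
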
 
	Now we are in a position to prove the Littlewood-Paley theorem on mixed Lebesgue space over the torus. 
	\begin{proposition}[Littlewood-Paley theorem]\label{mixed LP}
		Let $\{\psi_{N}\}_{N \in 2^{\mathbb{N}}}$ be the family of functions constructed in \eqref{psi N}  with smooth cutoff function $\phi$ and  $P_{N}$'s are the  Littlewood-Paley projectors on torus defined   in \eqref{lpope}. Then, for $1<r, \widetilde{r}<\infty$, we  have 
		\begin{align}\label{norm in terms of j}
			\left\| \left(\sum_{N \in 2^\mathbb{N}} \left|P_N f\right|^2\right)^{1 / 2}\right\|_{L_x^r L_y^{\tilde{r}}(\mathbb{T}^{d-k} \times \mathbb{T}^{k})}  \sim \|f\|_{L_x^r L_y^{\tilde{r}}(\mathbb{T}^{d-k} \times \mathbb{T}^{k})} .
		\end{align}
	\end{proposition}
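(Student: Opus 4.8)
The plan is to prove \eqref{norm in terms of j} by the classical Rademacher‑randomization method, transplanted to the mixed‑norm torus through the transference principle. The central ingredient is the \emph{uniform randomized multiplier bound}: for every sequence of signs $(\varepsilon_N)_{N\in 2^{\mathbb N}}$ one has
$\big\|\sum_N \varepsilon_N P_N f\big\|_{L_x^r L_y^{\tilde r}(\mathbb T^{d-k}\times\mathbb T^k)}\le C\,\|f\|_{L_x^r L_y^{\tilde r}(\mathbb T^{d-k}\times\mathbb T^k)}$
with $C$ independent of $(\varepsilon_N)$. Granting this, the lower Khintchine bound (Lemma \ref{Khintchine inequality}) together with Minkowski's integral inequality yields the direction ``$\lesssim$'' of \eqref{norm in terms of j}, while a routine duality argument yields ``$\gtrsim$''.

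To obtain the uniform bound I would regard the cut‑offs $\psi_N$ of \eqref{psi N} (built from the smooth $\phi$ of \eqref{phi cutoff}) as functions on $\mathbb R^d$ and set $m_\varepsilon(\xi)=\sum_N \varepsilon_N \psi_N(\xi)$, so that $\sum_N\varepsilon_N P_N$ is exactly the torus Fourier multiplier with symbol $m_\varepsilon|_{\mathbb Z^d}$ (this is immediate from \eqref{lpope}). The point is that $m_\varepsilon$ satisfies the H\"ormander--Mikhlin condition \eqref{Hormander condition} uniformly in $\varepsilon$: for $\xi\neq 0$ at most three of the annular cut‑offs $\psi_N$ are supported near $\xi$, and on $\operatorname{supp}\psi_N$ one has $|\partial^\alpha\psi_N(\xi)|\lesssim N^{-|\alpha|}\lesssim|\xi|^{-|\alpha|}$, so summing the boundedly many active terms gives $|\partial^\alpha m_\varepsilon(\xi)|\lesssim|\xi|^{-|\alpha|}$ for $|\alpha|\le\lfloor d/2\rfloor+1$, with constant independent of $\varepsilon$; near the origin only $\psi_1=\phi$ is active, so $m_\varepsilon\in C^\infty(\mathbb R^d)$ and hence is regulated at every lattice point. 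Then Lemma \ref{Hormander-mikhlm} gives boundedness of the Euclidean operator $T_\varepsilon^{\mathbb R}$ on $L_x^rL_y^{\tilde r}(\mathbb R^{d-k}\times\mathbb R^k)$ uniformly in $\varepsilon$, and Proposition \ref{transference} transfers this to the torus. One first argues for trigonometric polynomials $f$, where the sum over $N$ is finite, and then passes to general $f$ by density since $1<r,\tilde r<\infty$.

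For ``$\lesssim$'', let $(\varepsilon_N(\omega))$ be the Rademacher functions on $[0,1]$ and put $g(x,y,\omega)=\sum_N\varepsilon_N(\omega)P_N f(x,y)$. The lower Khintchine bound with exponent $1$ gives $\big(\sum_N|P_N f(x,y)|^2\big)^{1/2}\lesssim\|g(x,y,\cdot)\|_{L^1_\omega}$ pointwise, so the left side of \eqref{norm in terms of j} is $\lesssim\big\|\,\|g\|_{L^1_\omega}\,\big\|_{L_x^rL_y^{\tilde r}}$. Applying Minkowski's integral inequality first in the variables $(\omega,y)$ and then in $(\omega,x)$ — both legitimate since $1\le\tilde r$ and $1\le r$ — pulls the $L^1_\omega$ norm to the outermost position, bounding this by $\big\|\,\|g(\cdot,\cdot,\omega)\|_{L_x^rL_y^{\tilde r}}\,\big\|_{L^1_\omega}=\big\|\,\|\sum_N\varepsilon_N(\omega)P_N f\|_{L_x^rL_y^{\tilde r}}\,\big\|_{L^1_\omega}\le C\|f\|_{L_x^rL_y^{\tilde r}}$ by the uniform bound. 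For ``$\gtrsim$'', write $f=\sum_N P_N f$, insert fattened projectors $\widetilde P_N$ (finite sums of $P_M$'s with $\widetilde P_N P_N=P_N$), and test against $g\in L_x^{r'}L_y^{\tilde r'}$: Cauchy--Schwarz in $N$ followed by the mixed‑norm H\"older inequality gives $|\langle f,g\rangle|\le\big\|(\sum_N|P_N f|^2)^{1/2}\big\|_{L_x^rL_y^{\tilde r}}\,\big\|(\sum_N|\widetilde P_N g|^2)^{1/2}\big\|_{L_x^{r'}L_y^{\tilde r'}}$, and the last factor is $\lesssim\|g\|_{L_x^{r'}L_y^{\tilde r'}}$ by the ``$\lesssim$'' direction applied with the conjugate exponents (which again lie in $(1,\infty)$). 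Taking the supremum over $\|g\|_{L_x^{r'}L_y^{\tilde r'}}\le 1$ finishes the proof.

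The step I expect to require the most care is the randomization in the mixed‑norm setting: because $r$ and $\tilde r$ need not be comparable, one cannot use Fubini to interchange the $\omega$‑average with the two spatial integrations, so it is essential to run Khintchine with a small exponent (here $1$, or any $s\le\min(r,\tilde r)$) precisely so that Minkowski's integral inequality can carry that norm past both $L_y^{\tilde r}$ and $L_x^r$. By contrast the $\varepsilon$‑uniform H\"ormander--Mikhlin estimate is routine once the bounded overlap of the $\psi_N$ is noted, and the transference step is a direct citation of Proposition \ref{transference}.
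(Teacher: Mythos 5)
Your proposal is correct and follows essentially the same route as the paper: Rademacher randomization together with Khintchine's inequality and Minkowski's integral inequality reduce the square function to a randomized multiplier, which is then bounded uniformly in the signs by combining the H\"ormander--Mikhlin theorem (Lemma~\ref{Hormander-mikhlm}) with the transference principle (Proposition~\ref{transference}); the reverse inequality is handled by duality. The one genuine (if minor) improvement over the paper's writeup is your choice of Khintchine exponent: you take exponent $1$, so Minkowski's inequality pulls the $\omega$-average outward past both $L_y^{\tilde r}$ and $L_x^{r}$ without any ordering hypothesis on $r,\tilde r$, whereas the paper applies Khintchine with exponent $\tilde r$ and therefore restricts the writeup to $\tilde r\le r$, relegating the remaining case to ``a similar argument.'' Your version eliminates that case split and is slightly cleaner. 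You also spell out the duality direction explicitly (via fattened projectors and the mixed-norm H\"older inequality) rather than citing it, which is fine and matches the reference the paper invokes.
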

	\begin{remark}[proof strategy] The following steps are in order    
		\begin{itemize}
			\item[-] We introduce Rademacher variables, it allows us to replace the square function by an average of linear combinations.
			
			\item[-] We define the randomized multiplier operator on torus. This operator is simply the sum of the Littlewood--Paley pieces, each multiplied by a random sign.
			
			\item[-] Because of how the Littlewood--Paley cutoff functions are constructed, the multiplier automatically satisfies the usual Euclidean Littlewood--Paley conditions.
			
			\item[-] Apply Lemma \ref{Hormander-mikhlm} (Euclidean multiplier theorem) together with Proposition \ref{transference} (transference principle). This tells us that the same bounds that hold in the Euclidean setting also hold on the torus.
			
			\item[-] Importantly, the operator norm we obtain does not depend on the particular choice of Rademacher variables.  
		\end{itemize}
	\end{remark}
	\begin{proof} We will prove this result in the case $1 < \tilde{r} \leq r < \infty$.
		The other cases follow by a similar argument.
		Let us take $\varepsilon=(\varepsilon_{N})_{N}$ be independent Rademacher random signs with probability $P(\varepsilon_{N}=1)=P(\varepsilon_{N}=-1)=\frac{1}{2}.$ Using Lemma \ref{Khintchine inequality} with $q=\tilde{r},$ $n=1$ and Minkowski's integral inequality, we get
		\begin{align}\label{khintchine application}
			\left\| \left(\sum_{N \in 2^\mathbb{N}} \left|P_N f\right|^2\right)^{1 / 2}\right\|_{L_x^r L_y^{\tilde{r}}} &\lesssim_{\tilde{r}} \Big\|\sum_{N \in 2^\mathbb{N}} \varepsilon_{N}(\cdot) P_{N}f \Big\|_{L_x^r L_y^{\tilde{r}}L_{t}^{\tilde{r}}(\T^{d-k} \times \T^{k} \times [0,1])} \nonumber \\
			&\lesssim_{\tilde{r}} \Big\|\sum_{N \in 2^\mathbb{N}} \varepsilon_{N}(\cdot) P_{N}f \Big\|_{L_{t}^{\tilde{r}}L_x^r L_y^{\tilde{r}}([0,1] \times \T^{d-k} \times \T^{k})}.
		\end{align}
		Let $T^{\T}_{\varepsilon}:L_{x}^{r}L_{y}^{\Tilde{r}} \to L_{x}^{r}L_{y}^{\Tilde{r}}$ be defined by 
		\begin{equation*}
			T^{\T}_{\varepsilon}f=\sum_{N \in 2^\mathbb{N}} \varepsilon_{N}(\cdot) P_{N}f.
		\end{equation*}
		Then
		\begin{equation*}
			\widehat{T^{\T}_{\varepsilon}f}(\xi_{1},\xi_{2})=\mathfrak{M}^{\T}_{\varepsilon}(\xi_{1},\xi_{2}) \widehat{f}(\xi_{1},\xi_{2}), \quad (\xi_{1},\xi_{2}) \in \Z^{d-k} \times \Z^{k},
		\end{equation*}
		where \[
		\mathfrak{M}^{\T}_{\varepsilon}(\xi_{1},\xi_{2}) 
		= \sum_{N \in 2^\mathbb{N}} \varepsilon_{N}(\cdot)\, \psi_{N}(\xi_{1},\xi_{2}), 
		\quad (\xi_{1},\xi_{2}) \in \Z^{d-k} \times \Z^{k}.
		\]
		This can be viewed as the restriction of the continuous function
		\[
		\mathfrak{M}^{\R}_{\varepsilon}(\xi_{1},\xi_{2}) 
		= \sum_{N \in 2^\mathbb{N}} \varepsilon_{N}(\cdot)\, \psi_{N}(\xi_{1},\xi_{2}), 
		\quad (\xi_{1},\xi_{2}) \in \R^{d-k} \times \R^{k},
		\]
		where $\psi_{1}$ is supported in the ball of radius $2$ and each $\psi_{N}$ is supported in the region $\{(\xi_{1},\xi_{2}) : |(\xi_{1},\xi_{2})| \approx N\}$ with  $\sum_{N \in 2^{\mathbb{N}}} \psi_{N} = 1.$ Then $\mathfrak{M}^{\R}$ satisfies  the Mikhlin multiplier condition \eqref{Hormander condition} uniformly in the choice of  \( \varepsilon \) (see \cite[p. 1]{schlag2007littlewood}), and by Lemma \ref{Hormander-mikhlm} and Proposition \ref{transference}, \( T^{\T}_{\varepsilon} \) acts as a multiplier on \( L_{x}^{r}L_{y}^{\Tilde{r}} \) and  its operator norm is independent of the choice of \( \varepsilon \), i.e., 
		$$\|T^{\T}_{\varepsilon}f \|_{L_{x}^{r}L_{y}^{\Tilde{r}}} \lesssim \|f\|_{L_{x}^{r}L_{y}^{\Tilde{r}}}.$$
		Then  from inequality  \eqref{khintchine application}, we get 
		\begin{align*}
			\left\| \left(\sum_{N \in 2^\mathbb{N}} \left|P_N f\right|^2\right)^{1 / 2}\right\|_{L_x^r L_y^{\tilde{r}}}  \lesssim \|f\|_{L_x^r L_y^{\tilde{r}}} .
		\end{align*}
		The reverse inequality follows from a standard duality argument (see \cite[Theorem~2.3.1]{ward2010mixedLebesgue}).
	\end{proof}
	\subsection{Vector-valued  Bernstein inequality}
	Let us denote the space $$L^r(\mathbb{C}^n,\, e^{-\pi|z|^2}\,dz)
	:= \left\{ F:\mathbb{C}^n\to\mathbb{C} :
	\int_{\mathbb{C}^n} |F(z)|^r e^{-\pi|z|^2}\,dz < \infty \right\},$$
	equipped with the  norm
	\[
	\|F\|_{L^r(\mathbb{C}^n,\, e^{-\pi|z|^2}\,dz)}
	=
	\left(
	\int_{\mathbb{C}^n} |F(z)|^r e^{-\pi|z|^2}\,dz
	\right)^{1/r}.
	\]
	\begin{lemma}[see \cite{grafakos2008classical}]\label{lem:5.5.2}
		For any $0< r < \infty$, let 
		\begin{equation}\label{5.5.7}
			B_r
			=
			\left(
			\frac{\Gamma\!\left(\frac{r}{2}+1\right)}{\pi^{\frac{r}{2}}}
			\right)^{\frac{1}{r}} .
		\end{equation}
		Then for any $w_1,w_2,\ldots,w_n \in \mathbb{C}$, we have
		\begin{equation}\label{5.5.9}
			\left(
			\int_{\mathbb{C}^n}
			\big|
			w_1 z_1 + \cdots + w_n z_n
			\big|^r
			e^{-\pi |z|^2}
			\,dz
			\right)^{\frac{1}{r}}
			=
			B_r
			\left(
			|w_1|^2 + \cdots + |w_n|^2
			\right)^{\frac12},
		\end{equation}
		where $dz = dz_1 \cdots dz_n = dx_1 dy_1 \cdots dx_n dy_n$
		with  $z_j = x_j + i y_j$.
	\end{lemma}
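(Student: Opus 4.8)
The plan is to reduce the $n$-dimensional identity \eqref{5.5.9} to the one-variable Gaussian moment computation by means of a unitary change of variables, which is the route indicated in \cite{grafakos2008classical}. If $w=(w_1,\dots,w_n)=0$ both sides vanish, so I may assume $w\neq 0$ and write $\abs{w}=(\abs{w_1}^2+\cdots+\abs{w_n}^2)^{1/2}$. Consider the $\mathbb{C}$-linear functional $L(z)=w_1z_1+\cdots+w_nz_n$. First I would pick a unitary matrix $V\in U(n)$ whose first column is the unit vector $\bar w/\abs{w}$; such a $V$ is obtained by completing $\bar w/\abs{w}$ to a Hermitian orthonormal basis of $\mathbb{C}^n$ via Gram--Schmidt. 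A direct check then shows that, with $z=Vz'$, one has $L(Vz')=\abs{w}\,z_1'$, because $\sum_j w_jV_{jk}=\langle v_k,\bar w\rangle=\abs{w}\,\delta_{k1}$, where $v_1,\dots,v_n$ denote the columns of $V$. (It is precisely here that the first column of $V$ must be $\bar w/\abs w$ rather than $w/\abs w$, since $L$ is $\mathbb{C}$-linear and not conjugate-linear; an equivalent route, if one prefers to stay over $\mathbb{R}$, is to split into real and imaginary parts, which produces two real linear functionals of $\mathbb{R}^{2n}$ with orthogonal coefficient vectors of common norm $\abs w$.)

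The point that makes this substitution legitimate is that a unitary map of $\mathbb{C}^n$, regarded as a real-linear map of $\mathbb{C}^n\cong\mathbb{R}^{2n}$, is orthogonal: it preserves $\abs{z}^2=\sum_j\abs{z_j}^2$, and since $\abs{\det_{\mathbb{R}}V}=\abs{\det_{\mathbb{C}}V}^2=1$, it also preserves the Lebesgue measure $dz$. Applying the change of variables $z=Vz'$ to the left-hand integral of \eqref{5.5.9} therefore gives
\[
\int_{\mathbb{C}^n}\abs{L(z)}^r e^{-\pi\abs{z}^2}\,dz
=\int_{\mathbb{C}^n}\bigl(\abs{w}\,\abs{z_1'}\bigr)^r e^{-\pi\abs{z'}^2}\,dz'
=\abs{w}^r\int_{\mathbb{C}^n}\abs{z_1'}^r e^{-\pi\abs{z'}^2}\,dz'.
\]
By Fubini's theorem and the normalization $\int_{\mathbb{C}}e^{-\pi\abs{\zeta}^2}\,d\zeta=1$, the remaining integral collapses to the single factor $\int_{\mathbb{C}}\abs{\zeta}^r e^{-\pi\abs{\zeta}^2}\,d\zeta$.

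It then remains only to evaluate this one-dimensional Gaussian moment. Passing to polar coordinates on $\mathbb{R}^2$ and substituting $u=\pi\rho^2$, I would compute
\[
\int_{\mathbb{C}}\abs{\zeta}^r e^{-\pi\abs{\zeta}^2}\,d\zeta
=2\pi\int_0^\infty\rho^{r+1}e^{-\pi\rho^2}\,d\rho
=\pi^{-r/2}\int_0^\infty u^{r/2}e^{-u}\,du
=\pi^{-r/2}\,\Gamma\!\left(\tfrac r2+1\right)=B_r^r,
\]
recalling the definition \eqref{5.5.7} of $B_r$. Combining the last two displays and taking $r$-th roots yields $\bigl(\int_{\mathbb{C}^n}\abs{L(z)}^r e^{-\pi\abs{z}^2}\,dz\bigr)^{1/r}=B_r\,\abs{w}$, which is exactly \eqref{5.5.9}. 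I do not anticipate any genuine obstruction: the only steps requiring care are the bookkeeping in the unitary substitution --- in particular the conjugation in the first column of $V$ and the verification that its real Jacobian equals $1$ --- after which the remaining computation is entirely routine.
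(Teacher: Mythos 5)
Your proof is correct and is the standard argument for this Gaussian moment identity (essentially the one in Grafakos, which the paper cites without reproducing): a unitary change of variables reduces to the one-dimensional integral, which is evaluated in polar coordinates. The bookkeeping you flag as the only delicate points — the conjugation in the first column of $V$ (so that $\sum_j w_j V_{jk} = \langle v_k,\bar w\rangle = |w|\,\delta_{k1}$) and the fact that the real Jacobian of a unitary map is $|\det_{\mathbb C}V|^2=1$ — is handled correctly.
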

	The following vector-valued inequalities in mixed-norm spaces play a key role in establishing the vector-valued extension of Bernstein’s inequality, which in turn is essential for proving the well-posedness of the Hartree equation \eqref{Hartree} for infinitely many fermions. To establish Theorem~\ref{mixed vector valued}, we adapt the techniques used in the proof of the $\ell^2$-valued extension theorem for the diagonal case $r = \tilde{r}$ as presented in \cite{grafakos2008classical}, making the necessary adjustments to accommodate our specific setting.
	\begin{theorem}[$\ell^2$-valued extension]\label{mixed vector valued}
		Let $1 \le \tilde r \le r < \infty$ and let $T$ be a bounded linear operator
		from $L^r L^{\tilde r}(\mathbb{T}^{d-k}\times\mathbb{T}^k)$ to
		$L^r L^{\tilde r}(\mathbb{T}^{d-k}\times\mathbb{T}^k)$ with norm $\|T\|$.
		Then $T$ has an $\ell^2$-valued extension, i.e.,  for all complex-valued functions
		$f_j \in L^r L^{\tilde r}(\mathbb{T}^{d-k}\times\mathbb{T}^k)$, we have
		\begin{align}\label{Vector Valued}
			\left\|\left(\sum_j |T(f_j)|^2\right)^{\frac12}\right\|_{L^r L^{\tilde r}(\mathbb{T}^{d-k}\times\mathbb{T}^k)}
			\le C_{r,\tilde r}\|T\|
			\left\|\left(\sum_j |f_j|^2\right)^{\frac12}\right\|_{L^r L^{\tilde r}(\mathbb{T}^{d-k}\times\mathbb{T}^k)},
		\end{align}
		for some constant $C_{r,\tilde r}$  that depends only on $r$ and $\tilde{r}$. 
	\end{theorem}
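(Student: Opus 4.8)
Here is the approach I would take.

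\smallskip

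The plan is to run the Gaussian linearization argument that proves the $\ell^{2}$-valued extension in the diagonal case $r=\tilde r$, the new feature being that the two integrability exponents $\tilde r\le r$ must be peeled off one at a time, interleaved with Minkowski's integral inequality. By monotone convergence it suffices to establish \eqref{Vector Valued} with a uniform constant for finite families $f_{1},\dots,f_{n}$. For $z=(z_{1},\dots,z_{n})\in\mathbb{C}^{n}$ put $F_{z}=\sum_{j=1}^{n}z_{j}f_{j}$, so that $TF_{z}=\sum_{j=1}^{n}z_{j}\,Tf_{j}$ by linearity, and write $d\mu(z)=e^{-\pi|z|^{2}}\,dz$, which is a probability measure on $\mathbb{C}^{n}$. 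Applying Lemma~\ref{lem:5.5.2} pointwise in $y$ with exponent $\tilde r$ and then Fubini, one gets for each fixed $x$
\[
\int_{\mathbb{T}^{k}}\Big(\sum_{j}|Tf_{j}(x,y)|^{2}\Big)^{\tilde r/2}\,dy
=B_{\tilde r}^{-\tilde r}\int_{\mathbb{C}^{n}}\big\|(TF_{z})(x,\cdot)\big\|_{L^{\tilde r}_{y}}^{\tilde r}\,d\mu(z),
\]
hence, raising to the power $r/\tilde r$ and integrating in $x$,
\[
\Big\|\big(\textstyle\sum_{j}|Tf_{j}|^{2}\big)^{1/2}\Big\|_{L^{r}_{x}L^{\tilde r}_{y}}^{r}
=B_{\tilde r}^{-r}\int_{\mathbb{T}^{d-k}}\Big(\int_{\mathbb{C}^{n}}\big\|(TF_{z})(x,\cdot)\big\|_{L^{\tilde r}_{y}}^{\tilde r}\,d\mu(z)\Big)^{r/\tilde r}dx .
\]

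Since $r/\tilde r\ge 1$, I would next apply Minkowski's integral inequality in $x$ (treating the Gaussian average over $\mathbb{C}^{n}$ as the ``inner sum'') to interchange the $x$-integral with the $\mathbb{C}^{n}$-integral, which bounds the right-hand side by $B_{\tilde r}^{-r}\big(\int_{\mathbb{C}^{n}}\|TF_{z}\|_{L^{r}_{x}L^{\tilde r}_{y}}^{\tilde r}\,d\mu(z)\big)^{r/\tilde r}$. The boundedness hypothesis on $T$ gives $\|TF_{z}\|_{L^{r}_{x}L^{\tilde r}_{y}}\le\|T\|\,\|F_{z}\|_{L^{r}_{x}L^{\tilde r}_{y}}$, and since $d\mu$ is a probability measure and $\tilde r\le r$, Jensen's inequality (Lemma~\ref{Jensen inequality}) upgrades the $\tilde r$-average to the $r$-average, so that
\[
\Big\|\big(\textstyle\sum_{j}|Tf_{j}|^{2}\big)^{1/2}\Big\|_{L^{r}_{x}L^{\tilde r}_{y}}^{r}
\le B_{\tilde r}^{-r}\,\|T\|^{r}\int_{\mathbb{C}^{n}}\|F_{z}\|_{L^{r}_{x}L^{\tilde r}_{y}}^{r}\,d\mu(z).
\]

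It remains to evaluate this last average. Expanding $\|F_{z}\|_{L^{r}_{x}L^{\tilde r}_{y}}^{r}$, using Fubini to bring the $\mathbb{C}^{n}$-integral inside the $x$-integral, then Minkowski's integral inequality once more --- now in the $y$-variable, again legitimate because $r/\tilde r\ge 1$ --- to push the inner $y$-integral outside the Gaussian average, and finally Lemma~\ref{lem:5.5.2} pointwise in $(x,y)$ with exponent $r$ (which turns $\int_{\mathbb{C}^{n}}|F_{z}(x,y)|^{r}\,d\mu(z)$ into $B_{r}^{r}\big(\sum_{j}|f_{j}(x,y)|^{2}\big)^{r/2}$), one arrives at
\[
\int_{\mathbb{C}^{n}}\|F_{z}\|_{L^{r}_{x}L^{\tilde r}_{y}}^{r}\,d\mu(z)
\le B_{r}^{r}\,\Big\|\big(\textstyle\sum_{j}|f_{j}|^{2}\big)^{1/2}\Big\|_{L^{r}_{x}L^{\tilde r}_{y}}^{r}.
\]
Combining the displayed inequalities yields \eqref{Vector Valued} with $C_{r,\tilde r}=B_{r}/B_{\tilde r}$, $B_{r}$ as in \eqref{5.5.7}; in particular $C_{r,r}=1$, recovering the diagonal statement. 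I expect the only delicate point to be the bookkeeping of the three nested integrations in $x$, $y$ and $z$: the argument works precisely because one uses the \emph{exact} Gaussian identity of Lemma~\ref{lem:5.5.2} with exponent $\tilde r$ first (to linearize the $Tf_{j}$ into $TF_{z}$) and with exponent $r$ last (to rebuild the square function of the $f_{j}$), and the two interchanges of order of integration force the hypothesis $\tilde r\le r$; the complementary range $r\le\tilde r$ would instead be treated by a dual argument.
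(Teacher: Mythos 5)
Your proposal is correct and follows essentially the same route as the paper's proof: apply the Gaussian linearization identity of Lemma~\ref{lem:5.5.2} with exponent $\tilde r$ to turn the square function of the $Tf_j$ into a Gaussian average of $TF_z$, use Minkowski's integral inequality (valid since $r/\tilde r\ge 1$) to pull out the $\mathbb{C}^n$-integral, apply the boundedness of $T$, then Jensen's inequality for the probability measure $e^{-\pi|z|^2}\,dz$ together with a second Minkowski step in $y$, and finish by applying Lemma~\ref{lem:5.5.2} again with exponent $r$. The only cosmetic difference is in the packaging of the Jensen step (you upgrade the $\tilde r$-average of $\|F_z\|_{L^r_xL^{\tilde r}_y}$ to an $r$-average before expanding, whereas the paper applies the concavity of $t\mapsto t^{\tilde r/r}$ directly to the expanded expression and then uses Fubini), but the two computations are identical in substance and yield the same constant $C_{r,\tilde r}=B_r/B_{\tilde r}$.
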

	\begin{proof}[{\bf Proof of Theorem \ref{mixed vector valued}} ]
		Let $B_{\tilde r}$ be the constant as in \eqref{5.5.7}. Using the  
		boundedness of $T$, the identity \eqref{5.5.9} successively, Minkowski's integral  inequality, Jensen's inequality (see Lemma \ref{Jensen inequality}) for integral
		and H\"older’s inequality with respect to the measure $e^{-\pi |z|^2}$, we get
		\begin{align*}
			&\left\|\left(\sum_{j=1}^n |T(f_j)|^2\right)^{\frac12}\right\|^r_{L^r L^{\tilde r}
				(\mathbb{T}^{d-k}\times\mathbb{T}^k)}
			=
			\int_{\mathbb{T}^{d-k}}
			\left(
			\int_{\mathbb{T}^k}
			\left(\sum_{j=1}^n |T(f_j)|^2\right)^{\frac{\tilde r}{2}}
			\,dy
			\right)^{\frac{r}{\tilde r}}
			dx
			\\
			&=B_{\tilde r}^{- r}
			\int_{\mathbb{T}^{d-k}}
			\left(
			\int_{\mathbb{T}^k}
			\left(
			\int_{\mathbb{C}^n}
			\Big|\sum_{j=1}^n z_j T(f_j)\Big|^{\tilde r}
			e^{-\pi |z|^2}\,dz
			\right)
			dy
			\right)^{\frac{r}{\tilde r}}
			dx
			\\
			&=
			B_{\tilde r}^{-r}
			\int_{\mathbb{T}^{d-k}}
			\left(
			\int_{\mathbb{C}^n}
			\left(
			\int_{\mathbb{T}^k}
			\Big|\sum_{j=1}^n T(z_j f_j)\Big|^{\tilde{r}}
			\,dy
			\right)
			e^{-\pi |z|^2}\,dz
			\right)^{\frac{r}{\tilde r}}
			\,dx\\
			&=
			B_{\tilde r}^{-r}
			\left\|
			\left\|
			\int_{\mathbb{T}^k}
			\Big|\sum_{j=1}^n T(z_j f_j)\Big|^{\tilde r}
			\,dy
			\right\|_{L^{1}
				(\mathbb{C}^n,\, e^{-\pi|z|^2}\,dz)}
			\right\|_{L^{\frac{r}{\tilde r}}_x}^{\frac{r}{\tilde{r}}} \\
			&\le
			B_{\tilde r}^{-r}
			\left\|
			\left\|
			\int_{\mathbb{T}^k}
			\Big|\sum_{j=1}^n T(z_j f_j)\Big|^{\tilde r}
			\,dy
			\right\|_{L^{\frac{r}{\tilde r}}_x}
			\right\|_{L^{1}
				(\mathbb{C}^n,\, e^{-\pi|z|^2}\,dz)}^{\frac{r}{\tilde{r}}}
			\\
			&=
			B_{\tilde r}^{-r} \left(
			\int_{\mathbb{C}^n}
			\left\|
			T\!\big(\sum_{j=1}^n z_j f_j\big)
			\right\|_{L_x^r L_y^{\tilde r}}^{\tilde{r}}
			e^{-\pi |z|^2}\,dz \right)^{\frac{r}{\tilde{r}}}
			\\
			&\le
			B_{\tilde r}^{-r}
			\|T\|^r
			\left(\int_{\mathbb{C}^n}
			\Big\|
			\sum_{j=1}^n z_j f_j
			\Big\|_{L_x^r L_y^{\tilde r}}^{\tilde{r}}~
			e^{-\pi |z|^2}\,dz \right)^{\frac{r}{\tilde{r}}}
			\\
			&\leq 
			B_{\tilde r}^{-r}\|T\|^r
			\int_{\mathbb{T}^{d-k}}
			\Big\|
			\Big\|
			\sum_{j=1}^n z_j f_j
			\Big\|_{L_y^{\tilde r}}
			\Big\|_{L^r(\mathbb{C}^n,\, e^{-\pi|z|^2}\,dz)}^r
			\,dx
			\\
			&\le
			B_{\tilde r}^{-r}\|T\|^r
			\int_{\mathbb{T}^{d-k}}
			\Big\|
			\Big\|
			\sum_{j=1}^n z_j f_j
			\Big\|_{L^r(\mathbb{C}^n,\, e^{-\pi|z|^2}\,dz)}
			\Big\|_{L_y^{\tilde r}}^r
			\,dx
			\\
			&=
			B_{\tilde r}^{-r} B_{r}^{-r}\|T\|^r
			\int_{\mathbb{T}^{d-k}}
			\Big\|
			\left(
			\sum_{j=1}^n |f_j|^2
			\right)^{\frac12} 
			\Big\|_{L_y^{\tilde r}}^r
			\,dx=
			B_{\tilde r}^{-r} B_{r}^{-r} \|T\|^r
			\left\|
			\left(
			\sum_{j=1}^n |f_j|^2
			\right)^{\frac12}
			\right\|_{L_x^r L_y^{\tilde r}}^r .
		\end{align*}
		Finally, letting $n \to \infty$, we obtain the desired estimate for $\tilde r \le r$. 
	\end{proof}

	For $\rho \in \mathbb{R}$, we define an operator $T$ on 
	$L_x^{r}L_y^{\tilde r}(\mathbb T^{d-k}\times\mathbb T^{k})$ by
	\[
	Tg = N^{-\rho} P_{N}\,\langle \nabla \rangle^{\rho} g .
	\]
	
	By applying the Hörmander–Mikhlin Fourier multiplier theorem (Lemma \ref{Hormander-mikhlm}) together with the transference principle (Proposition \ref{transference}), it follows that 
	$T$ acts as a bounded Fourier multiplier on $L^r_xL^{\tilde{r}}_y$.  Therefore $$\|Tg\|_{L_x^{r}L_y^{\tilde r}(\mathbb T^{d-k}\times\mathbb T^{k})}
	\lesssim
	N^\rho
	\|g\|_{L_x^{r}L_y^{\tilde r}(\mathbb T^{d-k}\times\mathbb T^{k})}.$$  
	Now applying the vector-valued inequality \eqref{Vector Valued} from Theorem \ref{mixed vector valued} to the operator  $T$, we immediately obtain the following vector-valued version of Bernstein’s inequality.
	\begin{corollary}\label{vvr}
		For any $\rho \in \mathbb{R},$ $1< \tilde{r} \leq r < \infty,$ and any $(g_{j})_{j} \in L_{x}^{r}L_{y}^{\tilde{r}}(\ell^2)$ (not necessarily orthonormal), 
		$$\bigg\|
		\Big( \sum_j |P_N \langle \nabla\rangle^\rho g_j|^2 \Big)^{1/2}
		\bigg\|_{L_x^{r}L_y^{\tilde r}}
		\lesssim 
		N^{\rho}
		\bigg\|
		\Big( \sum_j |P_N g_j|^2 \Big)^{1/2}
		\bigg\|_{L_x^{r}L_y^{\tilde r}}.$$
	\end{corollary}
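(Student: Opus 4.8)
The plan is to realize the left-hand side as the image of the frequency blocks $(P_N g_j)_j$ under a single Fourier multiplier operator which is bounded on $L_x^{r}L_y^{\tilde r}(\mathbb T^{d-k}\times\mathbb T^{k})$ \emph{uniformly in $N$}, and then to invoke the $\ell^2$-valued extension principle of Theorem~\ref{mixed vector valued}. For $N>1$ let $\widetilde\psi_N=\widetilde\psi(\cdot/N)$ be the dilate of a fixed smooth bump supported in $\{1/2\le|\xi|\le 4\}$ with $\widetilde\psi_N\equiv 1$ on $\operatorname{supp}\psi_N$ (for $N=1$ take $\widetilde\psi_1$ a fixed ball bump $\equiv 1$ on $\operatorname{supp}\psi_1$), and let $T=T_{N}$ be the Fourier multiplier on $\mathbb T^{d-k}\times\mathbb T^{k}$ with symbol
\[
\mathfrak{m}_{N}(\xi)=N^{-\rho}\,\widetilde\psi_N(\xi)\,(1+|\xi|^2)^{\rho/2}.
\]
By construction $\mathfrak{m}_N(\xi)\,\psi_N(\xi)=N^{-\rho}\psi_N(\xi)(1+|\xi|^2)^{\rho/2}$, which is precisely the symbol of $N^{-\rho}P_N\langle\nabla\rangle^\rho$; hence $T(P_N g)=N^{-\rho}P_N\langle\nabla\rangle^\rho g$ for every $g$. (The fattening to $\widetilde\psi_N$ is needed because the smooth cutoffs satisfy $\psi_N^2\neq\psi_N$, so one cannot simply re-use $P_N$ itself.)

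First I would verify that $\mathfrak{m}_N$ satisfies the H\"ormander--Mikhlin condition \eqref{Hormander condition} with constant independent of $N$. Since $\mathfrak{m}_N$ is supported where $|\xi|\sim N$, there $(1+|\xi|^2)^{\rho/2}\sim N^{\rho}$, while each derivative falling on $\widetilde\psi_N(\cdot/N)$ contributes a factor $N^{-1}$ and each derivative of $(1+|\xi|^2)^{\rho/2}$ contributes a factor $\lesssim N^{\rho-1}$; by the Leibniz rule and the prefactor $N^{-\rho}$ one gets $|\partial^{\alpha}\mathfrak{m}_N(\xi)|\lesssim N^{-|\alpha|}\sim|\xi|^{-|\alpha|}$ on the support (and $\mathfrak{m}_N\equiv 0$ off it), uniformly in $N$, for all multi-indices with $|\alpha|\le\lfloor d/2\rfloor+1$. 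Applying the mixed-norm H\"ormander--Mikhlin theorem (Lemma~\ref{Hormander-mikhlm}) on $\mathbb R^{d-k}\times\mathbb R^{k}$ together with the transference principle (Proposition~\ref{transference}) then shows that $T$ is bounded on $L_x^{r}L_y^{\tilde r}(\mathbb T^{d-k}\times\mathbb T^{k})$ with $\|T\|\lesssim 1$ uniformly in $N$ (the implied constant being allowed to depend on $\rho$).

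Finally I would apply the $\ell^2$-valued extension \eqref{Vector Valued} of Theorem~\ref{mixed vector valued} to this $T$, with input functions $f_j:=P_N g_j\in L_x^{r}L_y^{\tilde r}$, obtaining
\[
\Big\|\Big(\sum_j|T(P_N g_j)|^2\Big)^{1/2}\Big\|_{L_x^{r}L_y^{\tilde r}}
\le C_{r,\tilde r}\,\|T\|\,\Big\|\Big(\sum_j|P_N g_j|^2\Big)^{1/2}\Big\|_{L_x^{r}L_y^{\tilde r}}.
\]
Since $T(P_N g_j)=N^{-\rho}P_N\langle\nabla\rangle^\rho g_j$, multiplying by $N^{\rho}$ and using $\|T\|\lesssim 1$ yields the claimed inequality; note that no orthonormality of $(g_j)$ enters, in agreement with the statement. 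The only substantive point --- and hence the ``main obstacle'', though a mild one here since the deep ingredient (Theorem~\ref{mixed vector valued}) is already in hand --- is the $N$-uniform verification of the H\"ormander--Mikhlin bound for $\mathfrak{m}_N$, i.e.\ tracking the exact cancellation between the normalization $N^{-\rho}$ and the growth of $(1+|\xi|^2)^{\rho/2}$ across the dyadic annulus; the remaining steps are routine bookkeeping with Littlewood--Paley cutoffs.
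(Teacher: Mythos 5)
Your proof is correct and takes essentially the same route as the paper: realize the weighted piece as a single Fourier multiplier that is bounded uniformly in $N$ (via H\"ormander--Mikhlin and the transference principle) and then invoke the $\ell^2$-valued extension of Theorem~\ref{mixed vector valued}. You are, if anything, slightly more careful than the paper's sketch: the paper applies Theorem~\ref{mixed vector valued} directly to $T=N^{-\rho}P_N\langle\nabla\rangle^\rho$, which, as written, yields the inequality with $\big(\sum_j|g_j|^2\big)^{1/2}$ (rather than $\big(\sum_j|P_N g_j|^2\big)^{1/2}$) on the right; your fattened cutoff $\widetilde\psi_N$ with $\widetilde\psi_N\psi_N=\psi_N$ is exactly what is needed to land precisely on the stated inequality, since the smooth Littlewood--Paley pieces satisfy $\psi_N^2\neq\psi_N$. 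Your verification of the $N$-uniform H\"ormander--Mikhlin bound for $\mathfrak{m}_N$ is the substantive step, and the bookkeeping you give (derivatives on $\widetilde\psi_N(\cdot/N)$ cost $N^{-1}$, derivatives on $(1+|\xi|^2)^{\rho/2}$ cost $\lesssim N^{\rho-1}$ on $|\xi|\sim N$, the prefactor $N^{-\rho}$ cancels the remaining growth) is correct.
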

	
	\subsection{Littlewood-Paley decomposition of operator densities}\label{vector valued LP decomposition}
	This subsection is devoted to a generalization of \eqref{norm in terms of j} to densities of operators.
	When $\gamma \ge 0$ is a finite-rank operator on $L^2(\mathbb{T}^d)$, its density
	is defined as
	\[
	\rho_\gamma(x) := \gamma(x,x), \qquad \forall x \in \mathbb{T}^d,
	\]
	where $\gamma(\cdot,\cdot)$ denotes the integral kernel of $\gamma$.
	We prove that for any $\tfrac12 < \tilde{r},r < \infty$, there exists a constant
	$C>0$ such that for any finite-rank $\gamma \ge 0$ with
	$\rho_\gamma \in L_x^rL_y^{\tilde{r}}(\mathbb{T}^{d-k} \times \mathbb{T}^{k})$, we have
	\begin{equation}\label{eq:operator_density_LP}
		\frac{1}{C}
		\|\rho_\gamma\|_{L_x^rL_y^{\tilde{r}}(\mathbb{T}^{d-k} \times \mathbb{T}^{k})}
		\;\le\;
		\left\|
		\sum_{N \in 2^\mathbb{N}} \rho_{P_N \gamma P_N}
		\right\|_{L_x^rL_y^{\tilde{r}}(\mathbb{T}^{d-k} \times \mathbb{T}^{k})}
		\;\le\;
		C
		\|\rho_\gamma\|_{L_x^rL_y^{\tilde{r}}(\mathbb{T}^{d-k} \times \mathbb{T}^{k})}.
	\end{equation}
	
	When $\gamma$ is a rank-one operator, this last inequality is equivalent to
	the usual Littlewood--Paley estimates \eqref{norm in terms of j}.
	Indeed, if $u$ with $\|u\|_{L^2}=1$ belongs to the range of $\gamma$, then
	$\rho_\gamma = |u|^2$.
	The constant $C$ in \eqref{eq:operator_density_LP} is independent of the rank of $\gamma$, and in
	particular the inequality remains valid when $\gamma$ has infinite rank, as
	soon as one of the terms of the inequality is well-defined.
	Theorem \ref{vector LP} implies the Littlewood-Paley decomposition \eqref{eq:operator_density_LP} for densities using the spectral decomposition of $\gamma.$ For more details see \cite{sabin2016littlewood}.
	\begin{theorem}[vector-valued Littlewood-Paley theorem]\label{vector LP}
		Let $\{\psi_{N}\}_{N \in 2^{\mathbb{N}}}$ be the family of functions constructed in
		\eqref{psi N} with a smooth cutoff function $\phi$, and let $P_{N}$ denote the
		Littlewood--Paley projectors on the torus defined in \eqref{lpope}.
		Let $(\lambda_k)_{k=1}^{m} \subset (0,\infty)$ be a finite sequence of coefficients,
		and let $(u_k)_{k=1}^{m}$ be a finite sequence in
		$L_{x}^{2r} L_{y}^{2\tilde{r}}(\mathbb{T}^{d-k} \times \mathbb{T}^k)$.
		Then, for all
		$\tfrac12 < \tilde{r}, r < \infty$,
		\begin{equation}\label{eq vector LP}
			\frac{1}{C}
			\left\|
			\sum_{k=1}^{m} \lambda_k |u_k|^2
			\right\|_{L_x^{r} L_y^{\tilde r}}
			\;\le\;
			\left\|
			\sum_{N \in 2^\mathbb{N}} \sum_{k=1}^{m} \lambda_k |P_N u_k|^2
			\right\|_{L_x^{r} L_y^{\tilde r}}
			\;\le\;
			C
			\left\|
			\sum_{k=1}^{m} \lambda_k |u_k|^2
			\right\|_{L_x^{r} L_y^{\tilde r}},
		\end{equation}
		where the implicit constant is independent of the sequences
		$(\lambda_k)_{k=1}^{m}$ and $(u_k)_{k=1}^{m}.$
	\end{theorem}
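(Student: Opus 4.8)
The plan is to reduce the claim to an $\ell^{2}$-valued Littlewood--Paley square-function estimate in a mixed-norm Lebesgue space with both exponents $>1$, and then to run a Rademacher-randomization argument using the multiplier tools of Section~\ref{sec22}. Set $v_{k}:=\sqrt{\lambda_{k}}\,u_{k}$ and $V:=(v_{k})_{k}$, so that $\sum_{k}\lambda_{k}|u_{k}|^{2}=\|V\|_{\ell^{2}}^{2}$ and $\sum_{N}\sum_{k}\lambda_{k}|P_{N}u_{k}|^{2}=\sum_{N}\|P_{N}V\|_{\ell^{2}}^{2}=\|(P_{N}v_{k})_{N,k}\|_{\ell^{2}_{N,k}}^{2}$. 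Using the elementary identity $\|h^{2}\|_{L^{r}_{x}L^{\tilde r}_{y}}=\|h\|_{L^{2r}_{x}L^{2\tilde r}_{y}}^{2}$ for $h\ge 0$ and writing $p:=2r$, $\tilde p:=2\tilde r$ (so $1<\tilde p\le p<\infty$ in the case $\tilde r\le r$, the case $r<\tilde r$ being entirely analogous), inequality~\eqref{eq vector LP} is equivalent to
\[
\bigl\|\,\|V\|_{\ell^{2}}\,\bigr\|_{L^{p}_{x}L^{\tilde p}_{y}}
\ \sim\
\bigl\|\,\|(P_{N}v_{k})_{N,k}\|_{\ell^{2}_{N,k}}\,\bigr\|_{L^{p}_{x}L^{\tilde p}_{y}}.
\]
By density it suffices to prove this for $V$ whose (finitely many) components are trigonometric polynomials, for which only finitely many $P_{N}$ act nontrivially and $\sum_{N}P_{N}V=V$.

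For the bound ``$\lesssim$'', let $(\varepsilon_{N})_{N}$ be independent Rademacher signs on $[0,1]$ and put $R_{\varepsilon(t)}:=\sum_{N}\varepsilon_{N}(t)P_{N}$. Applying the $\ell^{2}$-valued Khintchine inequality (a pointwise-in-$(x,y)$ consequence of Lemma~\ref{Khintchine inequality}, in the Hilbert space $\ell^{2}_{k}$, with exponent $\tilde p$) and Fubini in $(t,y)$ gives
\[
\bigl\|\,\|(P_{N}v_{k})_{N,k}\|_{\ell^{2}_{N,k}}\,\bigr\|_{L^{p}_{x}L^{\tilde p}_{y}}
\ \sim_{\tilde p}\
\bigl\|R_{\varepsilon(t)}V\bigr\|_{L^{p}_{x}L^{\tilde p}_{t,y}(\ell^{2}_{k})}.
\]
For each fixed $t$ the symbol $\sum_{N}\varepsilon_{N}(t)\psi_{N}$ extends to a function on $\mathbb{R}^{d-k}\times\mathbb{R}^{k}$ satisfying the H\"ormander--Mikhlin condition~\eqref{Hormander condition} with constants independent of $t$; hence by Lemma~\ref{Hormander-mikhlm} together with the transference principle (Proposition~\ref{transference}), $R_{\varepsilon(t)}$ is bounded on $L^{p}_{x}L^{\tilde p}_{y}(\mathbb{T}^{d-k}\times\mathbb{T}^{k})$ uniformly in $t$, and then by the $\ell^{2}$-valued extension (Theorem~\ref{mixed vector valued}) it is bounded on $L^{p}_{x}L^{\tilde p}_{y}(\ell^{2}_{k})$ with the same uniform bound. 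Integrating this out in $t$ by Minkowski's integral inequality in the $x$-variable (in the form dictated by the ordering of $p$ and $\tilde p$) yields $\|R_{\varepsilon(t)}V\|_{L^{p}_{x}L^{\tilde p}_{t,y}(\ell^{2}_{k})}\lesssim\|V\|_{L^{p}_{x}L^{\tilde p}_{y}(\ell^{2}_{k})}=\|\,\|V\|_{\ell^{2}}\,\|_{L^{p}_{x}L^{\tilde p}_{y}}$, which is the desired upper bound.

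For the reverse bound ``$\gtrsim$'', introduce a fattened family $\{\widetilde\psi_{N}\}$ with $\widetilde\psi_{N}\equiv 1$ on $\operatorname{supp}\psi_{N}$, still obeying the Mikhlin estimates uniformly, so that $\sum_{N}P_{N}\widetilde P_{N}V=V$ and the $\widetilde P_{N}$ are $\ell^{2}$-bounded and almost orthogonal. Since $1<p,\tilde p<\infty$, the space $L^{p}_{x}L^{\tilde p}_{y}(\ell^{2}_{k})$ is reflexive with dual $L^{p'}_{x}L^{\tilde p'}_{y}(\ell^{2}_{k})$; pairing $V$ against a test field $W$, writing $\langle V,W\rangle=\sum_{N}\langle \widetilde P_{N}V,P_{N}W\rangle$, and applying the Cauchy--Schwarz inequality in $k$ and in $N$ followed by H\"older's inequality in $(x,y)$ reduces the reverse bound to the already-established forward estimate in the dual exponents. (When $r,\tilde r\ge 1$ this is exactly the duality step used at the end of the proof of Proposition~\ref{mixed LP}.) Finally, the operator-density decomposition~\eqref{eq:operator_density_LP} follows by applying~\eqref{eq vector LP} to the spectral decomposition $\gamma=\sum_{k}\lambda_{k}|u_{k}\rangle\langle u_{k}|$, as in~\cite{sabin2016littlewood}.

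The main obstacle is the passage from the Rademacher-averaged quantity back to an $\varepsilon$-free estimate. On $\mathbb{R}^{d}$ one applies the Khintchine inequality a second time immediately after the multiplier bound; but in $L^{p}_{x}L^{\tilde p}_{y}$ the Rademacher variable $t$ carries the exponent $\tilde p$ while the spatial variable $x$ carries the exponent $p$, so the two integrations cannot be freely interchanged. The remedy---built into the proof of Theorem~\ref{mixed vector valued}---is to insert the $t$-average inside the $\ell^{2}_{k}$-norm via Jensen's inequality (Lemma~\ref{Jensen inequality}) and then invoke Minkowski's integral inequality in the $x$-variable; arranging the four norms $L^{p}_{x}$, $L^{\tilde p}_{y}$, $L^{\tilde p}_{t}$, $\ell^{2}_{k}$ in the order that makes both inequalities applicable in the right direction is the delicate point, and this is precisely where Sabin's argument does not transfer verbatim. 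A secondary technical point---the reduction from infinite rank and the unconditional convergence of $\sum_{N}P_{N}V$ and $\sum_{N}\rho_{P_{N}\gamma P_{N}}$---is handled by the stated density reduction, all relevant quantities being continuous in the $L^{2r}_{x}L^{2\tilde r}_{y}$-topology of the $u_{k}$.
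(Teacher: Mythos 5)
Your proof is correct, and it takes a genuinely different route from the paper's. The paper applies the two-index Khintchine inequality (Lemma~\ref{Khintchine inequality} with $n=2$), turning the $\ell^2_{N,k}$ square function into a scalar sum with two independent Rademacher sequences $\varepsilon_N,\varepsilon_k$; it then removes the $\varepsilon_N$-average via H\"ormander--Mikhlin plus the transference principle, resolves the $L^{2\tilde r}_{t_2}$-outer versus $L^{2r}_x$-inner exponent mismatch explicitly by Jensen's inequality followed by Minkowski's inequality (the chain culminating in \eqref{11}), and undoes the $\varepsilon_k$-average by a second, one-index Khintchine application. You instead keep the $\ell^2_k$ structure intact, randomize only the dyadic index $N$ via a Hilbert-space-valued Khintchine inequality (which does follow from Lemma~\ref{Khintchine inequality} by iterating the scalar statement over $N$ and $k$, though it is not literally a special case of it), and then invoke the $\ell^2$-valued extension Theorem~\ref{mixed vector valued} to transfer the uniform-in-$t$ Mikhlin bound through the $\ell^2_k$ norm, closing with a single Minkowski step in $t$. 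The effect is to delegate the delicate rearrangement of the four norms $L^p_x$, $L^{\tilde p}_y$, $L^{\tilde p}_t$, $\ell^2_k$ to Theorem~\ref{mixed vector valued}, which the paper proves earlier (via Gaussian rather than Rademacher randomization) so there is no circularity; this makes the logical structure more modular and reuses the $\ell^2$-valued extension directly. The paper's argument, by contrast, is self-contained within Section~\ref{sec22} and uses Rademacher signs throughout. Both proofs handle the reverse bound by essentially the same duality argument, and both assume $\tilde r\le r$ without loss of generality, with $1/2<\tilde r$ ensuring that the Mikhlin exponents $2r,2\tilde r$ exceed $1$.
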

	
	\begin{proof}
		We  prove this result in the case $1 < \tilde{r} \leq r < \infty$.
		The other case follow by a similar argument.
		Firstly, we will prove
		\begin{equation}\label{one way vector LP}
			\left\|
			\sum_{N \in 2^{\mathbb{N}}} \sum_{k=1}^{m}
			\lambda_k \, |P_N u_k|^2
			\right\|_{L_x^{r} L_y^{\tilde r}}
			\lesssim
			\left\|
			\sum_{k=1}^{m} \lambda_k |u_k|^2
			\right\|_{L_x^{r} L_y^{\tilde r}}.
		\end{equation}
		The above inequality is equivalent to 
		\[
		\left\|
		\left(
		\sum_{N \in 2^{\mathbb{N}}} \sum_{k=1}^{m}
		\bigl| \lambda_k^{1/2} P_N u_k \bigr|^2
		\right)^{1/2}
		\right\|_{L_x^{2r} L_y^{2\tilde r}}
		\lesssim
		\left\|
		\left(
		\sum_{k=1}^{m}
		\bigl| \lambda_k^{1/2} u_k \bigr|^2
		\right)^{1/2}
		\right\|_{L_x^{2r} L_y^{2\tilde r}} .
		\]
		Using Lemma \ref{Khintchine inequality} with $q=2\tilde{r},$ $n=2$ and Minkowski's integral inequality, we get
		\begin{align*}
			\Big\|
			\Big(
			\sum_{N \in 2^{\mathbb{N}}}\sum_{k=1}^{m} \lambda_k |P_N u_k|^2
			\Big)^{1/2}
			\Big\|_{L_x^{2r} L_y^{2\tilde r}}
			&\lesssim
			\Big\|
			\Big\| \Big\|
			\sum_{N \in 2^{\mathbb{N}}}\sum_{k=1}^{m}
			\lambda_k^{1/2} \varepsilon_{N} \varepsilon_{k} P_N u_k
			\Big\|_{L_{t_1,t_2}^{2\tilde r}}
			\Big\|_{L_y^{2\tilde r}}
			\Big\|_{L_{x}^{2r}} \\
			&\lesssim
			\Big\| \Big\|
			\sum_{N \in 2^{\mathbb{N}}}\sum_{k=1}^{m}
			\lambda_k^{1/2} \varepsilon_{N} \varepsilon_{k} P_N u_k
			\Big\|_{L_x^{2r} L_y^{2\tilde r}}
			\Big\|_{L_{t_1,t_2}^{2\tilde r}} .
		\end{align*}
		Again using Lemma~\ref{Hormander-mikhlm}, the Fourier multiplier
		\[
		\xi \mapsto \sum_{N \in 2^{\mathbb{N}}} \varepsilon_{N} \psi_{N}(\xi)
		\]
		is bounded on
		$L_x^{2r} L_y^{2\tilde r}(\mathbb{R}^{d-k} \times \mathbb{R}^{k})$.
		By Proposition~\ref{transference}, this boundedness transfers to
		$L_x^{2r} L_y^{2\tilde r}(\mathbb{T}^{d-k} \times \mathbb{T}^{k}),$ and the constant
		is independent of the choice of $\varepsilon$;
		for more details, we refer to the proof of Proposition~\ref{mixed LP}. Therefore
		\[
		\Big\| \Big\|
		\sum_{N \in 2^{\mathbb{N}}}\sum_{k=1}^{m}
		\lambda_k^{1/2} \varepsilon_{N} \varepsilon_{k} P_N u_k
		\Big\|_{L_x^{2r} L_y^{2\tilde r}}
		\Big\|_{L_{t_1,t_2}^{2\tilde r}} \lesssim 
		\Big\| \Big\|
		\sum_{k=1}^{m}
		\lambda_k^{1/2}  \varepsilon_{k} u_k
		\Big\|_{L_x^{2r} L_y^{2\tilde r}}
		\Big\|_{L_{t_2}^{2\tilde r}}.
		\]
		For simplicity, we assume $f
		=
		\sum_{k=1}^{m}\lambda_k^{1/2}\, \varepsilon_k\, u_k $ and $g=\|f\|_{L_y^{2\tilde r}} .$ Since $\frac{2\tilde r}{2r}\le 1,$ by Jensen's inequality, we have
		\begin{align}\label{11}\nonumber
			\Big\|
			\sum_{N \in 2^{\mathbb{N}}}\sum_{k=1}^{m}
			\lambda_k^{1/2} \varepsilon_{N} \varepsilon_{k} P_N u_k
			\Big\|_{L_x^{2r} L_y^{2\tilde r}}
			\Big\|_{L_{t_1,t_2}^{2\tilde r}} &\lesssim
			\Big\|\, \|f\|_{L_x^{2r}L_y^{2\tilde r}} \,\Big\|_{L_{t_2}^{2\tilde r}} =
			\Big\|\, \big\| g\big\|_{L_x^{2r}} \,\Big\|_{L_{t_2}^{2\tilde r}} \\\nonumber
			&=
			\left(
			\int_{[0,1]}
			\left(
			\int_{\mathbb{T}^{d-k}}
			|g|^{2r}\,dx
			\right)^{\frac{2\tilde r}{2r}} \,dt_2
			\right)^{\frac1{2\tilde r}} \\
			&\le
			\left(
			\int_{[0,1]}
			\int_{\mathbb{T}^{d-k}}
			|g|^{2 r}\,dx\,dt_2
			\right)^{\frac1{2 r}} = \big\|\|g\|_{L_{t_2}^{2 r}}^{2r}\big\|_{L_x^{1}}^{\frac{1}{2r}}.
		\end{align}
		Now, by Minkowski's inequality (since $2\tilde r \le 2r$), we get
		\begin{align*}
			\|g\|_{L_{t_2}^{2 r}}^{2r}
			&=
			\big\|\|f\|_{L_y^{2\tilde r}}\big\|_{L_{t_2}^{2 r}}^{2r}
			\le
			\big\|\|f\|_{L_{t_2}^{2r}}\big\|_{L_y^{2\tilde r}}^{2r} .
		\end{align*}
		Thus we have
		\begin{align*}
			\Big\|
			\sum_{N \in 2^{\mathbb{N}}}\sum_{k=1}^{m}
			\lambda_k^{1/2} \varepsilon_{N} \varepsilon_{k} P_N u_k
			\Big\|_{L_x^{2r} L_y^{2\tilde r}}
			\Big\|_{L_{t_1,t_2}^{2\tilde r}} &\lesssim \Big\|
			\big\|\|f\|_{L_{t_2}^{2r}}\big\|_{L_y^{2\tilde r}}^{2r} \Big\|_{L_{x}^{1}}^{\frac{1}{2r}} = \Big\| \big\| f \big\|_{L_{t_2}^{2r}} \Big\|_{L_{x}^{2r}L_{y}^{2\tilde{r}}}.
		\end{align*}
		Again using Lemma \ref{Khintchine inequality} with $q=2r,$ $n=1,$ we get
		\[
		\Big\|
		\Big(
		\sum_{j,k} \lambda_k |P_j u_k|^2
		\Big)^{1/2}
		\Big\|_{L_x^{2r} L_y^{2\tilde r}}
		\lesssim
		\Big\|
		\Big(
		\sum_k \lambda_k |u_k|^2
		\Big)^{1/2}
		\Big\|_{L_x^{2r} L_y^{2\tilde r}}.
		\]
		Therefore,  we obtained our required inequality \eqref{one way vector LP}.
		
		The reverse inequality will be proved using a duality argument together along with
		\eqref{one way vector LP}.
		Let $V$ satisfy $\|V\|_{L_x^{r'} L_y^{\tilde r'}} \le 1$.
		Then by H\"older's inequality
		\begin{align*}
			\int_{\mathbb{T}^d}
			\Big( \sum_{k=1}^{m} \lambda_k |u_k(x,y)|^2 \Big)\, V(x,y)\, dy\, dx 
			&=
			\sum_{k=1}^{m} \lambda_k
			\int_{\mathbb{T}^d}
			\overline{u_k(x,y)}\, V(x,y)\, u_k(x,y)\, dy\, dx \\
			&=
			\int_{\mathbb{T}^d}\sum_{N \in 2^{\mathbb{N}}}\sum_{k=1}^{m} \lambda_k
			\overline{P_N u_k(x,y)}\, \widetilde{P}_N V(x,y)\, u_k(x,y)\, dy\, dx,
		\end{align*}
		where $\widetilde{P}_N$ is defined in \eqref{tilde PN} and we used identity for two Schwartz functions $f$ and $g$ (see \cite[p. 36]{ward2010mixedLebesgue}): $$\int_{\mathbb{T}^d} f \overline{g} \,dx \, dy= \int_{\mathbb{T}^d}\sum_{N \in 2^{\mathbb{N}}}  P_{N}f \overline{\widetilde{P}_{N}g} \,dx \, dy.$$
		By H\"older's inequality
		\[
		\le
		\int_{\mathbb{T}^d}
		\Big( \sum_{N \in 2^{\mathbb{N}}}\sum_{k=1}^{m}
		\lambda_k |P_N u_k(x,y)|^2 \Big)^{1/2}
		\Big( \sum_{N \in 2^{\mathbb{N}}}\sum_{k=1}^{m}
		\lambda_k |\widetilde{P}_N V(x,y) u_k(x,y)|^2 \Big)^{1/2}
		dy\, dx .
		\]
		
		Applying H\"older inequality again we get 
		\[
		\le
		\left\|
		\Big( \sum_{N \in 2^{\mathbb{N}}}\sum_{k=1}^{m}
		\lambda_k |P_N u_k(x,y)|^2 \Big)^{1/2}
		\right\|_{L_x^{2r} L_y^{2\tilde r}}
		\left\|
		\Big( \sum_{N \in 2^{\mathbb{N}}}\sum_{k=1}^{m}
		\lambda_k |\widetilde{P}_N V(x,y) u_k(x,y)|^2 \Big)^{1/2}
		\right\|_{L_x^{(2r)'} L_y^{(2\tilde r)'}} .
		\]
		By applying \eqref{one way vector LP}, we get
		\[
		\lesssim
		\left\|
		\sum_{N \in 2^{\mathbb{N}}}\sum_{k=1}^{m}
		\lambda_k |P_N u_k(x,y)|^2 
		\right\|_{L_x^{r} L_y^{\tilde r}}^{\frac{1}{2}}
		\left\|
		\Big( \sum_{k=1}^{m}
		\lambda_k | V(x,y) u_k(x,y)|^2 \Big)^{1/2}
		\right\|_{L_x^{(2r)'} L_y^{(2\tilde r)'}} .
		\]
		Again using H\"older inequality, we get 
		\[
		\lesssim
		\left\|
		\sum_{N \in 2^{\mathbb{N}}}\sum_{k=1}^{m}
		\lambda_k |P_N u_k(x,y)|^2 
		\right\|_{L_x^{r} L_y^{\tilde r}}^{\frac{1}{2}}
		\left\|
		\Big( \sum_{k=1}^{m}
		\lambda_k | u_k(x,y)|^2 \Big)^{1/2}
		\right\|_{L_x^{2r} L_y^{2\tilde r}} \|V\|_{L_x^{r'}L_y^{\tilde{r}'}} .
		\]
		Finally we have, 
		$$\left\|
		\sum_{k=1}^{m}
		\lambda_k | u_k(x,y)|^2 
		\right\|_{L_x^{r} L_y^{\tilde r}} \lesssim \left\|
		\sum_{N \in 2^{\mathbb{N}}}\sum_{k=1}^{m}
		\lambda_k |P_N u_k(x,y)|^2 
		\right\|_{L_x^{r} L_y^{\tilde r}}
		$$ 
		and this completes the proof of the theorem.
	\end{proof}

	\section{Strichartz estimates for single function} \label{sec3} 
	In this section, we focus on establishing Strichartz estimates for the Schr\"odinger propagator $e^{-it\Delta}$  in the framework of partial regularity. Before presenting the main result, we first recall a few preparatory lemmas. The following lemma concerns the fixed-time decay estimate of the Schr\"odinger propagator  $e^{-it\Delta}$ on mixed-norm Lebesgue spaces.

	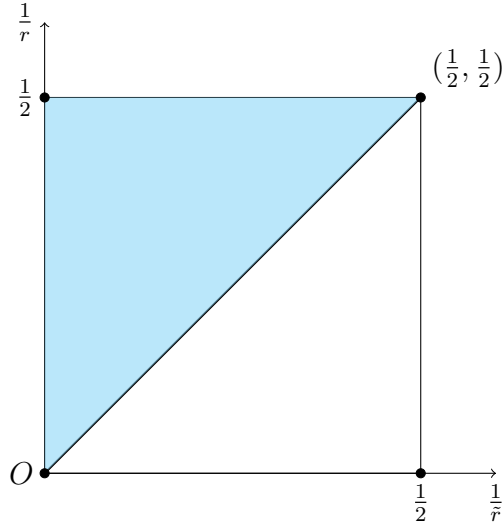
\begin{figure}[h]
		\centering
		\begin{tikzpicture}[scale=1]
			
			\draw[->] (0,0) -- (0,6) node[left] {$\tfrac{1}{r}$};
			\draw[->] (0,0) -- (6,0) node[below] {$\tfrac{1}{\tilde{r}}$};
			
			\draw (0,0) rectangle (5,5);
			\draw[thick] (0,0)--(5,5);

			
			
			\fill[cyan!40, opacity=0.6]
			(0,0) -- (5,5) -- (0,5) -- cycle;
			
			\filldraw[black] (5,5) circle (0.06) node[above right] {$(\tfrac{1}{2},\tfrac{1}{2})$};
			\filldraw[black] (0,5) circle (0.06) node[left] {$\tfrac{1}{2}$};
			\filldraw[black] (0,0) circle (0.06) node[left] {$O$};
			\filldraw[black] (5,0) circle (0.06) node[below] {$\tfrac{1}{2}$};
			
			
		\end{tikzpicture}
		\caption{The shaded region represent $2 \leq \tilde{r} \leq r \leq \infty.$ }
		\label{fig: tilda r and r}
	\end{figure}

	\begin{lemma}[fixed-time decay]\label{eq6}
		Let $d \geq 2 $ and $1 \leq k \leq d$.    Assume that $2 \leq \widetilde{r} \leq r \leq \infty,$ i.e., the shaded region in Figure \ref{fig: tilda r and r}. Then, for $(x, y) \in \mathbb{T}^{d-k} \times \mathbb{T}^k$ and $t\in [ -\frac{1}{2N}, \frac{1}{2N}]$, we have the following estimate:
		\begin{align}\label{eq5}
			\left\|e^{-i t\Delta} P_{\leq N} f\right\|_{L_x^r L_y^{\tilde{r}}} \lesssim |t|^{-\beta(r, \widetilde{r})}\|f\|_{L_x^{r^{\prime}} L_y^{\tilde{r}^{\prime}}}, 
		\end{align}
		where $P_{\leq N}$ is the Littlewood-Paley operator defined in \eqref{P less N}, constructed using the cutoff $\mathbf{1}_{S_{d,1}}$  and $$\beta(r, \widetilde{r}):=(d-k)\left(\frac{1}{2}-\frac{1}{r}\right)+k\left(\frac{1}{2}-\frac{1}{\widetilde{r}}\right).$$
	\end{lemma}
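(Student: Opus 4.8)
The plan is to deduce \eqref{eq5} from a fixed-time dispersive bound for the frequency-localized propagator $e^{-it\Delta}P_{\leq N}$, by factorizing its convolution kernel and interpolating between three endpoints adapted to the mixed norm. For fixed $t$, $e^{-it\Delta}P_{\leq N}$ is convolution on $\mathbb{T}^d$ with the Dirichlet-type kernel $K_N(t,z)=\sum_{\xi\in S_{d,N}}e^{2\pi i(z\cdot\xi+t|\xi|^2)}$ appearing in Proposition~\ref{l8} (up to the normalization constant of $\Delta$, which I absorb into $t$; this only affects implicit constants and the numerical constant in the time range). Since $S_{d,N}=S_{d-k,N}\times S_{k,N}$ while $|\xi|^2=|\xi_1|^2+|\xi_2|^2$ and $z\cdot\xi=x\cdot\xi_1+y\cdot\xi_2$, the kernel factors as a tensor product $K_N(t,x,y)=K_N^{(d-k)}(t,x)\,K_N^{(k)}(t,y)$, each factor itself a product of one-dimensional kernels. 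Applying Proposition~\ref{l8} to each one-dimensional factor gives, for $|t|\le\tfrac1{2N}$, the pointwise bounds $\bigl\|K_N^{(d-k)}(t,\cdot)\bigr\|_{L^\infty_x}\lesssim|t|^{-(d-k)/2}$, $\bigl\|K_N^{(k)}(t,\cdot)\bigr\|_{L^\infty_y}\lesssim|t|^{-k/2}$, hence $\|K_N(t,\cdot)\|_{L^\infty_{x,y}}\lesssim|t|^{-d/2}$. Correspondingly $e^{-it\Delta}P_{\leq N}=\bigl(e^{-it\Delta_x}P^x_{\leq N}\bigr)\bigl(e^{-it\Delta_y}P^y_{\leq N}\bigr)$, where $P^x_{\leq N},P^y_{\leq N}$ are the sharp frequency cutoffs to $S_{d-k,N}$ in $\xi_1$ and to $S_{k,N}$ in $\xi_2$.

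I would then record three endpoint estimates at fixed $t$ with $|t|\le\tfrac1{2N}$. First, $\|e^{-it\Delta}P_{\leq N}f\|_{L^2_xL^2_y}\le\|f\|_{L^2_xL^2_y}$, since $e^{-it\Delta}$ is unitary and $P_{\leq N}$ is an orthogonal projection on $L^2(\mathbb{T}^d)$. Second, $\|e^{-it\Delta}P_{\leq N}f\|_{L^\infty_xL^\infty_y}\lesssim|t|^{-d/2}\|f\|_{L^1_xL^1_y}$, directly from the $L^\infty$ bound on $K_N(t,\cdot)$. Third, the anisotropic endpoint $\|e^{-it\Delta}P_{\leq N}f\|_{L^\infty_xL^2_y}\lesssim|t|^{-(d-k)/2}\|f\|_{L^1_xL^2_y}$: here I would write $e^{-it\Delta}P_{\leq N}f(x,\cdot)=\int_{\mathbb{T}^{d-k}}K_N^{(d-k)}(t,x-x')\,g(x',\cdot)\,dx'$ with $g:=e^{-it\Delta_y}P^y_{\leq N}f$, apply Minkowski's integral inequality in the $y$-variable, bound $|K_N^{(d-k)}(t,x-x')|$ by $\|K_N^{(d-k)}(t,\cdot)\|_{L^\infty_x}\lesssim|t|^{-(d-k)/2}$, and use that $e^{-it\Delta_y}P^y_{\leq N}$ is an $L^2_y$-contraction for each fixed $x$, so that $\|g\|_{L^1_xL^2_y}\le\|f\|_{L^1_xL^2_y}$.

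The final step is interpolation. For $2\le\widetilde r\le r\le\infty$ the output exponent pair $(\tfrac1{\widetilde r},\tfrac1r)$ lies in the closed triangle with vertices $(0,0)$, $(\tfrac12,\tfrac12)$, $(\tfrac12,0)$ — precisely the region in Figure~\ref{fig: tilda r and r} — with barycentric coordinates $(\theta_1,\theta_2,\theta_3)=\bigl(1-\tfrac2{\widetilde r},\ \tfrac2r,\ \tfrac2{\widetilde r}-\tfrac2r\bigr)$, all nonnegative exactly under the stated hypotheses; the same coordinates express the input pair $(\tfrac1{\widetilde r'},\tfrac1{r'})$ in terms of the input spaces $L^1_xL^1_y$, $L^2_xL^2_y$, $L^1_xL^2_y$ of the three endpoints above. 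Complex (Riesz--Thorin--Stein) interpolation of mixed-norm Lebesgue spaces then yields $\|e^{-it\Delta}P_{\leq N}f\|_{L^r_xL^{\widetilde r}_y}\lesssim|t|^{-(\theta_1 d/2+\theta_3(d-k)/2)}\|f\|_{L^{r'}_xL^{\widetilde r'}_y}$, and a one-line computation gives $\theta_1\tfrac d2+\theta_3\tfrac{d-k}2=\tfrac d2-\tfrac{d-k}r-\tfrac k{\widetilde r}=\beta(r,\widetilde r)$, which is \eqref{eq5}. To avoid three-fold interpolation one may proceed in two steps: interpolating the first and third endpoints (vector-valued Riesz--Thorin in $x$ with values in $L^2_y$) gives $L^{p'}_xL^2_y\to L^p_xL^2_y$ with decay $|t|^{-(d-k)(1/2-1/p)}$ for $2\le p\le\infty$, and interpolating this family with the second endpoint $L^1_xL^1_y\to L^\infty_xL^\infty_y$ produces exactly \eqref{eq5}.

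I expect the only genuinely non-routine point to be isolating the anisotropic endpoint with the sharp power $|t|^{-(d-k)/2}$: a crude estimate there would only give $|t|^{-d/2}$ and hence the wrong, too-singular, exponent $\beta$. What makes the sharp power available is the factorization $K_N=K_N^{(d-k)}\otimes K_N^{(k)}$ into one-dimensional Dirichlet kernels together with the separate use of Proposition~\ref{l8} in the $x$- and $y$-blocks, combined with the fact that $e^{-it\Delta_y}P^y_{\leq N}$ costs nothing on $L^2_y$. The remaining ingredients — unitarity on $L^2$, the $L^\infty$ kernel bound, Minkowski's inequality, and interpolation of mixed-norm spaces — are standard.
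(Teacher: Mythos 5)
Your proposal is correct and follows essentially the same route as the paper: reduce to the frequency-localized convolution kernel $K_N$, prove the three endpoint estimates at $(r,\tilde r)\in\{(2,2),(\infty,\infty),(\infty,2)\}$ — Plancherel, Young plus the kernel $L^\infty$ bound from Proposition~\ref{l8}, and Minkowski in $y$ plus the sharp $(d-k)$-dimensional dispersive bound — and then interpolate by Riesz--Thorin for mixed-norm spaces. The one cosmetic difference is that the paper treats the anisotropic endpoint by Plancherel in $y$ directly (taking the partial Fourier transform $\tilde K_N$ and noting that $\sup_{\xi_2}|\tilde K_N(t,x,\xi_2)|=|K_N^{(d-k)}(t,x)|$ because the $|\xi_2|^2$ phase is unimodular), whereas you phrase it via the factorization $e^{-it\Delta}P_{\le N}=(e^{-it\Delta_x}P^x_{\le N})(e^{-it\Delta_y}P^y_{\le N})$ and the $L^2_y$-contractivity of the $y$-flow; these are the same observation. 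Your identification of the anisotropic endpoint as the genuinely non-routine step, and of the kernel factorization as what makes the sharp $|t|^{-(d-k)/2}$ decay available, matches the paper's logic.
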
 
	\begin{proof}

		In order to get the estimate (\ref{eq5}),  by Riesz-Thorin interpolation theorem,  it is enough to obtain estimate (\ref{eq5})  for the following three cases:
		\begin{enumerate}[\textbf{Case} (a)]
			\item $r=\widetilde{r}=2$,
			\item  $r=\widetilde{r}=\infty$,
			\item  $r=\infty$ and $\tilde{r}=2$.
		\end{enumerate}
		By  the Fourier transform and its inversion formula, we can write
		$$
		\begin{aligned}
			e^{-i t\Delta} P_{\leq N} f(x, y) & =\int_{\mathbb{T}^{d-k}} \int_{\mathbb{T}^k} K_N\left(x-x^{\prime}, y-y^{\prime}, t\right) f\left(x^{\prime}, y^{\prime}\right) d y^{\prime} d x^{\prime} \\
			& =(K_N *_{x,y} f)(x,y),
		\end{aligned}
		$$
		where  the kernel $K_N$ is given by 
		\begin{align}\label{eq9}
			K_N(x, y, t)= \sum_{\xi_{1} \in S_{d-k,N}} \sum_{\xi_{2} \in S_{k,N}} e^{2\pi i (x\cdot \xi_{1}+y \cdot \xi_{2}+ t  |(\xi_{1}, \xi_{2})|^2)} ,
		\end{align}
		where $S_{d,N}$ is defined in \eqref{rdp}.
		It is clear that \textbf{Case} (a) follows directly from the Plancherel theorem of the Fourier transform. Indeed, 
		\begin{align*}
			\|e^{-i t\Delta} P_{\leq N} f\|_{L^2}&=\| K_N* f\|_{L^2}\\
			&=\| \widehat{K_N} \widehat{f}\|_{\ell^2} \leq \| \widehat{f}\|_{\ell^2}=\|  f\|_{L^2}.
		\end{align*}
		
		For \textbf{Case} (b), that is, when  $r=\widetilde{r}=\infty$,  using Young's inequality, we get
		\begin{align}\label{Kernel estimate 1}
			\|e^{-i t\Delta} P_{\leq N} f\|_{L ^\infty}=\| K_N*  f\|_{L^\infty}\leq\left\|K_N\right\|_{L^{\infty}}\|f\|_{L^1} .
		\end{align}
		Now, we have to estimate $L^{\infty}$  norm of the kernel $K_N$.  From Proposition \ref{l8}, 
		$$\left\|K_N\right\|_{L^{\infty}}\leq C |t|^{-\frac{d}{2}},$$
		for any \((x, t) \in \mathbb{T} \times [-N^{-1}, N^{-1}].
		\)
		Then from the inequality \eqref{Kernel estimate 1}, we get 
		$$		\|e^{-i t\Delta} P_{\leq N} f\|_{L ^\infty} \leq\left\|K_N\right\|_{L ^{\infty}}\|f\|_{L ^1}  \lesssim |t|^{-\beta(\infty, \infty)} \|f\|_{L ^1} $$
		with $ \beta(\infty, \infty)=\frac{d}{2} \geq 0$ and   this concludes {\bf Case} (b).
		
		For \textbf{Case} (c), that is, when $r=\infty$ and $\tilde{r}=2$. This implies that $ \beta(\infty, 2)=\frac{d-k}{2} $.  Using Minkowski's inequality and Plancherel's theorem with respect to $y$ variable, we get 
		\begin{align}\label{Kernel 2}\nonumber
			\left\|e^{-i t\Delta} P_{\leq N} f\right\|_{L_x^{\infty} L_y^2} & =\left\|\| K_N *_{x,y} f \|_{L_y^2}\right\|_{L_x^{\infty}} \\\nonumber
			& \leq\left\|\int_{\mathbb{T}^{d-k}} \| K_N\left(t, x-x^{\prime}, \cdot\right) *_y f\left(x^{\prime}, \cdot\right) \|_{L_y^2} d x^{\prime}\right\|_{L_x^{\infty}} \\
			& =\left\|\int_{\mathbb{T}^{d-k}} \| \widetilde{K_N}\left(t, x-x^{\prime}, \cdot\right) \tilde{f}\left(x^{\prime}, \cdot\right) \|_{\ell_{\xi_{2}}^2} d x^{\prime}\right\|_{L_x^{\infty}},
		\end{align}
		where  $\tilde{f}=\mathcal{F}_y(f(x, \cdot))$ denotes the  Fourier transform of $f$ with respect to  the   $y \in \mathbb{T}^k$ variable. Let us denote    $\tilde{K}_N=\mathcal{F}_y(K_N(x, \cdot))$. Then    $\tilde{K}_N$  can be written   as 
		$$
		\widetilde{K}_N(t, x, {\xi_{2}})= \sum_{\xi_{1}\in S_{d-k,N}} e^{2\pi i (x \cdot \xi_{1}+ t  |(\xi_{1}, {\xi_{2}})|^2)} . 
		$$
		Again from  Lemma \ref{l8}, we have 
		\begin{align}\label{eq7}
			\sup _{{\xi_{2}} \in  \Z^k} \left|\widetilde{K}_N(t, x, {\xi_{2}})\right|= \sup _{{\xi_{2}} \in  \Z^k} \left| \sum_{\xi_{1}\in S_{d-k,N}} e^{2\pi i (x \cdot \xi_{1}+ t  |(\xi_{1}, {\xi_{2}})|^2)} \right|  \lesssim | t|^{-\frac{d-k}{2}} 
		\end{align}
		for   $ (x, t) \in \mathbb{T}^{d-k} \times [-N^{-1}, N^{-1}]$.
		
		Coming back to the inequality \eqref{Kernel 2},	using  the estimate  \eqref{eq7}, we obtain 
		$$
		\begin{aligned}
			\left\|e^{-i t\Delta} P_{\leq N} f\right\|_{L_x^{\infty} L_y^2}
			&	\lesssim 	\left\|\int_{\mathbb{T}^{d-k}}  |t|^{-\frac{d-k}{2}}\|   \tilde{f}\left(x^{\prime}, \cdot\right) \|_{\ell_{\xi_{2}}^2} d x^{\prime}\right\|_{L_x^{\infty}}.
		\end{aligned}
		$$
		Let $g(x)=1$ for all $x\in \T^{d-k}$. Then using Young's convolution inequality, we get
		$$
		\begin{aligned}
			\left\|e^{-i t\Delta} P_{\leq N} f\right\|_{L_x^{\infty} L_y^2}  
			&	\lesssim 	\left\|\int_{\mathbb{T}^{d-k}}  |t|^{-\frac{d-k}{2}}\|   \tilde{f}\left(x^{\prime}, \cdot\right) \|_{\ell_{\xi_{2}}^2} d x^{\prime}\right\|_{L_x^{\infty}}\\ 
			& \lesssim\left\||t|^{-\frac{d-k}{2}} g*_x \| \tilde{f} \|_{\ell_{\xi_{2}}^2}\right\|_{L_x^{\infty}} \\
			& =\left\||t|^{-\frac{d-k}{2}} g*_x \| f\|_{L_y^2}\right\|_{L_x^{\infty}} \\
			& \lesssim |t|^{-\frac{d-k}{2}}  \|g\|_{L^\infty_x} \|f\|_{L_x^1 L_y^2} \\
			& \leq |t|^{-\frac{d-k}{2}}\|f\|_{L_x^1 L_y^2}\\
			& =|t|^{-\beta(\infty, 2)}\|f\|_{L_x^1 L_y^2}
		\end{aligned}
		$$
		and this completes the proof of the lemma.  
	\end{proof}
	Next, using the above fixed time decay estimates, we have the following  frequency localized estimates
	\begin{lemma}[frequency localized estimate] \label{Loc} Assume that $d \geq 2,1 \leq k \leq d$, and
		\begin{align*}
			\frac{2}{q} \geq (d-k)\left(\frac{1}{2}-\frac{1}{r}\right)+k\left(\frac{1}{2}-\frac{1}{\widetilde{r}}\right), \quad 2 \leq \widetilde{r} \leq r<\infty, \quad 2< q \leq \infty. 
		\end{align*}
		Then, we have  
		\begin{align}\label{eq4}
			\left\|e^{-i t\Delta} P_{\leq N} f\right\|_{L_t^q\left(I_N; L_x^r (\T^{d-k}; L_y^{\tilde{r}}(\T^k) \right)} \lesssim\|f\|_{L^2},
		\end{align}
		where $I_N=[-\frac{1}{2N}, \frac{1}{2N}]$.
	\end{lemma}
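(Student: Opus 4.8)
The plan is to deduce \eqref{eq4} from the fixed-time dispersive estimate of Lemma~\ref{eq6} by the standard $TT^*$ argument, with the one-dimensional Hardy--Littlewood--Sobolev inequality handling the integration in time. First note that, since $P_{\le N}$ is built from the sharp cutoff $\mathbf{1}_{S_{d,1}}$, it is an orthogonal projection, so $P_{\le N}^*=P_{\le N}$ and $P_{\le N}^2=P_{\le N}$; set $U(t)=e^{-it\Delta}P_{\le N}$. Since $2\le\tilde r\le r<\infty$ and $2<q$, the space $L^q_t(I_N;L^r_xL^{\tilde r}_y)$ is reflexive with dual $L^{q'}_t(I_N;L^{r'}_xL^{\tilde r'}_y)$ when $q<\infty$; the case $q=\infty$ forces $\beta(r,\tilde r)=0$, hence $r=\tilde r=2$, and then \eqref{eq4} reduces to $\sup_t\|P_{\le N}f\|_{L^2}\le\|f\|_{L^2}$, which is trivial. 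So I would reduce matters to showing that the operator
\[
F\longmapsto (UU^*F)(t):=\int_{I_N}U(t)U(s)^*F(s)\,ds=\int_{I_N}e^{-i(t-s)\Delta}P_{\le N}F(s)\,ds
\]
is bounded from $L^{q'}_t(I_N;L^{r'}_xL^{\tilde r'}_y)$ to $L^q_t(I_N;L^r_xL^{\tilde r}_y)$, where I used that $P_{\le N}$ commutes with $e^{is\Delta}$ and is idempotent.

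Next, for $t,s\in I_N=[-\tfrac1{2N},\tfrac1{2N}]$ one has $|t-s|\le \tfrac1N$, so arguing exactly as in the proof of Lemma~\ref{eq6} — whose sole input, Proposition~\ref{l8}, is valid on the full interval $[-N^{-1},N^{-1}]$ — I obtain the pointwise-in-time bound
\[
\bigl\|e^{-i(t-s)\Delta}P_{\le N}F(s)\bigr\|_{L^r_xL^{\tilde r}_y}\lesssim |t-s|^{-\beta}\,\|F(s)\|_{L^{r'}_xL^{\tilde r'}_y},\qquad \beta:=\beta(r,\tilde r)=(d-k)\Bigl(\tfrac12-\tfrac1r\Bigr)+k\Bigl(\tfrac12-\tfrac1{\tilde r}\Bigr).
\]
Feeding this into Minkowski's integral inequality gives
\[
\bigl\|(UU^*F)(t)\bigr\|_{L^r_xL^{\tilde r}_y}\lesssim \int_{I_N}|t-s|^{-\beta}\,\|F(s)\|_{L^{r'}_xL^{\tilde r'}_y}\,ds ,
\]
and it only remains to take the $L^q_t(I_N)$ norm of the right-hand side. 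Because $q>2$ and $r,\tilde r\ge 2$, we have $0\le\beta\le\tfrac2q<1$. In the borderline case $\beta=\tfrac2q$ I would dualize in $t$ and apply the one-dimensional Hardy--Littlewood--Sobolev inequality (Lemma~\ref{PL3} on $\mathbb R$ with both exponents equal to $q'$ and $\lambda=\beta$, which is admissible since $\tfrac2{q'}+\beta=2$, $\tfrac2{q'}\ge1$, $q'>1$, and $\beta\in[0,1)$), obtaining the estimate $\lesssim\|F\|_{L^{q'}_tL^{r'}_xL^{\tilde r'}_y}$. In the strict case $\beta<\tfrac2q$ I would instead use Young's convolution inequality on $\mathbb R$ with the kernel $|t|^{-\beta}\mathbf{1}_{\{|t|\le 2/N\}}$, whose $L^{q/2}(\mathbb R)$ norm equals $C_{\beta,q}N^{\beta-2/q}\le C_{\beta,q}$ since $N\ge1$ and $\beta-\tfrac2q<0$; this again gives $\lesssim\|F\|_{L^{q'}_tL^{r'}_xL^{\tilde r'}_y}$. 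Either way the $TT^*$ operator is bounded, and \eqref{eq4} follows.

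The one delicate point I anticipate is the borderline case $\beta(r,\tilde r)=\tfrac2q$, where a loss-free time integration is possible only through the endpoint one-dimensional Hardy--Littlewood--Sobolev inequality; here the standing assumption $q>2$ is exactly what makes the Riesz-potential exponent $\tfrac2q$ lie strictly inside $(0,1)$ and forces the admissibility $q'<q$, so the hypothesis cannot be weakened at this step. Away from the borderline, the restriction to the short interval $I_N$ renders the singular kernel integrable to the relevant power with all constants uniform in $N\ge1$, so the remaining work is entirely routine and I expect no further obstruction.
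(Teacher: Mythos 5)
Your argument is correct and follows essentially the same route as the paper: a $TT^*$ reduction, the fixed-time dispersive bound of Lemma~\ref{eq6}, and then a case split on $\beta(r,\tilde r)$ vs.\ $2/q$ handled by Young's inequality (non-endpoint), one-dimensional Hardy--Littlewood--Sobolev (endpoint), and Plancherel ($q=\infty$, $r=\tilde r=2$). The only cosmetic difference is that the paper passes through the retarded operator $\int_0^t e^{-i(t-s)\Delta}P_{\le N}g(s)\,ds$ rather than the symmetric $\int_{I_N}$; your version is slightly more explicit about the $L^{q/2}(I_N)$ kernel bound being uniform in $N$, which the paper leaves implicit.
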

	\begin{proof}
		By   a standard $T T^*$ argument (see \cite[Corollary 2.3]{zucco2010schrodingerequation}),  the required estimate (\ref{eq4}) is equivalent to
		$$
		\left\|\int_{0}^t e^{-i(t-s)\Delta} P_{\leq N}  g(s,\cdot) d s\right\|_{L_t^q\left(I_N; L_x^r (\T^{d-k}; L_y^{\tilde{r}}(\T^k) \right)}  \lesssim\|g\|_{L_t^{q'}\left(I_N; L_x^{r'} (\T^{d-k}; L_y^{\tilde{r}'}(\T^k) \right)}.
		$$
		By   Lemma \ref{eq6}, we obtain
		$$
		\begin{aligned}
			\left\|\int_{0}^t e^{-i(t-s)\Delta} P_{\leq N}  g(s,\cdot)\; d s\right\|_{L_t^q\left(I_N; L_x^r (\T^{d-k}; L_y^{\tilde{r}}(\T^k) \right)} 
			& \leq\left\|\int_{0}^t \| e^{-i(t-s)\Delta}  P_{\leq N}  g(s,\cdot) \|_{L_x^r L_y^{\tilde{r}}}\; ds\right\|_{L_t^q(I_N)} \\
			& \lesssim\left\|\int_{0}^t  |t-s|^{-\beta(r, \widetilde{r})}\|g(s,\cdot)\|_{L_x^{r^{\prime}} L_y^{\tilde{r}^{\prime}}}\; ds\right\|_{L_t^q(I_N)} \\
			&=\left\||\cdot|^{-\beta(r, \widetilde{r})} *_t \| g \|_{L_x^{r^{\prime}} L_y^{\tilde{r}^{\prime}}}\right\|_{L_t^q(I_N)}.
		\end{aligned}
		$$
		From the notation of $\beta(r, \tilde{r})$,  the inequality $	\frac{2}{q} \geq (d-k)\left(\frac{1}{2}-\frac{1}{r}\right)+k\left(\frac{1}{2}-\frac{1}{\widetilde{r}}\right)$  reduces to   $\frac{2}{q}\geq  \beta(r, \tilde{r})$. We now consider the following three cases:
		\begin{itemize}
			\item When $\frac{2}{q} > \beta(r, \tilde{r})$ and $2 \leq q \leq \infty$:    Then  Young's convolution inequality yields
			$$
			\begin{aligned}
				\left\| |\cdot|^{-\beta(r, \widetilde{r})} *_t \| g \|_{L_x^{r^{\prime}} L_y^{\widetilde{r}'}}\right\|_{L_t^q(I_N)} & \lesssim\left\||\cdot|^{-\beta(r, \widetilde{r})}\right\|_{L_t^{\frac{q}{2}}(I_N)}\|g\|_{L_t^{q'}\left(I_N; L_x^{r'} (\T^{d-k}; L_y^{\tilde{r}'}(\T^k) \right)} \\
				& \lesssim\|g\|_{L_t^{q'}\left(I_N; L_x^{r'} (\T^{d-k}; L_y^{\tilde{r}'}(\T^k) \right)}.
			\end{aligned}
			$$
			\item 	When $\frac{2}{q}=\beta(r, \tilde{r})$ with $2<q<\infty$, Applying one-dimensional Hardy-Littlewood Sobolev inequality, we get 
			$$
			\begin{aligned}
				\left\||\cdot|^{- \beta(r, \tilde{r})} *_t \| g \|_{L_x^{r^{\prime}} L_y^{\tilde{r}^{\prime}}}\right\|_{L_t^q(I_N)}
				& \lesssim\|g\|_{L_t^{q'}\left(I_N; L_x^{r'} (\T^{d-k}; L_y^{\tilde{r}'}(\T^k) \right)}.
			\end{aligned}
			$$
			\item 	When  $q=\infty$ and $ \beta(r, \widetilde{r})=0$: This implies  that  $r=\widetilde{r}=2$. In this case, the required estimate (\ref{eq5})   holds trivially by Plancherel's theorem. This completes the proof of the lemma.
	\end{itemize} \end{proof} 
	
	Let $I' \subset \mathbb{R}$ be an interval of length $1/N$, and denote by $C(I')$ the center of $I'$. By Lemma \ref{Loc}, we have  
	\begin{align*}
		\left\| e^{-i t' \Delta} P_{\leq N} f \right\|_{L_{t'}^{q}(I') L_{x}^{r} L_{y}^{\tilde r}}
		&= \left\| e^{-i t \Delta} P_{\leq N} g \right\|_{L_{t}^{q}(I_{N}) L_{x}^{r} L_{y}^{\tilde r}} \\
		&\leq \|g\|_{L^{2} }
		= \|f\|_{L^{2} },
	\end{align*}
	where 
	\[
	g = \big( e^{\, 2 \pi i\, C(I')\, |(\xi_{1},\xi_{2})|^{2}} \, \widehat{f}(\xi_{1},\xi_{2}) \big)^{\vee}.
	\]
	
	We partition the time torus $\mathbb{T}$ into $N$ subintervals $\{I_{k,N}\}_{k=1}^{N}$, each of length $1/N$, so that  
	\[
	\mathbb{T} = \bigcup_{k=1}^{N} I_{k,N}.
	\]
	Then  
	\begin{align*}
		\left\| e^{-i t \Delta} P_{\leq N} f \right\|_{L_t^{q}(\mathbb{T}) L_x^{r} L_y^{\tilde r}}^{q}
		&\leq 
		N \left\| e^{-i t \Delta} P_{\leq N} f \right\|_{L_t^{q}(I_{k,N}) L_x^{r} L_y^{\tilde r}}^{q} \\
		&\leq N \|f\|_{L^{2}}^{q}.
	\end{align*}
	Thus,
	\begin{equation}\label{Time interval}
		\left\| e^{-i t \Delta} P_{\leq N} f \right\|_{L_t^{q}(\mathbb{T}) L_x^{r} L_y^{\tilde r}}
		\leq N^{1/q} \|f\|_{L^{2}}.
	\end{equation}
	Now we are in a position to prove  Strichartz estimates in the partial frame work. 
	\begin{proof}[\textbf{Proof of Theorem \ref{SE Single function}}] We begin the proof by substituting $e^{-i t\Delta} f$  into the estimate  \eqref{norm} as follows  $$
		\|e^{-i t\Delta} f\|_{L_t^qL_x^r L_y^{\tilde{r}}} \lesssim \left\| \left(\sum_{N \in 2^\mathbb{N}} \left|P_N e^{-i t\Delta} f\right|^2\right)^{1 / 2}\right\|_{L_t^qL_x^r L_y^{\tilde{r}}}. 
		$$ Then, by the Minkowski inequality, we  get 
		\begin{align}\label{eq12}\nonumber
			\left\|e^{-i t\Delta} f\right\|_{L_t^q L_x^r L_y^{\tilde{r}}}^2 &\lesssim\left\| \left( \sum_{N \in 2^\mathbb{N}} \left|P_N e^{-i t\Delta} f\right|^2\right)^{1 / 2}\right\|_{L_t^q L_x^r L_y^{\tilde{r}}}^2 \\
			& \lesssim \sum_{N \in 2^\mathbb{N}}\left\|e^{-i t\Delta} P_N f\right\|_{L_t^q L_x^r L_y^{\tilde{r}}}^2 \nonumber \\
			&= \left\|e^{-i t\Delta} P_1 f\right\|_{L_t^q L_x^r L_y^{\tilde{r}}}^2+\sum_{N \in 2^\mathbb{N}, N \neq 1}\left\|e^{-i t\Delta} P_N f\right\|_{L_t^q L_x^r L_y^{\tilde{r}}}^2,
		\end{align}
		where in the second step, we used the fact that $P_N$ commutes with the semigroup $e^{-i t\Delta}$.		 
		
		Now, choose compactly supported Schwartz functions $\tilde{\psi}_{N}$ for all 
		$N \in 2^{\mathbb{N}},\, N \neq 1$, such that 
		$\tilde{\psi}_{N} \equiv 1$ in a neighborhood of $\operatorname{supp}(\psi_{N})$.  
		Using these functions, define $\tilde{P}_{N}$ as 
		\begin{equation}\label{tilde PN}
			\widehat{\tilde{P}_{N} f} = \tilde{\psi}_{N}\, \widehat{f}.
		\end{equation}
		With this choice, we can write $ 
		P_{N} f = P_{N} \tilde{P}_{N} f.$    Then  from \eqref{eq12} and   \eqref{Time interval} with $s=\frac{1}{q}$, we obtain  
		$$
		\begin{aligned}
			\left\|e^{-i t\Delta} f\right\|_{L_t^q L_x^r L_y^{\tilde{r}}}^2  
			& \lesssim \left\|e^{-i t\Delta} P_1 f\right\|_{L_t^q L_x^r L_y^{\tilde{r}}}^2+\sum_{N \in 2^\mathbb{N}, N \neq 1}\left\|e^{-i t\Delta} P_N \tilde{P}_{N} f\right\|_{L_t^q L_x^r L_y^{\tilde{r}}}^2\\
			& \lesssim  \|f\|_{L^{2}}^{2}+ \sum_{N \in 2^\mathbb{N}, N \neq 1} N^{2s}\left\|\widetilde{P}_N f\right\|_{L^2}^2 \lesssim\|f\|_{{H}^s}^2
		\end{aligned}
		$$
		with $s=\frac{1}{q}$ and this completing the proof of the theorem. 
	\end{proof}
	\section{Local well-posedness for the Hartree type equations}\label{sec4}
	In this section, we apply the refined Strichartz estimate from Theorem  \ref{SE Single function}  to study the well-posedness of the nonlinear equation \eqref{NLS} when the initial data are chosen from the partial regularity space   $X_k^s;$ rather than the full regularity space {${H}^s$}. As a preliminary step, we first recall the well-known fractional chain rule on the torus, which will play a key role in the proof.

	\begin{proposition}[Fractional chain rule on torus \cite{lee2019wellposedness}]\label{fractional chain}
		Suppose $F : \mathbb{C} \to \mathbb{C}$ satisfies 
		\[
		|F(u) - F(v)| \lesssim |u - v|\big(G(u) + G(v)\big)
		\]
		for some $G : \mathbb{C} \to [0, \infty)$. Let $d \ge 1$, $0 \leq s < 1$, $1 < \mathfrak{a} < \infty$, and $1 < c \le \infty$, such that 
		\[
		\frac{1}{\mathfrak{a}} = \frac{1}{b} + \frac{1}{c}.
		\]
		Then
		\[
		\|\,|\nabla|^s F(u)\|_{L^\mathfrak{a}} 
		\lesssim 
		\|\,|\nabla|^s u\|_{L^{b}} 
		\|G(u)\|_{L^{c}}.
		\]
	\end{proposition}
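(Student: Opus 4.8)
The plan is to adapt the Christ--Weinstein proof of the Euclidean fractional chain rule, whose ingredients all have torus counterparts. The starting point is to replace $|\nabla|^{s}$ by the Gagliardo/Stein square function: for $0<s<1$ and $1<p<\infty$ one has, for $g:\mathbb{T}^{d}\to\mathbb{C}$,
\[
\||\nabla|^{s}g\|_{L^{p}(\mathbb{T}^{d})}\ \sim_{d,s,p}\ \|D_{s}g\|_{L^{p}(\mathbb{T}^{d})},
\qquad
D_{s}g(x):=\Big(\int_{|y|\le 1/2}\frac{|g(x+y)-g(x)|^{2}}{|y|^{d+2s}}\,dy\Big)^{1/2},
\]
where $g(x+y)$ refers to the periodic extension and the $y$-integral is genuinely local; no lower-order term is needed since $|\nabla|^{s}$ annihilates the mean and $D_{s}$ is invariant under adding a constant. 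I would obtain this equivalence either by transferring the classical Euclidean identity through Proposition~\ref{transference} (in the $\ell^{2}$-valued form, exactly as the square function is treated in the proof of Proposition~\ref{mixed LP}), or directly from the subordination formula $|\nabla|^{s}g=c_{s}\int_{0}^{\infty}\big(g-e^{t\Delta}g\big)\,t^{-1-s/2}\,dt$, which behaves well on the compact torus.

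Applying the $\lesssim$ half of this equivalence to $g=F(u)$ and inserting the hypothesis on $F$ pointwise in the difference quotient, namely $|F(u(x+y))-F(u(x))|\lesssim|u(x+y)-u(x)|\big(G(u(x+y))+G(u(x))\big)$, gives the pointwise bound
\[
D_{s}F(u)(x)\ \lesssim\ G(u(x))\,D_{s}u(x)\ +\ E(x),
\qquad
E(x)^{2}:=\int_{|y|\le 1/2}\frac{|u(x+y)-u(x)|^{2}\,G(u(x+y))^{2}}{|y|^{d+2s}}\,dy.
\]
For the diagonal term, H\"older's inequality with $\tfrac1{\mathfrak a}=\tfrac1b+\tfrac1c$ together with the reverse direction of the characterization ($\|D_{s}u\|_{L^{b}}\sim\||\nabla|^{s}u\|_{L^{b}}$, valid for $1<b<\infty$) yields $\|G(u)\,D_{s}u\|_{L^{\mathfrak a}}\le\|G(u)\|_{L^{c}}\|D_{s}u\|_{L^{b}}\lesssim\|G(u)\|_{L^{c}}\||\nabla|^{s}u\|_{L^{b}}$, which is already the desired right-hand side.

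The off-diagonal term $E$ is the crux, and it is where the argument must go beyond a naive Minkowski estimate: pulling the $L^{\mathfrak a/2}$ norm inside the $y$-integral would leave $\int_{|y|\le1/2}\|u(\cdot+y)-u(\cdot)\|_{L^{b}}^{2}|y|^{-d-2s}\,dy$, which is controlled by $\||\nabla|^{s}u\|_{L^{b}}^{2}$ only when $\min(\mathfrak a,b)\ge2$ --- too restrictive, since in the application (Lemma~\ref{nonlinearity estimate x y}) one has $\mathfrak a=\tilde r'\le2$. Instead, following Christ--Weinstein, I would decompose $E(x)^{2}=\sum_{k\ge1}\int_{2^{-k-1}\le|y|\le2^{-k}}(\cdots)$, bound the $k$-th shell by $2^{2ks}$ times the average over the ball $B(x,2^{-k})$ of $|u(x+y)-u(x)|^{2}G(u(x+y))^{2}$, apply H\"older in $y$ on that ball to separate $|u(x+y)-u(x)|$ from $G(u(x+y))$, control the $G$-factor by $M\big(G(u)^{\rho}\big)(x)^{1/\rho}$ (Hardy--Littlewood maximal function on $\mathbb{T}^{d}$) for a suitable $\rho>1$ and the difference-factor by a variant of $D_{s}u$, sum the geometric series in $k$, and conclude by taking the $L^{\mathfrak a}_{x}$ norm via H\"older and the $L^{p}$-boundedness of $M$ for $1<p<\infty$. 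This reproduces the Euclidean argument verbatim on $\mathbb{T}^{d}$, using only ingredients available there, and yields the claimed bound for the full range $1<\mathfrak a<\infty$, $1<c\le\infty$. I expect the correct choice of the exponent $\rho$ and of the auxiliary square function, and the verification that every intermediate exponent stays in $(1,\infty)$, to be the only delicate point.
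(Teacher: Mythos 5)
The paper does not prove this proposition --- it is quoted from \cite{lee2019wellposedness} --- so there is no in-paper argument to compare with, and what follows evaluates your sketch on its own terms. Your route via the Strichartz--Gagliardo difference-quotient square function is a recognizable alternative to the Christ--Weinstein Littlewood--Paley proof, and the decomposition into a diagonal term and an off-diagonal term $E$ is the right shape, but as sketched there are two genuine gaps.

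First, the equivalence $\||\nabla|^{s} g\|_{L^{p}} \sim \|D_{s}g\|_{L^{p}}$ with an $L^{2}(dy)$-average is \emph{not} valid for the full range $1<p<\infty$: the direction $\|D_{s}g\|_{L^{p}} \lesssim \||\nabla|^{s}g\|_{L^{p}}$, which your diagonal term uses at exponent $b$, holds only for $p>2d/(d+2s)$ (Strichartz's characterization; Stein, Singular Integrals, Ch.\ V). Since the proposition permits $c=\infty$, one can have $b=\mathfrak a$ arbitrarily close to $1$, and for $d\ge 2$ this restriction bites. Second, the treatment of $E$ does not close as described: after H\"older on each dyadic shell in $y$ and a maximal-function bound $M\!\left(G(u)^{\rho}\right)^{1/\rho}$ with $\rho>1$, the remaining difference factor carries an $L^{2\rho'}(dy)$-average with $\rho'>1$ --- a square function that \emph{dominates} $D_{s}u$, with an even more restrictive admissible range for its equivalence with $\||\nabla|^{s}u\|_{L^{b}}$. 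So the phrase ``sum the geometric series'' conceals the real difficulty rather than resolving it. To reach the full stated range one would either have to work with $L^{q}(dy)$ differences for small $q$ throughout (which requires revisiting the pointwise step, since the hypothesis on $F$ no longer separates $G$ as cleanly), or follow the Christ--Weinstein Littlewood--Paley decomposition $P_{N}F(u)=P_{N}F(P_{\le N}u)+P_{N}\bigl(F(u)-F(P_{\le N}u)\bigr)$ with mean-value and vector-valued maximal bounds, which carries no such range restriction and transfers to $\mathbb{T}^{d}$ by the same mechanism you invoke.
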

	Let $G_{p}(u)=w \ast F_{p}(u),$ and we collect some basic properties of function $F_{p}(u)$ to be used in the proof:
	\begin{equation}\label{difference of Fp}
		| F_{p}(u)-F_{p}(v)| \leq C (|u|^{p-1}+|v|^{p-1})|u-v|
	\end{equation}
	and 
	\begin{equation}\label{fractional chain for Fp}
		\|\,|\nabla|^s F_{p}(u)\|_{L^\mathfrak{a}} 
		\lesssim 
		\|\,|\nabla|^s u\|_{L^{b}} 
		\|u\|_{L^{c(p-1)}}^{p-1},
	\end{equation}
	where $0\leq s<1,$ $1<\mathfrak{a},c< \infty$ and $\frac{1}{\mathfrak{a}}=\frac{1}{b}+\frac{1}{c}.$  The inequality
	\eqref{fractional chain for Fp} directly follows from \eqref{difference of Fp} and Proposition \ref{fractional chain}. For a detailed study, we refer to \cite{koh2023local}.
	
	
	\begin{lemma}[nonlinear estimate]\label{nonlinearity estimate x y}
		Let $d \geq 3,$ $0 \leq s < 1,$ $ a \in [0,\infty),$ $1<p<1+\frac{4}{d-2s}$ and $w \in W_{x,y}^{\frac{2}{q},1}.$ For any time $I=[0,T],$ there are triples $(q,r,\tilde{r}) \in \mathbb{A}(d,2)$ (see \eqref{Adk}) and $\beta_{0}(d,p,s)>0$ such that

		\begin{equation}\label{nonlinearity estimate xy both}
			\| \langle \nabla_y \rangle^{s} \langle \nabla \rangle^{a} G_p(u) \|_{L_t^{q'}(I; L_x^{r'} L_y^{\tilde{r}'})}
			\leq C T^{\beta_0(d,p,s)} \| w \|_{ W ^{a,1}}
			\| u \|_{L_t^{q}(I; L_x^{r} W_y^{s,\tilde{r}})}^{p}. 
		\end{equation}
		Moreover,
		\begin{equation}\label{difference of Gu GV}
			\resizebox{\textwidth}{!}{$\| \langle \nabla \rangle^{a} (G_p(u) - G_p(v)) \|_{L_t^{q'}(I; L_x^{r'} L_y^{\tilde{r}'})}
				\lesssim \| w \|_{ W_{x,y}^{a,1}} T^{\beta_0(d,p,s)} 
				\left( 
				\| u \|_{L_t^{q}(I; L_x^{r} W_y^{s,\tilde{r}})}^{p-1}
				+ \| v \|_{L_t^{q}(I; L_x^{r} W_y^{s,\tilde{r}})}^{p-1}
				\right)
				\| u - v \|_{L_t^{q}(I; L_x^{r} L_y^{\tilde{r}})}.$}
		\end{equation}
	\end{lemma}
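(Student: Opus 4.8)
The plan is to treat this as a standard Schr\"odinger nonlinear estimate, extracting all the exponent gain from Sobolev embedding on the two-dimensional torus $\mathbb{T}^2$ in the $y$-variable and a positive power of $T$ from H\"older's inequality in time in the subcritical regime; the choice $k=2$ and the range $p<1+\tfrac{4}{d-2s}$ will appear precisely from requiring these two mechanisms to be compatible. First I would remove the potential: since $G_p(u)=w\ast F_p(u)$ with convolution over $\mathbb{T}^d=\mathbb{T}^{d-2}\times\mathbb{T}^2$ and $\langle\nabla\rangle^a,\langle\nabla_y\rangle^s$ commute with convolution, one writes $\langle\nabla_y\rangle^s\langle\nabla\rangle^a G_p(u)=(\langle\nabla\rangle^a w)\ast(\langle\nabla_y\rangle^s F_p(u))$ and applies Young's convolution inequality with the $L^1$ norm on $w$, giving
\[
\bigl\|\langle\nabla_y\rangle^s\langle\nabla\rangle^a G_p(u)\bigr\|_{L_x^{r'}L_y^{\tilde{r}'}}
\lesssim
\|w\|_{W^{a,1}}\,\bigl\|\langle\nabla_y\rangle^s F_p(u)\bigr\|_{L_x^{r'}L_y^{\tilde{r}'}},
\]
uniformly in $t$; the same reduction with $F_p(u)-F_p(v)$ in place of $F_p(u)$ (and without $\langle\nabla_y\rangle^s$) handles \eqref{difference of Gu GV}. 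It thus remains to bound $\|\langle\nabla_y\rangle^s F_p(u)\|_{L_t^{q'}L_x^{r'}L_y^{\tilde{r}'}}$ and $\|F_p(u)-F_p(v)\|_{L_t^{q'}L_x^{r'}L_y^{\tilde{r}'}}$.

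For the first, I would fix $t$ and $x$ and view $\mathbb{T}^2$ as the $y$-torus: using \eqref{homogeneous dominate} to pass from $\langle\nabla_y\rangle^s$ to $|\nabla_y|^s$, then \eqref{growth condition} on the zeroth-order piece and the fractional chain rule \eqref{fractional chain for Fp} (with $\tfrac1{\tilde{r}'}=\tfrac1{\tilde{r}}+\tfrac1c$) on the top-order piece, and \eqref{inhomogeneous dominate} to return $\||\nabla_y|^s u\|_{L_y^{\tilde{r}}}$ to $\|u\|_{W_y^{s,\tilde{r}}}$, I obtain
\[
\|\langle\nabla_y\rangle^s F_p(u)(t,x,\cdot)\|_{L_y^{\tilde{r}'}}
\lesssim
\|u(t,x,\cdot)\|_{L_y^{p\tilde{r}'}}^{p}
+\|u(t,x,\cdot)\|_{W_y^{s,\tilde{r}}}\,\|u(t,x,\cdot)\|_{L_y^{c(p-1)}}^{p-1}.
\]
Both $\|u\|_{L_y^{p\tilde{r}'}}$ and $\|u\|_{L_y^{c(p-1)}}$ are then controlled by $\|u\|_{W_y^{s,\tilde{r}}}$ through the Sobolev embedding $W^{s,\tilde{r}}(\mathbb{T}^2)\hookrightarrow L^\mu(\mathbb{T}^2)$ of Lemma \ref{sobolev embbeding T}, as long as $p\tilde{r}'$ and $c(p-1)$ do not exceed the threshold $\bigl(\tfrac1{\tilde{r}}-\tfrac s2\bigr)^{-1}$; this is the point where $k=2$ enters. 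Taking $r=p+1$, the $p$ copies of $L_x^{r}$ in a H\"older inequality in $x$ produce exactly $L_x^{r'}$ (no $x$-regularity is available or needed), so $\|\langle\nabla_y\rangle^s F_p(u)(t)\|_{L_x^{r'}L_y^{\tilde{r}'}}\lesssim\|u(t)\|_{L_x^{r}W_y^{s,\tilde{r}}}^{p}$, and a final H\"older inequality in $t$ supplies $T^{\beta_0}$ with $\beta_0=1-\tfrac{p+1}{q}$. Fixing the admissible triple by $r=p+1$ and $\tilde{r}$ saturating the Sobolev threshold, the relation $\tfrac2q+\tfrac{d-2}{r}+\tfrac2{\tilde{r}}=\tfrac d2$ forces $\tfrac2q=\tfrac d2-\tfrac{d+(p-1)s}{p+1}$, and one checks that $\beta_0>0$ is equivalent to $p<1+\tfrac4{d-2s}$, so the subcritical hypothesis is exactly what makes the time exponent positive.

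For \eqref{difference of Gu GV} the argument is the same with \eqref{difference of Fp} in place of the growth condition: the pointwise bound $|F_p(u)-F_p(v)|\lesssim(|u|^{p-1}+|v|^{p-1})|u-v|$ lets a H\"older inequality in $y$ place $|u-v|$ in $L_y^{\tilde{r}}$ (no derivative) and the $p-1$ copies of $u$ or $v$ in $L_y^{c(p-1)}$, which Sobolev embedding on $\mathbb{T}^2$ bounds by $\|u\|_{W_y^{s,\tilde{r}}}$ or $\|v\|_{W_y^{s,\tilde{r}}}$; a H\"older inequality in $x$ (again clean with $r=p+1$) and then in $t$ yields the claimed right-hand side of \eqref{difference of Gu GV}, which is the estimate needed for the contraction mapping argument in the proof of Theorem \ref{wellposedness}.

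The hard part is not any single step but the simultaneous bookkeeping of exponents: one must exhibit a single admissible triple $(q,r,\tilde{r})\in\mathbb{A}(d,2)$ together with the intermediate Lebesgue exponents so that the H\"older sum relations in $x$ and $y$, the Sobolev-embedding inequalities on $\mathbb{T}^2$, and $\beta_0>0$ hold at once; this is exactly where the decomposition with $k=2$ (rather than $k=1$, which would only reach $s<\tfrac12$) and the subcritical range $1<p<1+\tfrac4{d-2s}$ originate, as indicated in the remark after Theorem \ref{wellposedness}. One further has to be careful to interchange $|\nabla_y|^s$ and $\langle\nabla_y\rangle^s$ only through \eqref{inhomogeneous dominate} and \eqref{homogeneous dominate}, since the fractional chain rule is stated for the homogeneous derivative whereas the spaces $W_y^{s,\tilde{r}}$ are defined through the inhomogeneous one.
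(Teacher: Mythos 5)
Your proposal is correct and takes essentially the same route as the paper: commute $\langle\nabla\rangle^{a}$ and $\langle\nabla_{y}\rangle^{s}$ past the convolution, remove $w$ via Young's inequality with $\|\langle\nabla\rangle^{a}w\|_{L^{1}}=\|w\|_{W^{a,1}}$, split $\langle\nabla_{y}\rangle^{s}$ into a zeroth-order piece and an $|\nabla_{y}|^{s}$ piece via \eqref{homogeneous dominate}, treat the first with the growth condition \eqref{growth condition} and the second with the fractional chain rule \eqref{fractional chain for Fp} in $y$, control the resulting $L_{y}$ norms by $W_{y}^{s,\tilde{r}}$ through Sobolev embedding on $\mathbb{T}^{2}$, and close with H\"older in $x$ (using $r=p+1$) and in $t$ to extract $T^{\beta_{0}}$ with $\beta_{0}=1-\frac{p+1}{q}$; the difference estimate \eqref{difference of Gu GV} is the same computation with \eqref{difference of Fp} and no $y$-derivative. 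The only variations are cosmetic: the paper parametrizes $\tilde{r}$ by a strictly interior $\varepsilon_{0}\in\bigl(\frac{(1-s)(p-1)}{2},\min\{\frac{d+2-(d-2)p}{4},\frac{p-1}{2}\}\bigr)$ and splits $|u|^{p}=|u|^{p-1}\cdot|u|$ with $|u|^{p-1}\in L_{y}^{(p+1)/(2\varepsilon_{0})}$ against $|u|\in L_{y}^{\tilde r}$, while you place $|u|^{p}$ directly in $L_{y}^{p\tilde{r}'}$ and saturate the Sobolev threshold (equality being allowed in Lemma \ref{sobolev embbeding T}), yielding the endpoint value $\beta_{0}=1-\frac{(p-1)(d-2s)}{4}$ to which the paper's $\beta_{0}=\frac{d+2-(d-2)p}{4}-\varepsilon_{0}$ tends as $\varepsilon_{0}\downarrow\frac{(1-s)(p-1)}{2}$; both give the same subcritical threshold $p<1+\frac{4}{d-2s}$.
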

	\begin{proof}
		Let $G_{a,p}(u)=\langle \nabla \rangle^{a}  G_p(u)=\langle \nabla \rangle^{a}  (w \ast F_{p}(u)).$ 
		Let $d \ge 3$ and $0 \leq s < 1$. For fixed $1 < p < 1 + \dfrac{4}{d - 2s}$, 
		we take a small $\varepsilon_{0} > 0$ such that
		\[
		\dfrac{(1-s)(p-1)}{2} < \varepsilon_{0} < 
		\min\left\{ 
		\dfrac{d-2}{4}\left(1 + \dfrac{4}{d-2} - p\right), 
		\dfrac{p-1}{2}
		\right\},
		\]
		and then take a triple $(q, r, \tilde{r})$ as
		\begin{equation}\label{conditions q r} 
			\frac{1}{r} = \frac{1}{p+1}, \qquad 
			\frac{1}{\tilde{r}} = \frac{1}{2} - \frac{\varepsilon_{0}}{p+1}, \qquad 
			\frac{1}{q} = \frac{d-2}{4} - \frac{d-2}{2(p+1)} + \frac{\varepsilon_{0}}{p+1},
		\end{equation}
		so that $(q, r, \tilde{r}) \in \mathbb{A}(d,2).$ 
		Using the notation given in \eqref{homogeneous dominate}, together with Minkowski's inequality, we get 
		\begin{equation*}
			\| \langle \nabla_y \rangle^{s} G_{a,p}(u) \|_{L_t^{q'} L_x^{r'} L_y^{\tilde{r}'}} \lesssim \| G_{a,p}(u) \|_{L_t^{q'} L_x^{r'} L_y^{\tilde{r}'}}+\| \lvert \nabla_y \rvert^{s} G_{a,p}(u) \|_{L_t^{q'} L_x^{r'} L_y^{\tilde{r}'}}.
		\end{equation*}
		For the first term of the above sum, using Young’s convolution inequality, \eqref{growth condition},  H\"older’s inequality,  and    the first two conditions in \eqref{conditions q r}, we obtain 
		\begin{align*}
			\| G_{a,p}(u) \|_{L_t^{q'}(I; L_x^{r'} L_y^{\tilde{r}'})} &=\| (\langle \nabla \rangle^{a}w) \ast F_{p}(u) \|_{L_t^{q'} L_x^{r'} L_y^{\tilde{r}'}} \\
			&\lesssim \| \langle \nabla \rangle^{a}w \|_{L_x^{1} L_y^{1}} \|  F_{p}(u) \|_{L_t^{q'} L_x^{r'} L_y^{\tilde{r}'}} \\
			&\lesssim   \| w \|_{ W ^{a,1}}  \||u|^{p} \|_{L_t^{q'} L_x^{r'} L_y^{\tilde{r}'}} \\
			& \lesssim T^{1-\frac{p+1}{q}} \| w \|_{ W ^{a,1}}  \|u\|^{p-1}_{L_t^{q} L_x^{r} L_y^{{(p-1)(p+1)}/{2 \varepsilon_{0}}}} \|u \|_{L_t^{q} L_x^{r} L_y^{\tilde{r}}}.
		\end{align*}
		For the second term, we again apply Young’s convolution inequality along with \eqref{fractional chain for Fp}, and then use Hölder’s inequality as before to obtain
		\begin{align*}
			\| \lvert \nabla_y \rvert^{s} G_{a,p}(u) \|_{L_t^{q'} L_x^{r'} L_y^{\tilde{r}'}}&=\| (\langle \nabla \rangle^{a}w) \ast (|\nabla_{y}|^{s}F_{p}(u)) \|_{L_t^{q'} L_x^{r'} L_y^{\tilde{r}'}} \\
			&\lesssim \| \langle \nabla \rangle^{a}w \|_{L_x^{1} L_y^{1}} \|  |\nabla_{y}|^{s}F_{p}(u) \|_{L_t^{q'} L_x^{r'} L_y^{\tilde{r}'}} \\
			& \lesssim T^{1-\frac{p+1}{q}}  \| w \|_{ W ^{a,1}} \|u\|^{p-1}_{L_t^{q} L_x^{r} L_y^{{(p-1)(p+1)}/{2 \varepsilon_{0}}}} \|u \|_{L_t^{q} L_x^{r} W_y^{s,\tilde{r}}}
		\end{align*}
		Since $W^{s,\tilde{r}} \subseteq L^{\tilde{r}}$ for $s \geq 0,$ and 
		\begin{equation}\label{Embedding}
			W^{s,\tilde{r}} \hookrightarrow W^{1-\frac{2\varepsilon_{0}}{p-1},\tilde{r}} \hookrightarrow L^{\frac{(p-1)(p+1)}{2\varepsilon_{0}}},
		\end{equation}
		by Lemma~\ref{sobolev embbeding T} (Sobolev embedding in 2D) together with \eqref{inhomogeneous dominate}, we finally get \eqref{nonlinearity estimate xy both} with 
		$$
		\beta_0(d,p,s)=1-\frac{p+1}{q}=\frac{d-2}{4}(1+\frac{4}{d-2}-p) -\varepsilon_{0}>0,
		$$
		using the last condition in \eqref{conditions q r} together with the choice of $\varepsilon_{0}.$ 
		Similarly, the estimate \eqref{difference of Gu GV} is obtained more directly by combining Young's convolution inequality with \eqref{difference of Fp}, H\"older's inequality, and an appropriate embedding.
	\end{proof}
	To establish the well-posedness, some preliminary results are required. Let $x \in \mathbb{T}^{d-2}$ and $y \in \mathbb{T}^{2}.$
	For any triple $(q, r, \tilde{r}) \in \mathbb{A}(d,2),$ it follows from \eqref{For single} with $k = 2$ that
	\begin{equation}\label{SE k=2}
		\| e^{it\Delta} f \|_{L_{t}^{q}(I; L_{x}^{r} L_{y}^{\tilde{r}})} 
		\le C \| f \|_{{H}^{\frac{1}{q}}},
	\end{equation}
	for any time interval $I = [0,T].$ Furthermore, this leads to the inhomogeneous estimate
	\begin{equation}\label{single inhomogeneous estimate}
		\left\| \langle \nabla \rangle^{\frac{-2}{q}}
		\int_{0}^{t} e^{-i(t-t')\Delta} G_{p}(u) \, dt'
		\right\|_{L_{t}^{q}(I; L_{x}^{r} L_{y}^{\tilde{r}})}
		\le
		C \| G_{p}(u) \|_{L_{t}^{a'}(I; L_{x}^{b'} L_{y}^{\tilde{b}'})},
	\end{equation}
	for any triples $(q, r, \tilde{r}), (a, b, \tilde{b}) \in \mathbb{A}(d,2),$ by applying the standard $TT^{*}$ argument together with the Christ–Kiselev lemma.
	
	By Duhamel's principle, the solution map of \eqref{NLS} is given by 
	\begin{equation}\label{Duhamel formula}
		\Phi(u)=e^{-it \Delta}f - i \int_{0}^{t} e^{-i (t-t') \Delta} G_{p}(u) \, dt'.
	\end{equation}
	\begin{proof}[\textbf{Proof of Theorem \ref{wellposedness}}]
		For $0 \leq s < 1$ and suitable values of $T, A > 0,$ it suffices to prove that 
		$\Phi$ defines a contraction map on
		\[
		X(T, A) = \left\{
		\langle \nabla_{y} \rangle^{s} u \in C([0, T]; {H} ^{\frac{1}{q}}) \cap L_t^{q}([0, T]; L_x^{r} L_y^{ \tilde{r}}) :
		\sup_{(q, r, \tilde{r}) \in \mathbb{A}(d, 2)}
		\| \langle \nabla_{y} \rangle^{s}u\|_{L_t^{q}([0, T]; L_x^{r} L_y^{ \tilde{r}})} \le A
		\right\},
		\]
		equipped with the distance
		\[
		d(u, v) = 
		\sup_{(q, r, \tilde{r}) \in \mathbb{A}(d, 2)}
		\|u - v\|_{L_t^{q}(I; L_x^{r} L_y^{\tilde{r}})}.
		\]
		Let us now prove that $\Phi$ is a contraction on $X(T, A)$. 
		We first show that $\Phi(u) \in X$ for $u \in X.$ Using Plancherel’s theorem and the dual formulation of \eqref{SE k=2}, we deduce that 
		\begin{align*}
			\sup_{t \in I} 
			\| \langle \nabla_{y} \rangle^{s} \Phi(u)\|_{{H} ^{\frac{1}{q}}}
			&\lesssim
			\|\langle \nabla_{y} \rangle^{s} f\|_{{H} ^{\frac{1}{q}}}
			+ 
			\sup_{t \in I} 
			\left\|\langle \nabla_{y} \rangle^{s}
			\int_{0}^{t} e^{-i(t-t')\Delta} G_{p}(u) \, dt'
			\right\|_{{H} ^{\frac{1}{q}}} \\
			&=
			\|\langle \nabla_{y} \rangle^{s}f\|_{{H} ^{\frac{1}{q}}}
			+ 
			\sup_{t \in I} 
			\left\|\langle \nabla \rangle^{\frac{2}{q}}
			\int_{0}^{t} e^{i t'\Delta} \langle \nabla_{y} \rangle^{s}  G_{p}(u) \, dt'
			\right\|_{{H} ^{-\frac{1}{q}}} \\
			&\lesssim \|\langle \nabla_{y} \rangle^{s}f\|_{{H} ^{\frac{1}{q}}}+ \| \langle \nabla_y \rangle^{s} \langle \nabla \rangle^{\frac{2}{q}} G_p(u) \|_{L_t^{q'}(I; L_x^{r'} L_y^{\tilde{r}'})} \\
			&\lesssim \|\langle \nabla_{y} \rangle^{s}f\|_{{H} ^{\frac{1}{q}}}+T^{\beta_0(d,p,s)} \| w \|_{ W ^{\frac{2}{q},1}} \| u \|_{L_t^{q}(I; L_x^{r} W_y^{s,\tilde{r}})}^{p}
		\end{align*}
		for some $(q, r, \tilde{r}) \in \mathbb{A}(d, 2)$ for which Lemma \ref{nonlinearity estimate x y} and \eqref{nonlinearity estimate xy both} holds.  
		Thus we have for $u \in X(T,A).$ 
		\begin{equation*}
			\sup_{t \in I} 
			\| \langle \nabla_{y} \rangle^{s} \Phi(u)\|_{{H} ^{\frac{1}{q}}}
			\lesssim  \|\langle \nabla_{y} \rangle^{s} f\|_{{H} ^{\frac{1}{q}}}+T^{\beta_0(d,p,s)} \| w \|_{ W ^{\frac{2}{q},1}} A^p.
		\end{equation*}
		Now using \eqref{SE k=2},\eqref{single inhomogeneous estimate} and \eqref{nonlinearity estimate xy both} in Lemma \ref{nonlinearity estimate x y}, we get 
		\begin{equation*}
			\|\Phi(u)\|_{L_{t}^{q}(I; L_{x}^{r} W_{y}^{s, \tilde{r}})}
			\lesssim \|\langle \nabla_{y} \rangle^{s} f\|_{{H} ^{\frac{1}{q}}}+T^{\beta_0(d,p,s)} \| w \|_{ W ^{\frac{2}{q},1}} A^p
		\end{equation*}
		for $u \in X(T,A).$ Therefore, $\Phi(u) \in X(T,A)$ if 
		\begin{equation}\label{condition on A}
			C\|\langle \nabla_{y} \rangle^{s} f\|_{{H} ^{\frac{1}{q}}}+C T^{\beta_0(N,p,s)}\| w \|_{ W ^{\frac{2}{q},1}} A^p \leq A.
		\end{equation}
		On the other hand, applying \eqref{single inhomogeneous estimate} together with \eqref{difference of Gu GV} in Lemma~\ref{nonlinearity estimate xy both},
		\begin{align*}
			d(\Phi(u), \Phi(v))
			&= d\!\left(
			\int_{0}^{t} e^{-i(t-t')\Delta} G_{p}(u) \, dt', 
			\int_{0}^{t} e^{-i(t-t')\Delta} G_{p}(v) \, dt'
			\right) \\
			& \lesssim  \|\langle \nabla \rangle^{\frac{2}{q}} (G_p(u) - G_p(v))\|_{L_{t}^{q'}(I; L_{x}^{r'} L_{y}^{\tilde{r}'})} \\
			&\le 2C \| w \|_{ W ^{\frac{2}{q},1}} T^{\beta_{0}(d,p,s)} A^{p-1} d(u,v),
		\end{align*}
		for $u, v \in X$. By choosing $A = 2C  \|\langle \nabla_{y} \rangle^{s} f\|_{ H ^{\frac{1}{q}}}$ and taking $T$ sufficiently small so that
		\[
		2C C_{w} T^{\beta_{0}(d,p,s)} A^{p-1} \le \frac{1}{2},
		\]
		we see that \eqref{condition on A} holds and
		\[
		d(\Phi(u), \Phi(v)) \le \frac{1}{2} d(u,v)
		\]
for all $u, v \in X(T,A).$ Thus, it follows that the map  $\Phi$ is a contraction mapping on the Banach space $X(T,A)$. By fixed point theorem, we completes the proof of the local
well-posedness result.      
\end{proof}

	\section{Strichartz estimates for system of orthonormal functions}\label{sec5} 
	If $z \in \T^{d-k} \times \T^{k},$ then we write $z=(x,y),$ where $x \in \T^{d-k} $ and $y \in \T^{k}.$ Similarly, if $\xi \in \Z^{d-k} \times \Z^{k},$ then we denote $\xi=(\xi_{1},\xi_{2}),$ where $\xi_{1} \in \Z^{d-k}$ and $\xi_{2} \in \Z^{k}.$  For $a  \in \ell_{\xi_{1}}^{2}\ell_{\xi_{2}}^2,$ the  \textbf{Fourier extension operator} $\mathcal{E}_N$  is given by 
	\begin{equation}\label{FEP}
		\mathcal{E}_N a (t,z) = \sum_{\xi \in S_{d,N}}{a} (\xi)  e^{2\pi i (z \cdot \xi + t |\xi|^{2})} , \quad(t, z) \in I \times \T^{d-k} \times \T^{k},
	\end{equation}
	where $S_{d,N}$ is defined in \eqref{rdp}. By the Plancherel theorem, with $\mathcal H = L^{2}\!\left(\mathbb T; L^{2}(\mathbb T^{d-k}\times\mathbb T^{k})\right)$, the operator
	$\mathcal E_{N} : \ell^{2}_{\xi_{1}}\ell^{2}_{\xi_{2}} \to \mathcal H$ is bounded. In fact, we have 
	\begin{align} \label{d1}
		\|\mathcal{E}_{N}a\|_{\cH}^{2}&= \int_{I} \|\mathcal{E}_{N}a(t, \cdot)\|^2_{L_{z}^{2}} \, dt =\int_{I} \|\widehat{\mathcal{E}_{N}a(t, \cdot)}(\xi)\|^2_{\ell_{\xi_{1}}^{2}\ell_{\xi_{2}}^2} \, dt \lesssim \|a\|_{\ell_{\xi_{1}}^{2} \ell_{\xi_{2}}^{2}}^{2}.
	\end{align}    
	The dual\footnote{the dual operator of $\mathcal{E}_N$ means that
		$\langle \mathcal{E}_N a, F \rangle_{\cH} = \langle a, \mathcal{E}_N^* F \rangle_{\ell_{\xi_{1}}^{2}\ell_{\xi_{2}}^{2}}
		$
		holds for all $a$ and $F$} of  the operator $\mathcal{E}_N$ is denoted by $\mathcal{E}^*_N$-so called the \textbf{Fourier restriction operator}. Specifically, $\mathcal{E}_N^*: \cH \to \ell_{\xi_{1}}^{2}\ell_{\xi_{2}}^{2}$ is
	given by
	\begin{equation}\label{FRO}
		F \mapsto \mathcal{E}_N^* F (\xi_{1},\xi_{2}) = \begin{cases}
			\int_{I \times \T^{d}} F(t, z) e^{-2\pi i (z \cdot \xi + t |\xi|^{2})} \, dz dt  & \text{if} \quad \xi \in S_{d,N}, \\
			0 & \text{otherwise}.
		\end{cases}
	\end{equation}
	We note that a composition of $\mathcal{E}_N$ and $\mathcal{E}^*_N$  gives us a convolution operator $\cH$. In fact 
	\begin{flalign}\label{P1}
		\mathcal{E}_N \circ \mathcal{E}_N^* F(t,z) &= \sum_{\xi \in S_{d,N} } \mathcal{E}_N^*F(\xi) e^{2\pi i (z \cdot \xi + t|\xi|^{2})}  \nonumber \\
		&=\sum_{\xi \in S_{d,N} }\int_{I \times \T^{d}} F(t',z') e^{-2\pi i (z' \cdot \xi + t'|\xi|^{2})}e^{2\pi i (z \cdot \xi + t|\xi|^{2})}  \, dz' \, dt' \nonumber \\
		&=\int_{I \times \T^{d}} F(z',t')\sum_{\xi \in S_{d,N} }e^{2\pi i [(z-z') \cdot \xi + (t-t')|\xi|^{2}]}  \, dz' \, dt' \nonumber \\
		& =  \int_{I \times \T^d} K_N (z - z', t - t') F(z', t') \, dz' dt',
	\end{flalign}  
	where 
	\begin{equation}\label{kernel of extension}
		K_N (t,z) = \sum_{\xi \in S_{d,N} } e^{2\pi i (z \cdot \xi + t |\xi|^{2})} .
	\end{equation}
	\begin{remark}\label{usrev} We can restate the inequality \eqref{IN12} as follows: For every $N > 1$, any $\lambda \in \ell^{\alpha'}$, and any orthonormal system $(f_j)_j \subset L_{z}^2(\T^{d})$, the inequality \eqref{IN12} holds  if and only if
		\begin{equation}\label{P2}
			\left\| \sum_j \lambda_j |\mathcal{E}_N a_j|^2 \right\|_{L_t^q L_x^r L_{y}^{\Tilde{r}}(I \times \T^{d-k} \times \T^k)} \leq C_{|I|} N^{\frac{1}{q}} \| \lambda \|_{\ell^{\alpha'}}
		\end{equation}
		for any $N > 1$, $\lambda \in \ell^{\alpha'}$, and any ONS $(a_j)_j \subset \ell_{\xi_{1}}^{2}\ell_{\xi_{2}}^{2}.$ This follows from the observation that,  if we let $a_j = \widehat{f_j}$, then the orthonormality of $(f_j)_j$ in $L_{z}^2(\T^{d})$ is equivalent
		to the one of $(a_j)_j$ in $\ell_{\xi_{1}}^{2}\ell_{\xi_{2}}^{2}$ and $$e^{it \Delta} P_{\le N}f_j = \mathcal{E}_N a_j.$$
	\end{remark}

	\begin{proposition}\label{oseP}
		Let $1 \leq \Tilde{r} \leq r \leq \infty$ and $1 \leq \gamma < \frac{d+1}{d-1},$ where $\gamma$ defined in \eqref{gamma def}. Then for any triple $(q,r,\Tilde{r})$ satisfying \eqref{adc}, any $N > 1,$ any $\lambda \in \ell^{\frac{2\gamma}{\gamma+1}}, $ and any ONS $(a_{j})_{j}$ in $ \ell_{\xi_{1}}^{2}\ell_{\xi_{2}}^{2},$ 
		\begin{equation}\label{PR1}
			\left\| \sum_{j} \lambda_{j} |\mathcal{E}_{N}a_{j} |^{2} \right\|_{L_{t}^{q} (I_N, L_{x}^{r} L_{y}^{\Tilde{r}}(\T^{d-k} \times \T^{k}))} \leq C \| \lambda \|_{\ell^{\alpha'}},
		\end{equation}
		where $I_{N}=[\frac{-1}{2N},\frac{1}{2N}],$ $\mathcal{E}_{N}$ is defined in \eqref{FEP} and $\alpha'=\frac{2 \gamma}{\gamma+1}.$
	\end{proposition}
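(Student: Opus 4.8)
The plan is to prove the dual inequality and obtain the proposition by complex interpolation in Schatten classes. First I would invoke the reformulation in Remark~\ref{usrev}, so that it suffices to prove \eqref{PR1} for the Fourier extension operator $\mathcal{E}_N$ on the short interval $I_N$. By the duality principle (Lemma~\ref{PL1}), applied with the separable Hilbert space $\ell^2_{\xi_1}\ell^2_{\xi_2}$, the operator $T=\mathcal{E}_N$, the time interval $I_N$, and $\alpha'=\frac{2\gamma}{\gamma+1}$ (so that $\alpha=\frac{2\gamma}{\gamma-1}$), the estimate \eqref{PR1} is equivalent to
\[
\bigl\|W\mathcal{E}_N\mathcal{E}_N^*W\bigr\|_{\Sp^\alpha}\lesssim \|W\|_{L^{2q'}_tL^{2r'}_xL^{2\tilde r'}_y(I_N\times\T^{d-k}\times\T^k)}^2 .
\]
By \eqref{P1} the operator $W\mathcal{E}_N\mathcal{E}_N^*W$ has integral kernel $W(t,z)K_N(t-t',z-z')W(t',z')$ with $K_N$ as in \eqref{kernel of extension}, and by Proposition~\ref{l8} the kernel \emph{factorizes}, $K_N(s,z)=\prod_{\ell=1}^d\bigl(\sum_{\xi=-N}^{N}e^{2\pi i(z_\ell\xi+s\xi^2)}\bigr)$, with each one-dimensional factor bounded by $C|s|^{-1/2}$ on $I_N\times\T$; in particular $|K_N(s,z)|\lesssim|s|^{-d/2}$ there. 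This factorization is the structural fact that makes everything work in dimensions $d\ge2$.

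Next I would record the two endpoints of the interpolation. At the Schatten exponent $\alpha=\infty$ (that is $\alpha'=1$) the estimate \eqref{PR1} is nothing but the triangle inequality together with the single-function estimate $\|\mathcal{E}_Na\|_{L^{2q}_t(I_N)L^{2r}_xL^{2\tilde r}_y}\lesssim\|a\|_{\ell^2}$; this in turn is Lemma~\ref{Loc} applied to the triple $(2q,2r,2\tilde r)$, which satisfies its hypothesis with equality since $\frac1q=\frac d2-\frac12\bigl(\frac{d-k}{r}+\frac k{\tilde r}\bigr)=(d-k)\bigl(\tfrac12-\tfrac1{2r}\bigr)+k\bigl(\tfrac12-\tfrac1{2\tilde r}\bigr)$ (the degenerate case $q=\infty$, $r=\tilde r=2$ being immediate from Plancherel); via Lemma~\ref{PL1} this gives the $\Sp^\infty$ bound, and the same argument gives it for every \emph{over-admissible} triple, i.e.\ whenever $\frac1Q+\frac12\bigl(\frac{d-k}{R}+\frac k{\tilde R}\bigr)\ge\frac d2$, which provides the room needed below. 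For the other endpoint I would produce a Hilbert--Schmidt ($\Sp^2$) bound, and here the key point is that one must \emph{not} reach $\mathcal{E}_N\mathcal{E}_N^*$ directly: its kernel has temporal singularity $|s|^{-d/2}$, and $\|W\mathcal{E}_N\mathcal{E}_N^*W\|_{\Sp^2}^2=\int\!\!\int|W|^2|K_N(t-t',z-z')|^2|W|^2$ cannot be controlled without an unacceptable loss in $N$ (bounding the spatial integral by Plancherel costs a power of $N$, while bounding $|K_N|^2$ pointwise leaves $|t-t'|^{-d}$, which is not Hardy--Littlewood--Sobolev admissible).

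The remedy, and the heart of the proof, is to interpolate along a Stein-analytic family $\{T_w\}$ of convolution operators whose kernel is obtained from $K_N$ by modulating the temporal singularity — roughly, convolving $K_N$ in the $t$-variable with a suitably normalized (entire in $w$) Riesz kernel of order $w$ — so that $T_{w_*}=\mathcal{E}_N\mathcal{E}_N^*$ for a value $w_*$ lying in the \emph{interior} of the strip. On the regularizing edge of the strip the kernel of $T_w$ is bounded by $|s|^{-\beta_1}$ with $2\beta_1<1$; writing $K_N(s,z)=|s|^{-d/2}\prod_\ell h_N(s,z_\ell)$ with the $h_N$ bounded, the Hilbert--Schmidt norm of $W_1T_wW_2$ reduces to $\int_{I_N}\!\!\int_{I_N}\|W_1(t,\cdot)\|_{L^2_z}^2\,|t-t'|^{-2\beta_1}\,\|W_2(t',\cdot)\|_{L^2_z}^2\,dt\,dt'$, which the \emph{one-dimensional} Hardy--Littlewood--Sobolev inequality (Lemma~\ref{PL3}) in time bounds by $\|W_1\|_{L^{2b'}_tL^2_z}^2\|W_2\|_{L^{2b'}_tL^2_z}^2$ with no loss in $N$ — this is exactly where the factorization from Proposition~\ref{l8} is used, since $|s|^{-d/2}$ itself is not HLS-admissible for $d\ge2$ but its one-dimensional pieces are. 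On the singular edge of the strip the dual single-function Strichartz estimate of Lemma~\ref{Loc}, now exploiting the small amount of admissibility room noted above, supplies a $\Sp^\infty$ bound with mixed spatial exponents. Applying Stein's analytic interpolation in Schatten classes (Lemma~\ref{s-interpolation}), interpolating the Schatten exponents between $\infty$ and $2$ and simultaneously the time and mixed spatial Lebesgue exponents, and evaluating at $w=w_*$, would then yield precisely $\|W\mathcal{E}_N\mathcal{E}_N^*W\|_{\Sp^\alpha}\lesssim\|W\|_{L^{2q'}_tL^{2r'}_xL^{2\tilde r'}_y}^2$ for every triple on \eqref{adc}; the weighted-average exponent $\gamma$ of \eqref{gamma def} is exactly what emerges from interpolating $L^r_xL^{\tilde r}_y$ against $L^2_z$, and the constraint $\gamma<\frac{d+1}{d-1}$ is precisely the condition that keeps the interpolation parameter strictly between $0$ and $1$ (equivalently, keeps the singular-edge triple over-admissible). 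By Lemma~\ref{PL1} this is \eqref{PR1}.

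The hard part will be setting up the analytic family and matching the parameters: one has to (i) exploit the one-dimensional factorization of Proposition~\ref{l8} to circumvent the non-integrability of $|s|^{-d/2}$; (ii) define the modulation so that $\mathcal{E}_N\mathcal{E}_N^*$ sits strictly inside the strip while \emph{both} endpoint bounds remain genuinely available; (iii) keep all constants uniform in $N$, which forbids the use of Plancherel in the spatial variables at the Hilbert--Schmidt endpoint; and (iv) choose the free exponent parameters (the interpolation parameter and the spatial-exponent shifts) so that the interpolated point is exactly the prescribed $(\alpha,q,r,\tilde r)$ — it is here that the roles of $\gamma$ and of the threshold $\frac{d+1}{d-1}$ become transparent. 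Verifying the admissibility hypotheses of Lemma~\ref{Loc} for the perturbed triples, and the double-exponential growth in $\operatorname{Im}w$ demanded by the Stein framework (coming from the Gamma-factor normalization of the Riesz kernels), are the remaining routine checks.
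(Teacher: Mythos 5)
Your overall skeleton is right and matches the paper: reformulate via Remark~\ref{usrev}, dualize with Lemma~\ref{PL1}, exploit the one--dimensional factorization of the kernel in \eqref{kernel of extension} to get the Proposition~\ref{l8} bound $|K_N(s,z)|\lesssim|s|^{-d/2}$ uniformly in $N$, set up a Stein-analytic family with kernel $t^{z_1}K_{N,\epsilon}$, and interpolate a Hilbert--Schmidt bound obtained by one--dimensional HLS in time against an operator--norm bound, using Lemma~\ref{s-interpolation}. However there are two concrete gaps.

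First, the $\Sp^\infty$ endpoint is not what you propose. You suggest it comes from the dual of the single-function estimate (Lemma~\ref{Loc}) applied to over-admissible triples, but that lemma bounds $\mathcal E_N\mathcal E_N^*$, i.e.\ the analytic family at $\operatorname{Re}(z_1)=0$, which must sit strictly \emph{inside} the strip (a point you yourself flag in item (ii)). The actual $\Sp^\infty$ edge is at $\operatorname{Re}(z_1)=-1$, where $T_{N,\epsilon}^{z_1}$ is convolution with the \emph{more} singular kernel $t^{-1+i\operatorname{Im}(z_1)}K_{N,\epsilon}$, and its $\mathcal H\to\mathcal H$ bound $\|T_{N,\epsilon}^{z_1}\|\le C(\operatorname{Im} z_1)$, uniformly in $\epsilon$, is a genuinely nontrivial input which the paper imports from \cite[Prop.~3.3]{nakamura2020orthonormal} and \cite[Prop.~5.1]{bhimani2025orthonormal}. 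Lemma~\ref{Loc} neither gives this nor substitutes for it; using it at the singular edge would force $\mathcal E_N\mathcal E_N^*$ onto the boundary, yielding only the trivial $\Sp^\infty$ conclusion. Your attribution of the resolution of the $|t-t'|^{-d}$ non-admissibility to the factorization is also off: the factorization only yields the $N$-uniform pointwise kernel bound; making the time exponent HLS-admissible is the job of the parameter $\operatorname{Re}(z_1)$ in the analytic family, via condition \eqref{c1}.

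Second, you miss the reduction that produces the averaged exponent $\gamma$. Since $\gamma\ge\tilde r$ when $\tilde r\le r$, the compactness of the torus gives $L_y^\gamma\hookrightarrow L_y^{\tilde r}$, so the paper first replaces $\tilde r$ by $\gamma$ and only then dualizes. In the interpolation, the $x$-exponent is held \emph{fixed} at $L_x^{2r'}$ at both endpoints (the $\Sp^2$ endpoint uses $\|\cdot\|_{L^2_x}\lesssim\|\cdot\|_{L^{2r'}_x}$ on the compact torus), and only the time and $y$ exponents vary; the interpolation forces the $y$-exponent to coincide with the Schatten exponent $\alpha=2\gamma'$, which is precisely why one must first pass to $L_y^\gamma$. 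Without this reduction the two-point interpolation cannot deliver $L_y^{2\tilde r'}$ and $\Sp^{2\gamma'}$ simultaneously; your sentence about ``interpolating $L_x^rL_y^{\tilde r}$ against $L_z^2$'' does not reflect the mechanism. Your description of the $\alpha'=1$ case is correct but is not one of the endpoints actually used in the interpolation.
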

	\begin{proof}[Proof]  
		Since $1 \leq \Tilde{r} \leq r \leq \infty,$ this implies that
		\begin{equation*}
			\frac{1}{r}\left(1-\frac{k}{d} \right) \leq \frac{1}{\Tilde{r}}\left(1-\frac{k}{d} \right),
		\end{equation*}
		and hence
		\begin{equation*}
			\frac{1}{r}\left(1-\frac{k}{d} \right)+\frac{1}{\Tilde{r}}\frac{k}{d} \leq \frac{1}{\Tilde{r}}.
		\end{equation*}
		From the expression of $\gamma$ in \eqref{gamma def} and using the above inequality expression, we get
		\begin{equation*}
			\gamma \geq \Tilde{r}. 
		\end{equation*}
		Since $L^{\gamma} \hookrightarrow L^{\tilde{r}}$, to establish \eqref{PR1} it suffices to prove the following estimate:
		\begin{equation}\label{PR1 gamma}
			\left\| \sum_{j} \lambda_{j} |\mathcal{E}_{N}a_{j} |^{2} \right\|_{L_{t}^{q} (I_N, L_{x}^{r} L_{y}^{\gamma}(\T^{d-k} \times \T^{k}))} \leq C \| \lambda \|_{\ell^{\alpha'}}.
		\end{equation}
		For $\gamma=1,$ it is trivially true using Plancherel theorem and Minkowiski inequality.
		We will prove \eqref{PR1 gamma} for the range $\frac{1}{d+2} \leq \frac{1}{\alpha} < \frac{1}{d+1}.$
		
		For any $z=(x,y) \in \T^{d-k} \times \T^{k},$ we define $$\mathbf{1}_{I_{N}}(t,z)=\begin{cases}
			1 & \text{if} \quad  t \in  I_{N}, \\
			0  & \text{if} \quad t \notin  I_{N} .
		\end{cases}$$
		Put $\cH= L^2(I, L^2(\T^{d}))$ and $I=\T.$ By duality principle Lemma \ref{PL1}, to prove  the desired estimate \eqref{PR1 gamma} for all pair $(q,\gamma)$ satisfying \eqref{adc gamma} and $\frac{d+1}{d} \leq \gamma < \frac{d+1}{d-1}$ and for all $1 \leq r \leq \infty,$ it suffices to show that 
		\begin{equation}\label{PR2}
			\|W_{1}\mathbf{1}_{I_{N}}\mathcal{E}_{N} \mathcal{E}_{N}^{*}[\mathbf{1}_{I_{N}}W_{2}]\|_{\Sp^{2\gamma'} } \leq C \|W_{1} \|_{L_{t}^{2q'}L_{x}^{2r'}L_{y}^{2\gamma'}}
			\|W_{2} \|_{L_{t}^{2q'}L_{x}^{2r'}L_{y}^{2\gamma'}}.
		\end{equation}
		Put $\alpha =2\gamma'$ and $\beta=2q'.$  We may rewrite \eqref{PR2} as follows:
		\begin{equation}\label{PR3}
			\|W_{1}\mathbf{1}_{I_{N}}\mathcal{E}_{N} \mathcal{E}_{N}^{*}[\mathbf{1}_{I_{N}}W_{2}]\|_{\Sp^\alpha } \leq C \|W_{1} \|_{L_{t}^{\beta}L_{x}^{2r'}L_{y}^{\alpha}}
			\|W_{2} \|_{L_{t}^{\beta}L_{x}^{2r'}L_{y}^{\alpha}},
		\end{equation} 
		for all $\alpha,\beta \geq 1$ such that $\frac{2}{\beta}+ \frac{d}{\alpha}=1$ and $\frac{1}{d+2} \leq \frac{1}{\alpha} < \frac{1}{d+1}.$

		To this end, for $\epsilon > 0,$ we define a map $T_{N,\epsilon}:\cH  \to \cH $ by
		$$F \mapsto T_{N,\epsilon}F(t,z)=\int_{I \times \T^{d}} K_{N,\epsilon} (t - t', z-z') F( t', z') \, dz'  \, dt',$$
		where  
		\begin{align*}
			K_{N,\epsilon}(t,z)& =\mathbf{1}_{\epsilon < |t| <N^{-1}} (t,z) K_{N}(t,z)\\
			&=\mathbf{1}_{\epsilon < |t| <N^{-1}} (t,z)\sum_{\xi \in S_{d,N}} e^{2 \pi i (z \cdot \xi+ t|\xi|^{2})}.    
		\end{align*}
		Further, for $z_{1} \in \mathbb{C},$ with $\operatorname{Re}(z_{1}) \in [-1,\frac{d}{2}],$ we define
		\[T_{N,\epsilon}^{z_{1}}=K_{N,\epsilon}^{z_{1}}*,\]
		where 
		$K_{N,\epsilon}^{z_{1}}(t,z)=t^{z_{1}}K_{N,\epsilon}(t,z).$
		Since $\Sp^2$-norm is the Hilbert-Schmidt norm, we  first find the kernel of the operator $ W_1 \mathbf{1}_{I_{N}} T^{z_{1}}_{N,\epsilon} (\mathbf{1}_{I_{N}} W_2)$. In fact, for   $f \in \cH $, we have 
		\[
		\begin{aligned}
			W_1 \mathbf{1}_{I_{N}} T^{z_{1}}_{N,\epsilon} (\mathbf{1}_{I_{N}} W_2) f(t,z) 
			&= W_1(t,z)\mathbf{1}_{I_{N}}(t,z) K^{z_{1}}_{N,\epsilon} * (\mathbf{1}_{I_{N}} W_2 f)(t,z) \\
			&= \int_{I_N \times \T^{d}} K_1(t,z, t',z') f(t', z') \, dz' \, dt',
		\end{aligned}
		\]
		where kernel  $ K_1(t,z,t',z') = W_1(t,z) \mathbf{1}_{I_{N}}(t,z) K^{z_{1}}_{N,\epsilon} (t-t',z-z') W_2(t',z').$ By Proposition \ref{l8} and the fact that $2 \leq 2r',$ we have 
		\begin{flalign}\label{ds1}
			\|W_{1}\mathbf{1}_{I_{N}}T_{N,\epsilon}^{z_{1}}[\mathbf{1}_{I_{N}}W_{2}]\|_{\Sp^{2}}^{2}  =\left\|K_1\right\|_{\cH \times \cH}^{2} \nonumber 
			& = \int_{I} \int_{I} \frac{\|W_{1}(t)\|_{L_{x}^{2}L_{y}^{2}}^{2}\|W_{2}(t')\|_{L_{x}^{2}L_{y}^{2}}^{2}}{|t-t'|^{d-2\operatorname{Re}(z_{1})}} dt dt' \nonumber \\
			& \lesssim \int_{I} \int_{I} \frac{\|W_{1}(t)\|_{L_{x}^{2r'}L_{y}^{2}}^{2}\|W_{2}(t')\|_{L_{x}^{2r'}L_{y}^{2}}^{2}}{|t-t'|^{d-2\operatorname{Re}(z_{1})}} dt dt'.
		\end{flalign} 
		Now, by applying Lemma \ref{PL3} (1D Hardy--Littlewood--Sobolev inequality), we obtain the condition
		\begin{eqnarray}\label{c1}
			\frac{1}{u}=\frac{1}{2}+\frac{1}{2}\Big(\operatorname{Re}(z_{1})-\frac{d}{2}\Big) \in \left(\frac{1}{4},\frac{1}{2}\right].
		\end{eqnarray}
		Assuming that \eqref{c1} holds, it follows that
		\begin{flalign}\label{PR4}
			\|W_{1}\mathbf{1}_{I_{N}}T_{N,\epsilon}^{z_{1}}[\mathbf{1}_{I_{N}}W_{2}]\|_{\Sp^{2}}^{2}
			& =
			C' \|W_{1} \|^2_{L_{t}^{u}L_{x}^{2r'}L_{y}^{2}}
			\|W_{2} \|^2_{L_{t}^{u}L_{x}^{2r'}L_{y}^{2}}.
		\end{flalign}
		
		For $\operatorname{Re}(z_{1})=-1$, using the same argument as in \cite[Proposition 3.3]{nakamura2020orthonormal} and \cite[Proposition 5.1]{bhimani2025orthonormal}, the operator
		$T_{N,\epsilon}^{z_{1}}:\cH \to \cH$ is bounded, with operator norm depending exponentially on $\operatorname{Im}(z_{1})$, namely,
		\begin{eqnarray}\label{eb}
			\|T_{N,\epsilon}^{z_{1}}\|_{\cH \to \cH} \leq C (\text{Im}(z_{1})).
		\end{eqnarray}
		Let $f \in \cH,$ and $\|f\|_{\cH} \leq 1,$ then
		\begin{align*}
			\|W_{1}\mathbf{1}_{I_{N}}T_{N,\epsilon}^{z_{1}}[\mathbf{1}_{I_{N}}W_{2}]f\|_{\cH} &\leq \|W_{1}\|_{L_{t}^{1}L_{x}^{1}L_{y}^{1}} \|T_{N,\epsilon}^{z_{1}}([\mathbf{1}_{I_{N}}W_{2}]f)\|_{L_{t}^{1}L_{x}^{1}L_{y}^{1}} \\
			& \lesssim \|W_{1}\|_{L_{t}^{\infty}L_{x}^{2r'}L_{y}^{\infty}} \|T_{N,\epsilon}^{z_{1}}([\mathbf{1}_{I_{N}}W_{2}]f)\|_{L_{t}^{2}L_{x}^{2}L_{y}^{2}} \\
			& \lesssim \|W_{1}\|_{L_{t}^{\infty}L_{x}^{2r'}L_{y}^{\infty}} \|T_{N,\epsilon}^{z_{1}}\|_{\cH \to \cH} \|[\mathbf{1}_{I_{N}}W_{2}]f\|_{L_{t}^{2}L_{x}^{2}L_{y}^{2}} \\
			& \lesssim C(\operatorname{Im}(z_{1})) \|W_{1}\|_{L_{t}^{\infty}L_{x}^{2r'}L_{y}^{\infty}}  \|\mathbf{1}_{I_{N}}W_{2}\|_{L_{t}^{1}L_{x}^{1}L_{y}^{1}} \|f\|_{L_{t}^{1}L_{x}^{1}L_{y}^{1}} \\
			& \lesssim C(\operatorname{Im}(z_{1})) \|W_{1}\|_{L_{t}^{\infty}L_{x}^{2r'}L_{y}^{\infty}}  \|W_{2}\|_{L_{t}^{\infty}L_{x}^{2r'}L_{y}^{\infty}} \|f\|_{\cH} \\
			& \lesssim C(\operatorname{Im}(z_{1})) \|W_{1}\|_{L_{t}^{\infty}L_{x}^{2r'}L_{y}^{\infty}}  \|W_{2}\|_{L_{t}^{\infty}L_{x}^{2r'}L_{y}^{\infty}}.
		\end{align*}       
		Thus, for $\operatorname{Re}(z_{1})=-1,$ we have  
		\begin{align}\label{PR6}
			\|W_{1}\mathbf{1}_{I_{N}}T_{N,\epsilon}^{z_{1}}[\mathbf{1}_{I_{N}}W_{2}]\|_{\Sp^{\infty}} &=\|W_{1}\mathbf{1}_{I_{N}}T_{N,\epsilon}^{z_{1}}[\mathbf{1}_{I_{N}}W_{2}]\|_{\cH \to \cH} \nonumber \\
			& \leq C(\operatorname{Im}(z_{1}))\|W_{1}\|_{L_{t}^{\infty}L_{x}^{2r'}L_{y}^{\infty}} \|W_{2}\|_{L_{t}^{\infty}L_{x}^{2r'}L_{y}^{\infty}}.
		\end{align}
		We may choose $ \tau=\operatorname{Re}(z_{1})/(1+ \operatorname{Re}(z_{1})) \in [0,1]$ in Lemma \ref{s-interpolation},  and interpolate between \eqref{PR4} and \eqref{PR6},  to obtain
		\begin{flalign*}
			\|W_{1}\mathbf{1}_{I_{N}}T_{N,\epsilon}^{0}[\mathbf{1}_{I_{N}}W_{2}]\|_{\Sp^{2(\operatorname{Re}(z_{1})+1)}}  \lesssim \|W_{1} \|_{L_{t}^{u(\operatorname{Re}(z_{1})+1)}L_{x}^{2r'}L_{y}^{2(\operatorname{Re}(z_{1})+1)}} \|W_{2} \|_{L_{t}^{u(\operatorname{Re}(z_{1})+1)}L_{x}^{2r'}L_{y}^{2(\operatorname{Re}(z_{1})+1)}},
		\end{flalign*} 
		where the constant depends on $\theta$ and independent of $\epsilon.$
		Recalling $T^{0}_{N, \epsilon}=T_{N, \epsilon}$ and taking $\epsilon \to 0,$ we get
		\begin{flalign*}
			\|W_{1}\mathbf{1}_{I_{N}}T_{N}[\mathbf{1}_{I_{N}}W_{2}]\|_{\Sp^{2(\operatorname{Re}(z_{1})+1)}}  \lesssim  \|W_{1} \|_{L_{t}^{u(\operatorname{Re}(z_{1})+1)}L_{x}^{2r'}L_{y}^{2(\operatorname{Re}(z_{1})+1)}} \|W_{2} \|_{L_{t}^{u(\operatorname{Re}(z_{1})+1)}L_{x}^{2r'}L_{y}^{2(\operatorname{Re}(z_{1})+1)}}.
		\end{flalign*}
		Put  $\beta=u(\operatorname{Re}(z_{1})+1)$ and  $\alpha=2(\operatorname{Re}(z_{1})+1)$. Note that condition \eqref{c1} is compatible with the following condition 
		\begin{eqnarray}\label{c2}
			\frac{2}{\beta}+\frac{d}{  \alpha}=1 \quad \text{ and } \quad \frac{1}{ (d+2)}\leq \frac{1}{\alpha} < \frac{1}{d+1}.    
		\end{eqnarray}
		Thus the desired inequality  \eqref{PR3} holds as long as $\alpha$ and $\beta$ satisfies \eqref{c2}.
	\end{proof}
	
	\begin{proof}[\textbf{Proof of Theorem \ref{ose}}]
		Let  $I'_{N}$  be an arbitrary interval whose length is $N^{-1}.$
		Denote $c(I'_{N})$ by  the center of the interval $I'_{N}$. We wish to  shift this interval $I'_N$ to the interval   $I_{N}=[\frac{-1}{2N},\frac{1}{2N}]$ with centre origin. Thus, by the change of variables, we get 
		\begin{equation}\label{ts1}
			\left\|\sum_{j} \lambda_{j}|\mathcal{E}_{N}a_{j}|^{2}\right\|_{L^{q}_{t}L^{r}_{x}L_{y}^{\Tilde{r}}( I'_{N} \times \T^{d-k} \times \T^{k})}=\left\|\sum_{j} \lambda_{j}|\mathcal{E}_{N}b_{j}|^{2}\right\|_{L^{q}_{t}L^{r}_{x}L_{y}^{\Tilde{r}}( I_{N} \times \T^{d-k} \times \T^{k})},
		\end{equation}
		where $b_{j}(\xi)=a_{j}(\xi)e^{-2 \pi ic(I'_{N}) |\xi|^{2} }.$
		Note that if $(a_{j})_{j}$ is orthonormal, then $(b_{j})_{j}$ is also orthonormal in $\ell^{2}$. In view of the above identity,  Proposition \ref{oseP} is applicable to any time interval whose length is $1/N$.
		Since $I$  is an interval of finite length, we may cover it by taking finitely many intervals of length, $1/N,$ say $I\subset \bigcup_{i=1}^{N}I_{i}.$ 
		Now taking  Remark \ref{usrev}, \eqref{ts1} and Proposition \ref{oseP} into account, we obtain
		$$   \left\|\sum_{j} \lambda_{j}|\mathcal{E}_{N}a_{j}|^{2}\right\|_{L^{q}_{t}L^{r}_{x}L_{y}^{\Tilde{r}}}^{q}   \lesssim  \sum_{i=1}^{N}\left\|\sum_{j} \lambda_{j}|\mathcal{E}_{N}a_{j}|^{2}\right\|_{L^{q}_{t}L^{r}_{x}L_{y}^{\Tilde{r}}}^{q}
		\lesssim N \|\lambda\|_{\ell^{\alpha'}}^q.
		$$ 
		This completes the proof of  \eqref{IN12} if $\alpha' \leq  \frac{2\gamma}{\gamma+1}$.
	\end{proof}
	
	\section{Well-posedness to infinite system of  Hartree equations}\label{sec7}
	This section focuses on establishing the well-posedness of the Hartree equation \eqref{Hartree} for infinitely many fermions as an application of our orthonormal inequality \eqref{PR1} in the partial frame.  Throughout this section, we assume that $q,r,\tilde r,s$ and $\sigma$ satisfies the same conditions as in
	Theorem \ref{ose}. Note that the conclusions of Theorems~\ref{ose} (by the same proof) remain valid if $P_{\le N}$ is replaced by $P_N$ for any
	$N\in 2^{\mathbb N}$. As a consequence, the following estimate
	\begin{equation}\label{WL1}
		\bigg\| \sum_{j}\lambda_j 
		\,\big| e^{it\Delta} P_N f_j \big|^2 
		\bigg\|_{L^{q}_{t}L_x^{r}L_y^{\tilde r}}
		\le 
		C_{\sigma}\, N^{\sigma}\, \|\lambda\|_{\ell^{\alpha'}}
	\end{equation}
	holds true for any $N\in 2^{\mathbb N}$, where $(f_j)_j$ is an orthonormal system in
	$L^2(\mathbb T^{d-k}\times \mathbb T^{k})$.

	As a consequence of the above estimate \eqref{WL1} along with the vector-valued version of Bernstein’s inequality in Corollary \ref{vvr}, we have the following result. 
	\begin{corollary}
		For any $\varepsilon>0$ and any orthonormal system $(f_j)_j$ in
		$L^2(\mathbb T^{d-k}\times \mathbb T^{k})$, we have the following estimate:
		\begin{equation}\label{WL2}
			\bigg\| \sum_{j}\lambda_j 
			\big| e^{it\Delta}
			\langle \nabla \rangle^{-\frac{\sigma}{2}-\varepsilon} f_j \big|^2 
			\bigg\|_{L^{q}_{t}L_x^{r}L_y^{\tilde r}}
			\le 
			C_{\sigma,\varepsilon} \|\lambda\|_{\ell^{\alpha'}}.
		\end{equation}
	\end{corollary}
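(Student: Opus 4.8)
The strategy is the familiar one for upgrading a frequency-localized orthonormal estimate that carries an $N^{\sigma}$ loss to a frequency-global estimate at the price of a smoothing of order $\tfrac{\sigma}{2}+\varepsilon$: decompose $\langle\nabla\rangle^{-\frac{\sigma}{2}-\varepsilon}f_j$ into Littlewood--Paley blocks, use the operator-density Littlewood--Paley theorem (Theorem~\ref{vector LP}) to reduce to a single dyadic block, trade $\langle\nabla\rangle^{-\frac{\sigma}{2}-\varepsilon}$ on that block for the scalar $N^{-\frac{\sigma}{2}-\varepsilon}$ via the vector-valued Bernstein inequality (Corollary~\ref{vvr}), invoke \eqref{WL1}, and sum a geometric series in $N$. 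Since $\bigl|\sum_j\lambda_j|u_j|^2\bigr|\le\sum_j|\lambda_j|\,|u_j|^2$ pointwise while $\|\,|\lambda|\,\|_{\ell^{\alpha'}}=\|\lambda\|_{\ell^{\alpha'}}$, we may assume $\lambda_j\ge0$, and by monotone convergence it is enough to treat finitely many $j$; we also restrict to the exponent range $\tfrac12<\tilde r\le r<\infty$, in which Theorem~\ref{vector LP} and Corollary~\ref{vvr} are available (this covers the relevant triples from Theorem~\ref{ose}). Throughout, set $\rho:=-\tfrac{\sigma}{2}-\varepsilon$ and $g_j:=\langle\nabla\rangle^{\rho}f_j$.

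First I would pass from $g_j$ to its dyadic blocks. For a.e.\ fixed $t$, the family $\bigl(e^{it\Delta}g_j\bigr)_j$ with nonnegative weights $\lambda_j$ produces the density $\sum_j\lambda_j|e^{it\Delta}g_j|^2$ of a positive finite-rank operator, so the left-hand inequality in \eqref{eq vector LP}, whose constant is independent of $t$, gives
\[
\Bigl\|\,\sum_j\lambda_j\,|e^{it\Delta}g_j|^2\,\Bigr\|_{L^r_xL^{\tilde r}_y}
\;\lesssim\;
\Bigl\|\,\sum_{N\in2^{\mathbb N}}\sum_j\lambda_j\,|P_N e^{it\Delta}g_j|^2\,\Bigr\|_{L^r_xL^{\tilde r}_y} .
\]
Taking the $L^q_t$-norm of both sides and then applying the triangle inequality for the sum over the nonnegative dyadic blocks, I obtain
\[
\Bigl\|\,\sum_j\lambda_j\,|e^{it\Delta}g_j|^2\,\Bigr\|_{L^q_tL^r_xL^{\tilde r}_y}
\;\lesssim\;
\sum_{N\in2^{\mathbb N}}
\Bigl\|\,\sum_j\lambda_j\,|P_N e^{it\Delta}g_j|^2\,\Bigr\|_{L^q_tL^r_xL^{\tilde r}_y} .
\]

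Next I would estimate each dyadic block. The Fourier multipliers $\langle\nabla\rangle^{\rho}$, $P_N$ and $e^{it\Delta}$ all commute, so with $h_j:=\lambda_j^{1/2}e^{it\Delta}f_j$ one has $\lambda_j\,|P_N e^{it\Delta}g_j|^2=|P_N\langle\nabla\rangle^{\rho}h_j|^2$ pointwise, and likewise $\sum_j|P_N h_j|^2=\sum_j\lambda_j|P_N e^{it\Delta}f_j|^2$. Using the identity $\bigl\|\sum_j|v_j|^2\bigr\|_{L^r_xL^{\tilde r}_y}=\bigl\|(\sum_j|v_j|^2)^{1/2}\bigr\|_{L^{2r}_xL^{2\tilde r}_y}^{2}$ together with Corollary~\ref{vvr} at the doubled exponents $2r,2\tilde r$ (admissible since $\tfrac12<\tilde r\le r<\infty$), I get, for a.e.\ $t$,
\[
\Bigl\|\,\sum_j\lambda_j\,|P_N e^{it\Delta}g_j|^2\,\Bigr\|_{L^r_xL^{\tilde r}_y}
\;\lesssim\; N^{2\rho}\,
\Bigl\|\,\sum_j\lambda_j\,|P_N e^{it\Delta}f_j|^2\,\Bigr\|_{L^r_xL^{\tilde r}_y} .
\]
Taking the $L^q_t$-norm and invoking the frequency-localized orthonormal estimate \eqref{WL1} for the orthonormal system $(f_j)_j$ then yields
\[
\Bigl\|\,\sum_j\lambda_j\,|P_N e^{it\Delta}g_j|^2\,\Bigr\|_{L^q_tL^r_xL^{\tilde r}_y}
\;\lesssim\; N^{2\rho}\,N^{\sigma}\,\|\lambda\|_{\ell^{\alpha'}}
\;=\; N^{-2\varepsilon}\,\|\lambda\|_{\ell^{\alpha'}},
\]
since $2\rho+\sigma=-2\varepsilon$.

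Summing over $N\in2^{\mathbb N}$ and using $\sum_{N\in2^{\mathbb N}}N^{-2\varepsilon}=(1-2^{-2\varepsilon})^{-1}<\infty$ — which is exactly where $\varepsilon>0$ enters — I arrive at
\[
\Bigl\|\,\sum_j\lambda_j\,\bigl|e^{it\Delta}\langle\nabla\rangle^{-\frac{\sigma}{2}-\varepsilon}f_j\bigr|^2\,\Bigr\|_{L^q_tL^r_xL^{\tilde r}_y}
\;\lesssim\; C_{\sigma,\varepsilon}\,\|\lambda\|_{\ell^{\alpha'}},
\]
which is \eqref{WL2}. I do not expect a genuine obstacle here: the proof is essentially an assembly of the harmonic-analysis tools of Section~\ref{sec22} together with \eqref{WL1}. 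The only two points that require a little care are (i) applying the operator-density Littlewood--Paley theorem uniformly in the time variable — which is legitimate because the constant in \eqref{eq vector LP} does not depend on the data, so one may take the $L^q_t$-norm of both sides afterwards — and (ii) keeping track of the squares, i.e.\ applying Corollary~\ref{vvr} at the doubled exponents $(2r,2\tilde r)$ so that it matches the $L^r_xL^{\tilde r}_y$-norm of the density itself and not of its square root.
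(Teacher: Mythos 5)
Your proof is correct and takes essentially the same route as the paper: the operator-density Littlewood--Paley theorem \eqref{eq:operator_density_LP} reduces to dyadic blocks, the vector-valued Bernstein inequality (Corollary~\ref{vvr}) at doubled exponents trades $\langle\nabla\rangle^{-\sigma/2-\varepsilon}$ for $N^{-\sigma-2\varepsilon}$ after squaring, \eqref{WL1} supplies $N^{\sigma}$, and the resulting $N^{-2\varepsilon}$ geometric series closes the argument. In fact your bookkeeping of the exponent (giving $N^{-2\varepsilon}$) is slightly more careful than the displayed $N^{-(\sigma+\varepsilon)}$ in the paper's intermediate step, which appears to be a small slip with no effect on the conclusion.
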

	
	\begin{proof}
		By the  Littlewood-Paley theorem for densities of operators in  \eqref{eq:operator_density_LP}, together with the triangle
		inequality, Corollary \ref{vvr}, and \eqref{WL1}, we obtain
		\begin{flalign*}
			\bigg\| \sum_{j}\lambda_j 
			\big| e^{it\Delta}
			\langle \nabla \rangle^{-\frac{\sigma}{2}-\varepsilon} f_j \big|^2 
			\bigg\|_{L^{q}_{t}L_x^{r}L_y^{\tilde r}}
			&\lesssim 
			\bigg\| \sum_j \lambda_j 
			\big| \langle \nabla\rangle^{-\frac{\sigma}{2}-\varepsilon}
			e^{it\Delta} P_1 f_j \big|^2 
			\bigg\|_{L^{q}_{t}L_x^{r}L_y^{\tilde r}}
			\\
			&\qquad +
			\sum_{N\in 2^{\mathbb N}\setminus\{1\}}
			\bigg\| \sum_{j}\lambda_j 
			\big| \langle \nabla\rangle^{-\frac{\sigma}{2}-\varepsilon} 
			e^{it\Delta} P_N f_j \big|^2 
			\bigg\|_{L^{q}_{t}L_x^{r}L_y^{\tilde r}}
			\\
			&\lesssim_{\sigma,\varepsilon}
			\|\lambda\|_{\ell^{\alpha'}}
			+
			\sum_{N>1} N^{-(\sigma+\varepsilon)}
			\bigg\| \sum_{j}\lambda_j 
			\big| e^{it\Delta} P_N f_j \big|^2 
			\bigg\|_{L^{q}_{t}L_x^{r}L_y^{\tilde r}}
			\\
			&\lesssim_{\sigma,\varepsilon} 
			\|\lambda\|_{\ell^{\alpha'}},
		\end{flalign*}
		and this completes the proof of \eqref{WL2}.
	\end{proof}
	
	Now	consider the operator
	$\gamma_0 :=\sum_{j} \lambda_{j}\left|f_{j}\right\rangle\left\langle f_{j}\right|$ for  an  ONS \( (f_j)_{j} \) in \( L^{2} (\T^d) \) such that
	$
	\| \gamma_{0} \|_{\Sp^{\alpha'} (L^{2}(\T^d))} = \| \lambda \|_{\ell^{\alpha'}} \leq 1.$ Since  the sequence \(f_{j}\) form an orthonormal system therefore,
	the scalers, \(\lambda_{j}\) are precisely the eigenvalues of the operator $\gamma_0$. The evolved operator
	$$\gamma(t) :=e^{-i t \Delta} \gamma_0 e^{i t \Delta}=\sum_{j} \lambda_{j}\left|e^{-i t \Delta} f_{j}\right\rangle\left\langle e^{-i t \Delta} f_{j}\right| .$$
	Let us introduce the density  
	\begin{align}\label{y}
		\rho_{\gamma(t)} :=\sum_{j} \lambda_{j}\left|e^{-i t \Delta} f_{j}\right|^{2}.
	\end{align} 
	Here, we focus specifically on the density   $$
	\rho_{\langle \nabla \rangle^{-s}\gamma(t)\langle \nabla \rangle^{-s}}
	=
	\sum_{j} \lambda_j
	\left|
	e^{it\Delta}\langle \nabla \rangle^{-s} f_j
	\right|^2,
	$$
	associated with the evolve operator $$\langle \nabla \rangle^{-s} \gamma(t)\langle \nabla \rangle^{-s}=e^{-it\Delta}\langle \nabla \rangle^{-s}\gamma_{0}\langle \nabla \rangle^{-s}e^{it\Delta}.$$ 
	Moreover, for any multiplication operator $V$ on $L^{2}(\mathbb T^{d-k}\times\mathbb T^{k})$, we have the well-known identity (see \cite{nakamura2020orthonormal})
	\[
	\operatorname{Tr}(V\gamma)
	=
	\int_{\mathbb T^{d-k}\times\mathbb T^{k}} V(z)\rho_{\gamma}(z)\,dz.
	\]
	Then by the cyclic property of trace and H\"older's inequality,  for a time-dependent potential $V(t, z)$, the density function $\rho_{\gamma_s(t)}$  satisfies 
	\begin{flalign*}
		&\left|
		\operatorname{Tr}_{L^{2} }
		\left(
		\gamma_{0}
		\int_{I}
		e^{it\Delta}
		\langle \nabla \rangle^{-s}
		V(t,z)
		\langle \nabla \rangle^{-s}
		e^{-it\Delta}
		\,dt
		\right)
		\right|
		\\
		&=\left| \int_{I} \operatorname{Tr}_{L^{2} } \left( V\, e^{-it\Delta} \langle \nabla \rangle^{-s} \gamma_{0} \langle \nabla \rangle^{-s} e^{it\Delta} \right) \,dt \right| \\
		&=
		\left|
		\int_{I}
		\int_{\mathbb T^{d-k}\times\mathbb T^{k}}
		V(z)\,
		\rho_{\gamma_s(t)}(z)
		\,dz\,dt
		\right|
		\\
		&\le
		\| V \|_{L_t^{q'}L_x^{r'}L_y^{\tilde r'}}
		\left\|
		\rho_{\gamma_s(t)}
		\right\|_{L_t^{q}L_x^{r}L_y^{\tilde r}}
		\\
		&=
		\| V \|_{L_t^{q'}L_x^{r'}L_y^{\tilde r'}}
		\left\|
		\sum_{j} \lambda_j
		\left|
		e^{it\Delta}
		\langle \nabla \rangle^{-s}
		f_j
		\right|^2
		\right\|_{L_t^{q}L_x^{r}L_y^{\tilde r}}.
	\end{flalign*} 
	Thus, by the duality argument and \eqref{WL2} with $s=\frac{\sigma}{2}+\varepsilon$, we get 
	\begin{equation}\label{dual version}
		\left\|
		\int_{I}
		e^{it\Delta}
		\langle \nabla \rangle^{-s}
		V(t,z)
		\langle \nabla \rangle^{-s}
		e^{-it\Delta}
		\,dt
		\right\|_{\Sp^{\alpha}\!\left(L_z^{2}(\mathbb T^{d-k}\times\mathbb T^{k})\right)}
		\le
		C_{|I|,\sigma,\varepsilon}
		\| V \|_{L_t^{q'}L_x^{r'}L_y^{\tilde r'}}.
	\end{equation}

	Let $R(t') : L^{2}(\mathbb T^{d-k}\times\mathbb T^{k}) \to L^{2}(\mathbb T^{d-k}\times\mathbb T^{k})$ be self-adjoint for each $t'\in I$, and define
	\[
	\gamma(t)
	=
	\int_{0}^{t}
	e^{-i(t-t')\Delta}
	R(t')
	e^{i(t-t')\Delta}
	\,dt',
	\qquad t\in I.
	\] 
	We also denote
	\[
	A(t)
	=
	e^{it\Delta}
	\langle \nabla \rangle^{-s}
	|V(t,z)|
	\langle \nabla \rangle^{-s}
	e^{-it\Delta},
	\quad \text{and}\quad 
	B(t')
	=
	e^{it'\Delta}
	|R(t')|
	e^{-it'\Delta}.
	\]
	Then for all time-dependent potential $V$ with \(
	\| V \|_{L_t^{q'} L_x^{r'} L_y^{\tilde r'}(I \times \mathbb T^{d-k}\times\mathbb T^{k})}
	\le 1
	\), using the dual inequality \eqref{dual version} and an argument similar to that in \cite{nakamura2020orthonormal}, we get  
	\begin{align*}
		&\left| \int_{I} \int_{\mathbb{T}^{d-k} \times \mathbb{T}^{k}} V(t,z) \rho_{\langle \nabla \rangle^{-s} \gamma{(t)} \langle \nabla \rangle^{-s}}(z) \, dz \, dt \right| \\
		&=\left|\int_{I} \int_{0}^{t} \operatorname{Tr}_{L^{2} } \left( e^{it\Delta} \langle \nabla \rangle^{-s} V(t,z) \langle \nabla \rangle^{-s} e^{-it\Delta} \,e^{it'\Delta} R(t') e^{-it'\Delta} \right) \,dt'\,dt \right| \\
		&\leq \operatorname{Tr}_{L^{2} } \left( \left( \int_{I} A(t) \, dt \right) \left(\int_{I} B(t') dt' \right) \right) \\
		&\leq \left \| \int_{I} A(t) \, dt \right\|_{\Sp^{\alpha}(L^{2} (\mathbb{T}^{d-k} \times \mathbb{T}^{k}))} \left \| \int_{I} B(t') \, dt' \right\|_{\Sp^{\alpha'}(L^{2} (\mathbb{T}^{d-k} \times \mathbb{T}^{k}))}\\
		&\leq C_{\abs{I},\theta, \sigma, \varepsilon} \| V \|_{L_t^{q'}L_x^{r'}L_y^{\tilde r'}} \left \| \int_{I} B(t') \, dt' \right\|_{\Sp^{\alpha'}(L^{2} (\mathbb{T}^{d-k} \times \mathbb{T}^{k}))},
	\end{align*}
	where in the second last line, we used H\"older’s inequality for traces. Again by a the duality argument
	\begin{equation*}
		\left\|
		\rho_{\langle \nabla \rangle^{-s}\gamma(t)\langle \nabla \rangle^{-s}}
		\right\|_{L_t^{q}L_x^{r}L_y^{\tilde r}}
		\le
		C_{|I|,\sigma,\varepsilon}
		\left\|
		\int_{I}
		e^{it'\Delta}
		|R(t')|
		e^{-it'\Delta}
		\,dt'
		\right\|_{\Sp^{\alpha'}\!\left(L_z^{2}(\mathbb T^{d-k}\times\mathbb T^{k})\right)}.
	\end{equation*}
	In summary, combining the properties discussed above, we arrive at the following result.
	\begin{proposition}\label{operator inhomogeneous estimate}\
		\begin{enumerate}
			\item \label{ss1}
			The orthonormal inequality \eqref{WL2} is equivalent to for any
			$V\in L_t^{q'}L_x^{r'}L_y^{\tilde r'}$
			and $s=\frac{\sigma}{2}+\varepsilon,$
			\begin{equation}\label{WL3}
				\left\|
				\int_{I}
				e^{it\Delta}
				\langle \nabla \rangle^{-s}
				V(t,z)
				\langle \nabla \rangle^{-s}
				e^{-it\Delta}
				\,dt
				\right\|_{\Sp^{\alpha}\!\left(L_z^{2}(\mathbb T^{d-k}\times\mathbb T^{k})\right)}
				\le
				C_{|I|,\sigma,\varepsilon}
				\| V \|_{L_t^{q'}L_x^{r'}L_y^{\tilde r'}}.
			\end{equation}
			
			\item \label{ss2} (Inhomogeneous estimate)
			Let
			$R(t') : L^{2}(\mathbb T^{d-k}\times\mathbb T^{k})
			\to L^{2}(\mathbb T^{d-k}\times\mathbb T^{k})$
			be self-adjoint for each $t'\in I$ and define
			\[
			\gamma(t)
			=
			\int_{0}^{t}
			e^{-i(t-t')\Delta}
			R(t')
			e^{i(t-t')\Delta}
			\,dt',
			\qquad t\in I.
			\]
			If \eqref{WL2} holds true, then
			\begin{equation}\label{WL4}
				\left\|
				\rho_{\langle \nabla \rangle^{-s}\gamma(t)\langle \nabla \rangle^{-s}}
				\right\|_{L_t^{q}L_x^{r}L_y^{\tilde r}}
				\le
				C_{|I|,\sigma,\varepsilon}
				\left\|
				\int_{I}
				e^{is\Delta}
				|R(s)|
				e^{-is\Delta}
				\,ds
				\right\|_{\Sp^{\alpha'}\!\left(L_z^{2}(\mathbb T^{d-k}\times\mathbb T^{k})\right)}.
			\end{equation}
		\end{enumerate}
	\end{proposition}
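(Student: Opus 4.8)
The plan is to assemble the computations sketched in the paragraphs preceding the statement into two clean steps: item \eqref{ss1} is an application of the Frank--Sabin duality principle (Lemma \ref{PL1}), while item \eqref{ss2} is a duality argument carried out at the level of operators and traces, with \eqref{WL3} as its input.

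\textbf{Step 1 (proof of \eqref{ss1}).} I would apply Lemma \ref{PL1} with $\cH = L^2_z(\T^{d})$, $I=\T$, and $T$ the linear operator $Tf = e^{it\Delta}\langle\nabla\rangle^{-s}f$, $s=\tfrac{\sigma}{2}+\varepsilon$; this maps $\cH$ boundedly into $L^{2q}_tL^{2r}_xL^{2\tilde r}_y(I\times\T^{d-k}\times\T^{k})$ by the already established estimate \eqref{WL2} applied to a single function, so Lemma \ref{PL1} is applicable. With this $T$, condition (ii) of the lemma for the exponent $\alpha'=\tfrac{2\gamma}{\gamma+1}$ is, after the harmless reduction to $\lambda_j\ge 0$ (using $|\sum_j\lambda_jg_j|\le\sum_j|\lambda_j|g_j$ for $g_j\ge 0$ and $\|\,|\lambda|\,\|_{\ell^{\alpha'}}=\|\lambda\|_{\ell^{\alpha'}}$), exactly the orthonormal estimate \eqref{WL2}. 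Condition (i), upon setting $V=|W|^{2}$ so that $\|V\|_{L^{q'}_tL^{r'}_xL^{\tilde r'}_y}=\|W\|^{2}_{L^{2q'}_tL^{2r'}_xL^{2\tilde r'}_y}$, amounts (via $\|AA^{*}\|_{\Sp^{\alpha}}=\|A^{*}A\|_{\Sp^{\alpha}}$) to the bound $\|T^{*}M_VT\|_{\Sp^{\alpha}(\cH)}\le C\|V\|_{L^{q'}_tL^{r'}_xL^{\tilde r'}_y}$ for nonnegative $V$; using that $e^{it\Delta}$ commutes with $\langle\nabla\rangle^{-s}$, together with the substitution $t\mapsto-t$ (which fixes $I=\T$ and leaves the norm of $V$ unchanged), one identifies $T^{*}M_VT$ with the operator $\int_I e^{it\Delta}\langle\nabla\rangle^{-s}M_V\langle\nabla\rangle^{-s}e^{-it\Delta}\,dt$ appearing in \eqref{WL3}. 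A general complex-valued $V$ is reduced to the nonnegative case by factoring $V=W_1\overline{W_2}$ with $|W_1|=|W_2|=|V|^{1/2}$ and applying H\"older's inequality in Schatten classes, $\|T^{*}M_VT\|_{\Sp^{\alpha}}\le\|M_{\overline{W_1}}T\|_{\Sp^{2\alpha}}\|M_{\overline{W_2}}T\|_{\Sp^{2\alpha}}$ with $\|M_{\overline{W_i}}T\|^{2}_{\Sp^{2\alpha}}=\|T^{*}M_{|V|}T\|_{\Sp^{\alpha}}$. Thus Lemma \ref{PL1} yields the equivalence \eqref{WL2} $\Leftrightarrow$ \eqref{WL3}.

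\textbf{Step 2 (proof of \eqref{ss2}).} Assuming \eqref{WL2}, hence \eqref{WL3}, I would establish \eqref{WL4} by duality: it suffices to bound $\big|\int_I\int_{\T^{d-k}\times\T^{k}}V(t,z)\,\rho_{\langle\nabla\rangle^{-s}\gamma(t)\langle\nabla\rangle^{-s}}(z)\,dz\,dt\big|$ uniformly over $V$ with $\|V\|_{L^{q'}_tL^{r'}_xL^{\tilde r'}_y}\le 1$. Using the identity $\int V\rho_{\eta}=\operatorname{Tr}(M_V\eta)$, the definition of $\gamma(t)$, the cyclicity of the trace, and the commutativity of $e^{it\Delta}$ with $\langle\nabla\rangle^{-s}$, this quantity equals $\big|\int_I\int_0^t\operatorname{Tr}\big(A'(t)B'(t')\big)\,dt'\,dt\big|$, where $A'(t)=e^{it\Delta}\langle\nabla\rangle^{-s}M_{V(t)}\langle\nabla\rangle^{-s}e^{-it\Delta}$ and $B'(t')=e^{it'\Delta}M_{R(t')}e^{-it'\Delta}$. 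Expanding this trace in integral kernels gives $|\operatorname{Tr}(A'(t)B'(t'))|\le\operatorname{Tr}(A(t)B(t'))$, where $A(t),B(t')\ge 0$ are the operators built from $|V(t)|,|R(t')|$; by positivity one may then replace the inner integral $\int_0^t$ by $\int_I$, obtaining the bound $\operatorname{Tr}\big(\big(\int_IA(t)\,dt\big)\big(\int_IB(t')\,dt'\big)\big)$. H\"older's inequality for traces gives $\le\|\int_IA(t)\,dt\|_{\Sp^{\alpha}}\,\|\int_IB(t')\,dt'\|_{\Sp^{\alpha'}}$; by \eqref{WL3} (applied to the nonnegative symbol $|V|$) the first factor is $\lesssim_{|I|,\sigma,\varepsilon}\|\,|V|\,\|_{L^{q'}_tL^{r'}_xL^{\tilde r'}_y}\le 1$, while the second factor is precisely the right-hand side of \eqref{WL4}. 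Taking the supremum over $V$ completes the argument.

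\textbf{Expected obstacle.} The genuine work here is organizational rather than conceptual: one must track the conjugations by $e^{it\Delta}$ and $\langle\nabla\rangle^{-s}$ carefully, so that the conjugation order produced by Lemma \ref{PL1} is matched with that of \eqref{WL3} through the $t\mapsto-t$ symmetry and the Schatten--H\"older reduction, and one must justify the kernel identity behind $|\operatorname{Tr}(M_VXM_RX^{*})|\le\operatorname{Tr}(M_{|V|}XM_{|R|}X^{*})$, which is what licenses extending the $t'$-integral to $I$ and factoring the trace. Since $\langle\nabla\rangle^{-s}$ need not be Hilbert--Schmidt for the relevant values of $s$, these kernel computations should first be carried out for simple potentials $V$ and finite-rank $R(t')$ — where all operators in sight are trace class — and then extended to the general case by a routine density argument using the a priori bounds supplied by \eqref{WL3}.
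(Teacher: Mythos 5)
Your proposal is correct and follows essentially the same route as the paper: item \eqref{ss1} is the Frank--Sabin duality (the paper carries out the trace computation explicitly rather than citing Lemma \ref{PL1} as a black box, which only records one direction but yields the same content), and item \eqref{ss2} is the identical chain of $\rho_\eta \leftrightarrow \operatorname{Tr}(M_V\eta)$, positivity to pass from $\int_0^t$ to $\int_I$, Schatten--H\"older, and \eqref{WL3}. Your suggestion to justify $|\operatorname{Tr}(A'(t)B'(t'))|\le\operatorname{Tr}(A(t)B(t'))$ first for finite-rank $R(t')$ is exactly the right device, since for a general self-adjoint $R$ this bound is not a pointwise kernel estimate but follows from the spectral decomposition $R=\sum_k\mu_k|g_k\rangle\langle g_k|$ (or, equivalently, polar decomposition plus Schatten--H\"older); the paper cites Nakamura for this step without elaboration.
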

	Now, we are in a position to prove the well-posedness of the Hartree equation for infinitely many fermions.
	\begin{proof}[\textbf{Proof of Theorem \ref{TIN7} \eqref{TIN71}}] Denote $\Sp^{\alpha',s}=\Sp^{\alpha',s}(L^2 (\mathbb{T}^{d-k} \times \mathbb{T}^{k}))$ (see definition in \eqref{sobolev schatten}).
Let \(R > 0\) such that 
$\| \gamma_{0} \|_{\Sp^{\alpha', s} } < R,$ and 
$T = T(R)>0$ to be chosen later. Set 
\[
X_T
=
\left\{
(\gamma,\rho)
\in
C\!\left([0,T],\Sp^{\alpha',s}\right)
\times
L_t^{q}L_x^{r}L_y^{\tilde r}\!\left([0,T]\times\mathbb T^{d-k}\times\mathbb T^{k}\right)
:
\|(\gamma,\rho)\|_{X_T}
\le
C^{*}R
\right\},
\]
where 
\[
\| (\gamma, \rho) \|_{X_T}
=
\| \gamma \|_{C([0,T],\Sp^{\alpha',s})}
+
\| \rho \|_{L_t^{q}L_x^{r}L_y^{\tilde r}([0,T]\times\mathbb T^{d-k}\times\mathbb T^{k})}.
\]
Here \(C^* > 0\) is to chosen later independent of \(R\). We define
\[
\Phi(\gamma, \rho) = \left( \Phi_1(\gamma, \rho), \rho \left[ \Phi_1(\gamma, \rho) \right] \right),
\]
\[
\Phi_1(\gamma, \rho)(t)
=
S(t)
+
\int_{0}^{t}
e^{-i(t-t')\Delta}
\left[
\omega \ast \rho(t'),
\gamma(t')
\right]
e^{i(t-t')\Delta}
\,dt',
\]
where $S(t)=e^{-it\Delta}\,\gamma_{0}\,e^{it\Delta}$ and \(\rho[\gamma] = \rho _{\gamma}\) (the density function of $\gamma$). The solution of \eqref{Hartree} is fixed point of \(\Phi,\) i.e.,
\( \Phi(\gamma, \rho _{\gamma}) = (\gamma, \rho _{\gamma}).\)
Fix any \(t \in [0, T]\) and 
\begin{align*}
    &\| \Phi_1(\gamma, \rho)(t) \|_{\Sp^{\alpha', s}} \\ 
    &\leq \| S(t) \|_{\Sp^{\alpha', s}}
    + \int_{0}^{T}
    \| e^{-i(t-t')\Delta}
    \left[ \omega \ast \rho(t'), \gamma(t') \right]
    e^{i(t-t')\Delta}
    \|_{\Sp^{\alpha', s}} \, dt'.
\end{align*}
Since $(f_{j})_{j}$ is ONS in $L^{2} \!\left(\mathbb T^{d-k}\times\mathbb T^{k}\right)
,$ then $\bigl(e^{-it\Delta} f_j\bigr)_{j}
$ is as well for each $t$ and so
\[
\| e^{-it\Delta}\,\gamma_{0}\,e^{it\Delta} \|_{\Sp^{\alpha', s}}
=
\| \gamma_{0} \|_{\Sp^{\alpha', s}}
<
R.
\]Denote $\Sp^{\alpha',s}=\Sp^{\alpha',s}(L^2 (\mathbb{T}^{d-k} \times \mathbb{T}^{k}))$ (see definition in \eqref{sobolev schatten}).
Let \(R > 0\) such that 
$\| \gamma_{0} \|_{\Sp^{\alpha', s} } < R,$ and 
$T = T(R)>0$ to be chosen later. Set 
\[
X_T
=
\left\{
(\gamma,\rho)
\in
C\!\left([0,T],\Sp^{\alpha',s}\right)
\times
L_t^{q}L_x^{r}L_y^{\tilde r}\!\left([0,T]\times\mathbb T^{d-k}\times\mathbb T^{k}\right)
:
\|(\gamma,\rho)\|_{X_T}
\le
C^{*}R
\right\},
\]
where 
\[
\| (\gamma, \rho) \|_{X_T}
=
\| \gamma \|_{C([0,T],\Sp^{\alpha',s})}
+
\| \rho \|_{L_t^{q}L_x^{r}L_y^{\tilde r}([0,T]\times\mathbb T^{d-k}\times\mathbb T^{k})}.
\]
Here \(C^* > 0\) is to chosen later independent of \(R\). We define
\[
\Phi(\gamma, \rho) = \left( \Phi_1(\gamma, \rho), \rho \left[ \Phi_1(\gamma, \rho) \right] \right),
\]
\[
\Phi_1(\gamma, \rho)(t)
=
S(t)
+
\int_{0}^{t}
e^{-i(t-t')\Delta}
\left[
\omega \ast \rho(t'),
\gamma(t')
\right]
e^{i(t-t')\Delta}
\,dt',
\]
where $S(t)=e^{-it\Delta}\,\gamma_{0}\,e^{it\Delta}$ and \(\rho[\gamma] = \rho _{\gamma}\) (the density function of $\gamma$). The solution of \eqref{Hartree} is fixed point of \(\Phi,\) i.e.,
\( \Phi(\gamma, \rho _{\gamma}) = (\gamma, \rho _{\gamma}).\)
Fix any \(t \in [0, T]\) and 
\begin{align*}
    &\| \Phi_1(\gamma, \rho)(t) \|_{\Sp^{\alpha', s}} \\ 
    &\leq \| S(t) \|_{\Sp^{\alpha', s}}
    + \int_{0}^{T}
    \| e^{-i(t-t')\Delta}
    \left[ \omega \ast \rho(t'), \gamma(t') \right]
    e^{i(t-t')\Delta}
    \|_{\Sp^{\alpha', s}} \, dt'.
\end{align*}
Since $(f_{j})_{j}$ is ONS in $L^{2} \!\left(\mathbb T^{d-k}\times\mathbb T^{k}\right)
,$ then $\bigl(e^{-it\Delta} f_j\bigr)_{j}
$ is as well for each $t$ and so
\[
\| e^{-it\Delta}\,\gamma_{0}\,e^{it\Delta} \|_{\Sp^{\alpha', s}}
=
\| \gamma_{0} \|_{\Sp^{\alpha', s}}
<
R.
\]
		For the second term, again we, use the inequality, we get
\begin{align*}
    &\| e^{-i(t-t')\Delta} \left[ \omega * \rho(t'), \gamma(t') \right] e^{i(t-t')\Delta} \|_{\Sp^{\alpha', s}}\\
    &\leq \big\{ \left\| \langle \nabla \rangle^s \omega * \rho(t') \langle \nabla \rangle^{-s} \right\|_{\Sp^{\infty} }
    +
    \left\| \langle \nabla \rangle^{-s} \omega \ast \rho(t') \langle \nabla \rangle^{s} \right\|_{\Sp^{\infty} } \big\}\| \gamma(t') \|_{\Sp^{\alpha', s}}.
\end{align*}
The estimate we employ to evaluate the above nonlinear term is the following (see in \cite[Corollary on p. 205]{HansII}), where the inequality was proved for the \(\mathbb{R}^d\) case, but the same proof is applicable in the case of $\mathbb T^{d-k}\times\mathbb T^{k}$:
\[
\| f \cdot g \|_{H^{r}(\mathbb T^{d-k}\times\mathbb T^{k})} \leq C_{s, \delta} \| f \|_{B_{\infty, \infty}^{|r|+\delta}(\mathbb T^{d-k}\times\mathbb T^{k})} \| g \|_{H^{r}(\mathbb T^{d-k}\times\mathbb T^{k})},
\]
where \(r \in \mathbb{R}\) and \(\delta > 0\) are arbitrary.
From the above estimate and Young's inequality, we obtain
\begin{align*}
    \left\| \langle \nabla \rangle^s \omega \ast \rho(t') \langle \nabla \rangle^{-s} \right\|_{\Sp^{\infty} } 
    &\leq C'_{s, \delta} \| \omega \ast \rho(t') \|_{B^{s+\delta}_{\infty, \infty}(\mathbb T^{d-k}\times\mathbb T^{k})}\\
    &\leq C'_{s, \delta} \| \omega \|_{B^{s+\delta}_{(r',\tilde{r}'), \infty}(\mathbb T^{d-k}\times\mathbb T^{k})}
    \| \rho(t') \|_{L^r_x L_y^{\tilde{r}}(\mathbb T^{d-k}\times\mathbb T^{k})}.
\end{align*}

Similarly,
\[
\| \langle \nabla \rangle^{-s} \omega \ast \rho(t') \langle \nabla \rangle^s \|_{\Sp^{\infty}} \leq C'_{-s, \delta} \| \omega \|_{B^{s+\delta}_{(r',\tilde{r}'), \infty}(\mathbb T^{d-k}\times\mathbb T^{k})}
\| \rho(t') \|_{L^r_x L_y^{\tilde{r}}(\mathbb T^{d-k}\times\mathbb T^{k})}.
\]
In total from \((\gamma, \rho) \in X_T\), we estimate
$$ 
\int_0^T
\| e^{-i(t-t')\Delta}
\nabla^{s}
[ \omega \ast \rho(t'), \gamma(t')]
e^{i(t-t')\Delta}
\|_{\Sp^{\alpha', s}}
\, dt'
\le
C_{s,\delta}
\| \omega \|_{B^{s+\delta}_{(r,\tilde r),\infty}(\mathbb T^{d-k}\times\mathbb T^{k})}
T^{1/q'}
(C^{*}R)^2,
$$
where \(C_{s, \delta} = C'_{s, \delta} + C'_{-s, \delta}.\)
Therefore
\begin{eqnarray}\label{WL5}
    \| \Phi_{1} (\gamma, \rho) \|_{C \left([0, T] ;\Sp^{\alpha', s} \right)}
    \leq
    R
    +
    c_{s,\delta}
    T^{\frac{1}{q'}}
    \| \omega \|_{B^{s+\delta}_{(r,\tilde r), \infty}(\mathbb T^{d-k}\times\mathbb T^{k})}
    (C^*R)^2.
\end{eqnarray}
		Similarly, using Proposition \ref{operator inhomogeneous estimate}, we get
\begin{align*}
    &T^{-\frac{1}{p}} C^{-1}_{\sigma,\varepsilon}
    \|\rho[ \Phi_{1} (\gamma, \rho) ] \|_{L_t^{q}L_x^{r}L_y^{\tilde r}([0,T]\times\mathbb T^{d-k}\times\mathbb T^{k})} \\
    &\leq
    \| \langle \nabla \rangle^s \gamma_{0} \langle \nabla \rangle^s \|_{\Sp^{\alpha'} }
    +
    \left\|
    \int_0^T
    e^{it'\Delta}
    \langle \nabla \rangle^s
    \left| [ \omega \ast \rho(t') , \gamma(t')] \right|
    \langle \nabla \rangle^s
    e^{-it'\Delta}
    \, dt'
    \right \|_{\Sp^{\alpha'} }.
\end{align*}
For the first term,
\[
\| \langle \nabla \rangle^s \gamma_{0} \langle \nabla \rangle^s \|_{\Sp^{\alpha'} } = \| \gamma_{0} \|_{\Sp^{\alpha', s} } \leq R.
\]
For the second term, we may employ the same argument as for
$ 
\| \Phi_{1} (\gamma, \rho) \|_{C \left([0, T] ;\Sp^{\alpha', s} \right)},
$ 
and we get
\begin{eqnarray}\label{WL6}
    \| \rho [ \Phi_1 (\gamma, \rho) ] \|_{L_t^{q}L_x^{r}L_y^{\tilde r}([0,T]\times\mathbb T^{d-k}\times\mathbb T^{k})}
    \le
    C_{\sigma,\varepsilon}
    T^{\frac{1}{q}}
    \left\{
    R
    +
    C_{s,\delta}
    T^{\frac{1}{q'}}
    \| \omega \|_{B^{s+\delta}_{(r,\tilde{r}), \infty}(\mathbb T^{d-k}\times\mathbb T^{k})}
    (C^{*}R)^2
    \right\}.
\end{eqnarray}
Combining these two estimates, we have 
\[
\|\Phi(\gamma, \rho) \|_{X_{T}} \leq (1+C_{ \sigma, \epsilon} T^{1/q}) \left\{R + C_{s, \delta} T^{\frac{1}{q'}} \| \omega \|_{B^{s+\delta}_{(r,\tilde{r}), \infty}(\mathbb T^{d-k}\times\mathbb T^{k})} (C^* R)^2 \right\}.
\]
This shows that $\Phi :X_{T} \to X_{T}$ if we take $C^*$ large and $T$ (if $T$ relies on $\| \omega \|_{B^{s+\delta}_{(r,\tilde{r}), \infty}(\mathbb T^{d-k}\times\mathbb T^{k})}$, not $\| \omega \|_{B^{s}_{(r,\tilde{r}), \infty}(\mathbb T^{d-k}\times\mathbb T^{k})}$, but this distinction is inconsequential since $s=\sigma+\varepsilon$ and both $\varepsilon$ and $\delta$ can be arbitrarily small) small enough. Similarly, we can prove that $\Phi$ is a contraction on $X_{T}$ and therefore admits a fixed point in $X_{T}$. Uniqueness can be established using a similar estimate.

We establish \eqref{TIN72} by combining the bounds obtained in
\eqref{WL5} and \eqref{WL6}. These estimates imply that the nonlinear
operator $\Phi$ obeys the inequality
\begin{equation}\label{Phi-bound}
    \|\Phi(\gamma,\rho)\|_{X_T}
    \le
    \bigl(1 + C_{\sigma,\varepsilon} T^{1/p}\bigr)
    \Bigl(
    \|\gamma_0\|_{\Sp^{\alpha',\,s}}
    +
    C_{s,\delta} T^{1/p'}
    \|\omega\|_{B^{s+\delta}_{(r',\tilde r'),\infty}(\mathbb T^{d-k}\times\mathbb T^{k})}
    \|(\gamma,\rho)\|_{X_T}^2
    \Bigr).
\end{equation}

		Guided by this inequality, we fix a threshold
\[
R_T = R_T\!\left(\|\omega\|_{B^{s}_{(r',\tilde r'),\infty}(\mathbb T^{d-k}\times\mathbb T^{k})},\, s\right)
\]
small enough. More precisely, the choice of $R_T$ depends on
$\|\omega\|_{B^{s+\delta}_{(r',\tilde r'),\infty}(\mathbb T^{d-k}\times\mathbb T^{k})}$
rather than on
$\|\omega\|_{B^{s}_{(r',\tilde r'),\infty}(\mathbb T^{d-k}\times\mathbb T^{k})}$,
which does not affect the argument. This allows us to select a constant
$M>0$ such that, for every $y\in[0,M]$, the inequality
\begin{equation}\label{M-condition}
    \bigl(1 + C_{\sigma,\varepsilon} T^{1/p}\bigr)
    \Bigl(
    \|\gamma_0\|_{\Sp^{\alpha',\,s}}
    +
    C_{s,\delta} T^{1/p'}
    \|\omega\|_{B^{s+\delta}_{(r',\tilde r'),\infty}(\mathbb T^{d-k}\times\mathbb T^{k})}
    \, y^2
    \Bigr)
    \le M
\end{equation}
is satisfied whenever
\[
\|\gamma_0\|_{\Sp^{\alpha',\,s}} \le R_T.
\]

We now introduce the space $X_{T,M}$ by
\[
X_{T,M}
:=
\bigl\{ (\gamma,\rho)\in X_T : \|(\gamma,\rho)\|_{X_T} \le M \bigr\}.
\]
Estimates \eqref{Phi-bound} and \eqref{M-condition} ensure that
\[
\Phi(X_{T,M}) \subset X_{T,M}.
\]

By possibly reducing $R_T$ further, one verifies through analogous
estimates that $\Phi$ is in fact a contraction on $X_{T,M}$. An
application of the Banach fixed point theorem therefore yields a
solution
\[
\gamma \in C\!\left([0,T],\,\Sp^{\alpha',\,s}\right)
\]
to \eqref{Hartree} on $[0,T]\times\mathbb T^{d-k}\times\mathbb T^{k}$, whose
associated density $\rho_\gamma$ satisfies
\[
\rho_\gamma \in
L_t^q L_x^r L_y^{\tilde r}\!\left([0,T]\times\mathbb T^{d-k}\times\mathbb T^k\right).
\]
	\end{proof}

	\section*{Acknowledgments} 
	The second author wishes to thank the National Board for Higher Mathematics (NBHM) for
	the research fellowship and Indian Institute of Science Education and Research, Pune for
	the support provided during the period of this work.
	The third author is supported by the DST-INSPIRE Faculty Fellowship DST/INSPIRE/04/2023/002038.

	\bibliographystyle{siam}
	\bibliography{bib.bib}

@book{grafakos2008classical,
	author = {Grafakos, Loukas},
	title = {Classical {F}ourier analysis},
	series = {Graduate Texts in Mathematics},
	volume = {249},
	edition = {Second},
	publisher = {Springer, New York},
	year = {2008},
	pages = {xvi+489},
	isbn = {978-0-387-09431-1},
	mrclass = {42-01 (42Bxx)},
	mrnumber = {2445437},
	mrreviewer = {Andreas\ Seeger},}

@article{nakamura2020orthonormal,
	author = {Nakamura, Shohei},
	title = {The orthonormal {S}trichartz inequality on torus},
	journal = {Trans. Amer. Math. Soc.},
	fjournal = {Transactions of the American Mathematical Society},
	volume = {373},
	year = {2020},
	number = {2},
	pages = {1455--1476},
	issn = {0002-9947,1088-6850},
	mrclass = {35Q55 (35B45 35B65 35P10)},
	mrnumber = {4068269},
	mrreviewer = {Elena\ Igorevna\ Kaikina},
	doi = {10.1090/tran/7982},
	url = {https://doi.org/10.1090/tran/7982},}

@article{frank2017restriction,
	author = {Frank, Rupert L. and Sabin, Julien},
	title = {Restriction theorems for orthonormal functions, {S}trichartz inequalities, and uniform {S}obolev estimates},
	journal = {Amer. J. Math.},
	fjournal = {American Journal of Mathematics},
	volume = {139},
	year = {2017},
	number = {6},
	pages = {1649--1691},
	issn = {0002-9327,1080-6377},
	mrclass = {42B25 (46E30)},
	mrnumber = {3730931},
	mrreviewer = {Oscar\ Blasco},
	doi = {10.1353/ajm.2017.0041},
	url = {https://doi.org/10.1353/ajm.2017.0041},}

@incollection{vega1992restriction,
	author = {Vega, Luis},
	title = {Restriction theorems and the {S}chr\"odinger multiplier on the torus},
	booktitle = {Partial differential equations with minimal smoothness and applications ({C}hicago, {IL}, 1990)},
	series = {IMA Vol. Math. Appl.},
	volume = {42},
	pages = {199--211},
	publisher = {Springer, New York},
	year = {1992},
	isbn = {0-387-97774-0},
	mrclass = {42B10},
	mrnumber = {1155865},
	mrreviewer = {Gary\ Sampson},
	doi = {10.1007/978-1-4612-2898-1\_18},
	url = {https://doi.org/10.1007/978-1-4612-2898-1_18},}

@book{simon2005trace,
	author = {Simon, Barry},
	title = {Trace ideals and their applications},
	series = {Mathematical Surveys and Monographs},
	volume = {120},
	edition = {Second},
	publisher = {American Mathematical Society, Providence, RI},
	year = {2005},
	pages = {viii+150},
	isbn = {0-8218-3581-5},
	mrclass = {47L20 (47A40 47A55 47B10 47B36 47E05 81Q15 81U99)},
	mrnumber = {2154153},
	mrreviewer = {Pavel\ B.\ Kurasov},
	doi = {10.1090/surv/120},
	url = {https://doi.org/10.1090/surv/120},}

@article{lewin2015hartree,
	author = {Lewin, Mathieu and Sabin, Julien},
	title = {The {H}artree equation for infinitely many particles {I}. {W}ell-posedness theory},
	journal = {Comm. Math. Phys.},
	fjournal = {Communications in Mathematical Physics},
	volume = {334},
	year = {2015},
	number = {1},
	pages = {117--170},
	issn = {0010-3616,1432-0916},
	mrclass = {35Q40 (35P25 35Q55)},
	mrnumber = {3304272},
	mrreviewer = {David\ William\ Pravica},
	doi = {10.1007/s00220-014-2098-6},
	url = {https://doi.org/10.1007/s00220-014-2098-6},}

@article{lewin2014hartree,
	author = {Lewin, Mathieu and Sabin, Julien},
	title = {The {H}artree equation for infinitely many particles, {II}: {D}ispersion and scattering in 2{D}},
	journal = {Anal. PDE},
	fjournal = {Analysis \& PDE},
	volume = {7},
	year = {2014},
	number = {6},
	pages = {1339--1363},
	issn = {2157-5045,1948-206X},
	mrclass = {35Q40 (35P25)},
	mrnumber = {3270166},
	mrreviewer = {David\ William\ Pravica},
	doi = {10.2140/apde.2014.7.1339},
	url = {https://doi.org/10.2140/apde.2014.7.1339},}

@article{sabin2014hartree,
	abstract = {We review some recent results obtained with Mathieu Lewin [21] concerning the nonlinear Hartree equation for density matrices of infinite trace, describing the time evolution of quantum systems with infinitely many particles. Our main result is the asymptotic stability of a large class of translation-invariant density matrices which are stationary solutions to the Hartree equation. We also mention some related result obtained in collaboration with Rupert Frank [13] about Strichartz estimates for orthonormal systems.},
	affiliation = {Laboratoire de Mathématiques d’Orsay UMR CNRS 8628 Université Paris-Sud 91405 Orsay, France},
	author = {Sabin, Julien},
	journal = {Journ. Équ. Dériv. Partielles},
	language = {eng},
	pages = {1-18},
	publisher = {Groupement de recherche 2434 du CNRS},
	title = {The {H}artree equation for infinite quantum systems},
	url = {http://eudml.org/doc/275556},
	year = {2014}}

@article {chen2017global,

    AUTHOR = {Chen, Thomas and Hong, Younghun and Pavlovi\'c, Nata\v sa},

     TITLE = {Global well-posedness of the {NLS} system for infinitely many

              fermions},

   JOURNAL = {Arch. Ration. Mech. Anal.},

  FJOURNAL = {Archive for Rational Mechanics and Analysis},

    VOLUME = {224},

      YEAR = {2017},

    NUMBER = {1},

     PAGES = {91--123},

      ISSN = {0003-9527,1432-0673},

   MRCLASS = {35Q55 (35B30 81V35 82C10)},

  MRNUMBER = {3609246},

MRREVIEWER = {Sebastian\ Herr},

       DOI = {10.1007/s00205-016-1068-x},

       URL = {https://doi.org/10.1007/s00205-016-1068-x},

}

@article {chen2018scattering,

    AUTHOR = {Chen, Thomas and Hong, Younghun and Pavlovi\'c, Nata\v sa},

     TITLE = {On the scattering problem for infinitely many fermions in

              dimensions {$d\ge3$} at positive temperature},

   JOURNAL = {Ann. Inst. H. Poincar\'e{} C Anal. Non Lin\'eaire},

  FJOURNAL = {Annales de l'Institut Henri Poincar\'e{} C. Analyse Non

              Lin\'eaire},

    VOLUME = {35},

      YEAR = {2018},

    NUMBER = {2},

     PAGES = {393--416},

      ISSN = {0294-1449,1873-1430},

   MRCLASS = {35Q55 (35P25 35R09)},

  MRNUMBER = {3765547},

MRREVIEWER = {Tohru\ Ozawa},

       DOI = {10.1016/j.anihpc.2017.05.002},

       URL = {https://doi.org/10.1016/j.anihpc.2017.05.002},

}

@article {Bourgainrestriction, 

    AUTHOR = {Bourgain, J.},

     TITLE = {Fourier transform restriction phenomena for certain lattice

              subsets and applications to nonlinear evolution equations.

              {I}. {S}chr\"{o}dinger equations},

   JOURNAL = {Geom. Funct. Anal.},

  FJOURNAL = {Geometric and Functional Analysis},

    VOLUME = {3},

      YEAR = {1993},

    NUMBER = {2},

     PAGES = {107--156},

      ISSN = {1016-443X,1420-8970},

   MRCLASS = {35Q55 (11L07 35B10)},

  MRNUMBER = {1209299},

MRREVIEWER = {Yun\ Mei\ Chen},

       DOI = {10.1007/BF01896020},

       URL = {https://doi.org/10.1007/BF01896020},

}

@book {HansII, 

    AUTHOR = {Triebel, Hans},

     TITLE = {Theory of function spaces. {II}},

    SERIES = {Monographs in Mathematics},

    VOLUME = {84},

 PUBLISHER = {Birkh\"{a}user Verlag, Basel},

      YEAR = {1992},

     PAGES = {viii+370},

      ISBN = {3-7643-2639-5},

   MRCLASS = {46Exx (46-02)},

  MRNUMBER = {1163193},

MRREVIEWER = {P.\ Szeptycki},

       DOI = {10.1007/978-3-0346-0419-2},

       URL = {https://doi.org/10.1007/978-3-0346-0419-2},

}

@Inbook{bahouri2011basic,

author="Bahouri, Hajer

and Chemin, Jean-Yves

and Danchin, Rapha{\"e}l",

title="Basic Analysis",

bookTitle="Fourier Analysis and Nonlinear Partial Differential Equations",

year="2011",

publisher="Springer Berlin Heidelberg",

address="Berlin, Heidelberg",

pages="1--50",

abstract="Chapter 1 is devoted to a self-contained elementary presentation of classical Fourier analysis results. Even though none of the results are new, some of the proofs that we present are not the standard ones and are likely to be useful in other contexts. We also pay attention to the construction of explicit examples which illustrate the optimality of some refined estimates.",

isbn="978-3-642-16830-7",

doi="10.1007/978-3-642-16830-7_1",

url="https://doi.org/10.1007/978-3-642-16830-7_1"

}

@article {sabin2016littlewood,

    AUTHOR = {Sabin, Julien},

     TITLE = {Littlewood-{P}aley decomposition of operator densities and

              application to a new proof of the {L}ieb-{T}hirring

              inequality},

   JOURNAL = {Math. Phys. Anal. Geom.},

  FJOURNAL = {Mathematical Physics, Analysis and Geometry. An International

              Journal Devoted to the Theory and Applications of Analysis and

              Geometry to Physics},

    VOLUME = {19},

      YEAR = {2016},

    NUMBER = {2},

     PAGES = {Art. 11, 11},

      ISSN = {1385-0172,1572-9656},

   MRCLASS = {42B15 (46N50)},

  MRNUMBER = {3508209},

MRREVIEWER = {Brett\ D.\ Wick},

       DOI = {10.1007/s11040-016-9215-z},

       URL = {https://doi.org/10.1007/s11040-016-9215-z},

}

@article {ORS,

    AUTHOR = {Strichartz, Robert S.},

     TITLE = {Restrictions of {F}ourier transforms to quadratic surfaces and

              decay of solutions of wave equations},

   JOURNAL = {Duke Math. J.},

  FJOURNAL = {Duke Mathematical Journal},

    VOLUME = {44},

      YEAR = {1977},

    NUMBER = {3},

     PAGES = {705--714},

      ISSN = {0012-7094,1547-7398},

   MRCLASS = {46F10 (35B40 81.35)},

  MRNUMBER = {512086},

MRREVIEWER = {R.\ Glassey},

       URL = {http://projecteuclid.org/euclid.dmj/1077312392},

}

@article {KeelTao,

    AUTHOR = {Keel, Markus and Tao, Terence},

     TITLE = {Endpoint {S}trichartz estimates},

   JOURNAL = {Amer. J. Math.},

  FJOURNAL = {American Journal of Mathematics},

    VOLUME = {120},

      YEAR = {1998},

    NUMBER = {5},

     PAGES = {955--980},

      ISSN = {0002-9327},

   MRCLASS = {35B45 (35L70 35Q55)},

  MRNUMBER = {1646048},

MRREVIEWER = {John Albert},

       URL =

              {http://muse.jhu.edu/journals/american_journal_of_mathematics/v120/120.5keel.pdf},

}

@article {Dinh,
    AUTHOR = {Dinh, Van Duong},
     TITLE = {Strichartz estimates for the fractional {S}chr\"odinger and
              wave equations on compact manifolds without boundary},
   JOURNAL = {J. Differential Equations},
  FJOURNAL = {Journal of Differential Equations},
    VOLUME = {263},
      YEAR = {2017},
    NUMBER = {12},
     PAGES = {8804--8837},
      ISSN = {0022-0396,1090-2732},
   MRCLASS = {35R11 (35A01 35A17 35B45 35Q55 35R01 35S10 58J45)},
  MRNUMBER = {3710704},
MRREVIEWER = {Andrey\ I.\ Zahariev},
       DOI = {10.1016/j.jde.2017.08.045},
       URL = {https://doi.org/10.1016/j.jde.2017.08.045},
}

@article{wang2025strichartz,
  title={Strichartz estimates for orthonormal systems on compact manifolds},
  author={Wang, Xing and Zhang, An and Zhang, Cheng},
  journal={arXiv preprint arXiv:2503.08504},
  year={2025}
}

@article {neal2019,
    AUTHOR = {Bez, Neal and Hong, Younghun and Lee, Sanghyuk and Nakamura,
              Shohei and Sawano, Yoshihiro},
     TITLE = {On the {S}trichartz estimates for orthonormal systems of
              initial data with regularity},
   JOURNAL = {Adv. Math.},
  FJOURNAL = {Advances in Mathematics},
    VOLUME = {354},
      YEAR = {2019},
     PAGES = {106736, 37},
      ISSN = {0001-8708,1090-2082},
   MRCLASS = {42B35 (35Q41)},
  MRNUMBER = {3985036},
MRREVIEWER = {Elena\ Cordero},
       DOI = {10.1016/j.aim.2019.106736},
       URL = {https://doi.org/10.1016/j.aim.2019.106736},
}

@article {Hoshiya2024,
    AUTHOR = {Hoshiya, Akitoshi},
     TITLE = {Orthonormal {S}trichartz estimate for dispersive equations
              with potentials},
   JOURNAL = {J. Funct. Anal.},
  FJOURNAL = {Journal of Functional Analysis},
    VOLUME = {286},
      YEAR = {2024},
    NUMBER = {11},
     PAGES = {Paper No. 110425, 63},
      ISSN = {0022-1236},
   MRCLASS = {46E35 (35Q55 35Q70 47A55 81Q10)},
  MRNUMBER = {4725147},
MRREVIEWER = {Elena Cordero},
       DOI = {10.1016/j.jfa.2024.110425},
       URL = {https://doi.org/10.1016/j.jfa.2024.110425},
}

@article {bourgain2015,
    AUTHOR = {Bourgain, Jean and Demeter, Ciprian},
     TITLE = {The proof of the {$l^2$} decoupling conjecture},
   JOURNAL = {Ann. of Math. (2)},
  FJOURNAL = {Annals of Mathematics. Second Series},
    VOLUME = {182},
      YEAR = {2015},
    NUMBER = {1},
     PAGES = {351--389},
      ISSN = {0003-486X,1939-8980},
   MRCLASS = {42B37 (11E76 46E30 53C40)},
  MRNUMBER = {3374964},
MRREVIEWER = {G.\ V.\ Rozenblum},
       DOI = {10.4007/annals.2015.182.1.9},
       URL = {https://doi.org/10.4007/annals.2015.182.1.9},
}

@article {nealbez2021,
    AUTHOR = {Bez, Neal and Lee, Sanghyuk and Nakamura, Shohei},
     TITLE = {Strichartz estimates for orthonormal families of initial data
              and weighted oscillatory integral estimates},
   JOURNAL = {Forum Math. Sigma},
  FJOURNAL = {Forum of Mathematics. Sigma},
    VOLUME = {9},
      YEAR = {2021},
     PAGES = {Paper No. e1, 52},
      ISSN = {2050-5094},
   MRCLASS = {35Q55 (42B20 42B37)},
  MRNUMBER = {4202486},
       DOI = {10.1017/fms.2020.64},
       URL = {https://doi.org/10.1017/fms.2020.64},
}

@article{lieb1975bound,
  title={Bound for the kinetic energy of fermions which proves the stability of matter},
  author={Lieb, Elliott H and Thirring, Walter E},
  journal={Phys. Rev. Lett.},
  volume={35},
  number={11},
  pages={687},
  year={1975},
  publisher={APS}
}

@article {HoshiyaJMP2025,
    AUTHOR = {Hoshiya, Akitoshi},
     TITLE = {Orthonormal {S}trichartz estimates for {S}chr\"{o}dinger operator
              and their applications to infinitely many particle systems},
   JOURNAL = {J. Math. Phys.},
  FJOURNAL = {Journal of Mathematical Physics},
    VOLUME = {66},
      YEAR = {2025},
    NUMBER = {7},
     PAGES = {Paper No. 071505, 19},
      ISSN = {0022-2488},
   MRCLASS = {81Q15 (35Q55 81Q10)},
  MRNUMBER = {4930318},
       DOI = {10.1063/5.0250952},
       URL = {https://doi.org/10.1063/5.0250952},
}

@article {FrankJEMS2014,
    AUTHOR = {Frank, Rupert L. and Lewin, Mathieu and Lieb, Elliott H. and
              Seiringer, Robert},
     TITLE = {Strichartz inequality for orthonormal functions},
   JOURNAL = {J. Eur. Math. Soc. (JEMS)},
  FJOURNAL = {Journal of the European Mathematical Society (JEMS)},
    VOLUME = {16},
      YEAR = {2014},
    NUMBER = {7},
     PAGES = {1507--1526},
      ISSN = {1435-9855},
   MRCLASS = {35Q41 (35B45 35P25 47B10)},
  MRNUMBER = {3254332},
MRREVIEWER = {Elena Cordero},
       DOI = {10.4171/JEMS/467},
       URL = {https://doi.org/10.4171/JEMS/467},
}

@article{feng2024orthonormal,
	author = {Feng, Guoxia and Mondal, Shyam Swarup and Song, Manli and Wu, Huoxiong},
	date = {2026/01/23},
	date-added = {2026-01-23 14:07:46 +0530},
	date-modified = {2026-01-23 14:08:34 +0530},
	doi = {10.1007/s00209-026-03945-5},
	id = {Feng2026},
	isbn = {1432-1823},
	journal = {Math. Z.},
	number = {2},
	pages = {55},
	title = {Orthonormal {S}trichartz inequalities and their applications on abstract measure spaces},
	url = {https://doi.org/10.1007/s00209-026-03945-5},
	volume = {312},
	year = {2026},
	bdsk-url-1 = {https://doi.org/10.1007/s00209-026-03945-5}}

@article{bhimani2025orthonormal,
	author = {Bhimani, Divyang G. and Choudhary, Subhash. R.},
	title = {Orthonormal {S}trichartz estimates on torus and waveguide manifold and applications},
	journal = {arXiv preprint arXiv:2507.16712},
	year = {2025},
	url = {https://arxiv.org/abs/2507.16712},}

@article {koh2023local,
    AUTHOR = {Koh, Youngwoo and Lee, Yoonjung and Seo, Ihyeok},
     TITLE = {On local well-posedness of nonlinear dispersive equations with
              partially regular data},
   JOURNAL = {J. Differential Equations},
  FJOURNAL = {Journal of Differential Equations},
    VOLUME = {365},
      YEAR = {2023},
     PAGES = {38--54},
      ISSN = {0022-0396,1090-2732},
   MRCLASS = {35E15 (35L70 35Q55)},
  MRNUMBER = {4575087},
       DOI = {10.1016/j.jde.2023.04.006},
       URL = {https://doi.org/10.1016/j.jde.2023.04.006},
}

@book {cazenave2003,
    AUTHOR = {Cazenave, Thierry},
     TITLE = {Semilinear {S}chr\"odinger equations},
    SERIES = {Courant Lecture Notes in Mathematics},
    VOLUME = {10},
 PUBLISHER = {New York University, Courant Institute of Mathematical
              Sciences, New York; American Mathematical Society, Providence,
              RI},
      YEAR = {2003},
     PAGES = {xiv+323},
      ISBN = {0-8218-3399-5},
   MRCLASS = {35Q55 (35-01 35J10 35Q40)},
  MRNUMBER = {2002047},
MRREVIEWER = {Woodford\ W.\ Zachary},
       DOI = {10.1090/cln/010},
       URL = {https://doi.org/10.1090/cln/010},
}

@book {ward2010mixedLebesgue,
    AUTHOR = {Ward, Erika L.},
     TITLE = {New estimates in harmonic analysis for mixed {L}ebesgue spaces},
      NOTE = {Thesis (Ph.D.)--University of Kansas},
 PUBLISHER = {ProQuest LLC, Ann Arbor, MI},
      YEAR = {2010},
     PAGES = {70},
      ISBN = {978-1124-32424-1},
   MRCLASS = {99-05},
  MRNUMBER = {2813903},
       URL =
              {http://gateway.proquest.com/openurl?url_ver=Z39.88-2004&rft_val_fmt=info:ofi/fmt:kev:mtx:dissertation&res_dat=xri:pqdiss&rft_dat=xri:pqdiss:3427759},
}

@article {lee2019wellposedness,
    AUTHOR = {Lee, Gyu Eun},
     TITLE = {Local wellposedness for the critical nonlinear {S}chr\"odinger
              equation on {$\Bbb T^3$}},
   JOURNAL = {Discrete Contin. Dyn. Syst.},
  FJOURNAL = {Discrete and Continuous Dynamical Systems},
    VOLUME = {39},
      YEAR = {2019},
    NUMBER = {5},
     PAGES = {2763--2783},
      ISSN = {1078-0947,1553-5231},
   MRCLASS = {35Q55 (35B30 35B33)},
  MRNUMBER = {3927533},
MRREVIEWER = {Jos\'e\ Carlos\ de Albuquerque},
       DOI = {10.3934/dcds.2019116},
       URL = {https://doi.org/10.3934/dcds.2019116},
}

@article {mondal2025,
    AUTHOR = {Mondal, Shyam Swarup and Song, Manli},
     TITLE = {Orthonormal {S}trichartz inequalities for the
              {$(k,a)$}-generalized {L}aguerre operator and {D}unkl
              operator},
   JOURNAL = {Israel J. Math.},
  FJOURNAL = {Israel Journal of Mathematics},
    VOLUME = {269},
      YEAR = {2025},
    NUMBER = {2},
     PAGES = {697--729},
      ISSN = {0021-2172,1565-8511},
   MRCLASS = {42B37 (35J92 42C10 47B10)},
  MRNUMBER = {4982583},
       DOI = {10.1007/s11856-025-2762-x},
       URL = {https://doi.org/10.1007/s11856-025-2762-x},
}

@article{tadahiro2013sobolevtorus,
title = "The {S}obolev Inequality on the Torus Revisited",
abstract = " We revisit the Sobolev inequality for periodic functions on the d -dimensional torus. We provide an elementary Fourier analytic proof of this inequality which highlights both the similarities and differences between the periodic setting and the classical d -dimensional Euclidean one.",
keywords = "Sobolev inequality, Bessel potential, Riesz potential, Maximal function",
author = "{\'A}rp{\'a}d B{\'e}nyi and Tadahiro Oh",
year = "2013",
month = jan,
day = "1",
language = "American English",
volume = "83",
journal = "Mathematics Faculty Publications",
number = "3",
}

@article {schlag2007littlewood,
    AUTHOR = {Schlag, W.},
     TITLE = {A remark on {L}ittlewood-{P}aley theory for the distorted
              {F}ourier transform},
   JOURNAL = {Proc. Amer. Math. Soc.},
  FJOURNAL = {Proceedings of the American Mathematical Society},
    VOLUME = {135},
      YEAR = {2007},
    NUMBER = {2},
     PAGES = {437--451},
      ISSN = {0002-9939,1088-6826},
   MRCLASS = {35J10 (35P10 42B15 42B25)},
  MRNUMBER = {2255290},
MRREVIEWER = {Michael\ J.\ Goldberg},
       DOI = {10.1090/S0002-9939-06-08621-7},
       URL = {https://doi.org/10.1090/S0002-9939-06-08621-7},
}

@article {zucco2010schrodingerequation,
    AUTHOR = {Cordero, Elena and Zucco, Davide},
     TITLE = {Strichartz estimates for the {S}chr\"odinger equation},
   JOURNAL = {Cubo},
  FJOURNAL = {Cubo. A Mathematical Journal},
    VOLUME = {12},
      YEAR = {2010},
    NUMBER = {3},
     PAGES = {213--239},
      ISSN = {0716-7776,0719-0646},
   MRCLASS = {35Q41 (35B45 42B25 42B37)},
  MRNUMBER = {2779383},
MRREVIEWER = {Atanas\ G.\ Stefanov},
       DOI = {10.4067/s0719-06462010000300014},
       URL = {https://doi.org/10.4067/s0719-06462010000300014},
}

@article {burq2004strichartzcompact,
    AUTHOR = {Burq, N. and G\'erard, P. and Tzvetkov, N.},
     TITLE = {Strichartz inequalities and the nonlinear {S}chr\"odinger
              equation on compact manifolds},
   JOURNAL = {Amer. J. Math.},
  FJOURNAL = {American Journal of Mathematics},
    VOLUME = {126},
      YEAR = {2004},
    NUMBER = {3},
     PAGES = {569--605},
      ISSN = {0002-9327,1080-6377},
   MRCLASS = {58J35 (35Q55)},
  MRNUMBER = {2058384},
       URL =
              {http://muse.jhu.edu/journals/american_journal_of_mathematics/v126/126.3burq.pdf},
}

@book {bergh1976interpolationspace,
    AUTHOR = {Bergh, J\"oran and L\"ofstr\"om, J\"orgen},
     TITLE = {Interpolation spaces. {A}n introduction},
    SERIES = {Grundlehren der Mathematischen Wissenschaften},
    VOLUME = {No. 223},
 PUBLISHER = {Springer-Verlag, Berlin-New York},
      YEAR = {1976},
     PAGES = {x+207},
   MRCLASS = {46M35},
  MRNUMBER = {482275},
}

@article {nandakumaran2005,
    AUTHOR = {Nandakumaran, A. K. and Ratnakumar, P. K.},
     TITLE = {Schr\"odinger equation and the oscillatory semigroup for the
              {H}ermite operator},
   JOURNAL = {J. Funct. Anal.},
  FJOURNAL = {Journal of Functional Analysis},
    VOLUME = {224},
      YEAR = {2005},
    NUMBER = {2},
     PAGES = {371--385},
      ISSN = {0022-1236,1096-0783},
   MRCLASS = {33C45 (35G10 35J10 35Q40 47N50)},
  MRNUMBER = {2146045},
MRREVIEWER = {Nicholay\ G.\ Tomin},
       DOI = {10.1016/j.jfa.2004.12.011},
       URL = {https://doi.org/10.1016/j.jfa.2004.12.011},
}

@article {koh2015strichartz,
    AUTHOR = {Koh, Youngwoo and Seo, Ihyeok},
     TITLE = {Strichartz estimates for {S}chr\"odinger equations in weighted
              {$L^2$} spaces and their applications},
   JOURNAL = {Discrete Contin. Dyn. Syst.},
  FJOURNAL = {Discrete and Continuous Dynamical Systems},
    VOLUME = {37},
      YEAR = {2017},
    NUMBER = {9},
     PAGES = {4877--4906},
      ISSN = {1078-0947,1553-5231},
   MRCLASS = {35R11 (35B45 35Q41 42B35)},
  MRNUMBER = {3661824},
       DOI = {10.3934/dcds.2017210},
       URL = {https://doi.org/10.3934/dcds.2017210},
}

@article {bahouri2021strichartz,
    AUTHOR = {Bahouri, Hajer and Barilari, Davide and Gallagher, Isabelle},
     TITLE = {Strichartz estimates and {F}ourier restriction theorems on the
              {H}eisenberg group},
   JOURNAL = {J. Fourier Anal. Appl.},
  FJOURNAL = {The Journal of Fourier Analysis and Applications},
    VOLUME = {27},
      YEAR = {2021},
    NUMBER = {2},
     PAGES = {Paper No. 21, 41},
      ISSN = {1069-5869,1531-5851},
   MRCLASS = {35R03 (35Q40)},
  MRNUMBER = {4228908},
MRREVIEWER = {Xing\ Cheng},
       DOI = {10.1007/s00041-021-09822-5},
       URL = {https://doi.org/10.1007/s00041-021-09822-5},
}

@article {schippa2025refinements,
    AUTHOR = {Schippa, Robert},
     TITLE = {Refinements of {S}trichartz estimates on tori and
              applications},
   JOURNAL = {Math. Ann.},
  FJOURNAL = {Mathematische Annalen},
    VOLUME = {391},
      YEAR = {2025},
    NUMBER = {3},
     PAGES = {3245--3294},
      ISSN = {0025-5831,1432-1807},
   MRCLASS = {42B37 (35B45)},
  MRNUMBER = {4865216},
MRREVIEWER = {Pham\ Trieu\ Duong},
       DOI = {10.1007/s00208-024-03001-9},
       URL = {https://doi.org/10.1007/s00208-024-03001-9},
}

@article {tataru2000strichartz,
    AUTHOR = {Tataru, Daniel},
     TITLE = {Strichartz estimates for operators with nonsmooth coefficients
              and the nonlinear wave equation},
   JOURNAL = {Amer. J. Math.},
  FJOURNAL = {American Journal of Mathematics},
    VOLUME = {122},
      YEAR = {2000},
    NUMBER = {2},
     PAGES = {349--376},
      ISSN = {0002-9327,1080-6377},
   MRCLASS = {35S05 (35L70)},
  MRNUMBER = {1749052},
MRREVIEWER = {Luigi\ Rodino},
       URL =
              {http://muse.jhu.edu/journals/american_journal_of_mathematics/v122/122.2tataru.pdf},
}

@article {blair2012strichartz,
    AUTHOR = {Blair, Matthew D. and Smith, Hart F. and Sogge, Chris D.},
     TITLE = {Strichartz estimates and the nonlinear {S}chr\"odinger
              equation on manifolds with boundary},
   JOURNAL = {Math. Ann.},
  FJOURNAL = {Mathematische Annalen},
    VOLUME = {354},
      YEAR = {2012},
    NUMBER = {4},
     PAGES = {1397--1430},
      ISSN = {0025-5831,1432-1807},
   MRCLASS = {58J32 (35B45 35Q41 35R01)},
  MRNUMBER = {2993000},
MRREVIEWER = {Vladimir\ P.\ Burski\u i},
       DOI = {10.1007/s00208-011-0772-y},
       URL = {https://doi.org/10.1007/s00208-011-0772-y},
}

@article {birnir1996ill,
    AUTHOR = {Birnir, Bj\"orn and Kenig, Carlos E. and Ponce, Gustavo and
              Svanstedt, Nils and Vega, Luis},
     TITLE = {On the ill-posedness of the {IVP} for the generalized
              {K}orteweg-de {V}ries and nonlinear {S}chr\"odinger equations},
   JOURNAL = {J. London Math. Soc. (2)},
  FJOURNAL = {Journal of the London Mathematical Society. Second Series},
    VOLUME = {53},
      YEAR = {1996},
    NUMBER = {3},
     PAGES = {551--559},
      ISSN = {0024-6107,1469-7750},
   MRCLASS = {35R25 (35Q53 35Q55)},
  MRNUMBER = {1396718},
MRREVIEWER = {Alfred\ S.\ Carasso},
       DOI = {10.1112/jlms/53.3.551},
       URL = {https://doi.org/10.1112/jlms/53.3.551},
}

@article {kato1995nonlinear,
    AUTHOR = {Kato, Tosio},
     TITLE = {On nonlinear {S}chr\"odinger equations. {II}.
              {$H^s$}-solutions and unconditional well-posedness},
   JOURNAL = {J. Anal. Math.},
  FJOURNAL = {Journal d'Analyse Math\'ematique},
    VOLUME = {67},
      YEAR = {1995},
     PAGES = {281--306},
      ISSN = {0021-7670,1565-8538},
   MRCLASS = {35Q55},
  MRNUMBER = {1383498},
MRREVIEWER = {Gustavo\ Perla\ Menzala},
       DOI = {10.1007/BF02787794},
       URL = {https://doi.org/10.1007/BF02787794},
}

@article {cazenave1990cauchy,
    AUTHOR = {Cazenave, Thierry and Weissler, Fred B.},
     TITLE = {The {C}auchy problem for the critical nonlinear
              {S}chr\"odinger equation in {$H^s$}},
   JOURNAL = {Nonlinear Anal.},
  FJOURNAL = {Nonlinear Analysis. Theory, Methods \& Applications. An
              International Multidisciplinary Journal},
    VOLUME = {14},
      YEAR = {1990},
    NUMBER = {10},
     PAGES = {807--836},
      ISSN = {0362-546X,1873-5215},
   MRCLASS = {35Q55 (45K05)},
  MRNUMBER = {1055532},
MRREVIEWER = {Victor\ V.\ Zharinov},
       DOI = {10.1016/0362-546X(90)90023-A},
       URL = {https://doi.org/10.1016/0362-546X(90)90023-A},
}

@article {dai2013continuous,
    AUTHOR = {Dai, Wei and Yang, Weihua and Cao, Daomin},
     TITLE = {Continuous dependence of {C}auchy problem for nonlinear
              {S}chr\"odinger equation in {$H^s$}},
   JOURNAL = {J. Differential Equations},
  FJOURNAL = {Journal of Differential Equations},
    VOLUME = {255},
      YEAR = {2013},
    NUMBER = {7},
     PAGES = {2018--2064},
      ISSN = {0022-0396,1090-2732},
   MRCLASS = {35Q55 (35B30)},
  MRNUMBER = {3072680},
       DOI = {10.1016/j.jde.2013.06.005},
       URL = {https://doi.org/10.1016/j.jde.2013.06.005},
}

@incollection {killip2013nonlinear,
    AUTHOR = {Killip, Rowan and Vi\c{s}an, Monica},
     TITLE = {Nonlinear {S}chr\"odinger equations at critical regularity},
 BOOKTITLE = {Evolution equations},
    SERIES = {Clay Math. Proc.},
    VOLUME = {17},
     PAGES = {325--437},
 PUBLISHER = {Amer. Math. Soc., Providence, RI},
      YEAR = {2013},
      ISBN = {978-0-8218-6861-4},
   MRCLASS = {35Q55 (35A23 35B44 35B45 35C06 35C08)},
  MRNUMBER = {3098643},
MRREVIEWER = {Pavel\ I.\ Naumkin},
}

@incollection {Demirbas2016ALM,
    AUTHOR = {Demirbas, Seckin and Erdo\u{g}an, M. Burak and Tzirakis,
              Nikolaos},
     TITLE = {Existence and uniqueness theory for the fractional
              {S}chr\"{o}dinger equation on the torus},
 BOOKTITLE = {Some topics in harmonic analysis and applications},
    SERIES = {Adv. Lect. Math. (ALM)},
    VOLUME = {34},
     PAGES = {145--162},
 PUBLISHER = {Int. Press, Somerville, MA},
      YEAR = {2016},
      ISBN = {978-1-57146-315-9},
   MRCLASS = {35R11 (35B30 35Q55)},
  MRNUMBER = {3525558},
}

@article {cazenave2008,
    AUTHOR = {Cazenave, Thierry and Dickstein, Fl\'avio and Weissler, Fred
              B.},
     TITLE = {An equation whose {F}ujita critical exponent is not given by
              scaling},
   JOURNAL = {Nonlinear Anal.},
  FJOURNAL = {Nonlinear Analysis. Theory, Methods \& Applications. An
              International Multidisciplinary Journal},
    VOLUME = {68},
      YEAR = {2008},
    NUMBER = {4},
     PAGES = {862--874},
      ISSN = {0362-546X,1873-5215},
   MRCLASS = {35K55},
  MRNUMBER = {2382302},
MRREVIEWER = {Juli\'an\ Aguirre},
       DOI = {10.1016/j.na.2006.11.042},
       URL = {https://doi.org/10.1016/j.na.2006.11.042},
}

@article {mitidieri2005,
    AUTHOR = {Mitidieri, \`E. and Pokhozhaev, S. I.},
     TITLE = {Liouville theorems for some classes of nonlinear nonlocal
              problems},
   JOURNAL = {Tr. Mat. Inst. Steklova},
  FJOURNAL = {Trudy Matematicheskogo Instituta Imeni V. A. Steklova},
    VOLUME = {248},
      YEAR = {2005},
     PAGES = {164--184},
      ISSN = {0371-9685,3034-1809},
      ISBN = {5-02-033713-7},
   MRCLASS = {35R45 (35J60 35K55)},
  MRNUMBER = {2165926},
MRREVIEWER = {Alexander\ G.\ Losev},
}

@article {loayza2012global,
    AUTHOR = {Loayza, Miguel},
     TITLE = {Global existence and blow up results for a heat equation with
              nonlinear nonlocal term},
   JOURNAL = {Differ. Integral Equ.},
  FJOURNAL = {Differential and Integral Equations. An International Journal
              for Theory \& Applications},
    VOLUME = {25},
      YEAR = {2012},
    NUMBER = {7-8},
     PAGES = {665--683},
      ISSN = {0893-4983},
   MRCLASS = {35K91 (35A01 35B33 35B44 35K15 35K20 35R11)},
  MRNUMBER = {2975689},
MRREVIEWER = {Lubomira\ G.\ Softova},
}

@article {MINonGauge,
    AUTHOR = {Ikeda, Masahiro and Wakasugi, Yuta},
     TITLE = {Small-data blow-up of {$L^2$}-solution for the nonlinear
              {S}chr\"{o}dinger equation without gauge invariance},
   JOURNAL = {Differ. Integral Equ.},
  FJOURNAL = {Differential and Integral Equations. An International Journal
              for Theory \& Applications},
    VOLUME = {26},
      YEAR = {2013},
    NUMBER = {11-12},
     PAGES = {1275--1285},
      ISSN = {0893-4983},
   MRCLASS = {35Q55 (35B44)},
  MRNUMBER = {3129009},
MRREVIEWER = {Pascal\ B\'{e}gout},
       URL = {http://projecteuclid.org/euclid.die/1378327426},
}

@article {AdvMathNoGI,
    AUTHOR = {Jaquette, Jonathan and Lessard, Jean-Philippe and Takayasu,
              Akitoshi},
     TITLE = {Global dynamics in nonconservative nonlinear {S}chr\"{o}dinger
              equations},
   JOURNAL = {Adv. Math.},
  FJOURNAL = {Advances in Mathematics},
    VOLUME = {398},
      YEAR = {2022},
     PAGES = {Paper No. 108234, 70},
      ISSN = {0001-8708,1090-2082},
   MRCLASS = {35B40 (35Q55 37M21)},
  MRNUMBER = {4388950},
       DOI = {10.1016/j.aim.2022.108234},
       URL = {https://doi.org/10.1016/j.aim.2022.108234},
}

@article {HiroNGI,
    AUTHOR = {Oh, Tadahiro},
     TITLE = {A blowup result for the periodic {NLS} without gauge
              invariance},
   JOURNAL = {C. R. Math. Acad. Sci. Paris},
  FJOURNAL = {Comptes Rendus Math\'{e}matique. Acad\'{e}mie des Sciences.
              Paris},
    VOLUME = {350},
      YEAR = {2012},
    NUMBER = {7-8},
     PAGES = {389--392},
      ISSN = {1631-073X,1778-3569},
   MRCLASS = {35Q55 (35B44)},
  MRNUMBER = {2922089},
       DOI = {10.1016/j.crma.2012.04.009},
       URL = {https://doi.org/10.1016/j.crma.2012.04.009},
}

@article {WangMei2025Navier,
    AUTHOR = {Wang, Yanqing and Mei, Xue and Wei, Wei},
     TITLE = {Gagliardo-{N}irenberg inequality in anisotropic {L}ebesgue
              spaces and energy equality in the {N}avier-{S}tokes equations},
   JOURNAL = {J. Anal. Math.},
  FJOURNAL = {Journal d'Analyse Math\'ematique},
    VOLUME = {156},
      YEAR = {2025},
    NUMBER = {1},
     PAGES = {151--170},
      ISSN = {0021-7670,1565-8538},
   MRCLASS = {46E35 (35A23 35Q30 42B37)},
  MRNUMBER = {4966479},
       DOI = {10.1007/s11854-025-0378-z},
       URL = {https://doi.org/10.1007/s11854-025-0378-z},
}
\end{document}